\def\printappendix{on}    % on | off
\newcommand{\ifthen}[2]{ \ifthenelse{#1}{#2}{} }
\newcommand{\inappendix}[2]{\ifthenelse{\equal{\printappendix}{on}}{#1}{#2}}
\let\eps\varepsilon
\let\epsilon\varepsilon
\let\rho\varrho
\let\phi\varphi
\let\subset\subseteq
\def\dcup{\dot\cup}
\def\subsc#1{\textsc{\MakeTextLowercase{#1}}} % needs package textcase
\def\isubsc#1{\text{\it\tiny #1}}
\def\itm#1{\rm ({#1})}
\def\itmit#1{\itm{\it #1\,}}
\def\irom{\itmit{\roman{*}}}
\def\cF{\hyper{F}}
\def\cS{\mathcal{S}}  % SNPP
\def\cH{\mathcal{H}}
\def\cC{\mathcal{C}}
\def\cP{\mathcal{P}}  
\def\tand{\ \text{and}\ }
\def\qand{\quad\text{and}\quad}
\def\qqand{\qquad\text{and}\qquad}
\newtheorem{theorem}                   {Theorem}%[section]
\newtheorem{lemma}           [theorem] {Lemma}   
\newtheorem{proposition}     [theorem] {Proposition}  
\newtheorem{claim}           [theorem] {Claim}
\newtheorem{definition}      [theorem] {Definition} 
 \theoremstyle{remark} 
\newtheorem*{remark}{Remark} 
\newcommand{\comment}[1]{}
\newcommand{\sml}[1]{\scalebox{.7}{#1}}
\newcommand{\By}[2]{\overset{\mbox{\tiny{#1}}}{#2}} % Textbegr"undung
\newcommand{\ByRef}[2]{   \By{\eqref{#1}}{#2} }     % Formelreferenzbegr"undung
\newcommand{\leBy}[1]{    \By{#1}{\le} }
\newcommand{\geBy}[1]{    \By{#1}{\ge} }
\newcommand{\eqByRef}[1]{ \ByRef{#1}{=} }
\newcommand{\leByRef}[1]{ \ByRef{#1}{\le} }
\newcommand{\geByRef}[1]{ \ByRef{#1}{\ge} }
\newcommand{\subref}[2][L]{\text{\tiny\ref{#2}}}
\newcommand{\hyper}[1]{\mathcal{#1}}
\newcommand{\coN}{N^{\cap}}
\newcommand{\REALS}{\mathbb{R}}
\newcommand{\NATS}{\mathbb{N}}
\newcommand{\bigO}{\mathcal{O}}
\newcommand{\smallo}{o}
\newcommand{\ti}[1]{\widetilde{#1}} 
\newcommand{\Gnp}[1][n]{\ensuremath{\mathcal{G}_{#1,p}}}
\newcommand{\aas}{a.a.s.}
\newcommand{\bad}[4][G]{\operatorname{bad}^{#1,#2}_{#3,#4,p}}
\newcommand{\Bad}[4][G]{\operatorname{Bad}^{#1,#2}_{#3,#4,p}}
\newcommand{\BAD}[4][G]{\operatorname{bad}^{#1,#2}_{#3,#4,p}}
\newcommand{\ndist}[1][\ti{U}]{\operatorname{d}_{N(#1)}} % neighb. dist
\newcommand{\stars}[1][G]{\#\operatorname{stars}^{#1}}
\newcommand{\MON}{R} % subdivided comb
\newcommand{\LDR}{R^*}  % subdivided comb
\newcommand{\STi}[1][i]{\rm ($\cS_{#1}$)}
\DeclareMathOperator{\Prob}{\mathbb{P}}
\DeclareMathOperator{\Exp}{\mathbb{E}}
\DeclareMathOperator{\Bin}{Bi}
\DeclareMathOperator{\bw}{bw} % Bandweite
\DeclareMathOperator{\ldeg}{ldeg}
\DeclareMathOperator{\edeg}{edeg}
\DeclareMathOperator{\swdist}{d_{sw}} % switching-dist
\date{\today}
\title[Almost spanning subgraphs of random graphs after edge removal] 
{Almost spanning subgraphs of random graphs after adversarial edge removal}
\author[Julia B\"ottcher]{Julia B\"ottcher} 
\address{Instituto de
  Matem\'atica e Estat\'{\i}stica, Universidade de S\~ao Paulo, Rua do
  Mat\~ao 1010, 05508--090~S\~ao Paulo, Brazil} 
\curraddr{Department of
  Mathematics, Columbia House, London School of Economics, Houghton Street,
  London WC2A 2AE, UK } \email{boettche@lse.ac.uk}
\author[Yoshiharu Kohayakawa]{Yoshiharu Kohayakawa}
\address{Instituto de Matem\'atica e Estat\'{\i}stica, Universidade de
  S\~ao Paulo, Rua do Mat\~ao 1010, 05508--090~S\~ao Paulo, Brazil}
\email{yoshi@ime.usp.br}
\author[Anusch Taraz]{Anusch Taraz} 
\address{Zentrum Mathematik, Technische
 Universit\"at M\"unchen, Boltzmannstra\ss{}e~3, D--85747 Garching bei
 M\"unchen, Germany} 
\email{taraz@ma.tum.de} 
\thanks{%
  The first and third author were partially supported by DFG grant TA
  309/2-1.
  The first author was partially supported by an EUBRANEX grant of the
  EU programme EM ECW.
  The second author was partially supported by CNPq (Proc.~308509/2007-2,
  485671/2007-7 and 486124/2007-0). The cooperation of the three authors was
  supported by a joint CAPES-DAAD project (415/ppp-probral/po/D08/11629,
  Proj.~no.~333/09). \\
  An extended abstract appeared in: V Latin-American Algorithms,
  Graphs and Optimization Symposium (LAGOS), Gramado, Brazil,
  Electronic Notes in Discrete Mathematics, 2009.
  The authors are
  grateful to NUMEC/USP, N\'ucleo de Modelagem Estoc\'astica e Complexidade
  of the University of S\~{a}o Paulo, and Project MaCLinC/USP, for supporting
  this research.
}
\dedicatory{Dedicated to Vojt\v{e}ch R\"odl on the occasion of his sixtieth birthday}
\begin{document}

\begin{abstract}
  Let~$\Delta\geq2$ be a fixed integer.  We show that the random graph
  $\Gnp$ with $p\gg (\log n/n)^{1/\Delta}$ is robust with respect to
  the containment of almost spanning bipartite graphs~$H$ with maximum
  degree $\Delta$ and sublinear bandwidth in the following sense:
  asymptotically almost surely, if an adversary deletes arbitrary
  edges from~$\Gnp$ in such a way that each vertex loses less than
  half of its neighbours, then the resulting graph still contains a
  copy of all such~$H$.
\end{abstract}

\maketitle

%%%% INTRO %%%%%%%%%%%%%%%%%%%%%%%%%%%%%%%%%%%%%%%%%%%%%%%%%%%%%%%%%%%%%

\section{Introduction and results}
\label{sec:intro}

In this paper we study graphs that are robust in the following sense: even
after adversarial removal of a specified proportion of their edges, they still
contain copies of every graph from a certain class of graphs.

In order to make this precise, we use the notion of \emph{resilience}
(see~\cite{SuVu}). Let~$\cP$ be a monotone increasing graph property
and~$G=(V,E)$ be a graph. The \emph{global resilience}~$R_g(G,\cP)$
of~$G$ with respect to~$\cP$ is the minimum~$r\in\REALS$ such that
deleting a suitable set of $r\cdot|E|$ edges from~$E$ creates a graph
which is not in $\cP$. The \emph{local resilience}~$R_\ell(G,\cP)$
of~$G$ with respect to~$\cP$ is the minimum~$r\in\REALS$ such that
deleting a suitable set of edges, respecting the restriction that at
most $r\cdot \deg_G(v)$ edges incident to $v$ should be removed for
every vertex $v\in V$, creates a graph which is not in $\cP$.

For example, using this terminology, the classical theorems of Tur\'an~\cite{Tur}
and Dirac~\cite{Dir} can be stated as follows: the global resilience of the
complete graph $K_n$ with respect to containing a clique on $r$ vertices is
$\frac{1}{r-1}-o(1)$, and the local resilience of $K_n$ with respect to
containing a Hamilton cycle is $\frac12-o(1)$. In this paper we stay quite close to the
scenario of these two examples insofar as we will also consider properties that
deal with subgraph containment. However, we are interested in the resilience of
graphs which are much sparser than the complete graph.

It turns out that the random graph $\Gnp$ is well suited for this purpose ($\Gnp$
is defined on vertex set $[n]=\{1,\dots,n\}$ and edges exist independently of
each other with probability $p$).  
Clearly, asymptotically almost surely (\aas)\ the local resilience of $\Gnp$
with respect to containing a Hamilton cycle (or in fact any connected graph
on more than, say, $\frac12n$ vertices) is at most $\frac12+o(1)$, since for
bigger values it is easy to disconnect the graph into
components of size at most $\frac12n$ by deleting edges respecting the
corresponding resilience definition.
Sudakov and Vu~\cite{SuVu} showed that indeed
\aas\ the local resilience of $\Gnp$ with respect
to containing a Hamilton cycle is $\frac12-o(1)$ if $p>\log^4n/n$.
A result of Dellamonica, Kohayakawa, Marciniszyn and Steger~\cite{DKMS}
implies that \aas\ the local resilience of $\Gnp$ with respect to
containing cycles of length at least $(1-\alpha)n$ is $\frac12-o(1)$ for
any $0< \alpha < \frac12$ and $p\gg 1/n$. We shall discuss the various
lower bounds for the edge probability $p$ occuring in these and later
results at the end of Section~\ref{sec:background}.

Recently Balogh, Csaba, and Samotij~\cite{BalCsaSam} studied the
local resilience of $\Gnp$ with respect to containing all
trees on $(1-\eta)n$ vertices with constant maximum degree~$\Delta$. They showed
that there is a constant $c=c(\Delta,\eta)$ such that for $p\ge c/n$ this
local resilience is also $\frac12-o(1)$ \aas

Now we extend the scope of investigations to the containment of a much
larger class of subgraphs.  A graph has \emph{bandwidth} at most~$b$
if there exists a labelling of the vertices by numbers $1,\dots,n$,
such that for every edge $ij$ of the graph we have $|i-j| \le b$. Let
$\cH(m,\Delta)$ denote the class of all graphs on $m$ vertices with
maximum degree at most $\Delta$, and $\cH_2^b(m,\Delta)$ denote the
class of all \emph{bipartite} graphs in $\cH(m,\Delta)$ which have
bandwidth at most $b$. Our result asserts that the local resilience of
$\Gnp$ with respect to containing all graphs~$H$ from $\cH_2^{\beta
  n}((1-\eta)n,\Delta)$ is $\frac12-o(1)$ for small $\beta$ and $\eta$
and for $p=p(n)=o(1)$ sufficiently large.

\begin{theorem}\label{thm:main}
  For each $\eta,\gamma>0$ and $\Delta\ge 2$ there exist positive
  constants~$\beta$ and~$c$ such that 
  the following holds for
  $p\ge c(\log n/n)^{1/\Delta}$. 
  Asymptotically almost surely every spanning subgraph
  $G=(V,E)$ of $\Gnp$ with $\deg_G(v)\ge(\frac{1}{2}+\gamma)\deg_{\Gnp}(v)$
  for all $v\in V$ contains a copy of every graph~$H$ in $\cH_2^{\beta
  n}((1-\eta)n,\Delta)$. 
\end{theorem}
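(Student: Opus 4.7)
The plan is to combine the sparse regularity method of Kohayakawa--R\"odl with an embedding procedure tailored to almost spanning bipartite graphs of sublinear bandwidth, transferring the strategy behind the Bollob\'as--Koml\'os conjecture (B\"ottcher--Schacht--Taraz) to the sparse random setting. The very first step is to verify that a.a.s.\ the random graph $\Gnp$ has several pseudo-random properties: all vertex degrees are $(1\pm o(1))pn$; all codegrees and, more generally, intersections of up to $\Delta$ neighbourhoods of a linear-size vertex set are concentrated around their expectation $p^j n$; and typical subpairs inherit sparse regularity. The lower bound $p\gg(\log n/n)^{1/\Delta}$ is exactly what is needed for such $\Delta$-wise neighbourhood intersections to be simultaneously non-trivial and well concentrated under a union bound.

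Next, I would apply the sparse regularity lemma to the subgraph $G$ of $\Gnp$, obtaining an equipartition $V_1\cup\dots\cup V_k$ of $V(G)$ such that most pairs $(V_i,V_j)$ are $(\eps,p)$-regular in $G$. From the hypothesis $\deg_G(v)\ge(\tfrac12+\gamma)\deg_{\Gnp}(v)$, a standard averaging argument combined with the pseudo-randomness of $\Gnp$ implies that the reduced graph $R$ (with edges corresponding to regular pairs of density at least, say, $(\tfrac12+\tfrac{\gamma}{2})p$) has minimum degree at least $(\tfrac12+\tfrac{\gamma}{3})k$. Classical Dirac-type results then provide a Hamilton cycle in $R$, and, since $H$ is bipartite, one can arrange that this cycle has even length and an alternating $2$-colouring compatible with the bipartition of $H$.

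Using the sublinear bandwidth of $H$, I would then apply a partitioning lemma of the B\"ottcher--Schacht--Taraz type to split $H$ into $k$ pieces $X_1,\dots,X_k$ of almost equal size, one per cluster, so that every edge of $H$ lies within a piece or between two consecutive pieces along the cycle, and so that the splitting respects the bipartition of $H$. This reduces the problem to embedding each $X_i$ into $V_i$ in such a way that edges between $X_i$ and $X_{i+1}$ are realised by edges of $G$ between $V_i$ and $V_{i+1}$. The embedding itself is performed vertex by vertex in the order inherited from the bandwidth labelling, maintaining for each yet-unembedded vertex $y$ a candidate set $C(y)\subseteq V_{i(y)}$ consisting of those vertices of $G$ joined in $G$ to all already-embedded neighbours of $y$. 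The pseudo-random properties and the $(\eps,p)$-regularity of the involved pairs guarantee $|C(y)|\gtrsim p^{\deg_H(y)}|V_{i(y)}|$, and the slack of $\eta n$ un-embedded vertices of $G$ is used to absorb vertices with anomalous candidate sets and to complete the embedding via a Hall-type matching at the end.

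The principal obstacle is precisely the final embedding step, since at these densities the classical Blow-up Lemma is unavailable and one must work directly with the pseudo-randomness of $\Gnp$. Two issues are delicate. First, the candidate sets $C(y)$ can collapse, because $y$ may have up to $\Delta$ already-embedded neighbours; controlling the joint size of $\Delta$-wise neighbourhood intersections in every $V_i$ uniformly in the embedding history is exactly where the threshold $p\gg(\log n/n)^{1/\Delta}$ is actually used, through a union bound over all choices of $\Delta$ vertices. Second, vertices of $H$ that straddle two consecutive pieces couple the embeddings into $V_i$ and $V_{i+1}$, so one must interleave the embedding carefully between the two sides of the bipartite cycle. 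Both difficulties are overcome by a greedy/random embedding strategy together with an absorption and Hall-matching clean-up that exploits the $\eta n$ buffer.
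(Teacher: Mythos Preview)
Your outline captures the high-level regularity framework correctly, but there are two genuine gaps where the argument, as written, would fail.

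\textbf{Balancing the bipartition.} You propose to find a Hamilton cycle in the reduced graph $R$ and map $H$ along it so that the $2$-colouring of the cycle is ``compatible with the bipartition of $H$''. But the two colour classes of $H$ can have wildly different sizes (the hypothesis gives no control on this), while a Hamilton cycle in $R$ forces the clusters to split evenly between the two sides. The paper resolves this by finding not a cycle but a \emph{spin graph} $\MON_{r,t}$ in $R$: a ladder augmented by small $5$-cycle gadgets. These gadgets allow the homomorphism from $H$ to $\MON_{r,t}$ to locally ``switch'' colour classes (Proposition~\ref{prop:lolly}, Lemma~\ref{lem:H}), so that each big cluster receives roughly the right number of vertices regardless of how lopsided the bipartition of $H$ is. Without such a mechanism your partition of $H$ into pieces $X_i$ of almost equal size cannot in general respect the bipartition.

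\textbf{The embedding step is not greedy.} You describe embedding vertex by vertex, maintaining candidate sets $C(y)$ of size $\gtrsim p^{\Delta}|V_{i(y)}|$, and invoking a ``Hall-type matching'' clean-up with the $\eta n$ slack. The difficulty is quantitative: each cluster receives $\sim n/r$ vertices of $H$, while candidate sets have size $\sim p^{\Delta} n/r = o(n/r)$. A greedy procedure exhausts candidate sets long before the cluster is full, and the $\eta n$ buffer is far too small to absorb $\Theta(n)$ failures. What is actually needed is a \emph{global} matching in the bipartite candidate graph between $\ti U_i$ and $U_i$, and verifying Hall's condition there is the entire content of the constrained blow-up lemma (Lemma~\ref{lem:blowup}): one needs expansion of small families of $p$-good $\Delta$-sets (Lemma~\ref{lem:exp}), star-counting for medium sets (Lemma~\ref{lem:stars-big}), and a hypergraph packing argument (Lemma~\ref{lem:pack}) to avoid the few $\Delta$-sets with deficient common neighbourhoods. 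Moreover, the interface between clusters is handled not by ``interleaving carefully'' but by a separate connection lemma (Lemma~\ref{lem:CL}) that embeds the few boundary vertices into dedicated small clusters, and by forcing the blow-up lemma to avoid certain \emph{forbidden} $\Delta$-sets so that the boundary vertices retain usable candidate sets afterwards. Your sketch does not supply any of this machinery.
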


We note that several important classes of graphs have sublinear bandwidth, and
hence Theorem~\ref{thm:main} does apply to them: this is the case for, e.g.,
the class of all bounded degree planar graphs (see~\cite{BPTW}).

As an application of this theorem we derive a result on rainbow
$H$-copies with $H\in\cH_2^{\beta n}((1-\eta)n,\Delta)$ for certain
edge-colourings of $K_n$ in Section~\ref{sec:bunt}. The proof of
Theorem~\ref{thm:main} is prepared in
Sections~\ref{sec:reg}--\ref{sec:joint} and presented in
Section~\ref{sec:proof}.  First, however, we will compare our result
to related results in the next section.

%%%% BACKGROUND %%%%%%%%%%%%%%%%%%%%%%%%%%%%%%%%%%%%%%%%%%%%%%%%%%%%%%%%%%%%%

\section{Background}
\label{sec:background}

As we saw at the end of the last section, we are looking for graphs that not
only contain one specific subgraph but a large class of graphs. A graph~$G$ is
called \emph{universal} for a class of graphs $\cH$ if~$G$ contains a copy of every graph
from $\cH$ as a subgraph. In this section, we first briefly sketch some results
concerning universality in general and then come back to resilience with respect
to universality.

Dellamonica, Kohayakawa, R\"odl, and Ruci\'nski~\cite{DeKoRoRu} show
that $\Gnp$ is \aas\ universal for
$\cH(n,\Delta)$ for some~$p$ in~$\ti\bigO(n^{-1/2\Delta})$ (where $\ti\bigO$
hides polylogarithmic factors). It is also shown in~\cite{DeKoRoRu} that
the lower bound for the edge probability $p$ can be improved if we restrict
our attention to balanced bipartite graphs: Let $\cH_2(m,m,\Delta)$ denote
the class of bipartite graphs in $\cH(2m,\Delta)$ with two colour classes
of equal size. Then $\Gnp[2n]$ \aas\ is universal for $\cH_2(n,n,\Delta)$
for some~$p$ in $\ti\bigO(n^{-1/\Delta})$. The same lower bound for $p$ also
guarantees universality for \emph{almost spanning} graphs of arbitrary
chromatic number: Alon, Capalbo, Kohayakawa, R{\"o}dl, Ruci{\'n}ski and
Szemer\'edi~\cite{millennium} prove that for every $\eta>0$ and for some~$p$ in
$\ti\bigO(n^{-1/\Delta})$, the random graph $\Gnp$ \aas\ is universal for
$\cH((1-\eta)n,\Delta)$. 
Recently, Dellamonica, Kohayakawa, R\"odl, and
Ruci\'nski~\cite{DeKoRoRu12} generalised these results and obtained a
corresponding lower bound for spanning graphs: They have
shown that $\Gnp$ is \aas\ universal for $\cH(n,\Delta)$ for some~$p$
in~$\ti\bigO(n^{-1/2\Delta})$. 

Alon and Capalbo~\cite{AloCap,AloCapSODA} gave
explicit constructions of graphs with average degree
$\ti\bigO(n^{-2/\Delta})n$ that are universal for $\cH(n,\Delta)$.
For results concerning universal graphs for trees see, e.g.,~\cite{AKStrees}.

Moving on to resilience, it is clear that an adversary can destroy any spanning
subgraph by deleting the edges incident to a single vertex. Hence any graph must
have trivial global resilience with respect to universality for spanning
subgraphs.

However, if we focus on subgraphs of smaller order, then sparse random
graphs have a global resilience arbitrarily close to~$1$: 
Alon, Capalbo, Kohayakawa, R{\"o}dl, Ruci{\'n}ski and
Szemer\'edi~\cite{millennium}
show that for every $\gamma >0$ there is a
constant $\eta>0$ such that for some $p$ in $\ti\bigO(n^{-1/2\Delta})$
the random graph $\Gnp$ \aas\ has global resilience $1-\gamma$ with
respect to universality for $\cH_2(\eta n,\eta n,\Delta)$.  In other
words, $\Gnp$ contains \emph{many} copies of all graphs from
$\cH_2(\eta n,\eta n,\Delta)$ \emph{everywhere}.

Finally, the concept of local resilience allows for non-trivial
results concerning universality for almost spanning subgraphs.
For example, a conjecture of Bollob\'as and Koml\'os proven
in~\cite{BST09} asserts that the local resilience of the
complete graph $K_n$ with respect to universality for $\cH_r^{\beta
  n}(n,\Delta)$ is $\frac1r -o(1)$.  Here $\cH_r^{\beta n}(n,\Delta)$
is the class of all $r$-colourable $n$-vertex graphs with maximum
degree at most $\Delta$ and bandwidth at most $\beta n$, and one can
show that the bandwidth constraint cannot be omitted.
 
\begin{theorem}[\cite{BST09}]
\label{thm:bandwidth}
  For all $r,\Delta\in\mathbb{N}$ and $\gamma>0$, there exist constants $\beta>0$
  and $n_0\in\mathbb{N}$ such that for every $n\geq n_0$ the following holds.
  If~$H$ is an $r$-chromatic graph on~$n$ vertices with $\Delta(H) \leq \Delta$,
  and bandwidth at most $\beta n$ and if~$G$ is a graph on~$n$ vertices with
  minimum degree $\delta(G) \geq (\frac{r-1}{r}+\gamma)n$, then~$G$ contains a
  copy of~$H$.
\end{theorem}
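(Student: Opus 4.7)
The plan is to deploy the standard regularity--blow-up scheme for spanning embeddings, combining (i) the Szemer\'edi Regularity Lemma applied to~$G$, (ii) a bandwidth-driven decomposition of $H$, and (iii) the Blow-up Lemma of Koml\'os--S\'ark\"ozy--Szemer\'edi to realise the decomposition inside~$G$.

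First I would apply the Regularity Lemma to~$G$ with parameters chosen sufficiently small relative to $\gamma$ and $1/\Delta$, obtaining a reduced graph $R$ on $k$ clusters that inherits minimum degree at least $\bigl(\tfrac{r-1}{r}+\tfrac{\gamma}{2}\bigr)k$. By the Hajnal--Szemer\'edi theorem, $R$ contains a $K_r$-factor $\cF=\{K^{(1)},\dots,K^{(k/r)}\}$, and a short additional argument links these copies of $K_r$ into a ``cycle backbone'' whose consecutive $K_r$-copies share colour-matched clusters (or are joined by short paths inside each colour class). This backbone serves as the $r$-chromatic skeleton onto which~$H$ will be mapped.

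Next I would establish a ``lemma for $H$'': using the bandwidth ordering $1,\dots,n$, slice $V(H)$ into a sequence of blocks $V_1,\dots,V_t$ of comparable size, with edges running only between consecutive blocks. Each block admits an $r$-colouring with nearly equal colour classes, and by chaining and locally adjusting these colourings along the backbone one obtains a homomorphism $\phi\colon H\to R$ such that $|\phi^{-1}(C)|$ matches the cluster size of each $C\in V(R)$ \emph{exactly}. The bounded bandwidth ensures that only $\bigO(\beta n)$ ``seam'' vertices sit at block boundaries, and these are the vertices that will act as buffers for stitching the embedding together.

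Finally, I would invoke the Blow-up Lemma inside each super-regular $r$-tuple of the backbone to embed the corresponding piece of~$H$, coordinating the embeddings across seams by pre-assigning the buffer vertices into the shared clusters and then completing the embedding with these prescribed partial images. The genuinely hard step is the balancing inside the lemma for $H$: cluster sizes produced by regularisation are essentially rigid, whereas the natural $r$-colourings of successive blocks may yield locally mismatched colour-class sizes. Redistributing this excess without violating the bandwidth-induced locality or the bounded-degree condition requires a delicate walk- or odd-cycle-based swapping argument in the backbone of $R$; this is precisely where the sublinear bandwidth hypothesis is indispensable.
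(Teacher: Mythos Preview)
The paper does not prove Theorem~\ref{thm:bandwidth}: it is quoted from~\cite{BST09} and used as a black box (in the proof of Lemma~\ref{lem:G}, to find a spanning ladder in the reduced graph). So there is no ``paper's own proof'' to compare your proposal against.

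That said, your outline is a reasonable high-level sketch of the strategy actually carried out in~\cite{BST09}: regularity lemma on~$G$, a $K_r$-factor plus connecting structure in the reduced graph, a bandwidth-based partition lemma for~$H$ producing a matching homomorphism into the reduced graph, and the Koml\'os--S\'ark\"ozy--Szemer\'edi blow-up lemma to finish. You also correctly identify the balancing of colour-class sizes across blocks as the genuinely delicate point. Where your sketch is thin is precisely there: in~\cite{BST09} the balancing is handled not by ad~hoc swaps but by exploiting additional structure in the reduced graph (triangles attached to the $K_r$-backbone, giving odd cycles that allow one to shift vertices between colour classes), and by a careful two-stage assignment that first distributes~$H$ coarsely and then corrects local imbalances using these extra clusters. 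Your phrase ``walk- or odd-cycle-based swapping argument'' gestures at this, but turning it into an actual proof requires specifying the auxiliary structure in~$R$ and showing that the corrections can be made simultaneously for all clusters without overloading any of them; this is several pages of work in the original.
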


Our Theorem~\ref{thm:main} replaces $K_n$ by the much sparser graph
$\Gnp$, but it only treats the case $r=2$ and 
almost spanning subgraphs.

Let us mention two more recent papers which continued this line of
research.  Huang, Lee and Sudakov considered almost spanning factors
for constant probability $p$ and showed that one cannot hope to obtain
spanning subgraphs, as~$\Omega(p^{-2})$ vertices may be forced to be
left out (see~\cite{HuaLeeSud}).  Also, Balogh, Lee and Samotij
considered the case of almost spanning triangle factors for $p\gg
(\log n/n)^{1/2}$ (see~\cite{balogh:_sp_corrad-_hajnal}).

Before we conclude this section, let us briefly discuss the lower
bounds for the edge probability $p$ mentioned in the results above,
summarized in Table~\ref{tb:results}.  First, a straightforward
counting argument shows that any graph that is universal for
$\cH(n,\Delta)$ must have at least $\Omega(n^{2-2/\Delta})$
edges. Moreover, it is easy to see that an edge probability
$p=n^{\eps-2/\Delta}$ with $\eps<\frac{1}{\Delta^2}$ is not sufficient
to guarantee that $\Gnp$ is universal even for the more restrictive
class $\cH_2(\eta n,\eta n,\Delta)$.  Indeed, consider the graph $H\in
\cH_2(\eta n,\eta n,\Delta)$ consisting of $\eta n /\Delta$ copies of
$K_{\Delta,\Delta}$. The expected number of copies of
$K_{\Delta,\Delta}$ in $\Gnp$ is at most
$$
n^{2\Delta} p^{\Delta^2} 
= n^{2\Delta} (n^{-\frac{2}{\Delta}+\eps})^{\Delta^2}  
= n^{2\Delta-2\Delta+\eps\Delta^2}
\ll n,
$$
and hence \aas\ $\Gnp$ does not contain a copy of $H$. 

{ % \small
\begin{table*}[ht]
   \begin{center}
    \begin{tabular}{|l|l|l|c|}
      \hline
      & \bf Result & \it\textbf{p} &  \bf Reference \\ \hline

      \multirow{1}{*}{Universality} &

      % $\cH_2(n, n, \Delta) \subset\Gnp[2n]$ &
      % $p = n^{-1/\Delta}$ 
      % \vphantom{$\Big($} &
      % \cite{DeKoRoRu} \\ \cline{2-4}

      % & $\cH(n,\Delta) \subset\Gnp$ &
      % $p = n^{-1/2\Delta}$
      % \vphantom{$\Big($} &
      % \cite{DeKoRoRu} \\ \cline{2-4}

      % & $\cH((1-\eta)n,\Delta) \subset\Gnp$ &
      % $p = n^{-1/\Delta}$
      % \vphantom{$\Big($} &
      % \cite{millennium} \\ \hline

      $\cH(n,\Delta) \subset\Gnp$ &
      $p = n^{-1/\Delta}$
      \vphantom{$\Big($} &
      \cite{DeKoRoRu12} \\ \hline

      \multirow{2}{*}{Resilience} &
      $R_g\big(\Gnp,\cH_2(\eta n,\eta n,\Delta)\big)\ge1-\gamma$ &
      $p = n^{-1/2\Delta}$
      \vphantom{$\Big($} &
      \cite{millennium} \\ \cline{2-4}

      & $R_\ell\big(\Gnp,\cH_2^{\beta n}((1-\eta)n,\Delta)\big)\ge\frac12-\gamma$ &
      $p = n^{-1/\Delta}$
      \vphantom{$\Big($} &
      Theorem~\ref{thm:main} \\ \hline

    \end{tabular}
  \end{center}
\caption{Summary of (best) known universality and resilience results 
(logarithmic factors for $p$ are omitted).}\label{tb:results}
 \end{table*}
}

%%%% RAINBOW COPIES %%%%%%%%%%%%%%%%%%%%%%%%%%%%%%%%%%%%%%%%%%%

\section{An application: rainbow copies of bipartite graphs}
\label{sec:bunt}

Let~$\phi$ be an arbitrary colouring of the edges of the complete graph~$K_n$. If~$\phi$
uses no colour more than~$k$ times then we say that~$\phi$ is \emph{$k$-bounded}.
Moreover, a copy of a graph~$H$ in~$K_n$ is a \emph{rainbow} copy if $\phi$ uses
no colour more than once on~$H$. If there is a rainbow copy of~$H$ in~$K_n$
then~$\phi$ is called \emph{$H$-rainbow}.

Erd\H{o}s, Ne\v{s}et\v{r}il, and R\"odl~\cite{ErdNesRoe} asked for
which $k=k(n)$ every $k$-bounded edge colouring of~$K_n$ 
has a rainbow Hamilton cycle.
Frieze and Reed~\cite{FriRee} showed that $k(n)$ can grow as fast as
$\kappa n/\log n$ for some constant~$\kappa$
(for early progress on this problem see the references in~\cite{FriRee}).
Albert, Frieze, and Reed~\cite{AlbFriRee} improved this bound to
$n/65$, which shows that $k$ can grow linearly, as was previously
conjectured by Hahn and Thomassen~\cite{HahTho}.

Here we consider the analogous question for $H$-rainbow
colourings with $H\in\cH_2^{\beta n}((1-\eta)n,\Delta)$.
As a consequence of our main theorem, Theorem~\ref{thm:main}, we
prove the following result.

\begin{theorem}\label{thm:bunt}
  For every $\eta>0$ and $\Delta\ge 2$ there exist positive
  constants~$\beta$ and~$\kappa$ such that for~$n$ sufficiently large,
  for every graph~$H\in\cH_2^{\beta n}((1-\eta)n,\Delta)$ and
  $k\le \kappa(n/\log n)^{1/\Delta}$, 
  every $k$-bounded edge-colouring of $K_n$ is $H$-rainbow.
\end{theorem}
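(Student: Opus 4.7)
The plan is to apply Theorem~\ref{thm:main} to a random sparsification of~$K_n$ in which each colour occurs at most once. Fix $\gamma := 1/4$ and let $\beta = \beta(\eta,\gamma,\Delta)$ and $c = c(\eta,\gamma,\Delta)$ be the constants provided by Theorem~\ref{thm:main}; set $p := c(\log n/n)^{1/\Delta}$ and choose $\kappa = \kappa(c,\gamma,\Delta) > 0$ small enough that $kp \le \gamma/8$. For a given $k$-bounded colouring~$\phi$ of~$K_n$ and any $H\in\cH_2^{\beta n}((1-\eta)n,\Delta)$, sample $G\sim\Gnp$ by including each edge of~$K_n$ independently with probability~$p$. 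For every colour class that contains at least two sampled edges, mark all but the lexicographically first for removal; let $R$ denote the set of marked edges, and set $G' := G - R$. By construction, no colour appears twice in~$G'$, so every subgraph of~$G'$ is rainbow.

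The task reduces to showing, asymptotically almost surely, the local degree condition $\deg_{G'}(v) \ge (\tfrac12+\gamma)\deg_G(v)$ for every~$v$. Since $np \gg \log n$, standard Chernoff bounds give $\deg_G(v) = (1+o(1))np$ aas; thus it suffices to establish $B_v := |R \cap E(v)| \le \tfrac{\gamma}{2}np$ aas, where $E(v)$ denotes the set of edges of~$K_n$ incident to~$v$. Writing $J_c$ for the number of sampled edges at~$v$ of colour~$c$ that lie in a colour class with at least two sampled edges, we have $B_v = \sum_c J_c$, and the decisive point is that the $J_c$ are \emph{independent} across colours~$c$, since distinct colours partition the edges of~$K_n$. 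Each $J_c$ is bounded by $d_c(v) \le k$, and a direct calculation yields
\[
 \Exp[B_v] \;\le\; \sum_{e\ni v} p\,(k-1)p \;\le\; nkp^2 \;\le\; \tfrac{\gamma}{8}\,np.
\]
Splitting each $J_c$ into its contributions from colours with two or more sampled edges at~$v$ (bounded by~$k$) and from colours with exactly one sampled edge at~$v$ and at least one elsewhere (bounded by~$1$), and applying Bernstein's inequality to the former sum and the multiplicative Chernoff bound to the latter, one obtains $\Prob\bigl(B_v > \tfrac{\gamma}{2}np\bigr) \le \exp(-\Omega(\log n)) = o(1/n)$, provided $\kappa$ is sufficiently small relative to~$c$ and~$\gamma$. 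A union bound over~$v$ concludes the concentration step.

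Thus aas both the conclusion of Theorem~\ref{thm:main} holds for~$G$ and $|R\cap E(v)|\le\tfrac{\gamma}{2}np$ for every~$v$; Theorem~\ref{thm:main} applied to $G'\subset G$ then produces a copy of~$H$ in~$G'$, which is rainbow by construction, so such a rainbow copy exists in~$K_n$ for the given~$\phi$. The main technical hurdle is the per-vertex concentration of~$B_v$: flipping a single edge indicator can shift~$B_v$ by as much as $k\sim 1/p$, so neither edgewise Chernoff nor Azuma--Hoeffding closes the gap; grouping the contributions by colour restores enough independence for a Bernstein-type inequality to deliver the $1/n$-level concentration required for the union bound over all vertices.
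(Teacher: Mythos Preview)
Your approach is essentially the same as the paper's: take a random subgraph $\Gamma=\Gnp$ at the threshold probability, delete edges whose colour is repeated in~$\Gamma$, verify that each vertex loses less than half its degree, and invoke Theorem~\ref{thm:main}. The paper packages the concentration step as Lemma~\ref{lem:bunt}, splitting the deleted edges at~$v$ into those whose colour is unique at~$v$ but repeated elsewhere (your $J_c^{(b)}$ part, their~$N_1$) and those whose colour already repeats at~$v$ (your $J_c^{(a)}$ part, their~$N_2$); for the latter they pass to the complementary count~$Y$ of uniquely-coloured incident edges, which is a sum of independent indicators, rather than using Bernstein directly. Both arguments exploit the same independence across colour classes that you correctly identify as the crux. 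Two small points: you should have $B_v\le\sum_c J_c$ rather than equality (the lexicographically first edge is kept), and the specific constant $\gamma/8$ for $kp$ needs to be tightened for the Chernoff tail on $\sum_c J_c^{(b)}$ to reach $o(1/n)$ --- but your ``$\kappa$ sufficiently small'' proviso covers this.
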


For the proof of this theorem we apply the strategy of~\cite{FriRee}
and do the following for a given $k$-bounded edge colouring~$\phi$
of~$K_n$. We first take a random subgraph~$\Gamma=\Gnp$ of~$K_n$ and
then delete all edges in $\Gamma$ whose colour appears more than once
in~$\Gamma$. Denote the resulting graph by~$\Gamma(\phi)$. Any
subgraph of $\Gamma(\phi)$ is trivially rainbow and hence it
remains to show that there is a copy of~$H$ in $\Gamma(\phi)$ in order
to establish Theorem~\ref{thm:bunt}. In view of Theorem~\ref{thm:main}
it clearly suffices to prove the following lemma.

\begin{lemma}
\label{lem:bunt}
  Let~$p=p(n)$ and~$k=k(n)$ be such that
  $p\ge10^6\log n/n$ and $pk\le 10^{-3}$.
  For any $k$-bounded edge colouring~$\phi$ of $K_n$, 
  with probability $1-o(1)$ all vertices $v$ in $\Gamma=\Gnp$ satisfy
  $\deg_{\Gamma(\phi)}(v)\ge\frac23\deg_\Gamma(v)$.
\end{lemma}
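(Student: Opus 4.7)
The plan is to show that with probability $1-o(1)$ every vertex $v$ satisfies both $\deg_\Gamma(v) \geq \tfrac{1}{2}(n-1)p$ and $T_v := \deg_\Gamma(v) - \deg_{\Gamma(\phi)}(v) \leq \tfrac{1}{50}(n-1)p$; the two bounds combine to give $\deg_{\Gamma(\phi)}(v) \geq \tfrac{24}{25}\deg_\Gamma(v) \geq \tfrac{2}{3}\deg_\Gamma(v)$. Since the hypothesis forces $(n-1)p \gtrsim 10^6 \log n$, the degree lower bound is immediate from a Chernoff bound on $\operatorname{Bi}(n-1,p)$ together with a union bound. For the upper bound on $T_v$, fix $v$ and decompose $T_v$ according to whether the ``duplicate-colour witness'' for a removed edge $vu \in \Gamma$ lives at $v$ or not. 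Writing $Y_c$ for the number of colour-$c$ edges of $\Gamma$ incident to $v$ and $\bar Y_c$ for the number not incident to $v$, an easy count shows
$$
T_v \leq A_v + B_v,\quad A_v := \sum_c Y_c\mathbf{1}[Y_c\geq 2], \quad B_v := \sum_c Y_c\mathbf{1}[\bar Y_c\geq 1].
$$
The crucial structural fact is that, because the colour classes partition $E(K_n)$, the pairs $\{(Y_c,\bar Y_c)\}_c$ are mutually independent, and $Y_c,\bar Y_c$ are independent of each other for each fixed $c$.

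I would then apply the exponential-moment method. Writing $d_{v,c}:=|\{e : v\in e,\ \phi(e)=c\}|\leq k$ and using $\sum_c d_{v,c}=n-1$, standard binomial MGF estimates (with $e^u-1-u\leq \tfrac{e}{2}u^2$ for $u\leq 1$) yield, for $\lambda$ of order one,
$$
\mathbb{E}[e^{\lambda A_v}]\leq \exp\!\bigl(\tfrac{e}{2}\,k(n-1)p^2\,e^{2\lambda}\bigr),\qquad
\mathbb{E}[e^{\lambda B_v}] \leq \exp\!\bigl(2k(n-1)p^2(e^\lambda-1)\bigr).
$$
Since $kp\leq 10^{-3}$, the prefactor $k(n-1)p^2 = kp\cdot (n-1)p$ is at most $10^{-3}(n-1)p$, so optimising $\lambda$ and applying Markov's inequality produces Poisson-style tails $\Pr[A_v\geq (n-1)p/100]$ and $\Pr[B_v\geq (n-1)p/100]$ that are each $\leq n^{-C}$ for a constant $C$ that may be taken arbitrarily large, exploiting the slack in $p\geq 10^6 \log n / n$. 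A union bound over the $n$ vertices then gives $T_v \leq (n-1)p/50$ for every $v$ with probability $1-o(1)$, as required.

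The main obstacle is obtaining concentration sharp enough for the union bound: each summand $Y_c\mathbf{1}[Y_c\geq 2]$ of $A_v$ can be as large as $k$, which under the hypotheses may be of order $n/(10^9\log n)$, so any Hoeffding- or McDiarmid-type argument with per-edge effect of order $k$ is far too weak. What rescues the argument is that each summand vanishes unless a rare event (of probability at most $\binom{k}{2}p^2\leq (kp)^2/2$ for $A_v$, and at most $(k-1)p$ for $B_v$) occurs, and the Poisson-style MGF bound above captures exactly this rareness together with the independence across colour classes.
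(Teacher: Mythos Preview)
Your approach is correct and essentially the same as the paper's: both split the deleted edges at $v$ according to whether a colour duplicate occurs at $v$ or away from $v$, exploit independence across colour classes, and finish with Chernoff/MGF bounds and a union bound. The only cosmetic difference is that the paper handles the local-duplicate term by passing to the complementary variable (the number of edges at $v$ whose colour is unique there, a sum of independent Bernoullis amenable to a direct Chernoff bound), whereas you bound the MGF of $\sum_c Y_c\mathbf{1}[Y_c\ge2]$ directly.
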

\begin{proof}[Proof (sketch)]
  Let~$v$ be an arbitrary vertex of~$\Gamma$.  We classify the
  `deleted' edges incident to~$v$, that is, those edges in
  $E\big(v,N_\Gamma(v)\setminus N_{\Gamma(\phi)}(v)\big)$, into two
  sets: the set~$N_1$ of those edges whose colour appears only once
  in~$E(v,N_\Gamma(v))$ (but also somewhere else in~$\Gamma$) and the
  set~$N_2$ of those edges whose colour appears at least twice
  in~$E(v,N_\Gamma(v))$.
  With probability $1-\smallo(1/n)$ we have that
  $\deg_\Gamma(v)$ lies in the interval
  $[(1-\frac1{20})np,(1+\frac1{20})np]$ by a Chernoff bound. Therefore, showing
  \begin{enumerate}[label=\irom]
    \item\label{lem:bunt:N1} $\Prob(|N_1|\ge\frac1{10} np)=\smallo(1/n)$ and
    \item\label{lem:bunt:N2} $\Prob(|N_2|\ge\frac1{10} np)=\smallo(1/n)$
  \end{enumerate}
  and applying the union bound proves the lemma.
  
  For establishing~\ref{lem:bunt:N1} we expose the edges incident
  to~$v$ first, which enables us to determine $\deg_\Gamma(v)$. We
  have
  $\Prob\big(\deg_\Gamma(v)\ge\tfrac{21}{20}np\big)=\smallo(1/n)$. Subsequently
  we expose the remaining edges. Recall that for any edge $vw\in N_1$
  the colour $\phi(vw)$ appears somewhere else in~$\Gamma$, which
  happens with probability at most $p':=pk$.  Since these events are
  independent for different colours, we have
  \begin{equation*}
  \begin{split}
   \Prob(|N_1|\ge t)
     &\le\Prob\big(\deg_\Gamma(v)\ge\tfrac{21}{20}np\big)
       +\Prob\big(|N_1|\geq t\,\big|\deg_\Gamma(v)\le\tfrac{21}{20}np\big) \\
     &=\smallo(1/n)+\Prob(X\ge t)\,,
  \end{split}
  \end{equation*}
  where $X$ is a random variable with distribution $\Bin(n',p')$ where
  $n'=\deg_\Gamma(v)\le\frac{21}{20}np$. 
  Clearly $\Exp X\le\frac{21}{20}np\cdot pk\le\frac1{100}np$ and
  therefore~\ref{lem:bunt:N1} follows from an application of
  a Chernoff bound, since $np\ge10^6\log n$.

  For establishing~\ref{lem:bunt:N2} consider the random variable~$Y$ that counts
  edges in $E(v,N_\Gamma(v))$ whose colour appears only once in
  $E(v,N_\Gamma(v))$. Then $|N_2|=\deg_\Gamma(v)-Y$ and so it suffices to
  show that $\Prob(Y\le\frac{19}{20}np)=\smallo(1/n)$, using again
  that $\deg_{\Gamma}(v)>\frac{21}{20}np$ happens with probability
  $\smallo(1/n)$.  To see this, assume that $1,\dots,\ell$ are the
  colours that appear on the edges of $K_n$ containing~$v$, and let
  $k_i$ be the number of such edges with colour $i\in[\ell]$. Then
  $Y=\sum_{i\in[\ell]} Y_i$ where $Y_i$ is the indicator variable for
  the event that $E(v,N_\Gamma(v))$ contains exactly one edge of
  colour~$i$. Observe that the $Y_i$ are independent random variables
  and that $\Prob(Y_i=1)=k_ip(1-p)^{k_i-1}$.  In addition
  $1\ge(1-p)^{k_i-1}\ge(1-p)^k\ge\exp(-\frac{p}{1-p}k)\ge\frac{100}{101}$
  and hence
  \begin{equation*}
     \Exp Y = \sum_{i\in[\ell]} k_ip(1-p)^{k_i-1}\le np
     \qand
     \Exp Y \ge \tfrac{100}{101} (n-1)p \ge \tfrac{99}{100} np\,.
  \end{equation*}
  We conclude $\Prob(Y\le\frac{19}{20}np)=\smallo(1/n)$ from
  $\Prob(Y\le\Exp Y-t)\le\exp(-\frac1{2}t^2/\Exp Y)$
  (see~\cite[Theorem~2.10]{purpleBook}) by setting $t:=\frac1{100}np$
  and using $np\ge 10^6\log n$.
%   \begin{equation*}
%      \Prob(Y\le\tfrac{19}{20}np)\le\Prob(Y\le\Exp Y-\tfrac1{100}np)
%         \le\exp\Big(\frac{-np}{10^5}\Big)
%         \le\exp(-2\log n)=\smallo(1/n)
%   \end{equation*}
%   where the last inequality follows from $p\ge 10^6\log n/n$.
\end{proof}

As mentioned earlier, the bound on~$k(n)$ established in~\cite{FriRee}
for rainbow Hamilton cycles is not best possible.  As it turns
out, the bound on~$k$ in Theorem~\ref{thm:bunt} above can be improved
as well.  Indeed, such an improvement has recently been established
in~\cite{boettcher10+:_proper_colour_rainb}, where Lov\'asz's local
lemma is used.  However, we observe that the method of proof above is
more robust in the sense that one can, for instance, prove that
suitably bounded colourings of \textit{sparse random graphs} are
$H$-rainbow---something that does not seem to be within reach of
the method of proof in~\cite{boettcher10+:_proper_colour_rainb} (we
omit the details).

%%%% SPARSE REG %%%%%%%%%%%%%%%%%%%%%%%%%%%%%%%%%%%%%%%%%%%%%%%%%%%%%%%%%%%%%

\section{Sparse regularity}
\label{sec:reg}

In this section we will introduce one of the main tools for our proof,
a sparse version of the regularity lemma developed by R\"odl and one
of the current authors (see~\cite{Ko97,KohRod_sparseRL}).  Before
stating this lemma we introduce the necessary definitions.

% The key to a meaningful regularity concept for sparse graphs~$G$ is to scale all
% densities (that we determine or compare) by dividing them by a scaling
% factor~$p$. This scaling factor~$p$ can be thought of as the density of the
% graph~$G$ (or some super-graph~$\Gamma$ of~$G$) or, more abstractly, as a
% parameter measuring how sparse the graphs are that we are interested in, which
% will usually depend on the number~$n$ of vertices in~$G$. The following
% definitions provide scaled versions (and hence sparse analogues) of density and
% regularity.

Let $G=(V,E)$ be a graph, and suppose~$p\in(0,1]$ and $\eps>0$ are
reals.  For disjoint nonempty sets $U,W\subset V$ the
\emph{$p$-density} of the pair $(U,W)$ is defined as
$d_{G,p}(U,W):=e_G(U,W)/(p|U||W|)$.  The pair $(U,W)$ is
\emph{$(\eps,p)$-regular} if $|d_{G,p}(U',W')-d_{G,p}(U,W)|\leq\eps$
for all $U'\subset U$ and $W'\subset W$ with $|U'|\ge\eps|U|$ and
$|W'|\ge\eps|W|$.

An \emph{$(\eps,p)$-regular partition} of~$G=(V,E)$ is an
\emph{$\epsilon$-equipartition} $V_0\dcup V_1 \dcup\dots\dcup V_r$ of
$V$, that is, with $|V_0|\le\eps|V|$ and $|V_1|=\dots=|V_r|$, such
that $(V_i,V_j)$ is an $(\eps,p)$-regular pair in $G$ for all but at
most $\eps \binom{r}2$ pairs $ij\in\binom{[r]}2$.  The partition
classes $V_i$ with $i\in[r]$ are called the \emph{clusters} of the
partition and $V_0$ is the \emph{exceptional set}.

The sparse regularity lemma asserts the existence of
$(\epsilon,p)$-regular partitions for sparse graphs~$G$ without `dense
spots'.  To quantify this latter property we need the following
notion.  Let~$\eta>0$ and $K>1$ be real numbers.  We say that
$G=(V,E)$ is \emph{$(\eta,K)$-bounded with respect to $p$} if for all
disjoint sets $X,Y\subset V$ with $|X|,|Y|\ge\eta|V|$ we have
$e_G(X,Y)\le Kp|X||Y|$.

\begin{lemma}[sparse regularity lemma] \label{lem:sparse-RL} For each
  $\eps>0$, $K>1$, and $r_0\ge 1$ there are constants $r_1$,
  $\nu$, and $n_0$ such that for any $p\in(0,1]$ the following
  holds. Any graph $G=(V,E)$ which has at least $n_0$ vertices and is
  $(\nu,K)$-bounded with respect to $p$ admits an $(\eps,p)$-regular
  $\epsilon$-equipartition with $r$ clusters, for some $r_0\le r\le
  r_1$.  \qed
\end{lemma}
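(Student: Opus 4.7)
The plan is to mimic the proof of the classical Szemer\'edi regularity lemma, replacing the usual edge density by the $p$-density and using the boundedness hypothesis to keep the standard energy-increment argument under control. For any partition $\mathcal{P}=\{V_0,V_1,\dots,V_s\}$ of $V=V(G)$ with exceptional part $V_0$ of size at most $\eps|V|$ and equal-sized clusters $V_1,\dots,V_s$, define the \emph{index}
\begin{equation*}
q(\mathcal{P}) \;=\; \sum_{1\le i<j\le s}\frac{|V_i||V_j|}{|V|^2}\,d_{G,p}(V_i,V_j)^2.
\end{equation*}
The key observation is that if $\nu$ is chosen small enough compared to $1/s$, the $(\nu,K)$-boundedness forces $d_{G,p}(V_i,V_j)\le K$ for every pair of clusters, so $q(\mathcal{P})\le K^2/2$. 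This replaces the usual bound $q(\mathcal{P})\le 1/2$ of the dense case and is the only place where the boundedness hypothesis is essentially used.

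Next I would prove the standard two defect-Cauchy--Schwarz statements in the sparse language: (a) refining a partition never decreases $q$, and (b) if $(V_i,V_j)$ fails to be $(\eps,p)$-regular, witnessed by sets $U\subset V_i$, $W\subset V_j$ with $|U|\ge\eps|V_i|$, $|W|\ge\eps|V_j|$ and $|d_{G,p}(U,W)-d_{G,p}(V_i,V_j)|>\eps$, then splitting $V_i$ along $U$ and $V_j$ along $W$ increases the contribution of this pair to $q$ by at least $\eps^4|V_i||V_j|/|V|^2$. Since an $(\eps,p)$-irregular partition has more than $\eps\binom{s}{2}$ such bad pairs, jointly refining along all corresponding witnesses produces a refinement $\mathcal{P}'$ with $q(\mathcal{P}')\ge q(\mathcal{P})+\eps^5/4$ and at most $s\cdot 4^s$ new parts.

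To finish, start with any equipartition into $r_0$ classes and iterate this refinement step. Because $q$ is non-decreasing and bounded above by $K^2/2$, the process halts after at most $t:=\lceil 2K^2/\eps^5\rceil$ steps, yielding an $(\eps,p)$-regular $\eps$-equipartition with $r$ clusters where $r_0\le r\le r_1$ for an $r_1=r_1(\eps,K,r_0)$ obtained by iterating the function $s\mapsto s\cdot 4^s$ a total of $t$ times. The parameter $\nu$ must then be chosen so that $\nu\le 1/(2r_1)$ (say), and $n_0$ large enough so that the equipartition can actually be realised with integer part sizes; at each step the vertices left over when splitting clusters into equal pieces are dumped into $V_0$, and a geometric-series argument shows $|V_0|$ can be kept below $\eps|V|$ throughout.

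The main obstacle is exactly the circularity noted above: the a~priori bound on $q$ requires boundedness to apply to every cluster ever produced, while the number of clusters itself depends on the bound $K$. I would resolve this by fixing $r_1$ as a function of $\eps$, $K$, $r_0$ \emph{first} (by iterating $s\mapsto s\cdot 4^s$), and only afterwards choosing $\nu\le 1/(2r_1)$, so that clusters always have size at least $(1-\eps)|V|/r_1\ge\nu|V|$, making $(\nu,K)$-boundedness applicable at every stage. The remaining bookkeeping (maintaining equal part sizes, controlling $|V_0|$, and verifying the index jump when pairs involving the exceptional set are ignored) is routine and follows the presentation in~\cite{KohRod_sparseRL}.
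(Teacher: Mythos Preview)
The paper does not prove this lemma at all: it is stated with a \qed and attributed to~\cite{Ko97,KohRod_sparseRL}. Your sketch is precisely the standard energy-increment argument from those references, with the key adaptation being the use of $(\nu,K)$-boundedness to cap the index by $K^2/2$ in place of the dense bound $1/2$; your resolution of the circularity---fix $t=t(\eps,K)$, then $r_1$ by iterating $s\mapsto s\cdot4^s$ that many times, and only then choose $\nu\le(1-\eps)/r_1$---is exactly how the original proofs proceed, so the proposal matches the intended argument.
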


As it turns out, we shall only make use of what one could call
`one-sided regularity'.  We call a pair $(U,W)$
\emph{$(\eps,d,p)$-dense} if $d_{G,p}(U',W')\ge d-\eps$ for all
$U'\subset U$ and $W'\subset W$ with $|U'|\ge\eps|U|$ and
$|W'|\ge\eps|W|$.  Clearly, an $(\epsilon,p)$-regular pair~$(U,W)$ is
$(\epsilon,d,p)$-dense for~$d=d_{G,p}(U,W)$.  Occasionally, in
informal discussions, when the particular value of~$d$ or~$\epsilon$
is not immediately relevant, we say that an $(\epsilon,p)$-regular
pair~$(U,W)$ is \emph{$(\eps,p)$-dense} or \emph{$p$-dense}.

An $\epsilon$-equipartition $V_0\dcup V_1 \dcup\dots\dcup V_r$ of a
graph~$G=(V,E)$ is an \emph{$(\eps,d,p)$-dense partition} with
\emph{reduced graph} $R$ if~$V(R)=[r]$ and the pair $(V_i,V_j)$ is
$(\eps,d,p)$-dense in $G$ whenever $ij\in E(R)$.  Note that, given an
$(\epsilon,p)$-regular partition as in Lemma~\ref{lem:sparse-RL} and a
real number~$d$, one has an $(\eps,d,p)$-dense partition of~$G$ with
the reduced graph~$R$, with $ij\in E(R)$ if and only if
only~$(V_i,V_j)$ is $(\epsilon,p)$-regular and~$d_{G,p}(V_i,V_j)\geq
d$.

It follows directly from the definition that sub-pairs of $p$-dense pairs again
form $p$-dense pairs.

\begin{proposition}
\label{prop:subpairs}
Let $(X,Y)$ be $(\eps,d,p)$-dense and suppose $X'\subset X$ satisfies
$|X'|\geq\mu|X|$. Then $(X',Y)$ is $(\frac{\eps}{\mu},d,p)$-dense.
\qed
\end{proposition}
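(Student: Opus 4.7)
The plan is to unwind the definition of $(\epsilon,d,p)$-density for the pair $(X',Y)$ with the new parameter $\epsilon/\mu$, and reduce it directly to the hypothesis on $(X,Y)$. There is essentially no obstacle here: the point of shrinking one side of the pair by a factor $\mu$ is that any ``test subset'' of $X'$ of relative size at least $\epsilon/\mu$ is automatically a subset of $X$ of relative size at least $\epsilon$, which is exactly the regime in which the hypothesis applies.

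Concretely, I would fix arbitrary $U'\subset X'$ and $W'\subset Y$ with $|U'|\ge(\epsilon/\mu)|X'|$ and $|W'|\ge(\epsilon/\mu)|Y|$, and aim to prove $d_{G,p}(U',W')\ge d-\epsilon/\mu$. Since $X'\subset X$ and $|X'|\ge\mu|X|$, the first size hypothesis gives
\begin{equation*}
  |U'|\ge\frac{\epsilon}{\mu}|X'|\ge\frac{\epsilon}{\mu}\cdot\mu|X|=\epsilon|X|,
\end{equation*}
and because one may assume $\mu\le 1$ (otherwise the hypothesis $|X'|\ge\mu|X|$ is void), the second gives $|W'|\ge(\epsilon/\mu)|Y|\ge\epsilon|Y|$. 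Thus $U'$ and $W'$ qualify as test sets for the $(\epsilon,d,p)$-density of $(X,Y)$.

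Applying that hypothesis yields $d_{G,p}(U',W')\ge d-\epsilon$, and again using $\mu\le 1$ we have $d-\epsilon\ge d-\epsilon/\mu$, completing the verification. The only step that deserves even a moment's thought is checking that the implicit assumption $\mu\le 1$ is harmless, which it is, so I expect no genuine difficulty in this proof.
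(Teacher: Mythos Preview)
Your argument is correct and is precisely the routine verification the paper has in mind; the paper itself omits the proof entirely (the proposition is stated with a \qed\ and no argument), so there is nothing to compare beyond noting that your write-up is the intended direct check from the definition.
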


In addition, neighbourhoods of most vertices in a $p$-dense pair are
not much smaller than expected. Again, this is a direct consequence of
the definition of $p$-dense pairs.

\begin{proposition}
  \label{prop:typical}
  Let $(X,Y)$ be $(\eps,d,p)$-dense. Then less than $\eps|X|$ vertices
  $x\in X$ are such that $|N_Y(x)|<(d-\eps)p|Y|$.  \qed
\end{proposition}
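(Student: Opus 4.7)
The plan is to argue by contradiction, applying the definition of $(\eps,d,p)$-density directly to the set of vertices purportedly violating the conclusion. I set $X' := \{x\in X : |N_Y(x)|<(d-\eps)p|Y|\}$ and assume, for contradiction, that $|X'|\ge\eps|X|$.

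With this assumption in hand, the next step is to bound the number of edges between $X'$ and $Y$ by summing the small neighbourhood sizes:
\begin{equation*}
  e_G(X',Y) = \sum_{x\in X'} |N_Y(x)| < |X'|\cdot(d-\eps)p|Y|,
\end{equation*}
which rearranges to $d_{G,p}(X',Y)<d-\eps$. On the other hand, since $|X'|\ge\eps|X|$ and $|Y|\ge\eps|Y|$, the pair $(X',Y)$ is a valid witness pair in the $(\eps,d,p)$-density condition for $(X,Y)$, forcing $d_{G,p}(X',Y)\ge d-\eps$. The two bounds contradict each other, concluding the argument.

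No real obstacle is anticipated here: the proposition is essentially an unpacking of the definition, obtained by taking the "bad" set itself as the witness subset $U'\subset X$ and the whole of $Y$ as the witness subset $W'\subset Y$ in the $(\eps,d,p)$-density requirement.
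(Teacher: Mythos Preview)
Your proof is correct and is exactly the standard argument the paper has in mind; indeed, the paper omits the proof entirely (note the \qed\ box after the statement) and explicitly remarks just before the proposition that ``this is a direct consequence of the definition of $p$-dense pairs.'' Your contradiction argument, applying the $(\eps,d,p)$-density condition to the bad set $X'$ together with all of $Y$, is precisely that direct consequence.
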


Some properties of the graph~$G$ translate to certain properties of the reduced
graph~$R$ of the partition constructed by the sparse regularity lemma.
For example the following well known consequence of Lemma~\ref{lem:sparse-RL}
is a minimum degree version of the sparse regularity lemma. For a proof see 
\inappendix{Section~\ref{sec:aux:reduced}}{the appendix of~\cite{BoeKohTar_sparsebip_arxiv}}.

\begin{lemma}[sparse regularity lemma, minimum degree version for $\Gnp$]
\label{lem:reduced} 
  For all $\alpha\in[0,1]$, $\eps>0$, and every integer $r_0$, there is an
  integer $r_1\ge 1$ such that for all $d\in[0,1]$ the following holds \aas\ 
  for $\Gamma=\Gnp$ if $\log^4n/(pn)=\smallo(1)$.
  Let $G=(V,E)$ be a spanning subgraph of $\Gamma$ with
  $\deg_G(v)\ge\alpha\deg_\Gamma(v)$ for all $v\in V$.
  Then there is an $(\eps,d,p)$-dense partition of $G$ with reduced graph $R$ of
  minimum degree $\delta(R)\ge(\alpha-d-\eps)|V(R)|$ with
  $r_0\le|V(R)|\le r_1$.
\end{lemma}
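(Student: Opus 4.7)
The plan is to apply the sparse regularity lemma (Lemma~\ref{lem:sparse-RL}) to~$G$ with carefully chosen parameters, then to clean up the resulting partition by absorbing a small number of exceptional clusters into~$V_0$, and finally to derive the minimum-degree property of the reduced graph via a double-counting argument at each cluster. Before applying Lemma~\ref{lem:sparse-RL} I note that, since $G\subseteq\Gamma=\Gnp$, the $(\nu,K)$-boundedness of~$G$ follows from that of~$\Gamma$. A Chernoff bound combined with a union bound over all pairs of disjoint sets of size at least~$\nu n$ shows that, provided $\log^4 n/(pn)=\smallo(1)$, the graph~$\Gamma$ is \aas\ $(\nu,1+\eps'')$-bounded for any fixed constants $\nu,\eps''>0$. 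I would therefore take $K:=1+\eps''$ for some $\eps''$ that is a tiny fraction of~$\eps$, and apply Lemma~\ref{lem:sparse-RL} with accuracy parameter $\eps'\le\eps^2/100$ and initial threshold~$r_0$, obtaining an $(\eps',p)$-regular $\eps'$-equipartition $V_0\dcup V_1\dcup\dots\dcup V_r$ with $r_0\le r\le r_1$.

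Next, I would refine the partition by absorbing into~$V_0$ every cluster~$V_i$ that participates in more than $\sqrt{\eps'}\,r$ irregular pairs. Since at most $\eps'\binom{r}{2}$ pairs are irregular in total, a standard averaging bound gives at most $\sqrt{\eps'}\,r$ such ``bad'' clusters; the enlarged exceptional set therefore has size at most $\eps'n+\sqrt{\eps'}n\le 2\sqrt{\eps'}n\le\eps n$. Relabelling the surviving clusters $V_1,\dots,V_{r'}$, I would define the reduced graph~$R$ on $V(R)=[r']$ by placing $ij\in E(R)$ precisely when $(V_i,V_j)$ is $(\eps',p)$-regular with $d_{G,p}(V_i,V_j)\ge d$. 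Proposition~\ref{prop:subpairs} then guarantees that this is an $(\eps,d,p)$-dense partition with reduced graph~$R$ in the sense of the definition.

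To bound $\delta(R)$ from below, fix any surviving cluster~$V_i$. Since \aas\ all vertex degrees of~$\Gamma$ lie in $(1\pm\smallo(1))np$ by Chernoff, we have $\sum_{v\in V_i}\deg_G(v)\ge\alpha(1-\smallo(1))|V_i|np$. I would then split this degree sum according to the location of the other endpoint and bound each contribution: edges inside~$V_i$ by $(1+\smallo(1))|V_i|^2p$; edges to~$V_0$ by $2K\sqrt{\eps'}\,p|V_i|n$ (via $(\nu,K)$-boundedness when $|V_0|\ge\nu n$, and via the maximum-degree bound on~$\Gamma$ otherwise); edges to the at most $\sqrt{\eps'}\,r'$ clusters~$V_j$ forming an irregular pair with~$V_i$ by $K\sqrt{\eps'}\,p|V_i|n$; edges into clusters with $d_{G,p}(V_i,V_j)<d$ by at most $dp|V_i|n$; and edges corresponding to $ij\in E(R)$ by at most $\deg_R(i)\cdot Kp|V_i|(n/r')$. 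Rearranging and dividing by $Kp|V_i|n/r'$ yields $\deg_R(i)\ge(\alpha-d-\eps)r'$ once $\eps'$, $\eps''$ and $1/r_0$ are all taken sufficiently small compared to~$\eps$.

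The main technical annoyance is the interplay between~$\nu$ and~$r_1$ produced by Lemma~\ref{lem:sparse-RL}: clusters~$V_i$ of size roughly $n/r_1$ should themselves remain above the boundedness threshold~$\nu n$, and the fallback estimate $e_\Gamma(V_i,V_0)\le|V_0|(1+\smallo(1))np$ when $|V_0|<\nu n$ must be negligible after division by $|V_i|np$. Since $r_1$ depends only on~$\eps'$, $K$ and~$r_0$, while~$\nu$ can be taken arbitrarily small in the standard proof of the sparse regularity lemma, this is not a genuine obstruction, but it dictates the order in which one fixes the parameters ($\eps$, then $\eps''$ and $\eps'$, then $r_0$, then~$r_1$ and finally~$\nu$). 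The rest is routine bookkeeping to verify that every error term is absorbed into the single final~$\eps$.
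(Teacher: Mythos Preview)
Your proposal is correct and follows essentially the same route as the paper: apply Lemma~\ref{lem:sparse-RL}, absorb into~$V_0$ the clusters lying in too many irregular pairs, and then lower-bound $\deg_R(i)$ by comparing an upper and a lower estimate for $e_G(V_i,\bar V_i)$.

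Two small points where the paper handles the bookkeeping a little more cleanly. First, it feeds the regularity lemma the threshold $r'_0=\max\{2r_0,\lceil1/\eps'\rceil\}$, so that even after discarding up to $\sqrt{\eps'}\,r'$ clusters one still has at least $r_0$ left; with your start at~$r_0$ this is not guaranteed. Second, instead of arguing that~$\nu$ can be chosen last (which, as you note, goes beyond the black-box statement of Lemma~\ref{lem:sparse-RL}), the paper sidesteps the circularity entirely: it proves a separate concentration lemma giving $e_\Gamma(X)$, $e_\Gamma(X,Y)$ and $\sum_{x\in X}\deg_\Gamma(x)$ within a $(1\pm1/\log n)$ factor of their means for \emph{all} sets of size at least $n/\log n$. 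For $n$ large one then has $1/\log n\le\nu$ (so boundedness holds) and $n/r_1\ge n/\log n$ (so every cluster and the exceptional set are covered by the concentration lemma), which dissolves what you flagged as the main technical annoyance without any strengthening of the regularity lemma.
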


% Before we show how Lemma~\ref{lem:reduced} can be deduced from Lemma~\ref{lem:sparse-RL},
We remark that 
we do observe ``more'' than a mere inheritance of properties here: the graph~$G$
we started with is \emph{sparse}, but the reduced graph~$R$ we obtain 
in Lemma~\ref{lem:reduced} is
\emph{dense}. This will enables us to apply results obtained for dense graphs
to the reduced graph~$R$, and hence use such dense results to draw conclusions about
sparse graphs. 

%%%% MAIN LEMMAS %%%%%%%%%%%%%%%%%%%%%%%%%%%%%%%%%%%%%%%%%%%%%%%%%%%%%%%%%%%%%

\section{Main Lemmas}
\label{sec:idea}

In this section we will formulate the main lemmas 
and outline how they will be combined in Section~\ref{sec:proof} to give the 
proof of Theorem~\ref{thm:main}. 
For this we first
need to define two (families of) special graphs.

\label{def:spin}
For $r,t\in\NATS$, let $U=\{u_1,\dots,u_r\}$, $V=\{v_1,\dots,v_r\}$,
$C=\{c_{i,j},c'_{i,j}\colon i\in[r],j\in[2t]\}$, and
$B=\{b_{i,j},b'_{i,j}\colon i\in[r],j\in[2t]\}$. Let the \emph{ladder}~$\LDR_{r}$ be
the graph with vertex set~$U\dcup V$
and edge set $E(\LDR_r):=\{u_iv_j \colon  i,j\in[r],|i-j|\le 1\}$.
Let the \emph{spin graph}~$\MON_{r,t}$ be the graph with vertex set $U\dcup V\dcup C\dcup
B$ and the following edge set (see
Figure~\ref{fig:backbone}):
\begin{multline*}
  E(\MON_{r,t}):=
  \bigcup_{\substack{ i,i'\in[r], i'\neq 1\\ j,j'\in[2t]\\ k,k'\in[t]\\
    \ell,\ell'\in[t+1,2t]}}
  \Bigg( 
    \Big\{ u_iv_i,\, b_{i,k}b'_{i,k'},\, b_{i,\ell}b'_{i,\ell'},\,
      c_{i,k}c'_{i,k'},\,c_{i,\ell}c'_{i,\ell'} \Big\} 
    \cup \Big\{ b_{i,j}v_i,\, c_{i,j}v_i \Big\}
    \\[-12mm]
    \cup \Big\{b'_{i,k}b'_{i,\ell},\, c_{i'-1,\ell}c'_{i',k},\,
      c'_{i'-1,\ell}c_{i',k} \Big\} 
  \Bigg).
\end{multline*}

\begin{figure}
  \begin{center}
    \psfrag{R}{\scalebox{1.2}{$\LDR_{r}$}}
    \psfrag{RR}{\scalebox{1.2}{$\MON_{r,4}$}}
    \psfrag{u1}{$u_{i-1}$}
    \psfrag{v1}{$v_{i-1}$}
    \psfrag{u2}{$u_{i}$}
    \psfrag{v2}{$v_{i}$}
    \psfrag{u3}{$u_{i+1}$}
    \psfrag{v3}{$v_{i+1}$}
    \psfrag{c11}{\scalebox{0.7}{$c_{i-1,1}$}}
    \psfrag{c12}{\scalebox{0.7}{$c_{i-1,2}$}}
    \psfrag{c13}{\scalebox{0.7}{$c_{i-1,3}$}}
    \psfrag{c14}{\scalebox{0.7}{$c_{i-1,4}$}}
    \psfrag{c'11}{\scalebox{0.7}{$c'_{i-1,1}$}}
    \psfrag{c'12}{\scalebox{0.7}{$c'_{i-1,2}$}}
    \psfrag{c'13}{\scalebox{0.7}{$c'_{i-1,3}$}}
    \psfrag{c'14}{\scalebox{0.7}{$c'_{i-1,4}$}}
    \psfrag{c21}{\scalebox{0.7}{$c_{i,1}$}}
    \psfrag{c22}{\scalebox{0.7}{$c_{i,2}$}}
    \psfrag{c23}{\scalebox{0.7}{$c_{i,3}$}}
    \psfrag{c24}{\scalebox{0.7}{$c_{i,4}$}}
    \psfrag{c'21}{\scalebox{0.7}{$c'_{i,1}$}}
    \psfrag{c'22}{\scalebox{0.7}{$c'_{i,2}$}}
    \psfrag{c'23}{\scalebox{0.7}{$c'_{i,3}$}}
    \psfrag{c'24}{\scalebox{0.7}{$c'_{i,4}$}}
    \psfrag{b11}{\scalebox{0.7}{$b_{i-1,1}$}}
    \psfrag{b12}{\scalebox{0.7}{$b_{i-1,2}$}}
    \psfrag{b13}{\scalebox{0.7}{$b_{i-1,3}$}}
    \psfrag{b14}{\scalebox{0.7}{$b_{i-1,4}$}}
    \psfrag{b'11}{\scalebox{0.7}{$b'_{i-1,1}$}}
    \psfrag{b'12}{\scalebox{0.7}{$b'_{i-1,2}$}}
    \psfrag{b'13}{\scalebox{0.7}{$b'_{i-1,3}$}}
    \psfrag{b'14}{\scalebox{0.7}{$b'_{i-1,4}$}}
    \psfrag{b21}{\scalebox{0.7}{$b_{i,1}$}}
    \psfrag{b22}{\scalebox{0.7}{$b_{i,2}$}}
    \psfrag{b23}{\scalebox{0.7}{$b_{i,3}$}}
    \psfrag{b24}{\scalebox{0.7}{$b_{i,4}$}}
    \psfrag{b'21}{\scalebox{0.7}{$b'_{i,1}$}}
    \psfrag{b'22}{\scalebox{0.7}{$b'_{i,2}$}}
    \psfrag{b'23}{\scalebox{0.7}{$b'_{i,3}$}}
    \psfrag{b'24}{\scalebox{0.7}{$b'_{i,4}$}}
    \psfrag{c31}{\scalebox{0.7}{$c_{i+1,1}$}}
    \psfrag{c32}{\scalebox{0.7}{$c_{i+1,2}$}}
    \psfrag{c'31}{\scalebox{0.7}{$c'_{i+1,1}$}}
    \psfrag{c'32}{\scalebox{0.7}{$c'_{i+1,2}$}}
    \psfrag{b31}{\scalebox{0.7}{$b_{i+1,1}$}}
    \psfrag{b32}{\scalebox{0.7}{$b_{i+1,2}$}}
    \psfrag{b33}{\scalebox{0.7}{$b_{i+1,3}$}}
    \psfrag{b34}{\scalebox{0.7}{$b_{i+1,4}$}}
    \psfrag{b'31}{\scalebox{0.7}{$b'_{i+1,1}$}}
    \psfrag{b'32}{\scalebox{0.7}{$b'_{i+1,2}$}}
    \psfrag{b'33}{\scalebox{0.7}{$b'_{i+1,3}$}}
    \psfrag{b'34}{\scalebox{0.7}{$b'_{i+1,4}$}}
    \includegraphics[scale=1.0]{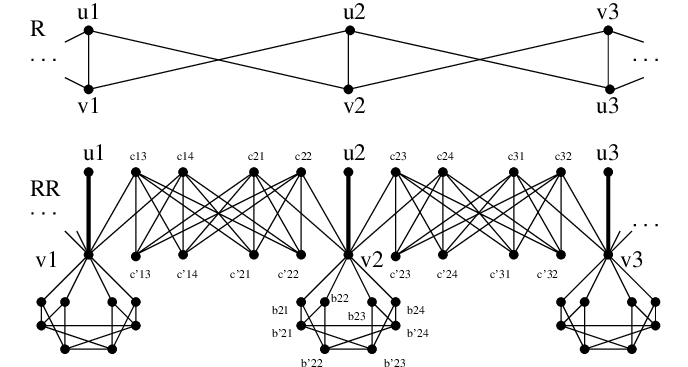}
  \end{center}
  \caption{The ladder $\LDR_r$ and the spin graph $\MON_{r,t}$ for the
  special case $t=2$.}
\label{fig:backbone}
\end{figure}

Now we can state our four main lemmas, two partition lemmas and two
embedding lemmas.  We start with the lemma for~$G$, which constructs a
partition of the host graph~$G$. This lemma is a consequence of the sparse
regularity lemma (Lemma~\ref{lem:reduced}) and asserts the existence of a
$p$-dense partition of $G$ such that its reduced graph contains a spin
graph.  We will indicate below why this is useful for the embedding of~$H$.
The lemma for~$G$ produces clusters of very different sizes: A set of
larger clusters $U_{i}$ and $V_i$ which we call \emph{big clusters} and
which will accommodate most of the vertices of~$H$ later, and a set of
smaller clusters $B_{i,j}$,$B'_{i,j}$, $C_{i,j}$, and $C'_{i,j}$.  The
$B_{i,j}$ and $B'_{i,j}$ are called \emph{balancing clusters} and the
$C_{i,j}$ and $C'_{i,j}$ \emph{connecting clusters}. They will be used to
host a small number of vertices of~$H$. These vertices balance and connect
the pieces of~$H$ that are embedded into the big clusters. The proof of
Lemma~\ref{lem:G} is given in Section~\ref{sec:G}.

% In the formulation of this lemma (and also in the lemma for~$H$ below) we abuse
% the notation in the following sense. For two sets $A$ and $B$ and a number $x$
% we write $|A|:=|B|\ge x$ by which we simultaneously mean that $A$ is defined to
% be the set $B$ and that the size $|A|=|B|$ of this set is at least~$x$.

\begin{lemma}[Lemma for $G$] \label{lem:G}
  For all integers $t,r_0>0$ and reals $\eta_\isubsc{G},\gamma>0$ there are positive reals
  $\eta'_\isubsc{G}$ and $d$ such that for all $\eps>0$ there is $r_1$
  such that the following holds \aas\ for $\Gamma=\Gnp$ with
  $\log^4n/(pn)=\smallo(1)$. Let $G=(V,E)$ be a spanning subgraph of $\Gamma$
  with $\deg_G(v)\ge(\frac{1}{2}+\gamma)\deg_\Gamma(v)$ for all $v\in V$. Then
  there is $r_0\le r\le r_1$, a subset $V_0$ of $V$ with $|V_0|\le\eps n$, and a
  mapping $g$ from $V\setminus V_0$ to the 
  spin graph $\MON_{r,t}$ such that for every $i\in[r],j\in[2t]$ we have
  \begin{enumerate}[label={\rm (G\arabic{*})}]
    \item \label{lem:G:Vi}
      $|U_{i}|,|V_{i}|\ge (1-\eta_\isubsc{G})\frac{n}{2r}$  
      \quad for $U_{i}:=g^{-1}(u_i)$ and 
      $V_{i}:=g^{-1}(v_i)$,
    \item \label{lem:G:Ci}
      $|C_{i,j}|,|C'_{i,j}|,|B_{i,j}|,|B'_{i,j}|\ge \eta'_\isubsc{G}\frac{n}{2r}$ \\ 
      for
      $C_{i,j}:=g^{-1}(c_{i,j})$,
      $C'_{i,j}:=g^{-1}(c'_{i,j})$,
      $B_{i,j}:=g^{-1}(b_{i,j})$, and
      $B'_{i,j}:=g^{-1}(b'_{i,j})$,
    \item \label{lem:G:reg} the pair $\big(g^{-1}(x),g^{-1}(y)\big)$ is
      $(\eps,d,p)$-dense for all $xy\in E(\MON_{r,t})$.
  \end{enumerate}
\end{lemma}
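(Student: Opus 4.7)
The plan is to apply the minimum-degree version of the sparse regularity lemma (Lemma~\ref{lem:reduced}) and then to realise the spin graph $\MON_{r,t}$ inside the dense reduced graph by further subdividing some of its clusters. I apply Lemma~\ref{lem:reduced} to $G$ with auxiliary regularity parameter $\eps^*$ chosen much smaller than $\eps\eta_G/t$, density parameter $d\le\gamma/4$, and initial cluster bound $r_0^*\ge 2r_0$; this gives \aas\ an $(\eps^*,d,p)$-dense $\eps^*$-equipartition $V=W_0\dcup W_1\dcup\dots\dcup W_{r^*}$ of $G$ with reduced graph $R$ on $r^*\in[r_0^*,r_1^*]$ vertices of minimum degree $\delta(R)\ge(\tfrac12+\tfrac\gamma2)r^*$. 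I set $r:=\lfloor r^*/2\rfloor$, so that $r_0\le r\le r_1$ for $r_1$ chosen in terms of $r_1^*$, and fix $\eta_G':=\eta_G/(8t)$. The mapping $g$ of Lemma~\ref{lem:G} will be produced from a further refinement of this partition.

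To locate the spin-graph structure I use $\delta(R)>r^*/2$ and Dirac's theorem to find a Hamilton cycle in $R$; after relabelling I write it $W_1W_2\dots W_{2r}$ and set $U_i:=W_{2i-1}$, $V_i:=W_{2i}$, so that the Hamilton-cycle edges $W_{2i-1}W_{2i}$ realise the edges $u_iv_i$ of $\MON_{r,t}$. The remaining eight small-cluster ``types'' per spin (the four $B$-halves and the four $C$-halves, each carrying $t$ sub-pieces) must be hosted by clusters of $R$ satisfying prescribed adjacencies: the four unprimed sides at spin $i$ must sit in clusters adjacent in $R$ to $V_i$, the four primed sides must be adjacent to the corresponding unprimed ones and additionally accommodate the $b'_{i,k}b'_{i,\ell}$ edges across halves, and the cross-spin $c$-$c'$ edges couple the $C$-roles of consecutive spins. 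Within each spin these requirements encode a $5$-cycle through $V_i$ (the pentagon of the $b$/$b'$ gadget) plus a bipartite connector for the $c$/$c'$ gadget. Since $\delta(R)$ exceeds $r^*/2$ by a slack of $\gamma r^*/2$, the common neighbourhood of any two clusters of $R$ has linear size, so these $5$-cycles and connectors are abundant; I pick them one spin at a time, walking along the Hamilton cycle, and a greedy/averaging argument balances the load so that no cluster of $R$ is assigned more than $8$ small-cluster types, hence at most $8t$ sub-pieces, which matches the budget $\eta_G/\eta_G'=8t$.

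Once the role assignment is fixed, I subdivide every cluster hosting several small-cluster roles into equal sub-pieces of size $\eta_G'\tfrac{n}{2r}$ and trim the corresponding $U_i$, $V_i$ to keep everything disjoint. Since each $W_k$ loses at most $\eta_G\tfrac{n}{2r}$ to the sub-pieces, condition~\ref{lem:G:Vi} holds, and each sub-piece has size exactly $\eta_G'\tfrac{n}{2r}$, giving~\ref{lem:G:Ci}. For each edge of $\MON_{r,t}$ the corresponding pair of sub-pieces is a sub-pair of some $(\eps^*,d,p)$-dense pair $(W_i,W_j)$ of $R$, so Proposition~\ref{prop:subpairs} together with the choice $\eps^*\le\eps\eta_G'$ yields the required $(\eps,d,p)$-density for~\ref{lem:G:reg}. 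Taking $V_0:=W_0$ plus any leftover vertices gives $|V_0|\le\eps n$ as demanded. The main obstacle is the role-assignment step: the $b'$-$b'$ edges within a spin create odd cycles that cannot be read off the Hamilton cycle alone, and the cross-spin $c$-$c'$ edges force the choices of consecutive spins to be coordinated. Both difficulties are overcome using the $\gamma r^*/2$ slack in $\delta(R)$, which supplies enough common-neighbour structure to extend the Hamilton-cycle backbone to the full spin-graph pattern in a balanced way.
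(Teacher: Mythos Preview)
Your overall framework matches the paper's: apply Lemma~\ref{lem:reduced}, realise the spin graph inside the dense reduced graph $R$, subdivide clusters, and invoke Proposition~\ref{prop:subpairs} for~\ref{lem:G:reg}. The divergence, and the gap, is in how you realise $\MON_{r,t}$ inside~$R$. The paper does not use a Hamilton cycle. It applies Theorem~\ref{thm:bandwidth} to $R$ to obtain a spanning copy of the \emph{ladder} $\LDR_r$, whose edges $u_iv_{i-1},u_iv_i,u_iv_{i+1}$ are strictly more than a Hamilton cycle gives. The connecting clusters are then simply cut from the ladder clusters themselves, $C_{i,j}\subset f^{-1}(u_i)$ and $C'_{i,j}\subset f^{-1}(v_i)$, so that every within-spin and cross-spin $c$--$c'$ density is a sub-pair of a ladder edge and comes for free. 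For the balancing gadget the paper observes that each ladder edge $u_iv_i$ lies in more than $\gamma r$ triangles of $R$, picks a third vertex $w_i$ for each $i$ with no cluster reused more than $2/\gamma$ times, and cuts the $B$/$B'$ pieces from $f^{-1}(u_i)$ and $f^{-1}(w_i)$; the $C_5$ of the $b$-gadget is then realised by the triangle $u_iv_iw_i$.

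Your Hamilton-cycle route discards the edges $u_iv_{i\pm1}$, so the cross-spin $c$--$c'$ densities no longer come for free and must be engineered by extra common-neighbourhood choices coordinated along all $r$ spins. That may be possible in principle using the $\gamma r^*/2$ slack, but your ``greedy/averaging argument'' sentence is not an argument: you neither specify which clusters host which roles nor justify the load bound. In particular the claim that no cluster receives more than $8$ small-cluster types, \emph{independent of $\gamma$}, is unsupported; the natural outcome of a common-neighbourhood greedy scheme is a load of order $1/\gamma$, exactly as in the paper (each cluster serves as some $w_i$ at most $2/\gamma$ times). Correspondingly the paper chooses $\eta'_G$ small enough to absorb an $O(t/\gamma)$ factor (see~\eqref{eq:G:eta}), whereas your choice $\eta'_G=\eta_G/(8t)$ does not depend on $\gamma$. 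With the argument you sketch, the big clusters $U_i,V_i$ could lose $\Theta(\eta_G/\gamma)\cdot n/(2r)$ vertices and~\ref{lem:G:Vi} would fail for small $\gamma$. Either switch to the ladder via Theorem~\ref{thm:bandwidth} as the paper does, or make the role assignment explicit and let $\eta'_G$ depend on $\gamma$.
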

Since the dependencies of the constants appearing in this lemma are quite
involved, we remark that their quantification is as follows:
\begin{equation*}
  \forall\, t,\,r_0,\,\eta_\isubsc{G},\,\gamma \quad
  \exists\, \eta'_\isubsc{G},\,d \quad
  \forall\, \eps \quad
  \exists\, r_1\,.
\end{equation*}

Our second lemma provides a partition of~$H$ 
that fits the structure of the partition of $G$ generated by
Lemma~\ref{lem:G}. We will first state this lemma and then explain the
different properties which it guarantees. A set $S$ of vertices in a
graph~$H$ is called \emph{$\ell$-independent} for an integer $\ell$ if each pair
of distinct vertices in~$S$ has distance at least $\ell+1$ in~$H$.

\begin{lemma}[Lemma for $H$]
\label{lem:H}
  For all integers $\Delta$ there is an integer $t>0$ such
  that for any $\eta_\isubsc{H}>0$ 
  and any integer $r\ge 1$ 
  there is $\beta>0$ such that 
  the following holds for all integers $m$ and all bipartite graphs $H$ on
  $m$ vertices with $\Delta(H)\le\Delta$ and $\bw(H)\le\beta m$. There is a
  homomorphism $h$ from $H$ to the spin graph $\MON_{r,t}$
  such that for every $i\in[r],j\in[2t]$
 \begin{enumerate}[label={\rm (H\arabic{*})}]
    \item \label{lem:H:Vi}
      $|\ti{U}_{i}|,|\ti{V}_{i}|\le(1+\eta_\isubsc{H})\frac{m}{2r}$ \quad
     for $\ti{U}_{i}:=h^{-1}(u_i)$ and $\ti{V}_{i}:=h^{-1}(v_i)$,
    \item \label{lem:H:Ci}
      $|\ti{C}_{i,j}|,|\ti{C}'_{i,j}|,|\ti{B}_{i,j}|,|\ti{B}'_{i,j}|\le\eta_\isubsc{H}\frac{m}{2r}$ \\
      for 
      $\ti{C}_{i,j}:=h^{-1}(c_{i,j})$, 
      $\ti{C}'_{i,j}:=h^{-1}(c'_{i,j})$,
      $\ti{B}_{i,j}:=h^{-1}(b_{i,k})$, 
      and
      $\ti{B}'_{i,j}:=h^{-1}(b'_{i,k})$,
    \item \label{lem:H:Cindep} $\ti{C}_{i,j}$, $\ti{C}'_{i,j}$, $\ti{B}_{i,j}$, 
      and $\ti{B}'_{i,j}$ are $3$-independent in $H$,
    \item \label{lem:H:deg}
      $\deg_{\ti{V}_{i}}(y)=\deg_{\ti{V}_{i}}(y')\le\Delta-1$ % \le\Delta-1
      for all $yy'\in\binom{\ti{C}_{i,j}}{2}\cup\binom{\ti{B}_{i,j}}{2}$, \\
      $\deg_{\ti{C}_i}(y)=\deg_{\ti{C}_i}(y')$ for all $y,y'\in\ti{C}'_{i,j}$, \\ 
      $\deg_{L(i,j)}(y)=\deg_{L(i,j)}(y')$ for all $y,y'\in\ti{B}'_{i,j}$,
  \end{enumerate}
  where $\ti{C}_i:=\bigcup_{k\in[2t]}\ti{C}_{i,k}$ and
  $L(i,j):=\bigcup_{k\in[2t]}\ti{B}_{i,k}\cup\bigcup_{k<j}\ti{B}'_{i,k}$. 
  Further, let $\ti{X}_i$ with $i\in[r]$ be the set of vertices in $\ti{V}_i$ 
  with neighbours outside $\ti{U}_i$. Then
  \begin{enumerate}[label={\rm (H\arabic{*})},resume]
    \item \label{lem:H:X} $|\ti{X}_{i}|\le \eta_\isubsc{H} |\ti{V}_i|$.
  \end{enumerate} 
\end{lemma}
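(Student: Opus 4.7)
The plan is to construct $h$ in three stages: a coarse interval partition of $V(H)$ driven by the bandwidth ordering, a re-routing of boundary-crossing edges through the connecting clusters $\ti{C}_{i,j},\ti{C}'_{i,j}$, and a size-balancing step through the balancing clusters $\ti{B}_{i,j},\ti{B}'_{i,j}$.

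First, fix an ordering $\sigma$ witnessing $\bw(H)\le\beta m$ and split $[m]$ into $r$ consecutive blocks $J_1,\dots,J_r$ of size $\lfloor m/r\rfloor$ or $\lceil m/r\rceil$. Let $I_i:=\sigma^{-1}(J_i)$ and let $A_i,B_i$ denote the restrictions of the canonical bipartition of $H$ to $I_i$. Provisionally set $h(a)=u_i$ for $a\in A_i$ and $h(b)=v_i$ for $b\in B_i$. Since $\beta\ll1/r$, every edge of $H$ either lies inside some $I_i$ (and thus maps to $u_iv_i\in E(\MON_{r,t})$) or between consecutive $I_i,I_{i+1}$. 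These latter ``crossing'' edges only involve vertices within a boundary strip of width $2\beta m$ on each side, so there are at most $O(\Delta\beta m)$ of them per boundary.

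In the second stage, for each crossing edge $e=w_1w_2$ with $w_1\in I_i$ and $w_2\in I_{i+1}$ I extend $e$ inside $H$ to a $4$-path $w_0w_1w_2w_3w_4$ (almost every crossing endpoint admits such an extension by a greedy counting argument using bounded degree) and re-route the interior of this path along the canonical spin-graph path from $v_i$ to $v_{i+1}$,
\begin{equation*}
  v_i\;\text{--}\;c_{i,\ell}\;\text{--}\;c'_{i+1,k}\;\text{--}\;c_{i+1,k'}\;\text{--}\;v_{i+1},
\end{equation*}
setting $h(w_1):=c_{i,\ell}$, $h(w_2):=c'_{i+1,k}$, $h(w_3):=c_{i+1,k'}$ and leaving $w_0,w_4$ at $v_i,v_{i+1}$; crossing edges of the opposite bipartition alignment are handled by the analogous path $v_i\;\text{--}\;c_{i,\ell''}\;\text{--}\;c'_{i,\ell}\;\text{--}\;c_{i+1,k}\;\text{--}\;v_{i+1}$. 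Since $c'_{i+1,k}$ is not adjacent to $u_{i+1}$ in $\MON_{r,t}$, each such re-routing cascades: up to $\Delta-1$ further neighbours of $w_2$ in $A_{i+1}$ must be displaced from $u_{i+1}$ to first-half clusters $c_{i+1,k''}$ (which are adjacent to $v_{i+1}$, so no further cascade is triggered). The slot indices $\ell\in[t+1,2t]$ and $k,k'\in[t]$ are assigned by partitioning the re-routed vertices according to their finitely many possible ``degree profiles'' into the adjacent big clusters---bounded in number because $\Delta$ is bounded---allocating one slot per profile. This is precisely where $t$ enters the quantifier order as a function of $\Delta$ only. Pairwise vertex-disjointness of the chosen paths and mutual distance at least~$4$ in $H$ are enforced by a greedy argument: each selection blocks only $O(\Delta^3)$ further candidates against a pool of size $\Theta(\beta m)$.

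In the third stage, if the removal of $A$- and $B$-vertices in Stage~2 has unbalanced $|\ti{U}_i|$ and $|\ti{V}_i|$ (or if the original $|A_i|\neq|B_i|$), I relocate $O(\eta_\isubsc{H} m/r)$ further short paths in each $I_i$ into the balancing clusters $\ti{B}_{i,j}$ and $\ti{B}'_{i,j}$. Crucially, the edges $b'_{i,k}b'_{i,\ell}$ for $k\in[t]$ and $\ell\in[t+1,2t]$ create a non-bipartite structure on $\ti{B}'_{i,\ast}$ that can absorb imbalances of either parity within a single spin. The $2t$ slot distribution again accommodates both the uniform-degree requirement of~\ref{lem:H:deg} and the $3$-independence of~\ref{lem:H:Cindep}. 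Properties~\ref{lem:H:Vi}, \ref{lem:H:Ci}, and~\ref{lem:H:X} then follow from the counting bound of $O((\beta+\eta_\isubsc{H}/r)m)$ relocated vertices per interval, provided $\beta\ll\eta_\isubsc{H}/r$. The hardest part will be arranging Stages~2 and~3 to satisfy $3$-independence and the uniform-degree profiles \emph{simultaneously} with vertex-disjoint path selection and the cascade effect---all four are coupled greedy constraints, and showing that the $2t$ slots suffice (with $t$ depending only on $\Delta$) even when the cascade forces neighbouring vertices also to leave the big clusters is where the proof needs the most care.
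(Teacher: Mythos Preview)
Your balancing stage contains a genuine gap. The imbalance $\bigl||A_i|-|B_i|\bigr|$ within a block $I_i$ need not be small: for $H$ a disjoint union of stars $K_{1,\Delta}$ (or of paths $P_3$), it is $\Theta(m/r)$. To meet~\ref{lem:H:Vi} you must therefore arrange for $\Theta(m/r)$ vertices of $A_i$ to land in $\ti V_i$ rather than $\ti U_i$ (or vice versa). Relocating $O(\eta_\isubsc{H} m/r)$ short paths into balancing clusters cannot do this: if the relocated vertices go to $\ti B_{i,j},\ti B'_{i,j}$ and everything else stays at its provisional target, the $\Theta(m/r)$ gap between $|\ti U_i|$ and $|\ti V_i|$ is untouched; and if instead you try to send an $A$-vertex to $v_i$, each of its neighbours (currently at $u_i$) must move too, cascading through the component. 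The paper's mechanism (Proposition~\ref{prop:lolly}) is different in kind: it subdivides $I_i$ into $\ell\approx 1/\eta_\isubsc{H}$ consecutive sub-intervals and for each one \emph{as a whole} chooses a normal embedding ($A\to u_i$, $B\to v_i$) or an inverted one ($A\to v_i$, $B\to u_i$), picking the pattern of inversions so that $|\ti U_i|$ and $|\ti V_i|$ agree up to one sub-interval. The $5$-cycle on the $b,b'$-vertices is traversed only at the $O(\ell)$ normal/inverted transitions, costing $O(\beta m)$ vertices each; the gross imbalance is fixed by the inversions, not by what sits in the balancing clusters.

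Two smaller points. Your edge-by-edge re-routing in Stage~2 requires each crossing edge to extend to a $4$-path, which fails for degree-$1$ endpoints; the paper avoids this by mapping whole contiguous boundary strips of width $\beta m$ to the connecting clusters according to a fixed table, so every crossing edge is covered automatically. And allocating slots by degree profile alone does not give the $3$-independence of~\ref{lem:H:Cindep}: two vertices with identical degrees into $\ti V_i$ can be adjacent in $H^2$. The paper's fix is to include in the fingerprint a proper colouring of $H^3$ with $\Delta^3+1$ colours (which exists since $\Delta(H^3)\le\Delta^3$); this is why $t=(\Delta+1)^3(\Delta^3+1)$.
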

The quantification of the constants appearing in this lemma is as follows:
\begin{equation*}
  \forall\, \Delta\quad
  \exists\, t \quad
  \forall\, \eta_\isubsc{H},\, r\quad
  \exists\, \beta\,.
\end{equation*}

This lemma asserts the existence of a homomorphism~$h$ from~$H$ to a spin
graph~$\MON_{r,t}$.  Recall that~$\MON_{r,t}$ is contained in the reduced
graph of the $p$-dense partition provided by Lemma~\ref{lem:G}.  As we will
see, we can fix the parameters in this lemma such that, when we apply it
together with Lemma~\ref{lem:G}, the homomorphism~$h$ has the following
additional property.  The number $\ti L$ of vertices that it maps to a
vertex $a$ of the spin graph is less than the number $L$ contained in the
corresponding cluster $A$ provided by Lemma~\ref{lem:G}
(compare~\ref{lem:G:Vi} and~\ref{lem:G:Ci} with~\ref{lem:H:Vi}
and~\ref{lem:H:Ci} and recall that~$m$ is slightly smaller than~$n$). If
$A$ is a big cluster, then the numbers $L$ and $\ti L$ differ only slightly
(these vertices will be embedded using the constrained blow-up lemma), but
for balancing and connecting clusters~$A$ the number~$\ti L$ is much
smaller than~$L$ (this is necessary for the embedding of these vertices
using the connection lemma).  With property~\ref{lem:H:X} Lemma~\ref{lem:H}
further guarantees that only few edges of~$H$ are not assigned either to
two connecting or balancing clusters, or to two big clusters. This is
helpful because it implies that we do not have to take care of ``too many
dependencies'' between the applications of the blow-up lemma and the
connection lemma. The remaining
properties~\ref{lem:H:Cindep}--\ref{lem:H:deg} of Lemma~\ref{lem:H} are
technical but required for the application of the connection lemma (see
conditions~\ref{lem:CL:indep} and~\ref{lem:CL:deg} of Lemma~\ref{lem:CL}).

The vertices in $\ti{C}_{i,j}$ and $\ti{C}'_{i,j}$ are also called
\emph{connecting vertices} of $H$, the vertices in
$\ti{B}_{i,j}$ and $\ti{B}'_{i,j}$ \emph{balancing
vertices}.

We next describe the two embedding lemmas, the constrained blow-up lemma
(Lemma~\ref{lem:blowup}) and the connection lemma (Lemma~\ref{lem:CL}),
which we would like to use on the partitions of $G$ and $H$ provided by
Lemmas~\ref{lem:G} and~\ref{lem:H}. The connecting lemma will be
used to embed the connecting and balancing vertices into the connecting and balancing clusters
\emph{after} all the other vertices are embedded into the big clusters with the
help of the constrained blow-up lemma.

The constrained blow-up lemma states that bipartite graphs~$H$ with bounded
maximum degree can be embedded into a $p$-dense pair $G=(U,V)$ whose cluster
sizes are just slightly bigger than the partition classes of~$H$. This lemma further
guarantees the following. If we specify 
a small family of small special sets 
in one of the partition classes of~$H$
and a small family of small forbidden sets
in the corresponding cluster of~$G$, then no special set is mapped to a
forbidden set. 

The existence of
these forbidden sets is in fact a main difference to the classical blow-up lemma which is used in the dense setting,
where a small family of special vertices of~$H$ can be guaranteed to be mapped to a 
\emph{required set} of linear size in~$G$. This is very useful in a dense graph, because its
neighbourhoods (into which we would like to embed neighbours of already embedded vertices) 
are of linear size. 
In contrast, 
the property of having forbidden sets will be crucial for the sparse setting when we will 
apply this lemma
together with the connection lemma in the proof of Theorem~\ref{thm:main} in
order to handle the ``dependencies'' between these applications.
The proof of this lemma is given in Section~\ref{sec:blowup} and relies
on techniques developed in~\cite{millennium}.

\begin{lemma}[Constrained blow-up lemma] \label{lem:blowup}
  For every integer $\Delta>1$ and for all positive reals~$d$, 
  and~$\eta$ there    
  exist positive constants $\eps$ and $\mu$ 
  such that for all positive integers $r_1$ there is $c$ such that for 
  all integers $1\le r\le r_1$ the following holds \aas\ for $\Gamma=\Gnp$ with
  $p\ge c(\log n/n)^{1/\Delta}$. Let $G=(U,V)\subset\Gamma$ be an
  $(\eps,d,p)$-dense pair with $|U|,|V|\ge n/r$ and let $H$ be a bipartite graph
  on vertex classes $\ti{U}\dcup\ti{V}$ of sizes
  $|\ti{U}|,|\ti{V}|\le(1-\eta)n/r$ and with $\Delta(H)\le\Delta$.
  Moreover, suppose that there is 
  a family $\hyper{H}\subset\binom{\ti{V}}{\Delta}$ of
  \emph{special $\Delta$-sets} in $\ti{V}$ such that
  each $\ti{v}\in\ti{V}$ is contained in at most $\Delta$ special sets and a family
  $\hyper{B}\subset\binom{V}{\Delta}$ of
  \emph{forbidden $\Delta$-sets} in $V$ with
  $|\hyper{B}|\le\mu |V|^\Delta$.
  Then there is an embedding of $H$ into $G$ such that
  no special set is mapped to a forbidden set.
\end{lemma}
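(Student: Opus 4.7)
The plan is to prove Lemma~\ref{lem:blowup} via the two-stage strategy for sparse blow-up lemmas from~\cite{millennium}: first embed $\ti U$ into $U$ by a random injection, then embed $\ti V$ into $V$ by a matching in the resulting \emph{candidate graph}. The family $\hyper B$ of forbidden sets is handled by a probabilistic alteration in the second stage.

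In the first stage I pick a uniformly random injection $\phi_U\colon\ti U\to U$ and, for each $\ti v\in\ti V$ with $H$-neighbourhood $\{\ti u_1,\dots,\ti u_k\}\subset\ti U$ (where $k\le\Delta$), consider the candidate set
$$C(\ti v):=\bigcap_{i=1}^k N_G\bigl(\phi_U(\ti u_i)\bigr)\subset V.$$
The hypothesis $p\ge c(\log n/n)^{1/\Delta}$ yields $p^\Delta n\gg\log n$, which is precisely the range needed to apply Chernoff/deletion bounds for $\Gamma=\Gnp$: combined with Proposition~\ref{prop:typical}, iterated along the $k\le\Delta$ neighbours of $\ti v$, one shows that with probability $1-o(1)$ over $\phi_U$ every $\ti v$ satisfies $|C(\ti v)|\ge(d-2\eps)^k p^k|V|\ge\rho p^\Delta|V|$ for some $\rho=\rho(\Delta,d,\eps)>0$.

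In the second stage I consider the bipartite candidate graph $\Pi$ on classes $\ti V$ and $V$ with edge set $\{\ti v v : v\in C(\ti v)\}$ and seek an injection $\phi_V\colon\ti V\to V$ with $\phi_V(\ti v)\in C(\ti v)$ for each $\ti v$ and such that no image of a special $\Delta$-set lies in $\hyper B$. Hall-type conditions for a system of distinct representatives in $\Pi$ follow from the candidate-size bound and the $(\eps,d,p)$-density of $(U,V)$, using $|\ti V|\le(1-\eta)|V|$ to provide linear slack. To avoid forbidden sets, I select $\phi_V$ by a random-greedy (nibble) procedure on $\Pi$: each round embeds a constant fraction of the remaining $\ti V$-vertices and the candidate sizes shrink by a controlled factor. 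Bounding the marginal probability that a fixed $\Delta$-subset of $\ti V$ is matched onto a fixed $\Delta$-subset of $V$ by the candidate lower bound, and using that each $\ti v$ lies in at most $\Delta$ special sets while $|\hyper B|\le\mu|V|^\Delta$, the expected number of bad assignments is $o(|\ti V|)$ once $\mu$ is chosen small enough in terms of $d$, $\eta$ and $\Delta$. A final alteration step deletes the endangered vertices and re-embeds them via a second Hall application, using the slack inside the candidate bound.

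The main obstacle is that the forbidden constraint is set-valued, so a naive union bound over $|\hyper B|\le\mu|V|^\Delta$ forbidden sets is too crude if the matching is viewed as uniformly random on $\Pi$. The delicate point is the marginal estimate for the random-greedy matching: this must absorb one factor of $|V|^\Delta$ against the candidate-set lower bound $|C(\ti v)|\ge\rho p^\Delta|V|$, leaving room to pick $\mu$ independently of $n$. This is precisely the type of calculation developed in~\cite{millennium}, and its adaptation to the constrained setting constitutes the bulk of the work in Section~\ref{sec:blowup}.
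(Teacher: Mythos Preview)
Your proposal reverses the two stages relative to the paper, and this reversal is fatal for the forbidden-set constraint. The paper embeds $\ti V$ into $V$ \emph{first} (by a random injection, modified via switchings), and only then embeds $\ti U$ into $U$ by a matching in the candidate graph. Since the special sets live in $\ti V$ and the forbidden sets in $V$, handling them during a near-uniform injection $\ti V\to V$ costs a marginal probability of order $|V|^{-\Delta}$ per pair $(S,B)$, which exactly cancels $|\hyper B|\le\mu|V|^\Delta$ and leaves an expected $O(\mu|\ti V|)$ bad pairs, controllable by choosing $\mu$ small. In your order the matching $\phi_V$ is confined to candidate sets of size $\Theta(p^\Delta|V|)$, so the same union bound picks up a factor $p^{-\Delta^2}$; no constant $\mu$ can absorb this, and the alteration step would have to re-embed a superlinear number of vertices. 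This is the ``delicate point'' you flag, and it does not go through as stated.

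Concretely, the paper's mechanism is the hypergraph packing lemma (Lemma~\ref{lem:pack}): it takes the family $\hyper N=\{N_H(\ti u):\ti u\in\ti U\}\cup\hyper H$ on the $\ti V$ side and an enlarged family $\hyper B'\supset\hyper B$ on the $V$ side (adding all $\Delta$-sets with too small common neighbourhood in some $U_j$), removes corrupted vertices so no vertex is over-represented in $\hyper B'$, and then asserts that at least half of all injections $f\colon\ti V\to V'$ are within switching distance $\sigma n/r$ of a packing $f'$ for which no member of $\hyper N$ lands in $\hyper B'$. This simultaneously (i) guarantees no special set hits a forbidden set and (ii) guarantees every $N_H(\ti u)$ has a large common neighbourhood, i.e.\ is $p$-good, which then feeds the matching lemma (Lemma~\ref{lem:matching}) for the $\ti U$-side. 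The matching argument itself partitions $\ti U$ into $\Delta^2+1$ classes that are $2$-independent (via Hajnal--Szemer\'edi) so that neighbourhoods are disjoint, and verifies Hall's condition in three size regimes using the expansion lemma (Lemma~\ref{lem:exp}) and the star lemma (Lemma~\ref{lem:stars-big}).

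A secondary issue: your appeal to ``Proposition~\ref{prop:typical}, iterated'' does not yield common-neighbourhood lower bounds; after one restriction the pair need not remain $(\eps,d,p)$-dense. You would need Lemma~\ref{lem:joint} (or the inheritance lemma, Lemma~\ref{lem:reg-neighb}) to control $|C(\ti v)|$.
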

The quantification of the constants appearing in this lemma is as follows:
\begin{equation*}
  \forall\, \Delta,\,d,\,\eta\quad
  \exists\, \eps\,,\mu \quad
  \forall\, r_1\quad
  \exists\, c\,.
\end{equation*}

At first sight, the r\^ole of the integer $r$ in Lemma~\ref{lem:blowup} (and also in Lemma~\ref{lem:CL} below)
seems a little obscure.
The only reason for stating the lemma as above is that it is more readily applicable in this form,
since we will need it for pairs of partition classes $(U,V)$ whose size in relation to $n$ 
will be determined by the regularity lemma.

Our last main lemma, the connection lemma (Lemma~\ref{lem:CL}), embeds
graphs~$H$ into graphs~$G$ forming a system of $p$-dense pairs.
In contrast to the blow-up lemma, however, the graph~$H$ has to be much smaller
than the graph~$G$ now (see condition~\ref{lem:CL:Wi}). 
In addition, each vertex $\ti y$ of~$H$ is equipped with a
candidate set $C(\ti y)$ in $G$ from which the connection lemma will choose the
image of $\ti y$ in the embedding.
Lemma~\ref{lem:CL} requires that these candidate
sets are big (condition~\ref{lem:CL:Cbig}) and that pairs of candidate sets
that correspond to an edge of~$H$ form $p$-dense pairs
(condition~\ref{lem:CL:Cdense}). The remaining conditions
(\ref{lem:CL:indep} and \ref{lem:CL:deg}) are conditions on the
neighbourhoods and degrees of the vertices in~$H$ (with respect to the given
partition of~$H$). For their statement we need the following
additional definition. 

For a graph $H$ on vertex set
$\ti{V}=\ti{V}_1\dcup\dots\dcup\ti{V}_t$ and $y\in\ti{V}_i$ with $i\in[t]$
define the \emph{left degree} of~$y$ with respect to the
partition $\ti{V}_1\dcup\dots\dcup\ti{V}_t$ to be
$\ldeg(y;\ti{V}_1,\dots,\ti{V}_t):=\sum_{j=1}^{i-1}\deg_{\ti{V}_j}(y)$. When
clear from the context we may also omit the partition and simply write $\ldeg(y)$.
For two sets of vertices $S$, $T$ we denote the \emph{common neighbourhood} of 
(the vertices of) $S$ in $T$ by
$\coN_{T}(S):= \bigcap_{s\in S} N_T(s)$.

\begin{lemma}[Connection lemma] \label{lem:CL}
  For all integers $\Delta>1$, $t>0$ and reals $d>0$ there are $\eps$,
  $\xi>0$ such that for all positive integers $r_1$ there is $c>1$ such that
  for
  all integers $1\le r\le r_1$ the
  following holds \aas\ for $\Gamma=\Gnp$ with $p\ge c(\log n/n)^{1/\Delta}$.  
  Let $G\subset\Gamma$ be any graph on vertex set $W=W_1\dcup\dots\dcup
  W_t$ and let $H$ be any graph on vertex set
  $\ti{W}=\ti{W}_1\dcup\dots\dcup\ti{W}_t$.
  Suppose further that for each $i\in[t]$ each vertex $\ti{w}\in\ti{W}_i$ is
  equipped with an arbitrary set $X_{\ti w}\subset V(\Gamma)\setminus W$ with
  the property that the indexed set system
  $\big(X_{\ti{w}}\colon\ti{w}\in\ti{W}_i\big)$ consists of pairwise disjoint
  sets such that the following holds. We define the
  \emph{external degree} of $\ti{w}$ to be
  $\edeg(\ti{w}):=|X_{\ti{w}}|$, its \emph{candidate set} $C(\ti{w})\subset
  W_i$ to be $C(\ti{w}):=\coN_{W_i}(X_{\ti{w}})$, and require that 
  \begin{enumerate}[label={\rm (\Alph{*})}]
    \item \label{lem:CL:Wi} $|W_i|\ge n/r$ and $|\ti{W}_i|\le\xi n/r$,
    \item \label{lem:CL:indep} $\ti{W}_i$ is a $3$-independent set in $H$,
    \item \label{lem:CL:deg}
    $\edeg(\ti{w})+\ldeg(\ti{w}) =\edeg(\ti{v})+\ldeg(\ti{v})$ and
    $\deg_H(\ti{w})+\edeg(\ti{w})\le\Delta$ for all $\ti{w},\ti{v}\in\ti{W}_i$,
    \item \label{lem:CL:Cbig} $|C(\ti{w})|\ge((d-\eps)p)^{\edeg(\ti{w})} |W_i|$
    for all $\ti{w}\in\ti{W}_i$, and
    \item \label{lem:CL:Cdense} $(C(\ti{w}),C(\ti{v}))$ forms an
    $(\eps,d,p)$-dense pair for all $\ti{w}\ti{v}\in E(H)$.
  \end{enumerate}
  Then there is an embedding of $H$ into $G$ such that every vertex $\ti{w}\in
  \ti{W}$ is mapped to a vertex in its candidate set $C(\ti{w})$.
\end{lemma}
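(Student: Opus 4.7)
The plan is to prove the lemma by a greedy embedding that processes the classes $\ti{W}_1,\dots,\ti{W}_t$ in order, and within each class handles vertices one at a time. For each still-unembedded $\ti w$ I would maintain the \emph{live candidate set} $C^*(\ti w) := C(\ti w) \cap \bigcap_{\ti u} N_\Gamma(\phi(\ti u))$, where $\ti u$ ranges over the already-embedded neighbors of $\ti w$. The invariants to maintain are (i) $|C^*(\ti w)| \ge ((d-\eps')p)^{D}|W_i|$ where $D=\edeg(\ti w)+\ldeg(\ti w)$ is constant on $\ti{W}_i$ by~\ref{lem:CL:deg}, and (ii) for every edge $\ti w\ti v\in E(H)$ with both endpoints unembedded, the pair $(C^*(\ti w), C^*(\ti v))$ is $(\eps', d, p)$-dense. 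Condition~\ref{lem:CL:Cbig} gives the initial instance of (i), and \ref{lem:CL:Cdense} the initial instance of (ii).

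Before starting the embedding, I would establish the aas\ random-graph property that drives the argument: for $\Gamma=\Gnp$ with $p\ge c(\log n/n)^{1/\Delta}$, any $(\eps, d, p)$-dense pair $(A,B)$ in $\Gamma$ and any set $S$ of at most $\Delta$ ``typical'' vertices of $\Gamma$, the common neighborhood $B \cap N_\Gamma(S)$ has size at least $((d-\eps)p)^{|S|}|B|$ and the pair $(A, B\cap N_\Gamma(S))$ is still $(\eps', d, p)$-dense, provided $|B|\ge \xi n/r$. This regularity inheritance is where the hypothesis on $p$ is used: the underlying Chernoff-and-union-bound argument needs $p^\Delta n\gtrsim \log n$, exactly the hypothesis. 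The small set of atypical vertices of $\Gamma$ can be discarded at the outset and shrinks every candidate set by only a negligible amount.

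With these properties secured, when processing $\ti w \in \ti W_i$ the invariants guarantee that the dense pair $(C^*(\ti w), C^*(\ti v))$ is available for every unembedded neighbor $\ti v$ of $\ti w$. By Proposition~\ref{prop:typical} at most $\eps'|C^*(\ti w)|$ vertices $x\in C^*(\ti w)$ satisfy $|N_\Gamma(x)\cap C^*(\ti v)|<(d-\eps)p|C^*(\ti v)|$, i.e.~are \emph{unusable} for the given $\ti v$. Summing over the at most $\Delta$ unembedded neighbors of $\ti w$ and adding the at most $|\ti W_i|\le\xi n/r$ vertices already used as images in $\ti W_i$, the total number of forbidden vertices is at most a small constant fraction of $|C^*(\ti w)|$, so a valid image $\phi(\ti w)$ exists. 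The 3-independence of $\ti{W}_i$ (condition~\ref{lem:CL:indep}) ensures $\ti w$ has no neighbor in $\ti W_i$, so this choice does not disturb live candidate sets of unprocessed vertices of $\ti W_i$; and the regularity inheritance statement, reapplied to $S=\{\phi(\ti w)\}$, propagates the invariants to the next round for neighbors of $\ti w$ in later classes.

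The main obstacle will be the regularity inheritance. A direct iteration of Proposition~\ref{prop:subpairs} blows $\eps$ up by a factor of $(dp)^{-\Delta}$, which is far too large for the argument to close; instead one must exploit random-graph concentration at the scale $p^\Delta n\gtrsim \log n$ and check that the failure probabilities, union-bounded over all relevant $(\eps,d,p)$-dense pairs, over all tuples of $\le \Delta$ vertices, and over all sub-pair sizes down to $\xi n/r$, remain summable. This is precisely where the bound $p\ge c(\log n/n)^{1/\Delta}$ is essential and, I expect, is where most of the technical work of the proof will lie.
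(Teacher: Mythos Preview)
Your greedy argument has a genuine gap at the step where you claim the already-used vertices in $W_i$ are ``at most a small constant fraction of $|C^*(\ti w)|$''. This comparison goes the wrong way. By your own invariant, $|C^*(\ti w)|$ is only guaranteed to be at least $((d-\eps')p)^{D}|W_i|$ with $D=\edeg(\ti w)+\ldeg(\ti w)$; whenever $D\ge 1$ this is at most $p\cdot n/r$, which is $o(n/r)$ since $p=o(1)$. Meanwhile the number of vertices of $\ti W_i$ you may already have embedded into $W_i$ is up to $\xi n/r$, a \emph{fixed positive constant} times $n/r$. Hence the used images can swallow the entire live candidate set, and a one-at-a-time greedy choice is impossible. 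The paper's proof flags exactly this obstruction: ``in general $|C'(\ti y)|\le|C_i(\ti y)|=o(n)\ll|\ti W_{i+1}|$ \dots\ hence we cannot `blindly' select $\phi_{i+1}(\ti y)$ from $C'(\ti y)$''.

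The fix in the paper is to embed each class $\ti W_{i+1}$ \emph{all at once} via a system of distinct representatives for the family $\big(C'(\ti y):\ti y\in\ti W_{i+1}\big)$, and to verify Hall's condition. Small families are handled by the size lower bound on each $C'(\ti y)$. For large families one uses that the sets $K(\ti y)=\phi_i(N_H(\ti y)\cap(\ti W_1\cup\dots\cup\ti W_i))\cup X_{\ti y}$ are pairwise disjoint $k$-sets (this is where both the $3$-independence in~\ref{lem:CL:indep} and the disjointness of the $X_{\ti y}$ are used), and then the star lemma (Lemma~\ref{lem:stars-small}) bounds the number of stars between the putative union and the family $\{K(\ti y)\}$, forcing the union to have size at least $|\ti W_{i+1}|$. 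Your regularity-inheritance ingredient (Lemma~\ref{lem:reg-neighb}) is indeed used, but only to propagate the density invariant between rounds; the heart of the argument is the Hall/star-lemma step, which your proposal is missing.
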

The quantification of the constants appearing in this lemma is as follows:
\begin{equation*}
  \forall\, \Delta,\,t,\,d\quad
  \exists\, \eps\,,\xi \quad
  \forall\, r_1\quad
  \exists\, c\,.
\end{equation*}
	
The proof of this lemma is inherent in~\cite{KohRoeSchSze}. 
For the details in our setting see 
\inappendix{Section~\ref{sec:CL}}{the appendix of~\cite{BoeKohTar_sparsebip_arxiv}}.
	
%%% RANDOM GRAPHS %%%%%%%%%%%%%%%%%%%%%%%%%%%%%%

\section{Stars in random graphs}
\label{sec:random}

In this section we formulate two lemmas concerning properties of random graphs
that will be useful when analysing neighbourhood properties of $p$-dense pairs
in the following section. More precisely, we consider the following question
here. Given a set of vertices~$X$ in a random graph $\Gamma=\Gnp$ together with a
family~$\hyper F$ of pairwise disjoint $\ell$-sets in $V(\Gamma)$, we
would like to determine how many pairs $(x,F)$ with $x\in X$ and
$F\in\hyper{F}$ have the property that $x$ lies in the common neighbourhood of
the vertices in~$F$.

\begin{definition}[stars]
\label{def:stars}
Let $G=(V,E)$ be a graph, $X$ be a subset of $V$ and $\hyper{F}$ be a family
of pairwise disjoint $\ell$-sets in $V\setminus X$ for some $\ell$. Then the number of
\emph{stars} in $G$ between $X$ and $\hyper{F}$ is
\begin{equation}\label{eq:stars}
  \stars(X,\hyper{F}):=\Big|\,\big\{\,(x,F)\colon\,x\in X,\,F\in\hyper{F},\,
      F\subset N_G(x)\,\big\}\,\Big|.
\end{equation}
\end{definition}

Observe that in a random graph $\Gamma=\Gnp$ and for fixed sets $X$ and
$\hyper{F}$ the random variable $\stars[\Gamma](X,\hyper{F})$ has
binomial distribution $\Bin(|X||\hyper{F}|,p^\ell)$. This will be used in the
proofs of the following lemmas. The first of these lemmas states that in
$\Gnp$ the number of stars between $X$ and $\hyper{F}$ does not exceed its
expectation by more than seven times as long as $X$ and $\hyper{F}$ are not too
small. This is a straightforward consequence of Chernoff's inequality.

\begin{lemma}[star lemma for big sets]\label{lem:stars-big}
  For every positive integer $\Delta$ and every positive real $\nu$
  there is $c$ such that if $p\ge c(\log n/n)^{1/\Delta}$ the
  following holds \aas\ for $\Gamma=\Gnp$ on vertex set $V$. Let $X$
  be any subset of $V$ and $\hyper{F}$ be any family of pairwise
  disjoint $\Delta$-sets in $V\setminus X$.  If $\nu
  n\le|X|\le|\hyper{F}|\le n$, then
  \begin{equation*}
    \stars[\Gamma](X,\hyper{F}) \le 7p^{\Delta}|X||\hyper{F}|.
  \end{equation*}
\end{lemma}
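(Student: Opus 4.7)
The plan is to observe that for fixed deterministic $X$ and $\hyper{F}$, the quantity $\stars[\Gamma](X,\hyper{F})$ is a sum of independent Bernoulli random variables, apply a Chernoff tail bound, and then take a union bound over all admissible pairs $(X,\hyper{F})$.

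First I would fix $X\subset V$ and a family $\hyper{F}$ of pairwise disjoint $\Delta$-sets in $V\setminus X$ satisfying the hypothesis, and introduce the indicators $I_{x,F}:=\mathbf{1}[F\subset N_\Gamma(x)]$ for $x\in X$ and $F\in\hyper{F}$. Because the sets in $\hyper{F}$ are pairwise disjoint and contained in $V\setminus X$, the edge sets $\{xv:v\in F\}$ corresponding to distinct pairs $(x,F)$ are pairwise disjoint subsets of $\binom{V}{2}$. Consequently the $I_{x,F}$ are mutually independent and $\stars[\Gamma](X,\hyper{F})=\sum_{x,F} I_{x,F}$ has distribution $\Bin(N,p^\Delta)$ with $N:=|X||\hyper{F}|$ and mean $\mu:=Np^\Delta$. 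Applying the standard Chernoff bound $\Prob[\Bin(N,q)\ge(1+\delta)Nq]\le\exp(-\delta Nq/3)$ (valid for $\delta\ge 1$) with $\delta=6$ yields
\begin{equation*}
  \Prob\big[\stars[\Gamma](X,\hyper{F})\ge 7p^\Delta|X||\hyper{F}|\big]\le\exp(-2\mu).
\end{equation*}
The hypotheses $|X|,|\hyper{F}|\ge\nu n$ and $p^\Delta\ge c^\Delta\log n/n$ give $\mu\ge \nu^2 c^\Delta\, n\log n$.

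For the union bound I would count the admissible pairs $(X,\hyper{F})$: there are at most $2^n$ choices for $X\subset V$, and each $\hyper{F}$ is specified by its union $Y:=\bigcup_{F\in\hyper{F}} F\subset V\setminus X$ (at most $2^n$ options) together with a partition of $Y$ into $\Delta$-sets (at most $|Y|!\le n^n$ options). Hence the total number of admissible pairs is at most $2^{2n}n^n\le\exp(3n\log n)$. Choosing $c$ so that $2\nu^2 c^\Delta>3$ makes the overall failure probability at most $\exp(-(2\nu^2 c^\Delta-3)\,n\log n)=\smallo(1)$, and the lemma follows. The only genuine step is verifying independence of the $I_{x,F}$, which relies solely on the disjointness of $\hyper{F}$; everything else is routine, with the lower bounds $|X|,|\hyper{F}|\ge\nu n$ serving exactly to push $\mu$ up to order $n\log n$, large enough to beat the entropy of the union bound.
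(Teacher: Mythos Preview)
Your proof is correct and follows essentially the same approach as the paper: both observe that $\stars[\Gamma](X,\hyper{F})$ is binomial (the paper states this just before the lemma), apply a Chernoff bound for fixed $(X,\hyper{F})$, and then take a union bound over all admissible pairs. The only cosmetic differences are the particular Chernoff variant used and the enumeration of families $\hyper{F}$ (the paper bounds this crudely by $n^{\Delta n}$ rather than via $2^n\cdot n^n$), neither of which affects the argument.
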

\begin{proof}
  Given $\Delta$ and $\nu$ let $c$ be such that $7c^\Delta\nu^2\ge 3\Delta$.
  From Chernoff's inequality (see~\cite[Chapter~2]{purpleBook}) we know that
  $\Prob[Y\ge7\Exp Y]\le\exp(-7\Exp Y)$ 
  for a
  binomially distributed random variable $Y$. We conclude that for fixed~$X$
  and~$\hyper{F}$
  \begin{equation*}\begin{split}
    \Prob\left[\stars[\Gamma](X,\hyper{F})>7p^{\Delta}|X||\hyper{F}|\right]
      &\le\exp(-7p^{\Delta}|X||\hyper{F}|) \\
      &\le\exp(-7c^\Delta(\log n/n)\nu^2n^2) 
      \le\exp(-3\Delta n\log n)
  \end{split}\end{equation*}
  by the choice of~$c$. Thus the probability that there are sets~$X$
  and~$\hyper{F}$ violating the assertion of the lemma is at most
  \begin{equation*}
    2^n n^{\Delta n} \exp(-3\Delta n\log n)
      \le\exp(2\Delta n\log n-3\Delta n\log n),
  \end{equation*}
  which tends to $0$ as~$n$ tends to infinity.
\end{proof}

We will also need a variant of Lemma~\ref{lem:stars-big} for smaller
sets~$X$ and families~$\hyper{F}$. As a trade-off, the bound on the number
of stars provided by the next lemma will be somewhat worse.  Lemma~\ref{lem:stars-small}
appears almost in this form in~\cite{KohRoeSchSze}. The only (slight)
modification that we need here is that~$X$ is allowed to be bigger than
$\hyper{F}$. However, the same proof as presented in~\cite{KohRoeSchSze}
still works for this modified version.  For the details see
\inappendix{Section~\ref{sec:aux:stars-small}}{the appendix
  of~\cite{BoeKohTar_sparsebip_arxiv}}.

\begin{lemma}[star lemma for small sets]\label{lem:stars-small}
  For all positive integers $\Delta$ and positive reals $\xi$ there are positive
  constants $\nu$ and $c$ such that if $p\ge c(\log n/n)^{1/\Delta}$, then the
  following holds \aas\ for $\Gamma=\Gnp$ on vertex set $V$. Let $X$ be any
  subset of $V$ and $\hyper{F}$ be any family of pairwise disjoint
  $\Delta$-sets in $V\setminus X$. If $|X|\le\nu np^\Delta|\hyper{F}|$ and
  $|X|,|\hyper{F}|\le\xi n$, then
  \begin{equation}\label{eq:stars-small}
    \stars[\Gamma](X,\hyper{F}) 
      \le p^{\Delta}|X||\hyper{F}|+6\xi np^\Delta|\hyper{F}|.
  \end{equation}
\end{lemma}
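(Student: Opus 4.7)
The plan is to prove the lemma by applying Chernoff's inequality for each fixed admissible pair $(X,\hyper F)$ and then taking a union bound. The first observation is that, because $\hyper F$ consists of pairwise disjoint $\Delta$-sets, the indicator random variables $\mathbf{1}[F\subset N_\Gamma(x)]$ for $x\in X$ and $F\in\hyper F$ are mutually independent, each of probability $p^\Delta$. Hence $\stars[\Gamma](X,\hyper F)\sim\Bin(|X||\hyper F|,p^\Delta)$ with mean $p^\Delta|X||\hyper F|$. Since $|X|\le\xi n$, the threshold $a:=p^\Delta|X||\hyper F|+6\xi np^\Delta|\hyper F|=p^\Delta|\hyper F|(|X|+6\xi n)$ is at least seven times the mean, placing us deep in the upper tail.

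Applying the Chernoff bound $\Prob[\Bin(N,q)\ge a]\le(eNq/a)^a$ (valid for $a\ge Nq$) gives
\[
\Prob\bigl[\stars[\Gamma](X,\hyper F)\ge a\bigr] \le \Bigl(\frac{e|X|}{|X|+6\xi n}\Bigr)^{p^\Delta|\hyper F|(|X|+6\xi n)}.
\]
When $|X|$ is of order $\xi n$, the base $e|X|/(|X|+6\xi n)$ is a constant strictly less than one (at most $e/7$ at $|X|=\xi n$) and the exponent is of order $\xi np^\Delta|\hyper F|$; when $|X|\ll\xi n$ the base decreases like $|X|/n$, strengthening the bound by an additional logarithmic factor $\log(\xi n/|X|)$.

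For fixed sizes $|X|=k$ and $|\hyper F|=f$, the number of choices of $(X,\hyper F)$ is at most $\binom{n}{k}\cdot n^{\Delta f}$, whose logarithm is at most $k\log(en/k)+\Delta f\log n$. The term $\Delta f\log n$ is absorbed by the Chernoff exponent once $c$ is taken large enough in terms of $\Delta$ and $\xi$, so that $6\xi np^\Delta\ge 6\xi c^\Delta\log n\ge 2\Delta\log n$. The subtler part is the $k\log(en/k)$ contribution from choosing $X$, and the argument splits into two regimes according to how close $k$ is to $\xi n$.

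In the regime $k\ll\xi n$, the logarithmic improvement $\log(\xi n/k)$ in the Chernoff rate compensates the factor $\log(en/k)$ in $\binom{n}{k}$, and the hypothesis $k\le\nu np^\Delta f$ closes the gap once $\nu$ is a suitably small absolute constant. In the complementary regime, where $k$ is of order $\xi n$, the Chernoff exponent is only $\Theta(\xi np^\Delta f)$ and must dominate $\xi n\log(1/\xi)$ from $\binom{n}{\xi n}$ unaided. Here one uses $k\le\nu np^\Delta f$ together with $k=\Theta(\xi n)$ to deduce $p^\Delta f\ge\xi/\nu$, so that the Chernoff exponent is at least a constant times $\xi^2 n/\nu$, which beats $\xi n\log(1/\xi)$ provided $\nu\le c'\xi/\log(1/\xi)$ for a suitable absolute constant~$c'$. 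Choosing $\nu=\nu(\Delta,\xi)$ in this way and then $c=c(\Delta,\xi,\nu)$ large enough, each term in the union bound becomes at most $n^{-3}$, and summing over the $O(n^2)$ admissible pairs $(k,f)$ yields the desired $o(1)$ failure probability. The main obstacle is precisely this second regime: without the hypothesis $|X|\le\nu np^\Delta|\hyper F|$ the combinatorial factor $\binom{n}{\xi n}$ would overwhelm the Chernoff exponent, and it is the tight coupling between $\nu$ and $\xi$ there that makes the proof go through.
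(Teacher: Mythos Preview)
Your approach is correct and shares the same core mechanism as the paper's proof: observe that $\stars[\Gamma](X,\hyper F)$ is genuinely binomial (thanks to the disjointness of $X$ and the members of $\hyper F$), apply a Chernoff tail for each fixed pair, and take a union bound. The paper, however, organises the case split differently: it splits according to $|\hyper F|$ --- the regimes $|\hyper F|\ge n/\log n$ and $|\hyper F|<n/\log n$ --- rather than according to $|X|$. In the large-$\hyper F$ regime the paper uses the crude tail $\Prob[Y\ge\Exp Y+t]\le e^{-t}$ (valid for $t\ge6\Exp Y$) together with the trivial bound $2^n$ for the number of choices of $X$; this succeeds because $f\log n\ge n$ there, so the Chernoff exponent $6\xi c^\Delta f\log n$ already swallows the $2^n$. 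Only in the small-$\hyper F$ regime does the paper switch to the sharper tail $\Prob[Y\ge t]\le\exp(-t\log(t/3qm))$, which is your bound $(eNq/a)^a$, and only there does it invoke $|X|\le\nu np^\Delta|\hyper F|$ to control $\binom{n}{|X|}$. Your version uses a single Chernoff form throughout and splits by $k=|X|$ instead; both routes work. One minor slip: your phrase ``$\nu$ a suitably small absolute constant'' in the first regime is misleading --- even there $\nu$ must depend on~$\xi$, because $\log(\xi n/k)$ and $\log(en/k)$ differ by $\log(e/\xi)$, not by~$O(1)$, so the comparison forces $\nu\lesssim\xi$ at least. Your second regime correctly tightens this to $\nu\lesssim\xi/\log(1/\xi)$, which is what both arguments ultimately need (the paper takes the slightly more conservative $\nu$ of order $\xi^2/(\log(1/\xi))^2$).
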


%%%% JOINT NEIGHBOURHOODS %%%%%%%%%%%%%%%%%%%%%%%%%%%%%%%%%%%%%%%%%%%%%%%%%%%%%

\section{Common neighbourhoods in \texorpdfstring{$p$}{p}-dense pairs}
\label{sec:joint}

As discussed in Section~\ref{sec:reg} it follows
directly from the definition of $p$-denseness that sub-pairs of dense pairs form again
dense pairs.
In order to apply Lemma~\ref{lem:blowup} and Lemma~\ref{lem:CL} together,
we will need corresponding results on common neighbourhoods in systems of dense
pairs (see Lemmas~\ref{lem:joint} and~\ref{lem:Bad}). For this 
it is necessary to first introduce some notation.

Let $G=(V,E)$ be a graph, $\ell,T>0$ be integers, $p$, $\eps$, $d$ be positive
reals, and $X$, $Y$, $Z\subset V$ be disjoint vertex sets. Recall that for a
set $B$ of vertices from $V$ and a vertex set $Y\subset V$ we call
the set $\coN_Y(B)=\bigcap_{b\in B}N_Y(b)$ the common
neighbourhood of (the vertices in) $B$ in $Y$. 

\begin{definition}[Bad and good vertex sets]
\label{def:bad}
Let $G$, $\ell$, $T$, $p$, $\eps$, $d$, $X$, $Y$, and $Z$
be as above. 
We define the following family of $\ell$-sets in $X$ with small
common neighbourhood in~$Y$:
\begin{equation}\label{eq:BAD}
  \BAD{\ell}{\eps}{d}(X,Y) := \Big\{B\in\binom{X}{\ell}\colon\,
    |\coN_Y(B)|<(d-\eps)^{\ell}p^{\ell}|Y| \Big\}\,.
\end{equation}
If $(X,Y)$ has $p$-density $d_{G,p}(X,Y)\ge d-\eps$, then all
$\ell$-sets $T\in\binom{X}{\ell}$ that are not in 
$\BAD{\ell}{\eps}{d}(X,Y)$ are called \emph{$p$-good} in
$(X,Y)$.

Let further
\begin{equation*}
  \Bad{\ell}{\eps}{d}(X,Y,Z)
\end{equation*}
be the family of $\ell$-sets $B \in\binom{X}{\ell}$ that contain an
$\ell'$-set $B'\subset B$ with $\ell'>0$ such that either
$|\coN_Y(B')|<(d-\eps)^{\ell'}p^{\ell'}|Y|$ or $(\coN_Y(B'),Z)$ is not
$(\eps,d,p)$-dense in $G$.
\end{definition}
The following lemma 
states that $p$-dense pairs in
random graphs have the property that most $\ell$-sets have big common
neighbourhoods. Results of this type (with a slightly smaller exponent in the
edge probability~$p$)
were established in~\cite{KR03}.
The proof of Lemma~\ref{lem:joint} can be found in
\inappendix{Section~\ref{sec:aux:joint}}{the appendix of~\cite{BoeKohTar_sparsebip_arxiv}}.

\begin{lemma}[common neighbourhood lemma]
\label{lem:joint}
  For all integers $\Delta,\ell\ge 1$ and positive reals $d$, $\eps'$ and $\mu$,
  there is $\eps>0$ such that for all $\xi>0$ there is $c>1$ such that
  if $p\ge c(\log n/n)^{1/\Delta}$, then the following holds \aas\ for
  $\Gamma=\Gnp$. For $n_1\ge\xi p^{\Delta-1}n$, $n_2\ge\xi
  p^{\Delta-\ell}n$ let $G=(X\dcup Y,E)$ be any bipartite subgraph of $\Gamma$
  with $|X|=n_1$ and $|Y|=n_2$. If $(X,Y)$ is an $(\eps,d,p)$-dense pair, then
  $|\bad{\ell}{\eps'}{d}(X,Y)|\le\mu n_1^\ell$.
\end{lemma}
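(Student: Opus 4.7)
The plan is to prove Lemma~\ref{lem:joint} by induction on~$\ell$, using Proposition~\ref{prop:typical} as the base case and the star lemmas of Section~\ref{sec:random} to handle the inductive step. Constants are chosen in the order dictated by the statement: $\eps$ after $\Delta,\ell,d,\eps',\mu$, and $c$ after $\xi$. For $\ell=1$ the bad set is exactly $\{x\in X:|N_Y(x)|<(d-\eps')p|Y|\}$, which has size at most $\eps|X|\le\mu n_1$ by Proposition~\ref{prop:typical}, provided $\eps\le\min(\eps',\mu)$.

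For the inductive step, assume the lemma at level $\ell-1$. Set $F:=\bad{\ell}{\eps'}{d}(X,Y)$ and suppose for contradiction that $|F|>\mu n_1^\ell$. Proposition~\ref{prop:typical} identifies a ``typical'' subset $X'\subset X$ with $|X'|\ge(1-\eps)n_1$ consisting of vertices $x$ with $|N_Y(x)|\ge(d-\eps)p|Y|$; the number of bad $\ell$-sets meeting $X\setminus X'$ is at most $\eps n_1^\ell$, so for $\eps\le\mu/2$ the refined family $F':=F\cap\binom{X'}{\ell}$ still satisfies $|F'|\ge\tfrac12\mu n_1^\ell$. A standard averaging argument then yields a typical vertex $x_1\in X'$ contained in at least $\tfrac{\ell\mu}{2}\,n_1^{\ell-1}$ elements of $F'$. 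Writing such an element as $B=B'\cup\{x_1\}$ and using $|N_Y(x_1)|\ge(d-\eps)p|Y|\ge(d-\eps')p|Y|$, one computes
\[
|\coN_{N_Y(x_1)}(B')| \;=\; |\coN_Y(B)| \;<\; (d-\eps')^{\ell}p^{\ell}|Y| \;\le\; (d-\eps')^{\ell-1}p^{\ell-1}|N_Y(x_1)|,
\]
so $B'$ is bad at level $\ell-1$ in the auxiliary pair $\bigl(X\setminus\{x_1\},\,N_Y(x_1)\bigr)$. Since $|N_Y(x_1)|\ge(d-\eps)p\cdot\xi p^{\Delta-\ell}n=\Omega(p^{\Delta-(\ell-1)}n)$ and $|X\setminus\{x_1\}|\ge\xi p^{\Delta-1}n -1$, the size conditions of Lemma~\ref{lem:joint} at level $\ell-1$ are met, and invoking the induction hypothesis with a sufficiently small parameter $\mu'<\tfrac{\ell\mu}{2}$ would contradict the abundance of bad $(\ell-1)$-sets in this auxiliary pair.

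The only remaining ingredient---and the main obstacle---is verifying that the auxiliary pair $\bigl(X\setminus\{x_1\},N_Y(x_1)\bigr)$ is $(\eps'',d,p)$-dense, where $\eps''$ is the parameter demanded by the induction hypothesis at level $\ell-1$. Proposition~\ref{prop:subpairs} by itself is inadequate: because $|N_Y(x_1)|$ is only a $\Theta(p)$-fraction of $|Y|$, it would inflate the regularity parameter by a factor of $1/p\gg1$. To circumvent this, I would exploit the random host graph $\Gamma=\Gnp$: combining the star lemmas (Lemmas~\ref{lem:stars-big} and~\ref{lem:stars-small}) with Chernoff concentration, one shows that a.a.s.\ for every $(\eps,d,p)$-dense pair $(X,Y)$ and every typical $x_1\in X$, the sub-pair $(X\setminus\{x_1\},N_Y(x_1))$ inherits $(\eps'',d,p)$-denseness. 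The hypothesis $p\ge c(\log n/n)^{1/\Delta}$ with $c$ sufficiently large is precisely what makes the required union bounds---over all candidate sets $X,Y$, vertices $x_1$, and relevant $(\ell-1)$-subsets---succeed. With this inheritance in hand the inductive step closes and one obtains the desired bound $|\bad{\ell}{\eps'}{d}(X,Y)|\le\mu n_1^{\ell}$.
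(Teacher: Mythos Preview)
Your inductive scheme is essentially the paper's: reduce from $\ell$ to $\ell-1$ by fixing a typical pivot vertex and passing to the pair $(X',N_Y(\text{pivot}))$. The gap is precisely where you flag it yourself: the inheritance of $(\eps'',d,p)$-denseness to $(X\setminus\{x_1\},N_Y(x_1))$. Your proposal to derive this from the star lemmas plus Chernoff bounds is too vague to stand as a proof; this inheritance statement is exactly the content of Lemma~\ref{lem:reg-neighb}, which the paper imports as a black box from~\cite{KohRoeSchSze} and whose proof is considerably more involved than a direct union-bound argument. You should invoke that lemma rather than try to reprove it on the fly. Note also that Lemma~\ref{lem:reg-neighb} only guarantees inheritance for \emph{most} vertices, not for every typical one, so your statement ``for every typical $x_1$'' is too strong: you must first secure a large set of candidate pivots and then select one that enjoys both the averaging property and the inheritance property.

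There is a second structural point you are missing. Lemma~\ref{lem:reg-neighb} applies to a \emph{tripartite} graph on disjoint sets and concludes that for most $x$ in the first part the pair $(N_Y(x),Z)$ is dense; in your argument $x_1$ and the $(\ell-1)$-sets $B'$ all live in the same set $X$, so the lemma does not apply directly. The paper resolves this by first splitting $X=X_1\dcup X_2$ with $|X_1|=\lfloor 2n_1/3\rfloor$ via Proposition~\ref{prop:crosscut}, so that a positive fraction of bad $\ell$-sets are $1$-crossing (one vertex in $X_2$, the other $\ell-1$ in $X_1$). One then shows that the set $X'\subset X_2$ of vertices lying in many bad crossing sets is large, applies Lemma~\ref{lem:reg-neighb} to the tripartite graph $G[X'\dcup Y\dcup X_1]$ to obtain a pivot $x^*\in X'$ for which $(N_Y(x^*),X_1)$ is $(\eps_{\ell-1},d,p)$-dense, and only then projects the bad sets through $x^*$ onto $X_1$. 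With these two pieces in place---the black-box inheritance lemma and the crossing-partition device to create the disjoint tripartite structure it needs---your argument goes through exactly as you describe.
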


Thus we know that typical vertex sets in dense pairs inside random graphs are
$p$-good. In the next lemma we observe that families of such $p$-good vertex
sets exhibit strong expansion properties.

Given $\Delta$ and $p$ we say that a bipartite graph~$G=(X\dcup Y,E)$ is
\emph{$(A,f)$-expanding}, if, for any family
$\cF\subset\binom{X}{\Delta}$ of pairwise disjoint $p$-good $\Delta$-sets 
in $(X,Y)$
with~$|\cF|\leq A$, we have
$|\coN_Y(\cF)|\geq f|\cF|$.

\begin{lemma}[expansion lemma]
\label{lem:exp}
  For all positive integers~$\Delta$ and positive reals~$d$ and~$\eps$,
  there are positive~$\nu$ and~$c$ such that if $p\ge c(\log
  n/n)^{1/\Delta}$, then the following holds \aas\ for $\Gamma=\Gnp$. Let
  $G=(X\dcup Y,E)$ be a bipartite subgraph of $\Gamma$. If $(X,Y)$ is an
  $(\eps,d,p)$-dense pair, then $(X,Y)$ is $(1/p^\Delta,\nu
  np^\Delta)$-expanding.
\end{lemma}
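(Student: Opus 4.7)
The plan is a short double-counting argument in which Lemma~\ref{lem:stars-small} controls the overlap between the common neighbourhoods $\coN_Y(F)$ for $F\in\cF$. Writing $Z:=\coN_Y(\cF)=\bigcup_{F\in\cF}\coN_Y(F)$, I suppose for contradiction that a family $\cF\subset\binom{X}{\Delta}$ of pairwise disjoint $p$-good $\Delta$-sets with $|\cF|\le 1/p^\Delta$ violates the bound, i.e.\ $|Z|<\nu np^\Delta|\cF|$, for a $\nu>0$ to be fixed.

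I count pairs $(z,F)\in Z\times\cF$ with $F\subset N_G(z)$ in two ways. Summing over~$F$ and using the $p$-goodness of each $F$, which yields $|\coN_Y(F)|\ge(d-\eps)^\Delta p^\Delta|Y|$, gives the lower bound
\[
\sum_{F\in\cF}|\coN_Y(F)|\;\ge\;(d-\eps)^\Delta p^\Delta|Y|\,|\cF|.
\]
Since $G\subset\Gamma$, the same pairs are counted by $\stars_\Gamma(Z,\cF)$. For the upper bound I invoke Lemma~\ref{lem:stars-small} with the substitutions $X\leftarrow Z$ and $\hyper{F}\leftarrow\cF$: the size hypothesis $|Z|\le\nu_s np^\Delta|\cF|$ is supplied by the contradictory assumption once $\nu\le\nu_s$, while $|\cF|\le 1/p^\Delta\le\xi_s n$ (using $p\ge c(\log n/n)^{1/\Delta}$ with $c$ large enough) and $|Z|\le\nu n\le\xi_s n$ cover the remaining constraints. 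The lemma then delivers
\[
\stars_\Gamma(Z,\cF)\;\le\;p^\Delta|Z|\,|\cF|+6\xi_s np^\Delta|\cF|.
\]
Combining the two bounds and cancelling $p^\Delta|\cF|$ reduces everything to $(d-\eps)^\Delta|Y|\le|Z|+6\xi_s n$. Since one is in the regime where $|Y|$ is a linear fraction of~$n$ (which is how dense pairs from a regular partition are actually used in the proof of Theorem~\ref{thm:main}), choosing $\nu$ and $\xi_s$ sufficiently small in terms of $d$ and~$\eps$ forces $|Z|\ge c_0 n$ for some $c_0>\nu$, contradicting $|Z|<\nu n$.

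The proof is therefore essentially a parameter chase: first fix $\xi_s$ small with respect to $(d-\eps)^\Delta$, feed it into Lemma~\ref{lem:stars-small} to obtain constants $\nu_s$ and $c_s$, and then set $\nu:=\min\{\nu_s,\xi_s,(d-\eps)^\Delta/4\}$ and take $c\ge c_s$ large enough to ensure $1/p^\Delta\le\xi_s n$. The only genuine ingredient beyond careful bookkeeping is the $p$-goodness of the sets $F$, which supplies the linear-in-$|Y|$ lower bound on $|\coN_Y(F)|$ needed for the two counts to disagree; the mild subtlety in the argument is precisely the implicit requirement that $|Y|=\Omega(n)$, which I would absorb into the final value of~$\nu$.
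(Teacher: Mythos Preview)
Your argument is essentially identical to the paper's proof: assume for contradiction that $|Z|<\nu np^\Delta|\cF|$, apply Lemma~\ref{lem:stars-small} (the paper takes $\xi=(d-\eps)^\Delta/7$) to bound $\stars[\Gamma](Z,\cF)$ from above, bound it from below via $p$-goodness, and read off a contradiction in the constants. You are also right about the hidden assumption $|Y|=\Omega(n)$: the paper in fact writes $n$ in place of $|Y|$ in the lower bound without comment, which is justified only because the lemma is invoked (in Claim~\ref{cl:bl:small}) with $|Y|=|U_j|\ge n/(r_1\Delta')$; this missing hypothesis cannot literally be ``absorbed into~$\nu$'' as you phrase it, but it causes no difficulty in the intended application.
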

\begin{proof}
  Given~$\Delta$, $d$, $\eps$, set $\delta:=d-\eps$,
  $\xi:=\delta^\Delta/7$ and let $\nu'$ and $c$ be the constants from
  Lemma~\ref{lem:stars-small} for this $\Delta$ and $\xi$.
  Further, choose $\nu$ such that
  $\nu\le\xi$ and $\nu\le\nu'$.
  Let~$\cF\subset\binom{X}{\Delta}$ be a family of
  pairwise disjoint $p$-good $\Delta$-sets with~$|\cF|\leq1/p^\Delta$. Let~$U=\coN_Y(\cF)$ be
  the common neighbourhood of~$\cF$ in $Y$. We wish to show
  that~$|U|\geq(\nu np^\Delta)|\cF|$. Suppose the contrary. Then
  $|U|<\nu'np^\Delta|\cF|$, $|U|<\nu np^\Delta|\cF|\le\nu
  n\le\xi n$ and $|\cF|\le1/p^\Delta\le c^\Delta n/\log n\le\xi n$ for $n$
  sufficiently large and so we can apply Lemma~\ref{lem:stars-small} with
  parameters $\Delta$ and $\xi$ to $U$ and $\cF$. Since every member of~$\cF$ is
  $p$-good in $(X,Y)$, we thus have
  \begin{equation*}\begin{split}
    \delta^{\Delta}p^{\Delta}n|\cF| &\leq \stars(U,\cF) 
    \leq \stars[\Gamma](U,\cF)
    \leByRef{eq:stars-small} p^\Delta|U||\cF|+6\xi np^{\Delta}|\cF|\\
    &<p^{\Delta}(\nu np^\Delta)|\cF||\cF|+6\xi np^\Delta|\cF|
    \leq\nu np^\Delta|\cF|+6\xi np^{\Delta}|\cF|
    \leq 7\xi np^{\Delta}|\cF|,
  \end{split}\end{equation*}
  which yields that $\delta^\Delta<7\xi$, a
  contradiction.
\end{proof}

In the remainder of this section we are interested in the inheritance of
$p$-denseness to sub-pairs $(X',Y')$ of $p$-dense pairs $(X,Y)$ in a
graph~$G=(V,E)$. It comes as a surprise that even for sets $X'$ and $Y'$ that
are much smaller than the sets considered in the definition of $p$-denseness,
such sub-pairs are typically dense. Phenomena of this type were observed
in~\cite{KR03,GKRS07}.

Here, we will consider sub-pairs induced by neighbourhoods of vertices~$v\in V$
(which may or may not be in $X\dcup Y$), i.e., sub-pairs $(X',Y')$ where $X'$
(or $Y'$ or both) is the neighbourhood of $v$ in~$Y$ (or in $X$). 
Further, we only consider
the case when $G$ is a subgraph of a random graph $\Gnp$.

In~\cite{KohRoeSchSze} an inheritance result of this form was obtained for
triples of dense pairs. More precisely, the following holds for subgraphs~$G$
of~$\Gnp$. For sufficiently large vertex set $X$, $Y$, and $Z$ in~$G$ such that $(X,Y)$ and
$(Y,Z)$ form $p$-dense pairs we have that most vertices $x\in X$ are such
that $(N_Y(x),Y)$ forms again a $p$-dense pair (with slightly changed
parameters). If, moreover, $(X,Z)$ forms a $p$-dense pair, too, then 
$(N_Y(x),N_Z(x))$ is typically also a $p$-dense pair.

\begin{lemma}[inheritance lemma for vertices~\cite{KohRoeSchSze}] 
\label{lem:reg-neighb}
  For all integers $\Delta>0$ and positive reals $d_0$, $\eps'$ and $\mu$ there
  is $\eps$ such that for all $\xi>0$ there is $c>1$ such that if
  $p>c(\log n/n)^{1/\Delta}$, then the following holds \aas\ for $\Gamma=\Gnp$.
  For $n_1$, $n_3\ge\xi p^{\Delta-1}n$ and $n_2\ge\xi p^{\Delta-2}n$ let $G=(X\dcup
  Y\dcup Z,E)$ be any tripartite subgraph of $\Gamma$ with $|X|=n_1$, $|Y|=n_2$, and
  $|Z|=n_3$. If $(X,Y)$ and $(Y,Z)$ are $(\eps,d,p)$-dense pairs in $G$ with
  $d\ge d_0$, then there are at most $\mu n_1$ vertices $x\in X$ such that
  $(N(x)\cap Y,Z)$ is not an $(\eps',d,p)$-dense pair in $G$. 
  
  If, additionally, $(X,Z)$ is $(\eps,d,p)$-dense and $n_1$, $n_2$, $n_3\ge\xi
  p^{\Delta-2}n$, then there are at most $\mu n_1$ vertices $x\in X$ such that
  $(N(x)\cap Y,N(x)\cap Z)$ is not an $(\eps',d,p)$-dense pair in $G$.
\qed
\end{lemma}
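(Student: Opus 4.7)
My plan is to reduce the assertion to a counting argument within the random graph~$\Gamma$, in the spirit of the star lemmas of Section~\ref{sec:random}. As a first step I would dispose of the atypical vertices: by Proposition~\ref{prop:typical} applied to~$(X,Y)$, at most $\eps n_1$ vertices $x\in X$ satisfy $|N(x)\cap Y|<(d-\eps)p|Y|$ (and similarly for $|N(x)\cap Z|$ in the second assertion). These sets can be absorbed into the final exceptional set of size~$\mu n_1$, so it remains to bound the number of \emph{typical} bad~$x$.

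For the first assertion, if a typical~$x$ is bad then there exist $Y^*\subseteq N(x)\cap Y$ and $Z^*\subseteq Z$ with $|Y^*|\ge\eps'|N(x)\cap Y|\ge\eps'(d-\eps)p|Y|$, $|Z^*|\ge\eps'|Z|$ and $e_G(Y^*,Z^*)<(d-\eps')p|Y^*||Z^*|$. Choosing $\eps\le\eps'$, the $(\eps,d,p)$-density of $(Y,Z)$ in~$G$ forbids any witness with $|Y^*|\ge\eps|Y|$, so every witness falls in the range $\eps'(d-\eps)p|Y|\le|Y^*|<\eps|Y|$. I would enumerate a dyadic cover of this range of sizes~$\ell$; for each~$\ell$, let $\mathcal{Y}_\ell\subseteq\binom{Y}{\ell}$ be the family of bad $\ell$-sets (those admitting a low-$G$-density partner~$Z^*$). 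Any typical bad~$x$ then has $Y^*\subseteq N_\Gamma(x)$ for some $Y^*\in\mathcal{Y}_\ell$, since $G\subseteq\Gamma$. A Chernoff/union-bound estimate on~$\Gamma$, analogous to those in the proofs of Lemmas~\ref{lem:stars-big} and~\ref{lem:stars-small}, bounds $\sum_{Y^*\in\mathcal{Y}_\ell}|\coN_X(Y^*)|$ from above (with common neighbourhoods taken in~$\Gamma$), and summing over the $O(\log n)$ dyadic values of~$\ell$ yields at most $\mu n_1$ bad~$x$, provided $c$ in $p\ge c(\log n/n)^{1/\Delta}$ is chosen large enough.

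For the second assertion I would apply the first assertion twice---once to $(X,Y,Z)$ and once to $(X,Z,Y)$ (permissible since $(X,Z)$ is now assumed $(\eps,d,p)$-dense)---to conclude that for all but $2\mu n_1$ vertices $x\in X$ both $(N(x)\cap Y,Z)$ and $(N(x)\cap Z,Y)$ are $(\eps'',d,p)$-dense for some $\eps''\ll\eps'$. For such an~$x$, any bad witness $(Y^*,Z^*)$ for the failure of $(\eps',d,p)$-density of $(N(x)\cap Y,N(x)\cap Z)$ has $Y^*\subseteq N(x)\cap Y$ and $Z^*\subseteq N(x)\cap Z$, forcing $x\in\coN_X(Y^*\cup Z^*)$; running the same dyadic star-counting argument with the combined set $Y^*\cup Z^*$ in place of $Y^*$ bounds the number of such bad~$x$. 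The main obstacle is the union bound when~$\ell$ sits near the lower threshold $\eps'(d-\eps)p|Y|$: there the family $\mathcal{Y}_\ell$ can be very large while $|\coN_X(Y^*)|\approx p^\ell|X|$ is only of polylogarithmic order, so the Chernoff tail must be tightly balanced against $\binom{|Y|}{\ell}$. This is exactly the point at which the threshold $p\ge c(\log n/n)^{1/\Delta}$ is used, in complete analogy with the proof of Lemma~\ref{lem:stars-small}.
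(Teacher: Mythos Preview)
First, note that the paper does not itself prove Lemma~\ref{lem:reg-neighb}; it is quoted from~\cite{KohRoeSchSze} as a black box (hence the~\qed\ with no argument). So there is no in-paper proof to compare against, and the question is whether your sketch stands on its own.

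It does not: the step ``a Chernoff/union-bound estimate on~$\Gamma$, analogous to those in the proofs of Lemmas~\ref{lem:stars-big} and~\ref{lem:stars-small}, bounds $\sum_{Y^*\in\mathcal{Y}_\ell}|\coN_X(Y^*)|$'' is where the argument breaks. Those star lemmas treat families of pairwise \emph{disjoint} sets of \emph{fixed constant} size~$\Delta$, so that $\stars[\Gamma](X,\mathcal{F})$ is genuinely binomial and the union bound runs over $n^{O(n)}$ choices. Here the witnesses~$Y^*$ have size $\ell\ge\eps'(d-\eps)\xi p^{\Delta-1}n$, which is polynomial in~$n$; the members of~$\mathcal{Y}_\ell$ overlap heavily, so the events $\{Y^*\subseteq N_\Gamma(x)\}$ for different~$Y^*$ are strongly correlated and no Chernoff-type concentration applies. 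More fundamentally, $\mathcal{Y}_\ell$ is defined through the \emph{adversarial} graph~$G$, and the $(\eps,d,p)$-density of $(Y,Z)$ says nothing about subsets~$Y^*$ with $|Y^*|<\eps|Y|$---precisely the range you are in---so you have no bound on~$|\mathcal{Y}_\ell|$ at all. Without one, the only deterministic upper bound on the sum (taken in~$\Gamma$) is $\sum_{Y^*\in\binom{Y}{\ell}}|\coN_X(Y^*)|=\sum_{x\in X}\binom{|N_\Gamma(x)\cap Y|}{\ell}$, which is of order $|X|\binom{2p|Y|}{\ell}$ and hence exponentially large in~$\ell$. The proofs of sparse regularity inheritance in~\cite{KohRoeSchSze,KR03,GKRS07} do not go via containment of large witness sets in~$\Gamma$; they rely on iterated refinement of the density hypothesis on neighbourhoods together with counting statements specific to random graphs, and that machinery is genuinely what is required here.
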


In order to combine the constrained blow-up lemma (Lemma~\ref{lem:blowup}) and
the connection lemma (Lemma~\ref{lem:CL}) in the proof of
Theorem~\ref{thm:main} we will need a version of this result for
$\ell$-sets. Such a lemma, stating that common
neighbourhoods of certain $\ell$-sets form again $p$-dense pairs, can be
obtained by an inductive argument from the first part of
Lemma~\ref{lem:reg-neighb}. 
For a proof see
\inappendix{Section~\ref{sec:aux:Bad}}{the appendix of~\cite{BoeKohTar_sparsebip_arxiv}}.

\begin{lemma}[inheritance lemma for $\ell$-sets]
\label{lem:Bad}
  For all integers $\Delta,\ell>0$ and positive reals $d_0,\eps'$, and $\mu$
  there is $\eps$ such that for all $\xi>0$ there is $c>1$ such that if $p>c(\frac{\log
  n}{n})^{1/\Delta}$, then the following holds \aas\ for $\Gamma=\Gnp$. 
  For $n_1,n_3\ge\xi p^{\Delta-1}n$ and $n_2\ge\xi p^{\Delta-\ell-1}n$ let
  $G=(X\dcup Y\dcup Z,E)$ be any tripartite subgraph of $\Gamma$ with
  $|X|=n_1$, $|Y|=n_2$, and $|Z|=n_3$. Assume further that $(X,Y)$ and $(Y,Z)$
  are $(\eps,d,p)$-dense pairs with $d\ge d_0$. Then
  \begin{equation*}
    \big|\Bad{\ell}{\eps'}{d}(X,Y,Z)\big|\le\mu n_1^\ell.
  \end{equation*}  
\end{lemma}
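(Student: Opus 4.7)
I will prove the lemma by induction on~$\ell$. At each level~$\ell'$ I bound the family $D_{\ell'}\subset\binom{X}{\ell'}$ of \emph{directly bad} $\ell'$-sets~$B'$, that is, those for which either $|\coN_Y(B')|<(d-\eps')^{\ell'}p^{\ell'}|Y|$ (type~1) or $(\coN_Y(B'),Z)$ is not $(\eps',d,p)$-dense (type~2). Given bounds $|D_{\ell'}|\le\mu_{\ell'}n_1^{\ell'}$ for every $\ell'\le\ell$, the desired estimate follows from $|\Bad{\ell}{\eps'}{d}(X,Y,Z)|\le\sum_{\ell'=1}^{\ell}|D_{\ell'}|\cdot n_1^{\ell-\ell'}$ once the $\mu_{\ell'}$ are chosen small enough that the sum is at most $\mu n_1^\ell$.

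The base case $\ell=1$ is immediate. Proposition~\ref{prop:typical} bounds the number of $v\in X$ with $|N_Y(v)|<(d-\eps')p|Y|$ by $\eps|X|$ (taking $\eps\le\eps'$), and the first part of Lemma~\ref{lem:reg-neighb}, applied to the triple $(X,Y,Z)$, bounds the number of $v\in X$ with $(N_Y(v),Z)$ not $(\eps',d,p)$-dense by $\mu_1 n_1$. Combining gives $|D_1|\le\mu_1'n_1$.

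For the inductive step, type~1 directly bad $\ell$-sets are bounded by the common neighbourhood lemma (Lemma~\ref{lem:joint}) applied to the $(\eps,d,p)$-dense pair $(X,Y)$ with parameter~$\ell$. For a type~2 directly bad $\ell$-set $B$, write $B=B'\cup\{x\}$ with $|B'|=\ell-1$. If $B'\in\Bad{\ell-1}{\eps'}{d}(X,Y,Z)$, then by the inductive hypothesis the number of such~$B$ is at most $|\Bad{\ell-1}{\eps'}{d}(X,Y,Z)|\cdot n_1\le\mu_{\ell-1}n_1^{\ell}$. Otherwise, $|\coN_Y(B')|\ge(d-\eps')^{\ell-1}p^{\ell-1}|Y|\ge\xi'p^{\Delta-2}n$, where the last inequality uses $|Y|\ge\xi p^{\Delta-\ell-1}n$ and loses at most $\ell-1$ factors of~$p$; also $(\coN_Y(B'),Z)$ is $(\eps',d,p)$-dense. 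I would then like to apply Lemma~\ref{lem:reg-neighb} to the tripartite graph $(X\setminus B',\coN_Y(B'),Z)$ to conclude that at most $\mu n_1$ choices of~$x$ make $(\coN_Y(B),Z)=(N_{\coN_Y(B')}(x),Z)$ not $(\eps',d,p)$-dense.

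The main obstacle is that Lemma~\ref{lem:reg-neighb} additionally requires $(X\setminus B',\coN_Y(B'))$ to be $(\eps,d,p)$-dense, and a naive application of Proposition~\ref{prop:subpairs} only yields regularity parameter $\eps/p^{\ell-1}$, which is useless. I will address this by \emph{strengthening the inductive hypothesis}: beyond $|D_{\ell'}|\le\mu_{\ell'}n_1^{\ell'}$, I simultaneously track the number of $\ell'$-sets $B'\subset X$ for which $(X^c,\coN_Y(B'))$ is not $(\eps',d,p)$-dense, where $X=X^a\dcup X^b$ is a fixed bipartition into halves and $c\in\{a,b\}$. A short density-splitting calculation shows that $(\eps'',d,p)$-denseness of both $(X^a,\coN_Y(B'))$ and $(X^b,\coN_Y(B'))$, for a suitably small $\eps''\ll\eps'$, forces $(\eps',d,p)$-denseness of $(X\setminus B',\coN_Y(B'))$. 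The strengthened hypothesis is propagated to the next level by applying Lemma~\ref{lem:reg-neighb} to the auxiliary triples $(X^c\setminus B',\coN_Y(B'),X^{c'})$ with $c\neq c'$, whose density hypotheses are furnished by the strengthened hypothesis one level below. Careful bookkeeping of the $\eps$ and $\mu$ parameters through the $\ell$ inductive steps (choosing each smaller than the next by enough to absorb the losses from the bipartition-splitting and from Proposition~\ref{prop:subpairs} applied only to losses of constant order) then delivers the claimed bound.
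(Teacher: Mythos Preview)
Your overall induction-on-$\ell$ shape, your identification of the key obstacle (that $(X\setminus B',\coN_Y(B'))$ need not inherit $(\eps,d,p)$-denseness from $(X,Y)$), and your instinct to strengthen the hypothesis by tracking density of $(\coN_Y(B'),X^c)$ to parts of a fixed partition of~$X$ are all exactly right. But the bipartition version you describe does not close.

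Concretely: suppose at level $\ell-1$ your strengthened hypothesis says that for most $(\ell-1)$-sets~$B'$, both pairs $(X^a,\coN_Y(B'))$ and $(X^b,\coN_Y(B'))$ are $(\eps'',d,p)$-dense. You then propose to apply Lemma~\ref{lem:reg-neighb} to the triples $(X^c\setminus B',\coN_Y(B'),X^{c'})$ with $c\neq c'$. This yields, for a good~$B'$ and most $x\in X^c$, that $(\coN_Y(B'\cup\{x\}),X^{c'})$ is dense. It says nothing about $(\coN_Y(B'\cup\{x\}),X^{c})$, because the triple with third coordinate~$X^c$ would have $X^c$ in both the first and third positions and Lemma~\ref{lem:reg-neighb} requires disjoint sets. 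Hence if $B=B'\cup\{x\}$ lies entirely inside~$X^a$ (a constant fraction of all $\ell$-sets do), your argument controls only $(\coN_Y(B),X^b)$ and never $(\coN_Y(B),X^a)$; the strengthened hypothesis is not propagated and the induction breaks.

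The paper repairs precisely this point by replacing the bipartition with a partition $X=X_1\dcup\dots\dcup X_T$ into $T\ge\ell$ parts and first proving the statement only for \emph{crossing} $\ell$-sets (one vertex per part). For a crossing $\ell$-set $B\in X_{i_1}\times\dots\times X_{i_\ell}$, the peeled set $B'=B\setminus\{v\}$ with $v\in X_{i_\ell}$ lies entirely outside~$X_{i_\ell}$. The inductive statement (their Lemma for crossing sets) is applied with the third set equal to~$X_{i_\ell}$, yielding that for most crossing $B'$ the pair $(X_{i_\ell},\coN_Y(B'))$ is dense; then the $\ell=1$ case on $(X_{i_\ell},\coN_Y(B'),Z)$ finishes. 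No self-referential triple ever arises. Finally, non-crossing $\ell$-sets number at most $T\binom{n_1/T}{2}n_1^{\ell-2}\le n_1^\ell/T$, which is below $\tfrac12\mu n_1^\ell$ once~$T$ is large enough. Your proposal becomes correct if you adopt this crossing-set reduction (or, equivalently, if you use $\ell$ parts and carry the induction over crossing tuples rather than arbitrary subsets).
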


%%%% MAIN PROOF %%%%%%%%%%%%%%%%%%%%%%%%%%%%%%%%%%%%%%%%%%%%%%%%%%%%%%%%

\section{Proof of Theorem~\ref{thm:main}}
\label{sec:proof}

In this section we present a proof of Theorem~\ref{thm:main} that
combines our four main lemmas, namely the lemma for~$G$ (Lemma~\ref{lem:G}),
the lemma for~$H$ (Lemma~\ref{lem:H}), the constrained blow-up lemma
(Lemma~\ref{lem:blowup}), and the connection lemma
(Lemma~\ref{lem:CL}). This proof follows the outline given in
Section~\ref{sec:idea}. In addition we will
apply the inheritance lemma for $\ell$-sets (Lemma~\ref{lem:Bad}), which
supplies an appropriate interface between the constrained blow-up lemma and
the connection lemma.

\begin{proof}[Proof of Theorem~\ref{thm:main}]
  We first set up the constants.
  Given $\eta$, $\gamma$, and $\Delta$ let $t$ be the
  constant promised by the lemma for $H$ (Lemma~\ref{lem:H}) for
  input~$\Delta$. Set 
  \begin{equation}\label{eq:proof:etar0}
     \eta_\isubsc{G}:=\eta/10, 
     \qquad\text{and}\qquad
     r_0=1\,,
  \end{equation}
  and apply the lemma for $G$ (Lemma~\ref{lem:G}) with input
  $t$, $r_0$, $\eta_\isubsc{G}$, and $\gamma$ in order to obtain $\eta'_\isubsc{G}$ and $d$.
  Next, the connection lemma (Lemma~\ref{lem:CL}) with input $\Delta$, $2t$, and
  $d$ provides us with $\eps_{\subsc{CL}}$, and $\xi_{\subsc{CL}}$. We
  apply the constrained blow-up
  lemma (Lemma~\ref{lem:blowup}) with $\Delta$, $d$, and $\eta/2$ 
  in order to obtain $\eps_{\subsc{BL}}$ and $\mu_{\subsc{BL}}$. 
  With this we set
  \begin{equation}\label{eq:proof:eta}
    \eta_\isubsc{H}:=\min\{\eta/10,\,\xi_{\subsc{CL}}\eta'_\isubsc{G},\,1/(\Delta+1)\}.
  \end{equation}
  Choose $\mu>0$ such that
  \begin{equation}\label{eq:proof:mu}
    100t^2\mu\le\eta_{\subsc{BL}},
  \end{equation}
  and apply Lemma~\ref{lem:Bad} with $\Delta$ and
  $\ell=\Delta-1$, 
  $d_0=d$, $\eps'=\eps_{\subsc{CL}}$, and $\mu$ to obtain
  $\eps_{\subref{lem:Bad}}$. Let 
  \begin{equation}\label{eq:proof:xi}
      \xi_{\subref{lem:Bad}}:=\eta'_\isubsc{G}/2r
  \end{equation}
  and continue the application of Lemma~\ref{lem:Bad} with $\xi_{\subref{lem:Bad}}$ to obtain
  $c_{\subref{lem:Bad}}$. 
  Now we can fix
  \begin{equation}\label{eq:proof:eps}
    \eps:=\min\{\eps_{\subsc{CL}},\eps_{\subsc{BL}},\eps_{\subref{lem:Bad}}\}
  \end{equation}
  and continue the application of Lemma~\ref{lem:G} with input $\eps$ to get
  $r_1$. Let $\hat{r}_{\subsc{BL}}$ and $\hat{r}_{\subsc{CL}}$ be such that
  \begin{equation}\label{eq:proof:rhat}
     \frac{2r_1}{1-\eta_\isubsc{G}} \le \hat{r}_{\subsc{BL}}
     \qquad\text{and}\qquad
     \frac{2r_1}{\eta_\isubsc{G}} \le \hat{r}_{\subsc{CL}}
  \end{equation}
  and let $c_{\subsc{CL}}$ and  $c_{\subsc{BL}}$ be the constants obtained 
  from the continued application of Lemma~\ref{lem:CL} with $r_1$ replaced
 by $\hat{r}_{\subsc{CL}}$ and Lemma~\ref{lem:blowup} with $r_1$ replaced
  by $\hat{r}_{\subsc{BL}}$, respectively. 
  
  We continue the application of Lemma~\ref{lem:H} with input $\eta_\isubsc{H}$.
  For each $r\in[r_1]$ Lemma~\ref{lem:H} provides a value
  $\beta_r\,$, among all of which we choose the smallest one and set $\beta$ to
  this value.
  Finally, we set 
  $c:=\max\{c_{\subsc{BL}},c_{\subsc{CL}},c_{\subref{lem:Bad}}\}$.
  
  Consider a graph $\Gamma=\Gnp$ with $p\ge c(\log
  n/n)^{1/\Delta}$. Then $\Gamma$ \aas\ satisfies the properties stated in
  Lemma~\ref{lem:G}, Lemma~\ref{lem:blowup}, Lemma~\ref{lem:CL},
  and Lemma~\ref{lem:Bad}, with the parameters previously specified. We
  assume in the following that this is the case and show that then also the following holds.  
  For all subgraphs
  $G\subset\Gamma$ and all graphs $H$ such that $G$ and $H$ have the properties
  required by Theorem~\ref{thm:main} we have $H\subset G$. 
  To summarise the definition of the constants above, we can now
  assume that $\Gamma$ satisfies the conclusion of the
  following lemmas:
  \begin{itemize}[leftmargin=1.5cm]
    \item[(L\ref{lem:G})]
      { % \it
      Lemma~\ref{lem:G} for parameters $t$, $r_0=1$,
      $\eta_\isubsc{G}$, $\gamma$, $\eta'_\isubsc{G}$, $d$, $\eps$, and $r_1$,
      i.e., if $G$ is any spanning subgraph of $\Gamma$ satisfying the
      requirements of Lemma~\ref{lem:G}, then we obtain a partition
      of~$G$ as specified in the lemma with these parameters,}
    \item[(L\ref{lem:blowup})] 
      { % \it
      Lemma~\ref{lem:blowup} for parameters
      $\Delta$, $d$, $\eta/2$, $\eps_\subsc{BL}$, $\mu_\subsc{BL}$,
      and $\hat{r}_{\subsc{BL}}$,}
    \item[(L\ref{lem:CL})] 
      { % \it
      Lemma~\ref{lem:CL} for parameters
      $\Delta$, $2t$, $d$, $\eps_\subsc{CL}$, $\xi_\subsc{CL}$,
      and $\hat{r}_{\subsc{CL}}$,}
    \item[(L\ref{lem:Bad})] 
      { % \it
      Lemma~\ref{lem:Bad} for parameters $\Delta$, $\ell=\Delta-1$, 
      $d_0=d$, $\eps'=\eps_{\subsc{CL}}$, $\mu$, $\eps_{\subref{lem:Bad}}$, and
      $\xi_{\subref{lem:Bad}}$.}
  \end{itemize}  
  
  Now suppose we are given a graph $G=(V,E)\subset\Gamma$ with
  $\deg_G(v)\ge(\frac{1}{2}+\gamma)\deg_\Gamma(v)$ for all $v\in V$ and $|V|=n$,
  and a graph $H=(\ti{V},\ti{E})$ with $|\ti{V}|=(1-\eta)n$.
  Before we show that $H$ can be embedded into~$G$ we will use
  the lemma for~$G$ (Lemma~\ref{lem:G}) and the lemma
  for $H$ (Lemma~\ref{lem:H}) to prepare $G$ and $H$ for this embedding.

  First we use the fact that~$\Gamma$ has property (L\ref{lem:G}).
  Hence, for the graph $G$ we obtain an $r$ with $1\le r\le r_1$
  from Lemma~\ref{lem:G}, together with a set
  $V_0\subset V$ with $|V_0|\le\eps n$, and a mapping $g\colon V\setminus
  V_0\to\MON_{r,t}$ such that \ref{lem:G:Vi}--\ref{lem:G:reg} of
  Lemma~\ref{lem:G} are fulfilled. For all $i\in[r], j\in[2t]$ let $U_{i}$,
  $V_{i}$, $C_{i,j}$, $C'_{i,j}$, $B_{i,j}$, and $B'_{i,j}$ be the sets defined
  in Lemma~\ref{lem:G}. 
  Recall that these sets were called big clusters, connecting clusters, and
  balancing clusters. With this the graph~$G$ is prepared for the embedding.
  We now turn to the graph~$H$.
  
  We assume for simplicity that
  $2r/(1-\eta_\isubsc{G})$ and $r/(t\eta'_\isubsc{G})$ are integers and define
  \begin{equation}\label{eq:proof:r}
    r_{\subsc{BL}}:=2r/(1-\eta_\isubsc{G})
    \qquad\text{and}\qquad
    r_{\subsc{CL}}:=2r/\eta'_\isubsc{G}\,.
  \end{equation}
  We apply Lemma~\ref{lem:H} which we already provided with $\Delta$ and
  $\eta_\isubsc{H}$. For input~$H$ this lemma provides a homomorphism
  $h$ from $H$ to $\MON_{r,t}$ such that \ref{lem:H:Vi}--\ref{lem:H:X} of
  Lemma~\ref{lem:H} are fulfilled. For all $i\in[r], j\in[2t]$ let
  $\ti{U}_{i}$, $\ti{V}_{i}$, $\ti{C}_{i,j}$, $\ti{C}'_{i,j}$, $\ti{B}_{i,j}$,
  $\ti{B}'_{i,j}$, and $\ti{X}_i$
  be the sets whose existence is guaranteed by Lemma~\ref{lem:H}. Further, set
  $C_i:=C_{i,1}\dcup\dots\dcup C_{i,2t}$,
  $\ti{C}_i:=\ti{C}_{i,1}\dcup\dots\dcup \ti{C}_{i,2t}$, that is, 
  $C_i$ consists of connecting clusters and $\ti{C}_i$
  of connecting vertices. Define $C'_i$, $\ti{C}'_i$, $B_i$,
  $\ti{B}_i$, $B'_i$, and $\ti{B}'_i$ analogously ($B_i$ consists of
  balancing clusters and $\ti B_i$ of balancing vertices).
  
  Our next goal will be to appeal to property (L\ref{lem:blowup}) which asserts
  that we can apply the constrained blow-up lemma
  (Lemma~\ref{lem:blowup}) for each $p$-dense pair $(U_i,V_i)$
  with $i\in[r]$ individually and embed $H[\ti U_i\dcup\ti V_i]$ into this
  pair. For this we fix $i\in[r]$. We will first set up special $\Delta$-sets
  $\hyper{H}_i$ and forbidden $\Delta$-sets $\hyper{B}_i$ for the application
  of Lemma~\ref{lem:blowup}. The idea is as follows. With the help of
  Lemma~\ref{lem:blowup} we will embed all vertices in $\ti U_i\dcup\ti V_i$.
  But all connecting and balancing vertices of~$H$ remain unembedded. They will
  be handled by the connection lemma, Lemma~\ref{lem:CL}, later on.
  However, these two lemmas cannot operate independently. If, for example, a
  connecting vertex $\ti y$ has three neighbours in $\ti  V_i$, then these
  neighbours will be already mapped to vertices $v_1,v_2,v_3$ in $V_i$
  (by the blow-up lemma) when we want to embed $\ti y$. Accordingly the
  image of $\ti y$ in the embedding is confined to the common neighbourhood of
  the vertices $v_1,v_2,v_3$ in $G$. In other words, this common
  neighbourhood will be the candidate set $C(\ti y)$ in the application of
  Lemma~\ref{lem:CL}. This lemma requires, however, that candidate sets are
  not too small (condition~\ref{lem:CL:Cbig} of Lemma~\ref{lem:CL}) and, in
  addition, that candidate sets of any two adjacent vertices induce $p$-dense
  pairs (condition~\ref{lem:CL:Cdense}). Hence we need to be prepared for these
  requirements. This will be done via the special and forbidden sets. The
  family of special sets $\hyper{H}_i$ will contain neighbourhoods in $\ti V_i$
  of connecting or balancing vertices $\ti y$ of $H$ (observe that such
  vertices do not have neighbours in $\ti U_i$\,, see
  Figure~\ref{fig:backbone}). The family of forbidden sets $\hyper{B}_i$ will
  consist of sets in $V_i$ which are ``bad'' for the embedding of these
  neighbourhoods in view of~\ref{lem:CL:Cbig} and~\ref{lem:CL:Cdense} of
  Lemma~\ref{lem:CL} (recall that Lemma~\ref{lem:blowup} does not map special
  sets to forbidden sets). Accordingly, $\hyper{B}_i$ contains $\Delta$-sets
  that have small common neighbourhoods or do not induce $p$-dense pairs in one
  of the relevant balancing or connecting clusters.
  We will next give the details of this construction
  of $\hyper{H}_i$ and $\hyper{B}_i$.

  We start with the special $\Delta$-sets $\hyper{H}_i$. As explained, we would
  like to include in the family~$\hyper{H}_i$ all neighbourhoods of vertices $\ti w$ of
  vertices outside $\ti U_i\dcup\ti V_i$. Such neighbourhoods clearly lie
  entirely in the set~$\ti X_i$ provided by Lemma~\ref{lem:H}.
  However, they need not necessarily be $\Delta$-sets (in fact,
  by~\ref{lem:H:deg} of Lemma~\ref{lem:H}, they are of size at most
  $\Delta-1$). Therefore we have to ``pad'' these neighbourhoods in order to
  obtain $\Delta$-sets. This is done as follows. We start by picking
  an arbitrary set of $\Delta|\ti X_i|$ vertices (which will be used for the
  ``padding'') in $\ti V_i\setminus \ti X_i$. We add these vertices to $\ti X_i$
  and call the resulting set~$\ti{X}'_i$. This is possible
  because~\ref{lem:H:X} of Lemma~\ref{lem:H} and~\eqref{eq:proof:eta} imply that
    $|\ti{X}'_i|\le(\Delta+1)|\ti{X}_i|
    \le (\Delta+1)\eta_\isubsc{H}|\ti{V}_i|
    \le|\ti{V}_i|$.
    
  Now let $\ti{Y}_i$ be the set of vertices in $\ti{B}_i\dcup\ti{C}_i$ with
  neighbours in $\ti{V}_i$. These are the vertices for whose neighbourhoods we
  will include $\Delta$-sets in~$\hyper H_i$.
  It follows from the definition of $\ti{X}_i$ that
  $|\ti{Y}_i|\le\Delta|\ti{X}_i|$.  
  Let $\ti{y}\in\ti{Y}_i\subset\ti{B}_i\cup\ti{C}_i$.   
  By the definition of $\ti{X}_i$ we have $N_H(\ti{y})\subset\ti{X}_i$. 
  Next, we let
  \begin{equation}\label{eq:proof:Xy}
    \ti X_{\ti y}\ \text{be the set of neighbours of $\ti{y}$ in $\ti V_i$}\,.
  \end{equation}
  As explained, $\ti{y}$ has strictly less than $\Delta$ neighbours in $\ti{V}_i$
  and hence
  we choose additional vertices from $\ti{X}'_i\setminus\ti{X}_i$. In this way
  we obtain for each $\ti{y}\in\ti{Y}_i$ a $\Delta$-set $N_{\ti{y}}\in\ti{X}'_i$ with
  \begin{equation}\label{eq:proof:Ny}
    N_{\ti{X}_i}(\ti y)=N_{\ti{V}_i}(\ti y)=\ti X_{\ti y}\subset N_{\ti{y}}\,.
  \end{equation}
  We make sure, in this process, that for any two
  different $\ti{y}$ and $\ti{y}'$ we never include the same additional vertex
  from $\ti{X}'_i\setminus\ti{X}_i$. 
  This is possible because
  $|\ti{X}'_i\setminus\ti{X}_i|\ge\Delta|\ti{X}_i|\ge|\ti{Y}_i|$.
  We can thus guarantee that
  \begin{equation}\label{eq:proof:special}
    \text{each vertex in $\ti{X}'_{i}$ is contained in at most
      $\Delta$ sets $N_{\ti{y}}$.}
  \end{equation}
  The family of special $\Delta$-sets for the application of
  Lemma~\ref{lem:blowup} on $(U_i,V_i)$ is then
  \begin{equation}\label{eq:proof:Hi}
    \hyper{H}_i:=\{N_{\ti{y}}\colon \ti{y}\in\ti{Y}_i\}\,.
  \end{equation}
  Note that this is indeed a family of $\Delta$-sets encoding all
  neighbourhoods in $\ti U_i\dcup\ti V_i$ of vertices 
  outside this set.

  Now we turn to the family~$\hyper B_i$ of forbidden $\Delta$-sets. Recall
  that this family should contain sets that are forbidden for the embedding of
  the special $\Delta$-sets because their common neighbourhood in a
  (relevant) balancing or connecting cluster is small or does not induce a
  $p$-dense pair. More precisely, we are interested in $\Delta$-sets~$S$ 
  that have one of the following properties. Either $S$
  has a small common neighbourhood in some cluster from $B_i$ or from $C_i$
  (observe that only balancing vertices from $\ti B_i$ and connecting
  vertices from $\ti C_i$ have neighbours in $\ti V_i$). Or the neighbourhood
  $\coN_D(S)$ of $S$ in a cluster $D$ from $B_i$ or $C_i$, respectively, is such
  that $(\coN_D(S),D')$ is not $p$-dense for some cluster $D'$ from $B'_i\cup
  B'_{i+1}$ or $C'_i\cup C'_{i+1}$ (observe that edges between balancing
  vertices run only between $\ti B_i$ and $\ti B'_i\cup \ti B'_{i+1}$ and edges
  between connecting vertices only between $\ti C_i$ and $\ti C'_i\cup \ti
  C'_{i+1}$).
  
  For technical reasons, however, we need to digress from this strategy
  slightly: We want to bound the number of $\Delta$-sets in $\hyper B_i$ with
  the help of the inheritance lemma for $\ell$-sets, Lemma~\ref{lem:Bad},
  later. Notice that, thanks to the lower bound on $n_2$ in
  Lemma~\ref{lem:Bad}, this lemma cannot be applied (in a meaningful way) for
  $\Delta$-sets. But it can be applied for $(\Delta-1)$-sets.  
  Therefore, we will not consider $\Delta$-sets directly but
  first construct an auxiliary family of $(\Delta-1)$-sets and then, again,
  ``pad'' these sets to obtain a family of $\Delta$-sets. Observe that the
  strategy outlined while setting up the special sets~$\hyper{H}_i$
  still works with these
  $(\Delta-1)$-sets: neighbourhoods of connecting or balancing
  vertices in $\ti V_i$ are of size at most $\Delta-1$ by~\ref{lem:H:deg} of Lemma~\ref{lem:H}.
  
  But now let us finally give the details.
  We first define the auxiliary family of $(\Delta-1)$-sets as follows:
  \begin{equation}\label{eq:proof:bad}
  \begin{split}
    \hyper{B}'_i:=
    &\bigcup_{\substack{i'\in\{i,i+1\},j,j'\in[2t] \\
      (c_{i,j},c'_{i',j'})\in\MON_{r,t}}}
      \Bad{\Delta-1}{\eps_{\subsc{CL}}}{d}
      (V_i,C_{i,j},C'_{i',j'})
    \quad\cup
    % \\ &
      \bigcup_{\substack{j,j'\in[2t]\\ (b_{i,j},b'_{i,j'})\in\MON_{r,t}}}
      \Bad{\Delta-1}{\eps_{\subsc{CL}}}{d}
      (V_i,B_{i,j},B_{i,j'}).
  \end{split}\end{equation}

  We will next bound the size of this family by appealing to property
  (L\ref{lem:Bad}), and hence Lemma~\ref{lem:Bad}, with the
  tripartite graphs $G[V_i,C_{i,j},C'_{i',j'}]$ and $G[V_i,B_{i,j},B'_{i,j'}]$
  with indices as in the definition of~$\hyper{B}'_i$. For
  this we need to check the conditions appearing in this lemma. By the definition of $\MON_{r,t}$ and
  \ref{lem:G:reg} of Lemma~\ref{lem:G} all pairs $(C_{i,j},C'_{i',j'})$ and
  $(B_{i,j},B'_{i,j'})$ appearing in the definition of $\hyper{B}'_i$
  as well as the pairs
  $(V_{i},C_{i,j})$ and $(V_{i},B_{i,j})$ with $j\in[2t]$ are
  $(\eps,d,p)$-dense. For the vertex sets of these dense pairs we know
  $|V_{i}|,|C'_{i',j'}|,|B'_{i,j'}| \ge\eta'_\isubsc{G} n/2r
  \ge\xi_{\subref{lem:Bad}}p^{\Delta-1}n$ and $|C_{i,j}|,|B_{i,j}|
  \ge\eta'_\isubsc{G} n/2r =\xi_{\subref{lem:Bad}}n$ by~\ref{lem:G:Vi}
  and~\ref{lem:G:Ci} of Lemma~\ref{lem:G}
  and~\eqref{eq:proof:xi}.
  Thus, since $\eps\le\eps_{\subref{lem:Bad}}$, property (L\ref{lem:Bad})
  implies
  that the family
  \begin{align*}
    \Bad{\Delta-1}{\eps_{\subsc{CL}}}{d}
     (V_i,C_{i,j},C'_{i',j'}), \text{ and }
    \Bad{\Delta-1}{\eps_{\subsc{CL}}}{d}
     (V_i,B_{i,j},B'_{i,j'})
  \end{align*}
  is of size $\mu|V_i|^{\Delta-1}$ at most. It follows
  from~\eqref{eq:proof:bad} that $|\hyper{B}'_i|\le 8t^2\mu|V_i|^{\Delta-1}$ which is at most $\mu_{\rm
  BL}|V_i|^{\Delta-1}$ by~\eqref{eq:proof:mu}. The family of forbidden
  $\Delta$-sets is then defined by
  \begin{equation}
    \hyper{B}_i:=\hyper{B}'_i\times V_i \quad\text{and we have}\quad
    |\hyper{B}_i|\le\mu_{\subsc{BL}}|V_i|^\Delta\,.
  \label{eq:proof:corrupt}
  \end{equation}

  Having defined the special and forbidden $\Delta$-sets we are now ready to
  appeal to (L\ref{lem:blowup}) and use the constrained blow-up lemma
  (Lemma~\ref{lem:blowup}) 
  with parameters $\Delta$, $d$, $\eta/2$, $\eps_\subsc{BL}$, $\mu_\subsc{BL}$,
  $\hat{r}_{\subsc{BL}}$, and $r_{\subsc{BL}}$ separately for each pair of graphs
  $G_i:=(U_i,V_i)$ and $H_i:=H[\ti{U_i}\dcup\ti{V_i}]$. 
  Let us quickly check that the constant~$r_{\subsc{BL}}$ and the graphs $G_i$
  and $H_i$ satisfy the required conditions. Observe first, that 
  $1\le r_\subsc{BL} = 2r/(1-\eta_\isubsc{G})
  \le 2r_1/(1-\eta_\isubsc{G})\le
  \hat{r}_{\subsc{BL}}$ by~\eqref{eq:proof:r} and~\eqref{eq:proof:rhat}.
  Moreover $(U_i,V_i)$ is an
  $(\eps_\subsc{BL},d,p)$-dense pair by~\ref{lem:G:reg} of
  Lemma~\ref{lem:G} and~\eqref{eq:proof:eps}.
  \ref{lem:G:Vi} implies
  \begin{equation*}
    |U_i|\ge (1-\eta_\isubsc{G})\frac{n}{2r}
    \eqByRef{eq:proof:r} \frac{n}{r_{\subsc{BL}}}
  \end{equation*}
  and similarly $|V_i|\ge n/r_{\subsc{BL}}$.
  By~\ref{lem:H:Vi} of Lemma~\ref{lem:H} we have
  \begin{equation*}\begin{split}
    |\ti{U}_i| &\le(1+\eta_\isubsc{H})\frac{m}{2r}
    \le(1+\eta_\isubsc{H})(1-\eta)\frac{n}{2r}
    \le (1+\eta_\isubsc{H}-\eta)\frac{n}{2r}
    \leBy{\eqref{eq:proof:etar0},\eqref{eq:proof:eta}}
      (1-\tfrac12\eta-\eta_\isubsc{G})\frac{n}{2r} \\
    & \le
      (1-\tfrac12{\eta})(1-\eta_\isubsc{G})\frac{n}{2r}
    \eqByRef{eq:proof:r}
      (1-\tfrac12{\eta})\frac{n}{r_{\subsc{BL}}}
  \end{split}
  \end{equation*}
  and similarly $|\ti{V}_i|\le(1-\frac{\eta}{2})n/r_{\subsc{BL}}$.
  For the application of Lemma~\ref{lem:blowup}, let the families of 
  special and forbidden $\Delta$-sets be
  defined in~\eqref{eq:proof:Hi} and~\eqref{eq:proof:corrupt},
  respectively. Observe that~\eqref{eq:proof:special}
  and~\eqref{eq:proof:corrupt} guarantee that the required conditions (of
  Lemma~\ref{lem:blowup}) are satisfied. Consequently there is an embedding of $H_i$
  into $G_i$ for each $i\in[r]$ such that no special $\Delta$-set is mapped to a forbidden $\Delta$-set.
  Denote the united embedding resulting from these $r$ applications of the
  constrained blow-up lemma by $f_{\subsc{BL}}:\bigcup_{i\in[r]}\ti{U}_i\cup
  \ti{V}_i \to \bigcup_{i\in[r]}U_i\cup V_i$. 
  
  It remains to verify that $f_{\subsc{BL}}$ can be extended to an embedding of
  all vertices of $H$ into $G$. We still need to take care of the balancing and
  connecting vertices. For this purpose we will, again, fix $i\in[r]$ and use
  property (L\ref{lem:CL}) which states that the conclusion of the
  connection lemma (Lemma~\ref{lem:CL}) holds for parameters
  $\Delta$, $2t$, $d$, $\eps_{\subsc{CL}}$, $\xi_{\subsc{CL}}$, and
  $\hat{r}_{\subsc{CL}}$. We will apply this lemma with input
  $r_{\subsc{CL}}$ to the graphs $G'_i:=G[W_i]$ and $H'_i:=H[\ti{W}_i]$ where
  $W_i$ and $\ti{W}_i$ and their partitions for the application of the connection
  lemma are as follows (see Figure~\ref{fig:connect}). Let
  $W_i:=W_{i,1}\dcup\dots\dcup W_{i,8t}$ where for
  all $j\in[t],k\in[2t]$ we set
  \begin{align*}
    W_{i,j}&:=C_{i,t+j}\,, &
    W_{i,t+j}&:=C_{i+1,j}\,, & 
    W_{i,2t+j}&:=C'_{i,t+j}\,, \\
    W_{i,3t+j}&:=C'_{i+1,j}\,, & 
    W_{i,4t+k}&:=B_{i,k}\,, &
    W_{i,6t+k}&:=B'_{i,k}\,.
  \end{align*}
  ({\it This means that we propose the clusters in the following order to the
  connection lemma. The connecting clusters without primes come first, then the
  connecting clusters with primes, then the balancing clusters without primes,
  and finally the balancing clusters with primes.}\,)

  The partition
  $\ti{W}_i:=\ti{W}_{i,1}\dcup\dots\dcup \ti{W}_{i,8t}$ of the vertex set
  $\ti{W}_i$ of $H'_i$ is defined accordingly, i.e.,  for all
  $j\in[t],k\in[2t]$ we set
  \begin{align*}
    \ti W_{i,j}&:=\ti C_{i,t+j}\,, &
    \ti W_{i,t+j}&:=\ti C_{i+1,j}\,, & 
    \ti W_{i,2t+j}&:=\ti C'_{i,t+j}\,, \\
    \ti W_{i,3t+j}&:=\ti C'_{i+1,j}\,, & 
    \ti W_{i,4t+k}&:=\ti B_{i,k}\,, &
    \ti W_{i,6t+k}&:=\ti B'_{i,k}\,.
  \end{align*}  
  \begin{figure}
  \begin{center}
    \psfrag{u2}{\sml{$u_{i}$}}
    \psfrag{v2}{\sml{$v_{i}$}}
    \psfrag{u3}{\sml{$u_{i+1}$}}
    \psfrag{v3}{\sml{$v_{i+1}$}}
    \psfrag{c23}{\sml{$c_{i,3}$}} % W_{i,1}
    \psfrag{W1}{$W_{i,1}$}
    \psfrag{c24}{\sml{$c_{i,4}$}} % W_{i,2}
    \psfrag{W2}{$W_{i,2}$}
    \psfrag{c'23}{\sml{$c'_{i,3}$}} % W_{i,5}
    \psfrag{W5}{$W_{i,5}$}
    \psfrag{c'24}{\sml{$c'_{i,4}$}} % W_{i,6}
    \psfrag{W6}{$W_{i,6}$}
    \psfrag{b21}{\sml{$b_{i,1}$}} % W_{i,9}
    \psfrag{W9}{$W_{i,9}$}
    \psfrag{b22}{\sml{$b_{i,2}$}} % W_{i,10}
    \psfrag{W10}{$W_{i,10}$}
    \psfrag{b23}{\sml{$b_{i,3}$}} % W_{i,11}
    \psfrag{W11}{\hspace{-2mm}$W_{i,11}$}
    \psfrag{b24}{\sml{$b_{i,4}$}} % W_{i,12}
    \psfrag{W12}{$W_{i,12}$}
    \psfrag{b'21}{\sml{$b'_{i,1}$}} % W_{i,13}
    \psfrag{W13}{$W_{i,13}$}
    \psfrag{b'22}{\sml{$b'_{i,2}$}} % W_{i,14}
    \psfrag{W14}{$W_{i,14}$}
    \psfrag{b'23}{\sml{$b'_{i,3}$}} % W_{i,15}
    \psfrag{W15}{$W_{i,15}$}
    \psfrag{b'24}{\sml{$b'_{i,4}$}} % W_{i,16}
    \psfrag{W16}{$W_{i,16}$}
    \psfrag{c31}{\sml{$c_{i+1,1}$}} % W_{i,3}
    \psfrag{W3}{$W_{i,3}$}
    \psfrag{c32}{\sml{$c_{i+1,2}$}} % W_{i,4}
    \psfrag{W4}{$W_{i,4}$}
    \psfrag{c'31}{\sml{$c'_{i+1,1}$}} % W_{i,7}
    \psfrag{W7}{$W_{i,7}$}
    \psfrag{c'32}{\sml{$c'_{i+1,2}$}} % W_{i,8}
    \psfrag{W8}{$W_{i,8}$}
    \includegraphics[scale=1.7]{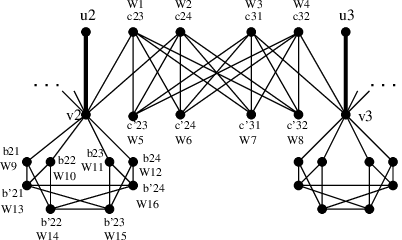}
  \end{center}
  \caption{The partition $W_i=W_{i,1}\dcup\dots\dcup W_{i,8t}$ of
  $G'_i=G[W_i]$ for the special case $t=2$.}
  \label{fig:connect}
  \end{figure}
  To check whether we can apply the connecting lemma observe first that 
  $$
    1\le 2r/\eta_\isubsc{G}\le 2r_1/\eta_\isubsc{G} \le \hat{r}_{\subsc{CL}}
  $$
  by~\eqref{eq:proof:rhat}. For $\ti{y}\in\ti{W}_{i,j}$ with $j\in[8t]$ 
  recall from~\eqref{eq:proof:Xy} (using that each vertex in $H$ has
  neighbours in at most one set~$\ti V_{i'}$, see Figure~\ref{fig:backbone}) that
  \begin{equation}\label{eq:proof:XXy}
    \text{$\ti{X}_{\ti{y}}$ is the set of neighbours of $\ti{y}$ in
      $\ti{V}_i\cup\ti{V}_{i+1}$ and
      set $X_{\ti{y}}:=f_{\subsc{BL}}(\ti{X}_{\ti{y}})$}.
  \end{equation}  
  Then the indexed set system
  $\big(\ti{X}_{\ti{y}}\colon\ti{y}\in\ti{W}_{i,j}\big)$ consists of pairwise
  disjoint sets because $\ti{W}_{i,j}$ is $3$-independent in~$H$
  by~\ref{lem:H:Cindep} of Lemma~\ref{lem:H}. Thus also
  $\big(X_{\ti{y}}:\ti{y}\in\ti{W}_{i,j}\big)$ consists of pairwise disjoint
  sets, as required by Lemma~\ref{lem:CL}. Now let the external degree and the
  candidate set of $\ti{y}\in\ti W_{i,j}$ be defined as in Lemma~\ref{lem:CL},
  i.e., 
  \begin{equation}\label{eq:proof:cand}
    \edeg(\ti{y}):=|X_{\ti{y}}| \qquad\text{and}\qquad
    C(\ti{y}):=\coN_{W_{i,j}}(X_{\ti{y}})\,.
  \end{equation}
  Observe that this implies $C(\ti{y})=W_{i,j}$ if $\ti{X}_{\ti{y}}=\emptyset$
  and hence $X_{\ti{y}}=\emptyset$. 
  Now we will check that conditions \ref{lem:CL:Wi}--\ref{lem:CL:Cdense} of
  Lemma~\ref{lem:CL} are satisfied. From~\ref{lem:G:Ci} of Lemma~\ref{lem:G}
  and~\ref{lem:H:Ci} of Lemma~\ref{lem:H}  
  it follows that
  \begin{align*}
    |W_{i,j}|
      &\geBy{\ref{lem:G:Ci}} \eta'_\isubsc{G}\frac{n}{2r}
      \eqByRef{eq:proof:r} \frac{n}{r_{\subsc{CL}}}
    \qquad\text{and} \\
    |\ti{W}_{i,j}|
      &\leBy{\ref{lem:H:Ci}}
      \eta_\isubsc{H}\frac{m}{2r}\le\eta_\isubsc{H}\frac{n}{2r}
      \eqByRef{eq:proof:r} \frac{\eta_\isubsc{H}}{\eta'_\isubsc{G}} \frac{n}{r_{\subsc{CL}}}
      \leByRef{eq:proof:eta}\xi_{\subsc{CL}} \frac{n}{r_{\subsc{CL}}}
  \end{align*}
  and thus we have condition~\ref{lem:CL:Wi}. By~\ref{lem:H:Cindep} of
  Lemma~\ref{lem:H} we also get condition~\ref{lem:CL:indep} of
  Lemma~\ref{lem:CL}. Further, it follows from~\ref{lem:H:deg} of
  Lemma~\ref{lem:H} that $\edeg(\ti{y})=\edeg(\ti{y}')$ and
  $\ldeg(\ti{y})=\ldeg(\ti{y}')$ for all $\ti{y},\ti{y}'\in\ti{W}_{i,j}$ with
  $j\in[8t]$. In addition $\Delta(H)\le\Delta$ and hence
  \begin{equation*}\begin{split}
    \deg_{H_i'}(\ti y) + \edeg(\ti y)
    & \eqByRef{eq:proof:cand} |N_{\ti W_i}(\ti y)|+|X_{\ti y}| \\
    & \eqByRef{eq:proof:XXy} 
      |N_{\ti W_i}(\ti y)|+|N_{\ti V_i\cup\ti V_{i+1}}(\ti y)|
    \le\deg_H(\ti y)\le\Delta
  \end{split}
  \end{equation*}
  and thus condition~\ref{lem:CL:deg} of Lemma~\ref{lem:CL} is
  satisfied.
  To check conditions~\ref{lem:CL:Cbig} and~\ref{lem:CL:Cdense} of
  Lemma~\ref{lem:CL} observe that for all $\ti{y}\in\ti{C}'_{i',j}$ with 
  $i'\in\{i,i+1\}$ and $j\in[2t]$ we have
  $C(\ti{y})=C'_{i',j}$ as $\ti{y}$ has no neighbours in $\ti{V}_i$ or
  $\ti{V}_{i+1}$ and hence the external $\edeg(\ti{y})=0$
  (see~\eqref{eq:proof:XXy} and~\eqref{eq:proof:cand}).
  Thus~\ref{lem:CL:Cbig} is
  satisfied for $\ti{y}\in\ti{C}'_{i',j}$, and similarly for
  $\ti{y}\in\ti{B'}_{i',j}$. 
  For all $\ti{y}\in\ti{C}_{i,j}$ with $t<j\le 2t$ on the other hand
  we have $\ti{X}_{\ti{y}}\subset N_{\ti{y}}\in\binom{\ti{V}_{i}}{\Delta}$
  by~\eqref{eq:proof:Xy}.
  Recall that $N_{\ti{y}}$ was a special $\Delta$-set in the application of
  the restricted blow-up lemma on $G_i=(U_i,V_i)$ and
  $H_i=H[\ti{U_i}\dcup\ti{V_i}]$ owing to~\eqref{eq:proof:Hi}. Therefore
  $N_{\ti{y}}$ is not mapped to a forbidden $\Delta$-set in
  $\hyper{B}_{i}\subset\binom{V_i}{\Delta}$ by $f_{\subsc{BL}}$ and thus,
  by~\eqref{eq:proof:bad}, to no
  $\Delta$-set in 
  $\Bad{\Delta-1}{\eps_{\subsc{CL}}}{d}(V_i,C_{i,j},C'_{i',j'})\times V_i$
  with $i'\in\{i,i+1\},j,j'\in[2t]$ and $(c_{i,j},c'_{i',j'})\in\MON_{r,t}$.
  We infer that the set $f_{\subsc{BL}}(\ti{X}_{\ti{y}})= 
  X_{\ti{y}}\in\binom{V_{i(\ti{y})}}{\edeg(\ti{y})}$ satisfies
  $|\coN_{C_{i,j}}(X_{\ti{y}})| \ge(d-\eps_{\rm
  CL})^{\edeg(\ti{y})}p^{\edeg(\ti{y})}|C_{i,j}|$ and is such that
  \begin{multline}\label{eq:proof:dense}
    (\coN_{C_{i,j}}(X_{\ti{y}}),C'_{i',j'}) \text{ is $(\eps_{\subsc{CL}},d,p)$-dense} \\
    \text{for all $i'\in\{i,i+1\},j,j'\in[2t]$ with
    $(c_{i,j},c'_{i',j'})\in\MON_{r,t}$}.
    \qquad
  \end{multline}
  Since we chose $C(\ti{y})=\coN(X_{\ti{y}})\cap{C_{i,j}}$
  in~\eqref{eq:proof:cand} we get condition~\ref{lem:CL:Cbig} of
  Lemma~\ref{lem:CL} also for $\ti{y}\in\ti{C}_{i,j}$ with $t<j\le 2t$.
  For $\ti{y}\in\ti{C}_{i+1,j}$ with $j\in[t]$
  the same argument applies with 
  $\ti{X}_{\ti{y}}\subset N_{\ti{y}}\in\binom{\ti{V}_{i+1}}{\Delta}$,
  and for $\ti{y}\in\ti{B}_{i,j}$
  with $j\in[2t]$
  the same argument applies with 
  $\ti{X}_{\ti{y}}\subset N_{\ti{y}}\in\binom{\ti{V}_{i}}{\Delta}$.
  
  Now it will be easy to see that we get~\ref{lem:CL:Cdense} of
  Lemma~\ref{lem:CL}. Indeed, recall again that $C(\ti{y})=C'_{i',j'}$ for all
  $\ti{y}\in\ti{C}'_{i',j'}$ and $C(\ti{y})=B'_{i',j'}$ for all $\ti{y}\in\ti{B}'_{i',j'}$ with
  $i'\in\{i,i+1\}$ and $j\in[2t]$. In addition, the mapping $h$ constructed by
  Lemma~\ref{lem:H} is a homomorphism from $H$ to $\MON_{r,t}$. Hence~\eqref{eq:proof:dense} and 
  property~\ref{lem:G:reg} of Lemma~\ref{lem:G} assert
  that condition~\ref{lem:CL:Cdense} of Lemma~\ref{lem:CL} is satisfied for 
  all edges $\ti{y}\ti{y}'$ of $H'_i=H[\ti{W}_i]$ with at least one end, say
  $\ti{y}$, in a cluster $\ti C'_{i',j'}$ or $\ti B'_{i',j'}$. This is
  true because then $C(\ti{y})=W_{i,k}$ where $\ti{W}_{i,k}$ is the cluster
  containing $\ti{y}$, and $C(\ti{y}')=\coN(X_{\ti{y}'})\cap {W_{i,k'}}$ where
  $\ti{W}_{i,k'}$ is the cluster containing $\ti{y}'$.
  Moreover, since $h$ is a homomorphism all edges $\ti{y}\ti{y}'$ in
  $H'_i=H[\ti{W}_i]$ have at least one end in a cluster $\ti C'_{i',j'}$ or
  $\ti B'_{i',j'}$.

  So conditions \ref{lem:CL:Wi}--\ref{lem:CL:Cdense} are satisfied and we can
  apply Lemma~\ref{lem:CL} to get embeddings of $H'_i=H[\ti{W}_i]$ into
  $G'_i=G[W_i]$ for all $i\in[r]$ that map vertices $\ti{y}\in\ti{W}_i$ (i.e.\
  connecting and balancing vertices) to vertices $y\in W_i$ in their candidate
  sets~$C(\ti y)$. Let $f_{\subsc{CL}}$ be the united embedding resulting from
  these $r$ applications of the connection lemma and denote the embedding that unites
  $f_{\subsc{BL}}$ and $f_{\subsc{CL}}$ by $f$.
  
  To finish the proof we verify that $f$ is an embedding of $H$ into $G$.
  Let $\ti{x}\ti{y}$ be an edge of $H$. By definition of the spin graph
  $\MON_{r,t}$ and since the mapping~$h$ constructed by  
  Lemma~\ref{lem:H} is a homomorphism from $H$ to
  $\MON_{r,t}$ we only need to distinguish the following cases for $i\in[r]$
  and $j,j'\in[2t]$ (see also Figure~\ref{fig:backbone}):
  \begin{enumerate}[leftmargin=*, label={\rm case \arabic{*}:}]
    \item If $\ti{x}\in\ti{V}_i$ and $\ti{y}\in\ti{U}_i$,
      then $f(\ti{x})=f_{\subsc{BL}}(\ti{x})$ and
      $f(\ti{y})=f_{\subsc{BL}}(\ti{y})$ and thus the constrained blow-up lemma
      guarantees  that $f(\ti{x})f(\ti{y})$ is an edge of $G_i$.
    \item If $\ti{x}\in\ti{W}_i$ and $\ti{y}\in\ti{W}_i$,
      then
      $f(\ti{x})=f_{\subsc{CL}}(\ti{x})$ and $f(\ti{y})=f_{\subsc{CL}}(\ti{y})$
      and thus the connection lemma guarantees that $f(\ti{x})f(\ti{y})$ is an
      edge of $G'_i$.
    \item If $\ti{x}\in\ti{V}_i$ and $\ti{y}\in\ti{W}_i$, then either
      $\ti{y}\in\ti{C}_{i,j}$ or $\ti{y}\in\ti{B}_{i,j}$ for some $j$.
      Moreover, $f(\ti{x})=f_{\subsc{BL}}(\ti{x})$ and therefore
      by~\eqref{eq:proof:cand} the candidate set $C(\ti{y})$ of $\ti{y}$ satisfies 
      $C(\ti{y})\subset N_{C_{i,j}}(f(\ti{x}))$ or $C(\ti{y})\subset
      N_{B_{i,j}}(f(\ti{x}))$, respectively. As $f(\ti{y})=f_{\subsc{CL}}(\ti{y})\in
      C(\ti{y})$ we also get that $f(\ti{x})f(\ti{y})$ is an edge of $G$ in
      this case.
  \end{enumerate}
  It follows that $f$ maps all edges of $H$ to edges of $G$, which finishes the
  proof of the theorem.
\end{proof}

%%%% LEMMA FOR G %%%%%%%%%%%%%%%%%%%%%%%%%%%%%%%%%%%%%%%%%%%%%%%%%%%%%%%%%%%%%

\section{A \texorpdfstring{$p$}{p}-dense partition of~\texorpdfstring{$G$}{G}}\label{sec:G}

For the proof of the Lemma for~$G$ we shall apply the minimum degree
version of the sparse regularity lemma (Lemma~\ref{lem:reduced}).
Observe that this lemma guarantees
that the reduced graph of the regular partition we obtain is dense. Thus we
can apply Theorem~\ref{thm:bandwidth} to this
reduced graph.
In the proof of Lemma~\ref{lem:G} we use this theorem to find a copy of the
ladder $\LDR_r$ in the reduced graph (the graphs $\LDR_r$ and $\MON_{r,t}$ are
defined in Section~\ref{def:spin} on
page~\pageref{def:spin}, see also Figure~\ref{fig:backbone}).
Then we further partition the clusters in this ladder to obtain a regular
partition whose reduced graph contains a spin graph $\MON_{r,t}$. 
Recall that this partition will consist of a series of so-called
\emph{big clusters} which we denote by $U_i$ and $V_i$, and a
series of smaller clusters called \emph{balancing
clusters} $B_{i,j}$, $B'_{i,j}$ and \emph{connecting
clusters} $C_{i,j}$, $C'_{i,j}$ with $i\in[r]$, $j\in[2t]$. We will now give
the details.

\begin{proof}[Proof of Lemma~\ref{lem:G}]
  Given $t$, $r_0$, $\eta_\isubsc{G}$, and $\gamma$ choose $\eta'_\isubsc{G}$
  such that
  \begin{equation}\label{eq:G:eta}
    \frac{\eta_\isubsc{G}}5+\left(\frac{4}{\gamma}+2\right)t\cdot\eta'_\isubsc{G}\le\frac{\eta_\isubsc{G}}{2}
  \end{equation}
  and set $d:=\gamma/4$. 
  Apply Theorem~\ref{thm:bandwidth} with input $r_\subsc{BK}:=2$,
  $\Delta=3$ and $\gamma/2$ to obtain the constants 
  $\beta$ and $k_\subsc{BK}:=n_0$.
%   
%   Let $\beta$ and $k_\subsc{BK}$ be the constants
%   provided by Theorem~\ref{thm:bandwidth} for input $r_\subsc{BK}:=2$,
%   $\Delta=3$ and $\gamma/2$. 
  For input $\eps$ set
  \begin{equation}\label{eq:lem:G:r0}
    r'_0:=\max\{2r_0+1,k_\subsc{BK},3/\beta,6/\gamma,2/\eps,10/\eta_\isubsc{G}\}      
  \end{equation}
  and choose
  $\eps'$ such that
  \begin{equation}\label{eq:G:eps}
    \eps'/\eta'_\isubsc{G}\le\eps/2, \qquad\text{and}\qquad
    \eps'\le\min\{\gamma/4,\eta_\isubsc{G}/10\}.
  \end{equation}
  Lemma~\ref{lem:reduced} applied with
  $\alpha:=\frac12+\gamma$, $\eps'$, $r'_0$ then gives us the missing constant
  $r_1$.

  Assume that~$\Gamma$ is a typical graph from~$\Gnp$ with
  $\log^4n/(pn)=\smallo(1)$, in the sense that it satisfies the conclusion of
  Lemma~\ref{lem:reduced}, and let $G=(V,E)\subset\Gamma$ satisfy
  $\deg_G(v)\ge(\frac12+\gamma)\deg_\Gamma(v)$ for all $v\in V$.
  Lemma~\ref{lem:reduced} applied with $\alpha=\frac12+\gamma$, $\eps'$,
  $r'_0$, and $d$ to~$G$ gives us an $(\eps',d,p)$-dense partition $V=V'_0\dcup V'_1
  \dcup\dots\dcup V'_{r'}$ of $G$ with reduced graph $R'$ with 
  $|V(R')|=r'$ such that
  $2r_0+1\le
  r'_0\le r'\le r_1$ and with minimum degree at least
  $(\frac12+\gamma-d-\eps')r'\ge(\frac12+\frac\gamma2)r'$ by~\eqref{eq:G:eps}.
  If $r'$ is odd, then set $V_0:=V'_0\dcup V'_{r'}$ and $r:=(r'-1)/2$, otherwise
  set $V_0:=V'_0$ and $r:=r'/2$. Clearly $r_0\le r\le r_1$, the graph
  $R:=R'[2r]$ still has minimum degree at least $(\frac12+\frac\gamma3)2r$ and
  $|V_0|\le\eps'n+(n/r'_0)\le(\eta_\isubsc{G}/5)n$ by the choice of $r'_0$ and $\eps'$. It
  follows from Theorem~\ref{thm:bandwidth} applied with $\Delta=3$ and
  $\gamma/2$ that $R$ contains a copy of the ladder $\LDR_r$ on $2r$ vertices
  ($\LDR_r$ has bandwidth $2\le\beta\cdot2r$ by the choice of
  $r'_0$ in~\eqref{eq:lem:G:r0}).% 
\comment{AT1}
Hence we can rename the vertices of the graph
  $R=R'[2r]$ with $u_1,v_1,\dots,u_r,v_r$
  according to the spanning copy of $\LDR_r$.
  This naturally defines an equipartite
  mapping $f$ from $V\setminus V_0$ to the vertices of the ladder $\LDR_r$, where $f$ maps 
  all vertices in some cluster $V_i$ with $i\in [2r]$ to a vertex~$u_{i'}$ or $v_{i'}$ 
  of~ $\LDR_r$ for some index $i'\in [r]$. 
% This naturally defines an equipartite
%   mapping $f$ from $V\setminus V_0$ to the ladder $\LDR_r$: $f$ maps each
%   vertex of~$G$ in cluster $V_i$ to vertex~$i$ of~$\LDR_r$.
  We will show that subdividing the clusters
  $f^{-1}(x)$ for all $x\in V(\LDR_r)$ will give the desired mapping $g$.
  
  \begin{figure}[ht]
  \begin{center}
    \psfrag{b}{\sml{$B_{i,j}$}}
    \psfrag{b'}{\sml{$B'_{i,j'}$}}
    \psfrag{bt}{\sml{$B_{i,t+j'''}$}}
    \psfrag{bt'}{\hspace{-1mm}\sml{$B'_{i,t+j''}$}}
    \psfrag{u}{$f^{-1}(u_i)$}
    \psfrag{v}{$f^{-1}(v_i)$}
    \psfrag{w}{\hspace{-2mm}$f^{-1}(w_i)$}
    \includegraphics[scale=1.5]{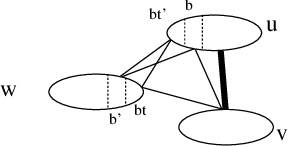}
  \end{center}
  \caption{Cutting off a set of balancing clusters from $f^{-1}(u_i)$ and
    $f^{-1}(w_i)$. These clusters build $p$-dense pairs (thanks to the
    triangle $u_iv_iw_i$ in~$R$) in the form of a $C_5$.}
  \label{fig:G:ballance}
  \end{figure}
  
%   To this end let us first rename the vertices of the graph
%   $R'[2r]$ to $\{u_1,v_1,\dots,u_r,v_r\}$
%   according to the spanning copy $\LDR_r$. 

   We will now construct the
  balancing clusters $B_{i,j}$ and $B'_{i,j}$ with $i\in[r]$, $j\in[2t]$ and
  afterwards turn to the connecting clusters $C_{i,j}$ and $C'_{i,j}$ 
  and big clusters $U_{i}$ and $V_{i}$ 
  with
  $i\in[r]$, $j\in[2t]$.
  
  Notice that $\delta(R)\ge(\frac12+\frac\gamma3)2r$ implies
  that every edge $u_iv_i$ of $\LDR_r\subset R$ is contained in more than
  $\gamma r$ triangles in~$R$. Therefore, we can choose vertices $w_i$ of $R$
  for all $i\in[r]$ such that $u_iv_iw_i$ forms a triangle in~$R$ and no vertex
  of~$R$ serves as $w_i$ more than $2/\gamma$ times. We continue by
  choosing in cluster $f^{-1}(u_i)$ arbitrary disjoint vertex sets $B_{i,1}$,
  \dots, $B_{i,t}$, $B'_{i,t+1}$, \dots, $B'_{i,2t}$\,,
  of size $\eta'_\isubsc{G}n/(2r)$ each, for all $i\in[r]$. We
  will show below that $f^{-1}(u_i)$ is large enough so that these
  sets can be chosen. We then remove all vertices in these sets from $f^{-1}(u_i)$.
  Similarly, we choose in cluster
  $f^{-1}(w_i)$ arbitrary disjoint vertex sets
  $B_{i,t+1}$, \dots,$B_{i,2t}$,
  $B'_{i,1}$, \dots,$B'_{i,t}$\,,  
  of size $\eta'_\isubsc{G}n/(2r)$ each,  for all $i\in[r]$. 
  We also remove these sets from
  $f^{-1}(w_i)$.
  Observe that this construction asserts the following property. For all
  $i\in[r]$ and $j,j',j'',j'''\in[t]$
  each of
  the pairs $(f^{-1}(v_i),B_{i,j})$, $(B_{i,j},B'_{i,j'})$,
  $(B'_{i,j'},B'_{i,t+j''})$, $(B'_{i,t+j''},B_{i,t+j'''})$, and
  $(B_{i,t+j'''},f^{-1}(v_i))$ is a sub-pair of a $p$-dense pair corresponding
  to an edge of $R[\{u_i,v_i,w_i\}]$ (see Figure~\ref{fig:G:ballance}).
  Accordingly this is a sequence of $p$-dense pairs in the form of a $C_5$, as
  needed for the balancing clusters 
  in view of condition~\ref{lem:G:reg}
  (see also Figure~\ref{fig:backbone}). Hence we call the sets $B_{i,j}$
  and $B'_{i,j}$ with $i\in[r]$, $j\in[2t]$ balancing clusters from now on and
  claim that they have the required properties. This claim will be verified below.
  
  We now turn to the construction of the connecting clusters and big clusters.
  Recall that we already removed balancing clusters from all clusters
  $f^{-1}(u_i)$ and possibly from some clusters $f^{-1}(v_i)$ (because $v_i$
  might have served as $w_{i'}$) with $i\in[r]$.
  For each $i\in[r]$ we arbitrarily partition the remaining vertices of cluster
  $f^{-1}(u_i)$ into sets $C_{i,1}\dcup\dots\dcup C_{i,2t} \dcup U_i$ and the 
  remaining vertices of cluster $f^{-1}(v_i)$ into sets
  $C'_{i,1}\dcup\dots\dcup C'_{i,2t}\dcup V_i$ such that
  $|C_{i,j}|,|C'_{i,j}|=\eta'_\isubsc{G} n/(2r)$ for all $i\in[r],j\in[2t]$.
  This gives us
  the connecting and the big clusters and we claim that also these clusters
  have the required properties. Observe, again, 
  that for all $i\in[r]$, $i'\in\{i-1,i,i+1\}\setminus\{0\}$, $j,j'\in[2t]$
  each of the pairs $(U_i,V_i)$, $(C_{i',j},V_i)$, and $(C_{i,j},C'_{i,j'})$
  is a sub-pair of a $p$-dense pair corresponding to an edge of $\LDR_r$
  (see Figure~\ref{fig:G:connect}).
  \begin{figure}[ht]
  \begin{center}
    \psfrag{R}{}
    \psfrag{u1}{$U_{i-1}$}
    \psfrag{v1}{$V_{i-1}$}
    \psfrag{u2}{$U_{i}$}
    \psfrag{v2}{$V_{i}$}
    \psfrag{u3}{$U_{i+1}$}
    \psfrag{v3}{$V_{i+1}$}
    \psfrag{fu1}{\sml{$f^{-1}(u_{i-1})$}}
    \psfrag{fu3}{\sml{$f^{-1}(u_{i+1})$}}
    \psfrag{fv1}{\sml{$f^{-1}(v_{i-1})$}}
    \psfrag{fv3}{\sml{$f^{-1}(v_{i+1})$}}
    \psfrag{c11}{\sml{$C_{i-1,2}$}}
    \psfrag{c12}{\sml{$C_{i,1}$}}
    \psfrag{c'11}{\sml{$C'_{i-1,2}$}}
    \psfrag{c'12}{\sml{$C'_{i,1}$}}
    \psfrag{c21}{\sml{$C_{i,2}$}}
    \psfrag{c22}{\sml{\hspace{-2mm}$C_{i+1,1}$}}
    \psfrag{c'21}{\sml{$C'_{i,2}$}}
    \psfrag{c'22}{\sml{\hspace{-2mm}$C'_{i+1,1}$}}
    \includegraphics[scale=1.0]{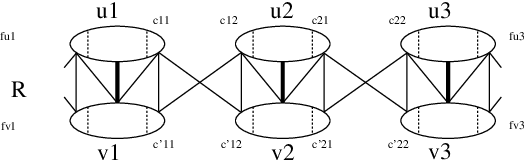}
  \end{center}
  \caption{Partitioning the remaining vertices of cluster $f^{-1}(u_i)$ and
  $f^{-1}(v_i)$ into sets $C_{i,1}\dcup C_{i,2} \dcup U_i$ and $C'_{i,1}\dcup
  C'_{i,2} \dcup V_i$ (for the special case $t=1$).
  These clusters form $p$-dense pairs (thanks to the ladder
  $\LDR_r$ in~$R$) as indicated by the edges.}
\label{fig:G:connect}
\end{figure}

  We will now show that the balancing clusters, connecting clusters and big
  clusters satisfy conditions~\ref{lem:G:Vi}--\ref{lem:G:reg}.
  Note that condition~\ref{lem:G:Ci} concerning the sizes of the
  connecting and balancing clusters is satisfied by construction.
  To determine the sizes of the big clusters observe that from each cluster
  $V'_j$ with $j\in[2r]$ vertices for at most $2t\cdot 2/\gamma$
  balancing clusters were removed. In addition, at most $2t$ connecting
  clusters were split off from $V'_j$.
  Since $|V\setminus V_0| \ge (1-\eta_\isubsc{G}/5)n$ we get
  \begin{equation*}
    |V_i|,|U_i|
      \ge \left(1-\frac{\eta_\isubsc{G}}5\right)\frac{n}{2r}
        -\left(\frac{4}{\gamma}+2\right)t\cdot\eta'_\isubsc{G}\frac{n}{2r}
      \ge(1-\eta_\isubsc{G})\frac{n}{2r}
  \end{equation*}
  by~\eqref{eq:G:eta}.\comment{AT2} This is
  condition~\ref{lem:G:Vi}. It remains to verify
  condition~\ref{lem:G:reg}. It can easily be checked that for all
  $xy\in E(\MON_{r,t})$ the corresponding pair $(g^{-1}(x),g^{-1}(y))$ is a
  sub-pair of some cluster pair $(f^{-1}(x'),f^{-1}(y'))$ with $x'y'\in E(R)$ by
  construction. In addition, all big, connecting, and balancing clusters are of
  size at least $\eta'_\isubsc{G}n/(2r)$. Hence we have $|g^{-1}(x)|\ge\eta'_\isubsc{G}|f^{-1}(x')|$
  and $|g^{-1}(y)|\ge\eta'_\isubsc{G}|f^{-1}(y')|$. 
  We conclude from Proposition~\ref{prop:subpairs} that
  $(g^{-1}(x),g^{-1}(y))$ is $(\eps,d,p)$-dense since $\eps'/\eta'_\isubsc{G}\le\eps$
  by~\eqref{eq:G:eps}. This finishes the verification
  of~\ref{lem:G:reg}.
\end{proof}

%%%% LEMMA FOR H %%%%%%%%%%%%%%%%%%%%%%%%%%%%%%%%%%%%%%%%%%%%%%%%%%%%%%%%

\section{A partition of~\texorpdfstring{$H$}{H}}
\label{sec:H}

Hajnal and Szemer\'edi determined the minimum degree that forces a certain
number of vertex disjoint~$K_r$ copies in~$G$. In addition their result
guarantees that the remaining vertices can be covered by copies of $K_{r-1}$. 
Another way to express this, which actually resembles the original formulation,
is obtained by considering the complement~$\bar G$ of~$G$ and its maximum degree. 
Then, so the theorem asserts, the graph~$\bar G$ contains a certain number of
vertex disjoint independent sets of almost equal sizes. In other words, $\bar
G$ admits a vertex colouring such that the sizes of the colour classes differ
by at most $1$. Such a colouring is also called \emph{equitable colouring}.

\begin{theorem}[Hajnal \& Szemer\'edi~\cite{HajSze}]
\label{thm:HajSze}
  Let~$\bar G$ be a graph on~$n$ vertices with maximum degree $\Delta(\bar
  G)\le \Delta$. Then there is an equitable vertex colouring of~$G$ with
  $\Delta+1$ colours.
\qed
\end{theorem}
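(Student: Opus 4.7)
The plan is to first obtain any proper $(\Delta+1)$-colouring of $\bar G$ and then gradually transform it into an equitable one through a sequence of local recolourings. A proper $(\Delta+1)$-colouring exists trivially by the greedy algorithm, since $\chi(\bar G) \le \Delta(\bar G) + 1 \le \Delta+1$. Writing $n = q(\Delta+1) + s$ with $0 \le s < \Delta+1$, an equitable colouring is one in which exactly $s$ classes have size $q+1$ and the remaining $\Delta+1-s$ classes have size $q$, so the task is to massage the greedy colouring towards this distribution of class sizes.

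Given a proper colouring $V_1, \dots, V_{\Delta+1}$, I would call a class \emph{large} if $|V_i| \ge q+2$ and \emph{small} if $|V_i| \le q$. If no such pair exists we are done, so assume a large class $V_L$ and a small class $V_S$. The heart of the argument is an exchange step that either moves a vertex from $V_L$ to $V_S$ directly, or performs a chain of one-vertex swaps along some sequence $V_L = V_{i_0}, V_{i_1}, \dots, V_{i_k} = V_S$ of classes, each step replacing a vertex of one class by a vertex of the next while preserving properness. If some $v \in V_L$ has no $\bar G$-neighbour in $V_S$ we simply move $v$; otherwise each vertex of $V_L$ sends at least one edge into $V_S$, and we use the max-degree bound together with $|V_L| \ge q+2$ and $|V_S| \le q$ to extract further structure.

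The technical device that carries this through is an auxiliary directed graph on the colour classes in which one draws an arc $V_i \to V_j$ whenever some vertex in $V_i$ can legally be recoloured to~$j$ (given the current colouring). A reachability/Hall-type argument then shows that from any large class one can reach some small class, producing the desired chain of swaps; the bound $\Delta(\bar G) \le \Delta$ is precisely what forces enough such arcs to exist, via a pigeonhole on the $\Delta+1$ available colours. Iterating the exchange decreases a natural potential (for example, the sum of $(|V_i|-q)^2$ over large classes), yielding termination at an equitable colouring.

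The main obstacle is making the rotation/exchange step rigorous: one must show that whenever a large and a small class coexist, the auxiliary digraph admits a suitable path, and one must avoid cycling through the same configurations. This is the combinatorial core of Hajnal--Szemer\'edi's theorem, and in a full write-up I would follow either the original discharging-style analysis or the shorter inductive proof of Kierstead and Kostochka, where a carefully chosen potential function and a single ``solo'' vs.\ ``crowded'' case distinction control the rotation dynamics cleanly.
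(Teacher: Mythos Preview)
The paper does not prove this theorem at all: the \texttt{\textbackslash qed} following the statement signals that it is quoted as a known result from~\cite{HajSze} and used as a black box (in the proof of Lemma~\ref{lem:H} and in Section~\ref{sec:blowup}). So there is no ``paper's own proof'' to compare against; any argument you supply is necessarily going beyond what the paper does.

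As to your outline itself: the overall strategy --- start from a greedy $(\Delta+1)$-colouring and rebalance via exchange chains governed by an auxiliary digraph on colour classes --- is indeed the shape of the known proofs. But your write-up is a plan, not a proof: you explicitly defer the one genuinely hard step (``making the rotation/exchange step rigorous'') to the literature, citing Hajnal--Szemer\'edi and Kierstead--Kostochka. The pigeonhole observation that $\Delta(\bar G)\le\Delta$ forces many recolouring arcs is correct but far from sufficient; the real difficulty is that a single exchange can destroy movability elsewhere, and controlling this is exactly what the discharging argument (or the Kierstead--Kostochka potential with the solo/crowded dichotomy) accomplishes. Since the paper only needs the statement, your level of detail is already more than the paper provides --- but if the intent is to \emph{prove} the theorem, the core combinatorial lemma is still missing.
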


In the proof of Lemma~\ref{lem:H} that we shall present in this section
we will use this theorem in order to guarantee
property~\ref{lem:H:Cindep}. This will be the very last step in the
proof, however. First, we need to take care of the remaining properties.

Before we start,
let us agree on some terminology that will turn out to be useful 
in the proof of Lemma~\ref{lem:H}. When defining a homomorphism $h$
from a graph $H$ to a graph $R$, we write 
$h(S):=z$ for a set $S$ of vertices in $H$ and a vertex $z$ in $R$
to say that all vertices from $S$ are mapped to $z$.
Recall that we have a bandwidth hypothesis on~$H$. Consider an ordering of the
vertices of~$H$ achieving its bandwidth.
Then we can deal with the vertices of $H$ in this order. 
In particular, we can refer to vertices as the \emph{first}
or \emph{last} vertices in some set, meaning that they are
the vertices with the smallest or largest label from this set. 

We start with the following proposition.

 \begin{figure}
  \begin{center}
    \psfrag{z0}{$z^0$}
    \psfrag{z1}{$z^1$}
    \psfrag{z2}{$z^2$}
    \psfrag{z3}{$z^3$}
    \psfrag{z4}{$z^4$}
    \psfrag{z5}{$z^5$}
    \includegraphics[scale=1]{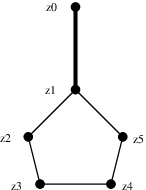}
  \end{center}
  \caption{The graph $\bar R$ in Proposition~\ref{prop:lolly}.}
  \label{fig:lolly}
  \end{figure}

  \begin{proposition}
  \label{prop:lolly} Let $\bar R$ be the following graph with six vertices and six edges: 
  $$
  \bar R:= \left( \{z^0,z^1,\dots,z^5\},\{z^0z^1,z^1z^2, z^2z^3, z^3z^4, z^4z^5, z^5z^1\} \right), 
  $$  
  see Figure~\ref{fig:lolly} for a picture of $\bar R$.
  For every real $\bar \eta>0$ there exists a real $\bar \beta>0$ such that the following holds:
  Consider an arbitrary bipartite graph 
  $\bar H$ with $\bar m$ vertices, colour classes $Z^0$ and $Z^1$, and 
  $\bw(\bar H)\le \bar \beta  \bar m$ and denote by $T$ the union of the first $\bar\beta \bar m$
  vertices and the last $\bar\beta \bar m$ vertices of $H$. Then there exists a 
  homomorphism $\bar h\colon V(\bar H) \to V(\bar R)$ from
  $\bar H$ to $\bar R$ such that for all $j\in \{0,1\}$ and all $k\in [2,5]$
  \begin{align}
  \label{eq:lolly1}
  \frac{\bar m}{2}-5\bar \eta \bar m \le |\bar h^{-1}(z^j)|&\le \frac{\bar
  m}{2}+\bar \eta \bar m\,, \\
  \label{eq:lolly2} 
  |\bar h^{-1}(z^k)|&\le \bar \eta \bar m\,, \\
  \label{eq:lolly3}
  h(T\cap Z^j) &= z^j.
  \end{align}
  \end{proposition}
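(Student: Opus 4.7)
The plan is to map most vertices of $\bar H$ to the edge $z^0 z^1$ of $\bar R$ via the ``straight'' pattern $P_{\text{std}}\colon Z^0\mapsto z^0$, $Z^1\mapsto z^1$, and to correct any imbalance between $|Z^0|$ and $|Z^1|$ by switching to the ``flipped'' pattern $P_{\text{flip}}\colon Z^0\mapsto z^1$, $Z^1\mapsto z^0$ on a middle segment of the bandwidth ordering. Since $z^0$ has degree one in $\bar R$, $P_{\text{std}}$ and $P_{\text{flip}}$ cannot be placed on adjacent intervals directly; they have to be interpolated. The odd $5$-cycle of $\bar R$ supplies the required interpolation via the chain
\[
(z^0,z^1)\to(z^2,z^1)\to(z^2,z^3)\to(z^4,z^3)\to(z^4,z^5)\to(z^1,z^5)\to(z^1,z^0),
\]
whose consecutive patterns $(a,b),(a',b')$ satisfy both $ab',ba'\in E(\bar R)$. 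This ``compatibility'' is exactly what is needed so that an edge of $\bar H$ whose endpoints fall in two consecutive intervals is still mapped to an edge of $\bar R$.

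Concretely, I would fix $\bar\beta:=\bar\eta/100$ (or similar), take a bandwidth ordering $v_1,\dots,v_{\bar m}$ of $\bar H$, and partition $[\bar m]$ into $13$ consecutive intervals $I_1,\dots,I_{13}$: the ten transition intervals $I_2,\dots,I_6,I_8,\dots,I_{12}$ have length exactly $2\bar\beta\bar m$ and receive the five intermediate patterns of the chain in order and then in reverse; $I_1$ and $I_{13}$ use $P_{\text{std}}$ and have length at least $\bar\beta\bar m$ each, which gives~\eqref{eq:lolly3}; and $I_7$ uses $P_{\text{flip}}$. Since every transition interval exceeds the bandwidth bound, no edge of $\bar H$ can span two interval boundaries, so pattern compatibility of consecutive intervals suffices to make $\bar h$ a homomorphism. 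Vertices $z^2,\dots,z^5$ receive images only from transition intervals, so the crude bound $|\bar h^{-1}(z^k)|\le 4\cdot 2\bar\beta\bar m\le\bar\eta\bar m$ yields~\eqref{eq:lolly2}.

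The remaining and essential step is to position $I_7=(s_1,s_2]$ so that $|\bar h^{-1}(z^0)|$ and $|\bar h^{-1}(z^1)|$ land in the target window required by~\eqref{eq:lolly1}. Introducing the discrepancy walk $\psi(t):=|Z^0\cap[1,t]|-|Z^1\cap[1,t]|$ (so $\psi(0)=0$, $\psi(\bar m)=\Delta:=|Z^0|-|Z^1|$, and $\psi$ changes by $\pm1$ at each step), a direct count gives
\[
 |\bar h^{-1}(z^0)| \;=\; \tfrac{\bar m}{2} + \tfrac{\Delta}{2} - \bigl(\psi(s_2)-\psi(s_1)\bigr) - |Z^0\cap\text{transitions}|,
\]
so we want $\psi(s_2)-\psi(s_1)$ to be within $O(1)$ of $\Delta/2$, with $s_1,s_2$ in a central range $[A,\bar m-A]$, $A=O(\bar\beta\bar m)$. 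This is the main obstacle; I would resolve it via a case split. If $|\Delta|\le 2\bar\eta\bar m$, set $I_7=\emptyset$ (so $P_{\text{std}}$ is effectively used outside the transitions), since the intrinsic sizes $|Z^j|$ already lie in the desired window. Otherwise $|\Delta|>4A$ by the choice of $\bar\beta$, and fixing $s_1:=A$ while varying $s_2$ over $[A,\bar m-A]$, the discrete intermediate value theorem applied to the walk $\psi$ produces $s_2$ with $\psi(s_2)-\psi(A)\in\{\lfloor\Delta/2\rfloor,\lceil\Delta/2\rceil\}$. Plugging back into the displayed formula (and the analogous one for $z^1$) gives~\eqref{eq:lolly1} and completes the proof.
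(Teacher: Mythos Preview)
Your argument is correct and genuinely different from the paper's. The paper cuts $V(\bar H)$ into $\ell$ equal blocks (with $5/\ell<\bar\eta$), greedily labels each block ``normal'' or ``inverted'' so as to keep the running imbalance below one block size, and then inserts the five-step walk around the $5$-cycle between every pair of consecutive blocks whose labels differ; the first and last blocks are forced to be normal to secure~\eqref{eq:lolly3}. You instead use a \emph{single} flipped interval $I_7$ and locate it via the discrete intermediate value theorem applied to the discrepancy walk $\psi$, so you need only one pair of transition walks (forward and back). Both proofs exploit the $5$-cycle in exactly the same way and both deliver the required bounds; your route is a bit slicker in that it avoids the block discretisation and the greedy step, whereas the paper's route avoids any case distinction on $|\Delta|$ and never has to reason about the walk $\psi$. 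A couple of minor points are worth recording explicitly in your write-up: in the large-$|\Delta|$ case the IVT actually forces $|I_7|\ge|\Delta|/2>2A>\bar\beta\bar m$, so no edge can jump from $I_6$ to $I_8$; and for the upper bound on $|\bar h^{-1}(z^1)|$ one must remember that some transition vertices (those in the $(z^2,z^1)$- and $(z^1,z^5)$-intervals) also land on $z^1$, contributing at most $8\bar\beta\bar m$, which your choice $\bar\beta=\bar\eta/100$ comfortably absorbs.
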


  Roughly speaking, Proposition~\ref{prop:lolly} shows that we can find a
  homomorphism from a bipartite graph $\bar H$ to a graph $\bar R$ which consists
  of an edge $z^0z^1$ which has an attached 5-cycle 
in such a way that most of the vertices of $\bar H$ are mapped about evenly to
the vertices $z^0$ and $z^1$. If we knew that the colour classes of $\bar H$ were
of almost equal size, then this would be a trivial task, but since this is not
guaranteed, we will have to make use of the additional vertices $z^2,\dots,z^5$.
  
  \begin{proof}[Proof of Proposition~\ref{prop:lolly}] 
  Given $\bar \eta$, choose an integer $\ell\ge 6$ and a real $\bar \beta>0$ such that
  \begin{equation}
  \label{eq:lollybeta}
  \frac{5}{\ell} < \bar\eta \text{ \quad and \quad} \bar \beta:= \frac{1}{\ell^2}.
  \end{equation}
  For the sake of a simpler exposition we assume that $\bar m/\ell$ and $\bar
  \beta \bar m$ are integers. Now consider a graph $\bar H$ as given in the
  statement of the proposition.
  Partition 
  $V(\bar H)$ along the ordering induced by the bandwidth labelling into sets $\bar W_1,\dots,\bar W_\ell$ of 
  sizes $|\bar W_i|=\bar m/\ell$ for $i\in [\ell]$. For each $\bar W_i$, consider 
its last $5\bar \beta \bar m$ vertices and partition them into sets 
$X_{i,1},\dots,X_{i,5}$ of size $|X_{i,k}|=\bar \beta \bar m$. 
For $i\in [\ell]$ let
$$
W_i:= \bar W_i \setminus (X_{i,1}\cup\dots\cup X_{i,5}), \quad
W:= \bigcup_{i=1}^\ell W_i,
$$
and note that
$$
L:=|W_i| = \frac{\bar m}{\ell} - 5\bar\beta \bar m \eqByRef{eq:lollybeta} 
\left( \frac1\ell - \frac5{\ell^2}\right) \bar m \ge \frac{1}{\ell^2} \bar m 
\eqByRef{eq:lollybeta} \bar\beta \bar m.
$$
For $i\in [\ell]$, $j\in\{0,1\}$, and $1\le k\le 5$ let
$$
W_i^j := W_i \cap Z^j, \quad
X_{i,k}^j:= X_{i,k} \cap Z^j.
$$
Thanks to the fact that 
$\bw(\bar H)\le \bar\beta \bar m$, we know that there are no edges between $W_i$ and $W_{i'}$ for 
$i\neq i'\in [\ell]$. 
%, hence $\bar H[W]$ is a bipartite graph with components of size at most $L$.
In a first round, for each $i\in[\ell]$ we will either map all vertices from
$W_i^j$ to $z^j$ for both $j\in\{0,1\}$ (call such a mapping a
\emph{normal} embedding of $W_i$) or we map all vertices
from $W_i^j$ to $z^{1-j}$ for both $j\in\{0,1\}$ (call this an
\emph{inverted} embedding). We will do this in such a
way that the difference between the number of vertices that get sent to $z^0$ and the number of those that get sent to $z^1$ is as small as possible. Since
$|W_i|\le L$ the difference is therefore at most $L$. If, in addition, we
guarantee that both $W_1$ and $W_\ell$ receive a normal embedding, it is at most
$2L$. So, to summarize and to describe the mapping more precisely: there exist
integers $\phi_i\in\{0,1\}$ for all $i\in [\ell]$ such that $\phi_1=0=\phi_\ell$
and the function $h: W \to \{z^0,z^1\}$ defined by
$$
h(W_i^j ):= 
\begin{cases}
z^j     & \text{ if } \phi_i=0, \\        
z^{1-j} & \text{ if } \phi_i=1,         
\end{cases} 
$$  
is a homomorphism from $\bar H[W]$ to $\bar R[\{z^0,z^1\}]$,
satisfying that for both $j\in\{0,1\}$
\begin{equation}
  \begin{split}
|h^{-1}(z^j)| 
&\le \frac{\ell L}{2} + 2L 
= \left(\frac{\ell}{2} +2\right)\frac{\bar m}{\ell} 
  -  \left(\frac{\ell}{2} +2\right)5\bar \beta\bar m \\
&\eqByRef{eq:lollybeta}
   \frac{\bar m}{2} + \bar m\left( \frac2\ell - \frac52 \frac1\ell 
   - \frac{10}{\ell^2}\right) \le \frac{\bar m}{2}\,.
\label{eq:hz0}  
  \end{split}
\end{equation}

In the second round we extend this homomorphism to the vertices in the classes 
$X_{i,k}$. Recall that these vertices are by definition situated after those 
in $W_i$ and before those in $W_{i+1}$.
The idea for the extension is simple. If $W_i$ and $W_{i+1}$ have been embedded in the same way by $h$ (either both normal or both inverted), then we 
map all the vertices from all $X_{i,k}$ to $z^0$ and $z^1$ accordingly. 
If they have been embedded in different ways (one normal and one inverted), then we walk around the 5-cycle 
$z^1,\dots,z^5,z^1$ to switch colour classes. 

Here is the precise definition. Consider an arbitrary $i\in [\ell]$.
Since $h(W_i^0)$ and $h(W_i^1)$ are already defined, choose (and fix) $j\in\{0,1\}$ in such a way that $h(W_i^j)=z^1$. Note that this implies that $h(W_i^{1-j})=z^0$. 
Now define $h_i: \bigcup_{k=0}^5 X_{i,k} \to \bigcup_{k=1}^5 \{z^k\}$ as follows:

Suppose first that $\phi_i=\phi_{i+1}$. Observe that in this case we must also have 
$h(W_{i+1}^j)=z^1$ and $h(W_{i+1}^{1-j})=z^0$. So we can happily define for all $k\in [5]$
$$
h_i(X_{i,k}^j) = z^1 \text{ \quad and \quad } h_i(X_{i,k}^{1-j}) = z^0.
$$

% Obviously this is an extension of the homomorphism from $\bar H[W]$ to the sets $X_{i,k}$, since
% any edge $xx'$ in $\bar H[W_i\cup \bigcup_{k=1}^5 X_{i,k} \cup W_{i+1}]$
% with $x\in Z^j$ and $x'\in Z^{1-j}$ is of the form 
% $x\in W_i^j\cup X_{i,k}^j\cup W_{i+1}^j$ and $x'\in W_i^{1-j}\cup X_{i,k}^{1-j}\cup W_{i+1}^{1-j}$
% for some $k,k'\in [5]$ and hence $h(x)=z^1$ and $h(x')=z^0$.

Now suppose that $\phi_i\neq\phi_{i+1}$. 
Since we are still assuming that $j$ is such that $h(W_i^j)=z^1$ and thus $h(W_i^{1-j})=z^0$, 
the fact that $\phi_i\neq\phi_{i+1}$ implies that $h(W_{i+1}^j)=z^0$ and $h(W_{i+1}^{1-j})=z^1$. 
In this case we define $h_i$ as follows:
$$
\begin{array}{l|l|l|l|l|l|l}
h(W_i^{1-j}) & h_i(X_{i,1}^{1-j}) & h_i(X_{i,2}^{1-j}) & h_i(X_{i,3}^{1-j}) 
                 & h_i(X_{i,4}^{1-j}) & h_i(X_{i,5}^{1-j}) & h(W_{i+1}^{1-j}) \\
=z^0 & :=z^2 & :=z^2 & :=z^4 
     & :=z^4 & :=z^1 & =z^1 \\
\hline 
h(W_i^j) & h_i(X_{i,1}^j) & h_i(X_{i,2}^j) & h_i(X_{i,3}^j) 
             & h_i(X_{i,4}^j) & h_i(X_{i,5}^j) & h(W_{i+1}^j) \\
=z^1 & :=z^1 & :=z^3 & :=z^3 
             & :=z^5 & :=z^5 & =z^0 \\
\end{array}
$$
%}%
% {\tiny
% $$
% \begin{array}{l|l|l|l|l|l|l}
% h(W_i^{1-j})=z^0 & h_i(X_{i,1}^{1-j}):=z^2 & h_i(X_{i,2}^{1-j}):=z^2 & h_i(X_{i,3}^{1-j}):=z^4 
%                  & h_i(X_{i,4}^{1-j}):=z^4 & h_i(X_{i,5}^{1-j}):=z^1 & h(W_{i+1}^{1-j})=z^1 \\
% \hline 
% h(W_i^j)=z^1 & h_i(X_{i,1}^j):=z^1 & h_i(X_{i,2}^j):=z^3 & h_i(X_{i,3}^j):=z^3 
%              & h_i(X_{i,4}^j):=z^5 & h_i(X_{i,5}^j):=z^5 & h(W_{i+1}^j)=z^0 \\
% \end{array}
% $$
% }%

Finally, we set $\bar h: V(\bar H)\to V(\bar R)$ by letting
$\bar h(x):= h(x)$ if $x\in W_i$ for some $i\in [\ell]$ and 
$\bar h(x):= h_i(x)$ if $x\in X_{i,k}$ for some $i\in [\ell]$ and
$k\in [5]$.

In order to verify that this is a
 homomorphism from $\bar H$ to the sets $\bar R$, 
 we first let 
$$
X_{i,0}^0:=W_i^0, X_{i,0}^1:=W_i^1, X_{i,6}^0:=W_{i+1}^0, X_{i,6}^1:=W_{i+1}^1. 
$$
Using this notation, it is clear that 
any edge $xx'$ in $\bar H[W_i\cup \bigcup_{k=1}^5 X_{i,k} \cup W_{i+1}]$
with $x\in Z^j$ and $x'\in Z^{1-j}$ is of the form 
$$
xx' \in 
(X_{i,k}^j \times X_{i,k}^{1-j}) \cup
(X_{i,k}^j \times X_{i,k+1}^{1-j}) \cup
(X_{i,k+1}^j \times X_{i,k}^{1-j})
$$
for some $k\in [0,6]$.   
It is therefore easy to check in the above table that $\bar h$ maps $xx'$ to an edge of $R$.

% Given the functions $h_i$ we can then define
% $\bar h: V(H)\to V(R)$ by letting $\bar h(x):= h(x)$ if $x\in \bar W_i$ for some $i\in [\ell]$ 
% and $\bar h(x):= h_i(x)$ if $x\in X_i$. 

We conclude the proof by showing that the cardinalities of the preimages of the vertices in $R$ match the required sizes. In the second round we mapped a total of   
$$
\ell \cdot 5\bar\beta \bar m \eqByRef{eq:lollybeta} \frac{5}{\ell} \bar m 
\leByRef{eq:lollybeta} \bar\eta \bar m 
$$
additional vertices from $\bar H$ to the vertices of $\bar R$, which guarantees that
  $$
  |\bar h^{-1}(z^j)|\leByRef{eq:hz0} \frac{\bar m}{2} + \bar \eta\bar m 
\text{ for all } j\in \{0,1\}, \quad 
  |\bar h^{-1}(z^k)|\le \bar \eta \bar m \text{ for all } k\in [2,5].
  $$
Finally, the lower bound in (\ref{eq:lolly1}) immediately follows from the upper bounds:
$$
|\bar h^{-1}(z^j)| \ge \bar m - |\bar h^{-1}(z^{1-j})| - \sum_{k=2}^5|\bar h^{-1}(z^k)|
\ge \frac{\bar m}{2} - 5\bar\eta \bar m.  
$$  
\end{proof}

We remark that Proposition~\ref{prop:lolly} (and thus Lemma~\ref{lem:H}) would remain true if we replaced the 5-cycle in 
$\bar R$ by a 3-cycle. However, we need the properties of the 5-cycle in the proof of the main
theorem. 
Now we will prove Lemma~\ref{lem:H}.

\begin{proof}[Proof of Lemma~\ref{lem:H}.]
  Given the integer $\Delta$, set $t:=(\Delta+1)^3(\Delta^3+1)$. 
  Given a real $0<\eta_{\isubsc{H}}<1$
  and integers $m$ and $r$, set $\bar \eta:= \eta_\isubsc{H}/20<1/20$ and apply 
  Proposition~\ref{prop:lolly} to 
  obtain a real $\bar \beta>0$. Choose $\beta>0$ sufficiently small so that all the inequalities
  \begin{equation}
  \label{eq:lemHbeta}
  \frac1r - 4\beta \ge \beta / \bar\beta, \quad 
   4\beta r \le \frac{\eta_\isubsc{H}}{20r}, \quad
   16\Delta\beta r \le
   \eta_\isubsc{H}\left(\frac1r-4\beta\right)\left(\frac12-5\bar\eta\right)
  \end{equation}
  hold. Again, we assume that $m/r$ and $\beta m$ are integers.

  Next we consider the spin graph $R_{r,t}$ with $t=1$, i.e., let
  $R:=R_{r,1}$. For the sake of simpler reference, we will change the  names of
  its vertices as follows: For all $i\in [r]$ we set (see Figure~\ref{fig:lemH})
  $$
  z_{i}^{0}:=u_i,
  z_{i}^{1}:=v_i,
  z_{i}^{2}:=b_{i,1},
  z_{i}^{3}:=b'_{i,1},
  z_{i}^{4}:=b'_{i,2},
  z_{i}^{5}:=b_{i,2},
  $$  
  $$
  q_{i}^2:=c_{i,1},
  q_{i}^3:=c'_{i,1},
  q_{i}^4:=c_{i,2},
  q_{i}^5:=c'_{i,2}.
  $$
  Note that for every $i\in [r]$ the graph $R[\{z_i^0,\dots,z_i^5\}]$ is isomorphic to the 
  graph $\bar R$ defined in Proposition~\ref{prop:lolly}. 

  % We advise the reader to draw a picture of 
  % $$
  % R[\{z_i^0,\dots,z_i^5,q_i^4, q_i^5, q_{i+1}^2,q_{i+1}^3,
  % z_{i+1}^0,\dots,z_{i+1}^5\}].
  % $$
 \begin{figure}
  \begin{center}
    \psfrag{u2}{\sml{$u_{i}$}}
    \psfrag{v2}{\sml{$v_{i}$}}
    \psfrag{u3}{\sml{$u_{i+1}$}}
    \psfrag{v3}{\sml{$v_{i+1}$}}
    \psfrag{b21}{\sml{$b_{i,1}$}}
    \psfrag{b22}{\sml{$b_{i,2}$}}
    \psfrag{b'21}{\sml{$b'_{i,1}$}}
    \psfrag{b'22}{\sml{$b'_{i,2}$}}
    \psfrag{c21}{\sml{$c_{i,1}$}}
    \psfrag{c22}{\sml{$c_{i,2}$}}
    \psfrag{c'21}{\sml{$c'_{i,1}$}}
    \psfrag{c'22}{\sml{$c'_{i,2}$}}
    \psfrag{c31}{\sml{$c_{i+1,1}$}}
    \psfrag{c32}{\sml{$c_{i+1,2}$}}
    \psfrag{c'31}{\sml{$c'_{i+1,1}$}}
    \psfrag{c'32}{\sml{$c'_{i+1,2}$}}
    \psfrag{b31}{\sml{$b_{i+2,1}$}}
    \psfrag{b32}{\sml{$b_{i+2,2}$}}
    \psfrag{b'31}{\sml{$b'_{i+2,1}$}}
    \psfrag{b'32}{\sml{$b'_{i+2,2}$}}
    \psfrag{z0}{$z_i^0$}
    \psfrag{z1}{$z_i^1$}
    \psfrag{z2}{$z_i^2$}
    \psfrag{z3}{$z_i^3$}
    \psfrag{z4}{$z_i^4$}
    \psfrag{z5}{$z_i^5$}
    \psfrag{z0+}{$z_{i+1}^0$}
    \psfrag{z1+}{$z_{i+1}^1$}
    \psfrag{z2+}{$z_{i+1}^2$}
    \psfrag{z3+}{$z_{i+1}^3$}
    \psfrag{z4+}{\hspace{-2mm}$z_{i+1}^4$}
    \psfrag{z5+}{$z_{i+1}^5$}
    \psfrag{q2}{$q_i^2$}
    \psfrag{q3}{$q_i^3$}
    \psfrag{q4}{$q_i^4$}
    \psfrag{q5}{$q_i^5$}
    \psfrag{q2+}{$q_{i+1}^2$}
    \psfrag{q3+}{$q_{i+1}^3$}
    \includegraphics[scale=1.5]{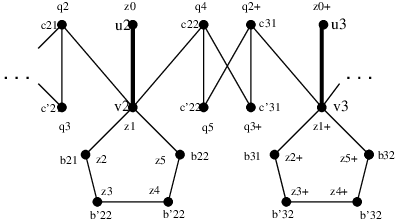}
  \end{center}
  \caption{The subgraph $R[\{z_i^0,\dots,z_i^5,q_i^4, q_i^5,
  q_{i+1}^2,q_{i+1}^3, z_{i+1}^0,\dots,z_{i+1}^5\}]$ of $\MON_{r,1}$ in the
  proof of Lemma~\ref{lem:H}.}
  \label{fig:lemH}
  \end{figure}

   Partition 
  $V(H)$ along the ordering (induced by the bandwidth labelling) into sets $\bar S_1,\dots,\bar S_r$ of 
  sizes $|\bar S_i|=m/r$ for $i\in [r]$. 

  Define sets $T_{i,k}$ for $i\in [r]$ and $k\in [0,5]$ with $|T_{i,k}|=\beta m$ such that
  $T_{i,0}\cup\dots\cup T_{i,4}$ contain the last $5\beta m$ vertices of $\bar S_i$ and 
  $T_{i,5}$ the first $\beta m$ vertices of $\bar S_{i+1}$ (according to the ordering).
  Set $S_i:= \bar S_i \setminus (T_{i,1}\cup\dots\cup T_{i,4})$ and observe that this implies
  that $T_{i,0}$ is the set of the last $\beta m$ vertices of $S_{i}$ and $T_{i,5}$ is the 
  set of the first $\beta m$ vertices in $\bar S_{i+1}$.  
  Set 
  \begin{equation}
  \label{eq:barm}
  \bar m:= |S_i| = (m/r) - 4\beta m = \left(\frac1r - 4\beta\right)m 
  \geByRef{eq:lemHbeta} \beta m/ \bar \beta,
  \text{ \quad thus \quad }
  \bar \beta \bar m \ge \beta m.
  \end{equation}
  Denote by $Z^0$ and $Z^1$ the two colour classes of the bipartite graph $H$.
  For $i\in [\ell]$, $k\in [0,5]$ and $j\in [0,1]$ let
  $$
  S_i^j := S_i \cap Z^j, \quad
  T_{i,k}^j:= T_{i,k} \cap Z^j.
  $$

  Now for each $i\in [r]$ apply Proposition~\ref{prop:lolly} to $\bar H_i:= H[S_i]$ and 
  $\bar R_i:=R[\{z_i^0,\dots,z_i^5\}]$.
  Observe that 
  $$\bw(\bar H_i) \le \bw(H) \le \beta m \leByRef{eq:barm} \bar\beta\bar m,$$
  so we obtain a homomorphism 
  $\bar h_i: S_i \to \{z_i^0,\dots,z_i^5\}$ of $\bar H_i$ to $\bar R_i$. Combining these yields a
  homomorphism 
  $$
  \bar h: \bigcup_{i=1}^{r} S_i \to \bigcup_{i=1}^{r} \{z_i^0,\dots,z_i^5\},
  $$
  $$
  \text{ \quad from \quad }
  H[\bigcup_{i=1}^{r} S_i] 
  \text{ \quad to \quad } 
  R[\bigcup_{i=1}^{r} \{z_i^0,\dots,z_i^5\}]
  $$
  with the property that for every $i\in [r]$, $j\in [0,1]$ and $k\in [2,5]$
  \begin{equation*}
  \label{eq:preim}
  \begin{split}
  \frac{\bar m}{2} -5 \bar \eta \bar m 
  \leByRef{eq:lolly1} |\bar h^{-1}(z_i^j)| 
  &\leByRef{eq:lolly1}\frac{\bar m}{2} + \bar \eta \bar m 
  \le \left(1+\frac{\eta_\isubsc{H}}{10}\right)\frac{m}{2r}
  \text{ \quad and} \\
  |\bar h^{-1}(z_i^k)| 
  &\leByRef{eq:lolly2} \bar \eta \bar m 
  \le \frac{\eta_\isubsc{H}}{10}\frac{m}{2r}.  
  \end{split}
  \end{equation*}
  Thanks to (\ref{eq:barm}), we know that $\bar\beta \bar m \ge \beta m$, and therefore applying
  the information from (\ref{eq:lolly3}) in Proposition~\ref{prop:lolly} yields
  that for all $i\in [r]$ and $j\in [0,1]$
  $$
  \bar h(T_{i,0}^j)=z_i^j \text{ \quad and \quad } \bar h(T_{i,5}^j)=z_{i+1}^j.
  $$
  
  In the second round, our task is to extend this homomorphism to the vertices in $\bar S_i \setminus S_i$ by
     defining a function 
  $$
  h_i: T_{i,1}\cup\dots\cup T_{i,4} \to \{z_i^1,q_i^4,q_i^5,q_{i+1}^2,q_{i+1}^3,z_{i+1}^1\}
  $$
  for each $i\in [r]$ as follows: 
  
{\footnotesize
$$
\begin{array}{l|l|l|l|l|l}
\bar h(T_{i,0}^0)=z_i^0     & h_i(T_{i,1}^{0}):=q_i^4 & h_i(T_{i,2}^{0}):=q_{i}^4 &
   h_i(T_{i,3}^0):=q_{i+1}^2  & h_i(T_{i,4}^{0}):=q_{i+1}^2 & \bar h(T_{i,5}^{0})=z_{i+1}^0 \\[1mm]
\hline \\[-2mm]
\bar h(T_{i,0}^1)=z_i^1     & h_i(T_{i,1}^{1}):=z_i^1 & h_i(T_{i,2}^{1}):=q_{i}^5 &
   h_i(T_{i,3}^1):=q_{i+1}^3  & h_i(T_{i,4}^{1}):=z_{i+1}^1 & \bar h(T_{i,5}^{1})=z_{i+1}^1 \\
\end{array}
$$
}%
Now set $h(x):= \bar h(x)$ if $x\in S_i$ for some $i\in [r]$ and 
$h(x):=h_i(x)$ if $x\in T_{i,k}$ for some $i\in [r]$ and $k\in [4]$. 

Let us verify that $h$ is a homomorphism from $H$ to $R$. For edges $xx'$ with
both endpoints inside a set $S_i$ we do not need to check anything because here
$h(x)=\bar h(x)$ and $h(x')=\bar h(x')$ and we know from
Proposition~\ref{prop:lolly} that $\bar h$ is a homomorphism. Due to the
bandwidth condition $\bw(H)\le \beta m$, any other edge $xx'$ with $x\in Z^0$ and
$x'\in Z^1$ is of the form
$$
xx' \in 
(T_{i,k}^0 \times T_{i,k}^{1}) \cup
(T_{i,k}^0 \times T_{i,k+1}^{1}) \cup
(T_{i,k+1}^0 \times T_{i,k}^{1})
$$
for some $i\in [\ell]$ and $0\le k,k+1 \le 5$.  
It is therefore easy to check in the above table 
that $h$ maps $xx'$ to an edge of $R$. 

What can we say about the cardinalities of the preimages? 
In the second round we have mapped $4\beta m r$
% \le \frac{\eta}{2} \frac{m}{2r}$
additional vertices from $H$ to vertices in $R$, hence for 
any vertex $z$ in $R$ with $z\not\in\{z_i^0,z_i^1\}$, $i\in[\ell]$ we have
\begin{equation}
\label{eq:H:X}
|h_i^{-1}(z)| \le 4\beta m r \leByRef{eq:lemHbeta} \frac{\eta_\isubsc{H}}{10}\frac{m}{2r},
\end{equation}
and therefore the required upper bounds immediately follow from (\ref{eq:preim}).

At this point we have found a homomorphism $h$ from $H$ to $R=\MON_{r,1}$ 
of which we know that it satisfies properties \ref{lem:H:Vi} and \ref{lem:H:Ci}.

So far we have been working with the graph $R=\MON_{r,1}$, and therefore 
we know which vertices have been mapped to $u_i=z_i^0$ and $v_i=z_i^1$:
$$
\ti{U_i} := h^{-1}(u_i) = h^{-1}(z_i^0) \text{ \quad and \quad }
\ti{V_i} := h^{-1}(v_i) = h^{-1}(z_i^1). 
$$
Moreover for $i\in[r]$ and $k\in [2,5]$ set
$$
Z_i^k:= h^{-1}(z_i^k) \text{ \quad and \quad } Q_i^k:=h^{-1}(q_i^k).
$$

% $$
% Z_i^2:= h^{-1}(z_i^2), \qquad
% Z_i^5:= h^{-1}(z_i^5)
% $$
% $$
% Z_i^3:= h^{-1}(z_i^3), \qquad
% Z_i^4:= h^{-1}(z_i^4),
% $$
% $$
% \ti{C}_i := h^{-1}(q_i) \ \dcup \ h^{-1}(\hat q_i) 
% = \bigcup_{j=1}^t \ti{C}_{i,j} \ \dcup \ \bigcup_{j=t+1}^{2t} \ti{C}_{i,j}
% $$
% $$
% \ti{C}_i' := h^{-1}(q'_i) \ \dcup \ h^{-1}(\hat q'_i) 
% = \bigcup_{j=1}^t \ti{C}_{i,j}' \ \dcup \ \bigcup_{j=t+1}^{2t} \ti{C}_{i,j}'
% $$

Let us 
deal with property \ref{lem:H:X} next. By definition, a vertex in 
$\ti{X}_i \subset \ti{V}_i$ must have at least one neighbour in 
%$h^{-1}(q_i) \cup h^{-1}(\hat q_i)$. 
$Q_i^2$ or $Q_i^4$ or $Z_i^2$ or $Z_i^5$.
We know from (\ref{eq:H:X}) that the two latter sets contain at most $4\beta m r$ vertices each,
and each of their vertices has at most $\Delta$ neighbours. Thus
\begin{eqnarray*}
|\ti{X}_i| &\le& \Delta \cdot 16\beta m r \leByRef{eq:lemHbeta} 
\eta_\isubsc{H} \left(\frac1r - 4\beta\right)\left(\frac12-5\bar\eta\right)m 
\eqByRef{eq:barm} \eta_\isubsc{H} \bar m \left(\frac12-5\bar\eta\right)\\
&\leByRef{eq:preim}&  \eta_\isubsc{H} |\bar h^{-1}(z_i^1)| \le \eta_\isubsc{H} |h^{-1}(z_i^1)| = \eta_\isubsc{H} |\ti{V}_i|,
\end{eqnarray*}
which shows that \ref{lem:H:X} is also satisfied.

Next we would like to split up the sets $Z_i^k$ and $Q_i^k$ for $i\in [r]$ and $k\in [2,5]$
into smaller sets 
in order to meet
the additional requirements \ref{lem:H:Cindep} and \ref{lem:H:deg}.
This means that we need to partition them further into sets 
of vertices which have no path of length 1, 2, or 3 between them
and which have the same degree into certain sets.

To achieve this, first denote by $H^3$ the 3rd power of $H$. Then an upper bound on 
the maximum degree of $H^3$ is obviously given by 
$$
\Delta+\Delta(\Delta-1) + \Delta(\Delta-1)(\Delta-1) \le \Delta^3.
$$
Hence $H^3$ has a vertex colouring $c:V(H)\to \mathbb{N}$ with at most 
$\Delta^3+1$ colours. 
Notice that a set of vertices that receives the same colour by $c$ forms a 3-independent set in 
$H$. 
%We now refine the partition classes in such a way that they
%also meet \ref{lem:H:deg}. Roughly speaking, this is again a simple task because
%the parameters in question 
%(namely $\deg_{\ti{V}_i}$, $\deg_{\ti{C}_i}$, $\deg_{L(i,j)}$ and $c$) 
%can each take on at most a constant number of values.
To formalize this argument, we define a `fingerprint' function
$$
f:\bigcup_{i=1}^r \bigcup_{k=2}^5 (Z_i^k\cup Q_i^k) 
\to [0,\Delta] \times  [0,\Delta] \times [0,\Delta] \times [\Delta^3+1]
$$
as follows:
$$
f(y):=\begin{cases}
\left(\deg_{\ti{V}_i}(y),\deg_{Q_i^2\cup Q_i^4}(y),\deg_{Z_i^2}(y),c(y)\right) 
& \text{if } 
y \in \left(\bigcup_{k=2}^5 (Q_i^k \cup Z_i^k) \right) \setminus Z_i^4, \\
\left(\deg_{\ti{V}_i}(y),\deg_{Q_i^2\cup Q_i^4}(y),\deg_{Z_i^3\cup Z_i^5}(y),c(y)\right)
& \text{if } 
y \in Z_i^4, 
\end{cases}
$$
for some $i\in [r]$.

Recall that we defined $t:=(\Delta+1)^3(\Delta^3+1)$, so let us
identify the codomain of $f$ with the set $[t]$. Now for $i\in [r]$ and $j\in
[t]$ we set $$
\ti{B}_{i,j}:= Z_i^2\cap f^{-1}(j), \qquad \ti{B}_{i,t+j}:= Z_i^5\cap f^{-1}(j),
$$
$$
\ti{B}_{i,j}':= Z_i^3\cap f^{-1}(j), \qquad \ti{B}_{i,t+j}':= Z_i^4\cap f^{-1}(j),
$$
$$
\ti{C}_{i,j}:= Q_i^2\cap f^{-1}(j), \qquad \ti{C}_{i,t+j}:= Q_i^4\cap f^{-1}(j),
$$
$$
\ti{C}_{i,j}':= Q_i^3\cap f^{-1}(j), \qquad \ti{C}_{i,t+j}':= Q_i^5\cap f^{-1}(j).
$$
Observe, for example, that for $y\in \ti{B}_{i,j}$
the third component of $f(y)$ is exactly equal to $\deg_{L(i,j)}(y)$.
Now, for any $$
yy'\in\binom{\ti{C}_{i,j}}{2}\cup\binom{\ti{B}_{i,j}}{2}
\cup \binom{\ti{C}_{i,j}'}{2}\cup\binom{\ti{B}_{i,j}'}{2},
$$ 
we have
$f(y)=j=f(y')$ and hence any of the parameters required in \ref{lem:H:Cindep} and \ref{lem:H:deg}
have the same value for $y$ and $y'$.  

The only thing missing before the proof of Lemma~\ref{lem:H} is complete is that we need 
to guarantee that every $y\in Z_i^2\cup Z_i^5\cup Q_i^2\cup Q_i^4$ has at most $\Delta-1$
neighbours in $\ti{V}_i$, 
as required in the first line of \ref{lem:H:deg}. 
If a vertex $y$ does not satisfy this, it must have 
\emph{all} its $\Delta$ neighbours in $\ti{V}_i$. 
Since by definition of $\ti{V}_i$ these neighbours have been mapped to $z_i^1$, we can map $y$ to $z_i^0$  
(instead of mapping it to $z_i^2$, $z_i^5$, $q_i^2$ or $q_i^4$).

Even if, in this way, all of the vertices in $Z_i^2\cup Z_i^5\cup Q_i^2\cup Q_i^4$
would have to be mapped to $z_i^0$, 
(\ref{eq:H:X}) assures us that these are at most 
$4\frac{\eta_\isubsc{H}}{10}\frac{m}{2r}$ vertices.
Since
by (\ref{eq:preim}) at most $(1+\frac{\eta_\isubsc{H}}{10})\frac{m}{2r}$ have already been mapped to $z_i^0$
in the first round and by (\ref{eq:H:X}) at most $\frac{\eta_\isubsc{H}}{10}\frac{m}{2r}$ in the second round,
this does not violate the upper bound in \ref{lem:H:Vi}.
\end{proof}

%%%% BLOW UP %%%%%%%%%%%%%%%%%%%%%%%%%%%%%%%%%%%%%%%%%%%%%%%%%%%%%%%%

\section{The constrained blow-up lemma}
\label{sec:blowup}

As explained earlier, the proof of the constrained blow-up lemma uses 
techniques developed in~\cite{millennium,RoRuTa} adapted to our setting.
In fact, the proof we present here follows the embedding strategy used
in the proof of~\cite[Theorem~1.5]{millennium}.
This strategy is roughly as follows. Assume we want to embed
the bipartite graph $H$ on vertex set $\ti U\dcup\ti V$ into the host graph $G$ on vertex
set $U\dcup V$. Then we consider injective mappings $f\colon\ti V\to V$, and
try to find one
that can be extended to $\ti U$ such that the resulting mapping is an
embedding of $H$ into $G$. For determining whether a particular mapping $f$
can be extended in this way we shall construct an auxiliary bipartite graph
$B_f$, the so-called candidate graph (see
Definition~\ref{def:cand}), which contains a
matching covering one of its partition classes if and only if $f$ can be
extended. Accordingly, our goal will be to check 
whether $B_f$ contains such a matching $M$ which we will do
by appealing to Hall's condition.
On page~\pageref{sec:bl:match} we will explain the details of this part
of the proof, determine necessary
conditions for the application of Hall's theorem, and collect them in form of a
matching lemma (Lemma~\ref{lem:matching}). 
It will then remain to show that
there is a mapping $f$ such that $B_f$ satisfies the conditions of this
matching lemma. This will require most of the work. The idea here is as follows.

We will show that mappings $f$ usually have the necessary
properties as long as they do not map neighbourhoods $N_H(\ti u)\subset\ti V$ of
vertices in $\ti u\in\ti U$ to certain ``bad'' spots in $V$. The existence of
(many) mappings that avoid these ``bad'' spots is verified with the help of a
hypergraph packing lemma (Lemma~\ref{lem:pack}). This lemma states
that half of all possible mappings $f$ avoid \emph{almost all} ``bad'' spots and
can easily be turned into mappings $f'$ avoiding \emph{all} ``bad'' spots with
the help of so-called switchings.

\subsection{Candidate graphs}
\label{sec:bl:cand}

If we have injective mappings $f\colon\ti V\to V$ as described in the previous
paragraph we would like to decide whether $f$ can be extended to an
embedding of $H$ into $G$. Observe that in such an embedding each vertex
$\ti u\in \ti U$ has to be embedded to a vertex $u\in U$ such that the
following holds. The neighbourhood $N_{H}(\ti u)$ has its image
$f(N_{H}(\ti u))$ in the set $N_G(u)$. 
Determining which vertices~$u$ are ``candidates'' for the embedding of~$\ti
u$ in this sense gives rise to the following bipartite graph.

\begin{definition}[candidate graph]
\label{def:cand}
  Let $H$ and $G$ be bipartite graphs on vertex sets $\ti{U}\dcup\ti{V}$ and
  $U\dcup V$, respectively. For an injective function $f\colon\ti{V}\to V$ we
  say that a vertex $u\in U$ is an
  \emph{$f$-candidate} for $\ti{u}\in\ti U$ if and
  only if $f(N_H(\ti{u}))\subset N_G(u)$. 
  
  The \emph{candidate graph}
  $B_f(H,G):=(\ti{U}\dcup U,E_f)$ for~$f$ is the bipartite graph with edge set 
  \begin{equation*}
    E_f:=\left\{
      \ti{u}u\in\ti{U}\times U \colon\, u\text{ is an $f$-candidate for }\ti{u}
    \right\}\,.
  \end{equation*}
\end{definition}

Now it is easy to see that the mapping $f$ described above can be extended
to an embedding of $H$ into $G$ if and only if the corresponding candidate
graph has a matching covering $\ti{U}$.
Clearly, if the candidate graph $B_f(H,G)$ of $f$ has vertices $\ti u\in\ti U$ of
degree $0$, then $B_f(H,G)$ has no such matching and hence $f$ cannot be
extended. More generally we would like to avoid that $\deg_{B_f(H,G)}(\ti u)$
is too small. Notice that this means precisely that $f$ should not map $N_H(\ti u)$ 
to a set $B\subset V$ that has a small common
neighbourhood in $G$. These sets $B$ are the ``bad'' spots (see
the beginning of this section) that should be avoided by~$f$.

We explained above that, in order to avoid ``bad'' spots,
we will have to change certain mappings~$f$ slightly. The exact definition of this
operation is as follows.

\begin{definition}[switching]
  Let $f,f'\colon X\to Y$ be injective functions. We say that $f'$ is obtained
  from $f$ by a \emph{switching} if there are
  $u,v\in X$ with $f'(u)=f(v)$ and $f'(v)=f(u)$ and $f(w)=f'(w)$ for all
  $w\not\in\{u,v\}$. The \emph{switching distance} \emph{$\swdist(f,f')$}
  of $f$ and $f'$ is at most $s$ if the mapping $f'$ can be obtained from $f$
  by a sequence of at most $s$ switchings.
\end{definition}

These switchings will alter the candidate graph corresponding to the
injective function slightly (but not much, see Lemma~\ref{lem:switch}). In order
to quantify this, we further define the neighbourhood distance between two
bipartite graphs $B$ and $B'$ which determines the number of vertices (in one
partition class) whose neighbourhoods differ in $B$ and $B'$.

\begin{definition}[neighbourhood distance]
Let $B=(U\dcup\ti{U},E)$, $B'=(U\dcup\ti{U},E')$ be bipartite graphs. We define
the \emph{neighbourhood distance} of $B$ and $B'$ with respect to $\ti{U}$
as
\begin{equation*}
  \ndist(B,B'):=\big|\{ \ti{u}\in\ti{U}\colon\, N_B(\ti{u})\neq
  N_{B'}(\ti{u})\}\big|.
\end{equation*}
\end{definition}

The next simple lemma now examines the effect of switchings on the neighbourhood
distance of candidate graphs and shows that functions with small switching
distance correspond to candidate graphs with small neighbourhood distance.

\begin{lemma}[switching lemma]\label{lem:switch}
  Let $H$ and $G$ be bipartite graphs on vertex sets $\ti{U}\dcup\ti{V}$ and
  $U\dcup V$, respectively, such that $\deg_H(\ti{v})\le\Delta$ for all
  $\ti{v}\in\ti{V}$ and let $f,f'\colon\ti{V}\to V$ be injective functions with
  switching distance $\swdist(f,f')\le s$. 
  % Further, let
  % \begin{equation*}
  %	  N(f,f'):=\left\{
  %	    \ti{u}\in\ti{U} : N_{B_f(H,G)}(\ti{u})\neq N_{B_{f'}(H,G)}(\ti{u})
  %	  \right\}.
  % \end{equation*}
  % Then $|N(f,f')|\le2s\Delta$.
  Then the neighbourhood distance of the candidate graphs $B_f(H,G)$ and
  $B_{f'}(H,G)$ satisfies
  \begin{equation*}
    \ndist\Big(B_f(H,G),B_{f'}(H,G)\Big)\le 2s\Delta\,.
  \end{equation*}
  % whose neighbourhood differs in the candidate graphs for $f$ and $f'$, i.e.,
\end{lemma}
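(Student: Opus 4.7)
The plan is to reduce the claim to a single switching and then chain the bounds together via a triangle inequality for~$\ndist$. First I would observe that the neighbourhood distance is subadditive: for any three bipartite graphs $B, B', B''$ on $U \dcup \ti U$, a vertex $\ti u\in\ti U$ with $N_B(\ti u)\neq N_{B''}(\ti u)$ must satisfy $N_B(\ti u)\neq N_{B'}(\ti u)$ or $N_{B'}(\ti u)\neq N_{B''}(\ti u)$, hence
\begin{equation*}
  \ndist(B,B'') \le \ndist(B,B') + \ndist(B',B'').
\end{equation*}

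The main step is to handle a single switching. Suppose $f'$ is obtained from $f$ by swapping the images of two vertices $\ti v_1, \ti v_2 \in \ti V$, that is, $f'(\ti v_1)=f(\ti v_2)$, $f'(\ti v_2)=f(\ti v_1)$, and $f'(\ti w)=f(\ti w)$ for all other $\ti w$. For any $\ti u\in\ti U$ with $N_H(\ti u)\cap\{\ti v_1,\ti v_2\}=\emptyset$ we then have $f'(N_H(\ti u))=f(N_H(\ti u))$, so by Definition~\ref{def:cand} the $f$-candidates and $f'$-candidates of $\ti u$ coincide, i.e.\ $N_{B_f(H,G)}(\ti u)=N_{B_{f'}(H,G)}(\ti u)$. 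Consequently, the only vertices of $\ti U$ that can contribute to $\ndist(B_f(H,G),B_{f'}(H,G))$ are those lying in $N_H(\ti v_1)\cup N_H(\ti v_2)$, and since $\deg_H(\ti v_i)\le\Delta$ this contributes at most $2\Delta$.

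Finally, I would write $f=f_0,f_1,\dots,f_s=f'$ for a sequence of at most $s$ switchings realising the switching distance (padding with trivial steps if fewer than $s$ suffice). Applying the single-switching bound to each consecutive pair $(f_{i-1},f_i)$ and iterating the triangle inequality yields
\begin{equation*}
  \ndist\bigl(B_f(H,G),B_{f'}(H,G)\bigr)
   \le \sum_{i=1}^{s}\ndist\bigl(B_{f_{i-1}}(H,G),B_{f_i}(H,G)\bigr)
   \le 2s\Delta,
\end{equation*}
which is the desired conclusion. No real obstacle is expected here; the entire argument is essentially bookkeeping, and the only thing to be careful about is that the degree hypothesis is stated on the $\ti V$-side (so that the neighbourhoods of the two swapped vertices in $\ti U$ each have size at most $\Delta$), which is exactly what we need.
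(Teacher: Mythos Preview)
Your proposal is correct and is essentially the same argument as the paper's proof: the paper phrases it as an induction on~$s$ with the set inclusion $N(f,f')\subset N(f,g)\cup N(g,f')$ at the inductive step, while you unfold the same induction into a chain of single switchings and use the corresponding triangle inequality for~$\ndist$. The key observation in both is identical, namely that a single swap of $\ti v_1,\ti v_2$ can only affect candidates of vertices in $N_H(\ti v_1)\cup N_H(\ti v_2)$, which has size at most~$2\Delta$.
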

\begin{proof}
  We proceed by induction on $s$. For $s=0$ the lemma is trivially true. Thus,
  consider $s>0$ and let $g$ be a function with $\swdist(f,g)\le s-1$ and
  $\swdist(g,f')=1$. Define 
  \begin{equation*}
    N(f,f'):=\left\{
      \ti{u}\in\ti{U} \colon N_{B_f(H,G)}(\ti{u})\neq N_{B_{f'}(H,G)}(\ti{u})
    \right\}.
  \end{equation*}  
  Clearly, $|N(f,f')|=\ndist(B_f(H,G),B_{f'}(H,G))$ and $N(f,f')\subset
  N(f,g)\cup N(g,f')$. By induction hypothesis we have $|N(f,g)|\le2(s-1)\Delta$.
  The remaining switching from $g$ to $f'$ interchanges only the images of two
  vertices from $\ti{V}$, say $\ti{v}_1$ and $\ti{v}_2$. It follows that 
  \begin{equation*}
 	  N(g,f')=\left\{
	    \ti{u}\in N_H(\ti{v}_1)\cup N_H(\ti{v}_2) \colon
	    N_{B_g(H,G)}(\ti{u})\neq N_{B_{f'}(H,G)}(\ti{u}) 
	  \right\},
  \end{equation*}
  which implies $|N(g,f')|\le2\Delta$ and therefore we get
  $|N(f,f')|\le2s\Delta$.
\end{proof}

\subsection{A hypergraph packing lemma}
\label{sec:bl:pack}

The main ingredient to the proof of the constrained blow-up lemma is the
following hypergraph packing result (Lemma~\ref{lem:pack}). To understand what
this lemma says and how we will apply it, recall that we would like to embed the
vertex set $\ti U$ of $H$ into the vertex set $U$ of $G$ such that subsets of
$\ti U$ that form neighbourhoods in the graph $H$ avoiding certain ``bad'' spots
in $U$. If $H$ is a $\Delta$-regular graph, then these neighbourhoods form
$\Delta$-sets. In this case, as we will see, also the ``bad'' spots
form~$\Delta$-sets. Accordingly, we have to solve the problem of packing the
neighbourhood $\Delta$-sets $\hyper{N}$ and the ``bad'' $\Delta$-sets $\hyper{B}$, 
which is a hypergraph packing 
problem. Lemma~\ref{lem:pack} below
states that this is possible under certain conditions. One of these conditions is that the
``bad'' sets should not ``cluster'' too much (although there might be many of
them). The following definition makes this precise.

\begin{definition}[corrupted sets]\label{def:corrupt}
  For $\Delta\in\NATS$ and a set $V$ let
  $\hyper{B}\subset\binom{V}{\Delta}$ be a collection of $\Delta$-sets in $V$
  and let $x$ be a positive real. We say that all $B\in\hyper{B}$ are
  \emph{$x$-corrupted} by $\hyper{B}$.
  Recursively, for $i\in[\Delta-1]$ an $i$-set
  $B\in\binom{V}{i}$ in $V$ is called $x$-corrupted by $\hyper{B}$ if it
  is contained in more than $x$ of the $(i+1)$-sets that are
  $x$-corrupted by $\hyper{B}$.

  Observe that, if a vertex $v\in V$ is not $x$-corrupted by $\hyper{B}$, then
  it is also not $x'$-corrupted by $\hyper{B}$ for any $x'>x$.
\end{definition}

The hypergraph packing lemma now implies that $\hyper{N}$ and $\hyper{B}$
can be packed if $\hyper{B}$ contains no corrupted sets. In fact this
lemma states that \emph{half of all} possible ways to map the vertices of
$\hyper{N}$ to $\hyper{B}$ can be turned into such a packing by performing
a sequence of few switchings.

\begin{lemma}[hypergraph packing lemma~\cite{RoRuTa}]
\label{lem:pack}
  For all integers $\Delta\ge 2$ and $\ell\ge 1$ 
  and every positive~$\sigma$
  there are positive constants 
  $\eta_{\subref{lem:pack}}$, and $n_{\subref{lem:pack}}$ such that the following holds. 
  Let $\hyper{B}$ be a $\Delta$-uniform
  hypergraph on $n'\ge n_{\subref{lem:pack}}$ vertices such that
  no vertex of $\hyper{B}$ is $\eta_{\subref{lem:pack}}n'$-corrupted by
  $\hyper{B}$. 
  Let $\hyper{N}$ be a $\Delta$-uniform hypergraph on 
  $n\le n'$ vertices such that no vertex of $\hyper{N}$ is
  contained in more than $\ell$ edges of $\hyper{N}$.
  
  Then for at least half of all injective functions $f\colon V(\hyper{N})\to
  V(\hyper{B})$ there are packings $f'$ of $\hyper{N}$ and $\hyper{B}$
  with switching distance $\swdist(f,f')\le\sigma n$.
\qed
\end{lemma}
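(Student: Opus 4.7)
My plan is to prove Lemma~\ref{lem:pack} by the standard ``random map plus repair by switchings'' approach. First, I will pick a uniformly random injection $f\colon V(\hyper N)\to V(\hyper B)$ and use the first moment method to show that typically only a small linear-in-$n$ number of edges $N\in\hyper N$ are \emph{misplaced}, in the sense that $f(N)\in\hyper B$. Then, for each such $f$, I will iteratively repair the misplaced edges by local switchings, spending only a bounded number of switchings per misplaced edge, so that the total switching distance stays below $\sigma n$ as long as $\eta_{\subref{lem:pack}}$ is chosen small enough in terms of $\Delta$, $\ell$, and $\sigma$.

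The first moment estimate rests on a global bound $|\hyper B|=O_\Delta(\eta_{\subref{lem:pack}}(n')^\Delta)$, which I would obtain by induction on the set size $i$. Writing $c_i$ for the number of $i$-sets that are $\eta_{\subref{lem:pack}}\,n'$-corrupted by $\hyper B$, one has
\[
   i\,c_i \;\le\; c_{i-1}\cdot n' \;+\; \binom{n'}{i-1}\,\eta_{\subref{lem:pack}}\,n',
\]
since any corrupted $i$-set has $i$ subsets of size $i-1$, each of which is either itself corrupted (for which the trivial bound $n'$ on the number of corrupted supersets applies) or uncorrupted (for which the corruption threshold $\eta_{\subref{lem:pack}}\,n'$ applies). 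Starting from $c_1=0$, this recursion yields $c_\Delta=O_\Delta(\eta_{\subref{lem:pack}}(n')^\Delta)$. For a uniformly random injection $f$, the probability that a fixed $N\in\hyper N$ satisfies $f(N)\in\hyper B$ equals $|\hyper B|/\binom{n'}{\Delta}$, so the expected number of misplaced edges is at most $|\hyper N|\cdot O_\Delta(\eta_{\subref{lem:pack}})=O_{\Delta,\ell}(\eta_{\subref{lem:pack}}\,n)$, using $|\hyper N|\le \ell n/\Delta$. Markov's inequality then guarantees that with probability at least $3/4$ the injection $f$ has at most $C_{\Delta,\ell}\,\eta_{\subref{lem:pack}}\,n$ misplaced edges.

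For such a ``good'' $f$, I would process the misplaced edges one by one. To fix a misplaced edge $N$, I pick some $v\in N$ and try to find a switching partner $u\in V(\hyper N)$ such that swapping $f(v)$ and $f(u)$ (i)~removes $N$ from the misplaced set, (ii)~does not render any of the at most $\ell-1$ other edges of $\hyper N$ through $v$ misplaced, and (iii)~does not render any of the at most $\ell$ edges of $\hyper N$ through $u$ misplaced. Conditions~(i) and~(ii) forbid only those $u$ whose image $f(u)$ extends one of $\ell$ specific $(\Delta-1)$-sets to an edge of $\hyper B$; the non-corruption hypothesis bounds each such codegree by $\eta_{\subref{lem:pack}}\,n'$ \emph{provided} the relevant $(\Delta-1)$-set is uncorrupted, and one chooses the pivot $v\in N$ so that this is the case, using recursively that vertices of $V(\hyper B)$ are uncorrupted at level~$1$. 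Condition~(iii), which rules out those $u$ for which $f(v)$ completes one of the $\hyper N$-edges through $u$ to an edge of $\hyper B$, is the main obstacle: bounding the number of such $u$ requires a second first-moment/Markov step, showing that with probability at least $3/4$ the random $f$ also avoids mapping many vertices of $V(\hyper N)$ onto vertices of $V(\hyper B)$ with atypically large degree in $\hyper B$. Intersecting both Markov events keeps at least half of all $f$'s, and for any such $f$ each of the $O_{\Delta,\ell}(\eta_{\subref{lem:pack}}\,n)$ misplaced edges can be repaired by at most $O(\Delta)$ switchings without creating new misplaced edges; choosing $\eta_{\subref{lem:pack}}$ sufficiently small forces the total switching distance below $\sigma n$, as required.
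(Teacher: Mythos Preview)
The paper does not prove this lemma; it is quoted from~\cite{RoRuTa} and closed with a \qed. Your overall strategy---a first-moment bound on the number of misplaced edges under a uniformly random injection, followed by local repair via switchings---is indeed the method of~\cite{RoRuTa}, and your recursion $i\,c_i\le c_{i-1}n'+\binom{n'}{i-1}\eta_{\subref{lem:pack}} n'$ starting from $c_1=0$ correctly yields $|\hyper B|=c_\Delta=O_\Delta\big(\eta_{\subref{lem:pack}}(n')^\Delta\big)$ and hence the desired expectation bound.

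The repair step, however, has a gap. You describe fixing a misplaced edge $N$ by a \emph{single} switching at a pivot $v\in N$ chosen so that $f(N)\setminus\{f(v)\}$ is uncorrupted, ``using recursively that vertices are uncorrupted at level~$1$.'' But the hypothesis that no $1$-set is corrupted does not force any $(\Delta-1)$-face of a given $f(N)\in\hyper B$ to be uncorrupted; a single edge of $\hyper B$ may well have all $\Delta$ of its $(\Delta-1)$-faces corrupted. What the recursive non-corruption structure actually gives you is a way to \emph{build} an uncorrupted $(\Delta-1)$-subset of $f(N)$ by a chain of at most $\Delta-1$ switchings (grow an uncorrupted $1$-set to an uncorrupted $2$-set, then to an uncorrupted $3$-set, and so on), after which one final switching takes $f(N)$ out of $\hyper B$. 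Your $O(\Delta)$-per-edge budget is compatible with this, but the sketch as written does not carry it out. Similarly, your handling of condition~(iii) via ``vertices of atypically large $\hyper B$-degree'' is insufficient: even the \emph{typical} $\hyper B$-degree of a vertex is of order $\eta_{\subref{lem:pack}}(n')^{\Delta-1}$, far too large to bound the number of forbidden partners $u$ directly. Here again it is the non-corruption of the specific $(\Delta-1)$-sets $f(N'\setminus\{u\})$ (for the at most $\ell$ edges $N'$ of $\hyper N$ through a candidate $u$) that makes the count go through, and one must first ensure---at the cost of further switchings or a genuinely different first-moment step---that these sets are themselves uncorrupted.
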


When applying this lemma we further make use of the following lemma which 
helps us to bound corruption.

\begin{lemma}[corruption lemma]
\label{lem:corrupt}
  Let $n\text{,}\Delta>0$ be integers and $\mu$ and $\eta$ be positive reals. Let
  $V$ be a set of size $n$ and $\hyper{B}\subset\binom{V}{\Delta}$ be a family
  of $\Delta$-sets of size at most $\mu n^\Delta$. Then at most
  $(\Delta!/\eta^{\Delta-1})\mu n$ vertices are $\eta n$-corrupted by $\hyper{B}$.
\end{lemma}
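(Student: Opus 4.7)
The plan is to prove this by a downward induction on the set size, controlling the number of $\eta n$-corrupted $i$-sets in terms of the number of $\eta n$-corrupted $(i+1)$-sets via a simple double counting argument. Let $b_i$ denote the number of $\eta n$-corrupted $i$-sets for $i\in[\Delta]$. By Definition~\ref{def:corrupt} we have $b_\Delta=|\hyper{B}|\le\mu n^\Delta$ as a base case, and the goal is to show $b_1\le(\Delta!/\eta^{\Delta-1})\mu n$.

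For the inductive step, fix $i\in[\Delta-1]$ and count pairs $(B,B')$ where $B$ is an $\eta n$-corrupted $i$-set, $B'$ is an $\eta n$-corrupted $(i+1)$-set, and $B\subset B'$. On the one hand, each $\eta n$-corrupted $(i+1)$-set contains exactly $i+1$ subsets of size $i$, so this count is at most $(i+1)b_{i+1}$. On the other hand, by the definition of corruption each $\eta n$-corrupted $i$-set is contained in strictly more than $\eta n$ corrupted $(i+1)$-sets, so the count is strictly greater than $\eta n\cdot b_i$. Combining,
\begin{equation*}
   b_i \;<\; \frac{(i+1)\,b_{i+1}}{\eta n}.
\end{equation*}

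Iterating this inequality from $i=\Delta-1$ down to $i=1$ yields
\begin{equation*}
   b_1 \;<\; \frac{\Delta\cdot(\Delta-1)\cdots 2}{(\eta n)^{\Delta-1}}\,b_\Delta
       \;=\;\frac{\Delta!}{(\eta n)^{\Delta-1}}\,b_\Delta
       \;\le\;\frac{\Delta!}{(\eta n)^{\Delta-1}}\,\mu n^\Delta
       \;=\;\frac{\Delta!}{\eta^{\Delta-1}}\,\mu n,
\end{equation*}
which is precisely the bound claimed in the lemma. No step here presents any real obstacle; the only subtlety is being careful about the recursive definition of corruption (noting that the count of pairs uses that every $(i+1)$-subset of the ground set has exactly $i+1$ many $i$-subsets, irrespective of whether they are corrupted or not, giving the upper bound on the pair count).
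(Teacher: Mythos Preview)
Your proof is correct and essentially identical to the paper's: both use downward induction on the set size $i$, with the same double counting of pairs $(B,B')$ of corrupted sets with $B\subset B'$ to obtain $b_i\le (i+1)b_{i+1}/(\eta n)$, and then iterate down from $b_\Delta\le\mu n^\Delta$. The only cosmetic difference is that the paper states the intermediate bound explicitly as $b_i\le(\Delta!/i!)\mu n^i/\eta^{\Delta-i}$, whereas you unwind the recursion at the end.
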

\begin{proof}
  For $i\in[\Delta]$ let $\hyper{B}_i$ be the
  family of all those $i$-sets $B'\in\binom{V}{i}$ that are $\eta n$-corrupted
  by $\hyper{B}$. We will prove by induction on $i$ (starting at $i=\Delta$) that
  \begin{equation}
  \label{eq:corrupt}
    |\hyper{B}_i|\le\frac{\Delta!/i!}{\eta^{\Delta-i}}\mu n^i.
  \end{equation} 
  For $i=1$ this establishes the lemma. For $i=\Delta$ the assertion is true by
  assumption. Now assume that \eqref{eq:corrupt} is true for $i>1$. By
  definition every $B'\in\hyper{B}_{i-1}$ is contained in more than $\eta n$
  sets $B\in\hyper{B}_{i}$. On the other hand, clearly every
  $B\in\hyper{B}_{i}$ contains at most $i$ sets from $\hyper{B}_{i-1}$.
  Double counting thus gives
  \begin{equation*}
    \eta n\left|\hyper{B}_{i-1}\right| 
    \le \left|\big\{(B',B):B'\in\hyper{B}_{i-1},B\in\hyper{B}_{i},B'\subset
    B\big\}\right| 
    \le i \left|\hyper{B}_{i}\right|
    \leByRef{eq:corrupt} i\frac{\Delta!/i!}{\eta^{\Delta-i}} \mu n^{i},
  \end{equation*}
  which implies \eqref{eq:corrupt} for $i$ replaced by $i-1$.
\end{proof}

\subsection{A matching lemma}
\label{sec:bl:match}

We indicated earlier that we are interested in determining whether
a candidate graph has a matching covering one of its partition classes.
In order to do so we will make use of the following matching lemma which is an
easy consequence of Hall's theorem. This lemma takes two graphs $B$ and $B'$ as
input that have small neighbourhood distance.
In our application these two graphs will be candidate graphs
that correspond to two injective
mappings $f$ and $f'$ with small switching distance (such as promised by the
hypergraph packing lemma, Lemma~\ref{lem:pack}). Recall that
Lemma~\ref{lem:switch} guarantees that mappings with small switching
distance correspond to candidate graphs with small neighbourhood distance.

The matching lemma asserts that $B'$ has the desired matching if certain
vertex degree and neighbourhood conditions are satisfied. These conditions are
somewhat technical. They are tailored exactly to match the conditions that we
establish for candidate graphs in the proof of the constrained blow-up lemma
(see Claims~\ref{cl:bl:small}--\ref{cl:bl:big}).

\begin{lemma}[matching lemma]
\label{lem:matching}
  Let $B=(\ti{U}\dcup U,E)$ and $B'=(\ti{U}\dcup U,E')$ be bipartite graphs with
  $|U|\ge|\ti{U}|$ and $\ndist(B,B')\le s$. If there are
  positive integers $x$ and $n_1,n_2,n_3$ such that
  \begin{enumerate}[label={\rm (\roman{*})}]
	\item \label{lem:matching:i} $\deg_{B'}(\ti{u})\ge n_1$ for all $\ti{u}\in\ti{U}$,
    \item \label{lem:matching:ii} $|N_{B'}(\ti{S})|\ge x|\ti{S}|$ for all
      $\ti{S}\subset\ti{U}$ with $|\ti{S}|\le n_2$
	\item \label{lem:matching:iii} $e_{B'}(\ti{S},S)\le\frac{n_1}{n_3}|\ti{S}||S|$
	  for all $\ti{S}\subset\ti{U}$, $S\subset U$ with $xn_2\le|S|<|\ti{S}|<n_3$,
	\item \label{lem:matching:iv} $|N_{B}(S)\cap\ti{S}|>s$ for all
	  $\ti{S}\subset\ti{U}$, $S\subset U$ with $|\ti{S}|\ge n_3$ and
	  $|S|>|U|-|\ti{S}|$,
  \end{enumerate}
  then $B'$ has a matching covering $\ti{U}$.
\end{lemma}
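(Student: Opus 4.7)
The plan is to verify Hall's condition in the graph $B'$, i.e., to show that $|N_{B'}(\ti{S})|\ge|\ti{S}|$ for every $\ti{S}\subset\ti{U}$. Since $|U|\ge|\ti{U}|$, a matching covering $\ti{U}$ then follows from Hall's theorem. I would split according to the size of $\ti{S}$, treating the three regimes $|\ti{S}|\le n_2$, $n_2<|\ti{S}|<n_3$, and $|\ti{S}|\ge n_3$ separately, each via the hypothesis tailored to it.

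The small case is immediate: condition~\ref{lem:matching:ii} gives $|N_{B'}(\ti S)|\ge x|\ti S|\ge|\ti S|$, using that $x\ge1$ is a positive integer. For the medium case $n_2<|\ti S|<n_3$, I would argue by contradiction. Suppose $S:=N_{B'}(\ti{S})$ satisfies $|S|<|\ti{S}|$, so in particular $|S|<n_3$. Taking any subset of $\ti{S}$ of size $n_2$ and applying~\ref{lem:matching:ii} yields $|S|\ge xn_2$, so $S$ and $\ti S$ meet the hypothesis of~\ref{lem:matching:iii}. On the one hand, every edge of $B'$ incident to $\ti S$ ends in $S$, so by~\ref{lem:matching:i}
\begin{equation*}
  e_{B'}(\ti S,S)=\sum_{\ti u\in\ti S}\deg_{B'}(\ti u)\ge n_1|\ti S|.
\end{equation*}
On the other hand,~\ref{lem:matching:iii} gives $e_{B'}(\ti S,S)\le\tfrac{n_1}{n_3}|\ti S||S|$. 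Combining forces $|S|\ge n_3$, contradicting $|S|<n_3$.

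For the large case $|\ti{S}|\ge n_3$, suppose again that $|N_{B'}(\ti{S})|<|\ti S|$, and put $S:=U\setminus N_{B'}(\ti S)$. Then $|S|=|U|-|N_{B'}(\ti S)|>|U|-|\ti S|$, so~\ref{lem:matching:iv} applies and yields $|N_B(S)\cap\ti S|>s$. The key observation is now that $\ndist(B,B')\le s$ forces the existence of some $\ti u\in N_B(S)\cap\ti S$ with $N_B(\ti u)=N_{B'}(\ti u)$: strictly more than $s$ vertices of $\ti S$ are witnesses in $B$, but only at most $s$ vertices of $\ti U$ can see their neighbourhood change when passing from $B$ to $B'$. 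For this $\ti u$, any $B$-neighbour in $S$ is also a $B'$-neighbour in $S$, contradicting $S\cap N_{B'}(\ti S)=\emptyset$.

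The proof is essentially routine once the three cases and the use of $\ndist(B,B')\le s$ in the last case are set up; the main conceptual point (and the only place where $B$, as opposed to $B'$, enters) is the large-set argument, where condition~\ref{lem:matching:iv} is phrased in terms of $B$ precisely so that the bound on $\ndist(B,B')$ can be cashed in. No calculations beyond the linear edge count in the medium case are needed.
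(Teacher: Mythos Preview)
Your proof is correct and follows essentially the same approach as the paper: verify Hall's condition in $B'$ by a three-way case split on $|\ti S|$, using~\ref{lem:matching:ii} for small sets, the edge-count contradiction via~\ref{lem:matching:i} and~\ref{lem:matching:iii} for medium sets, and~\ref{lem:matching:iv} together with the bound $\ndist(B,B')\le s$ for large sets. Your version is in fact slightly more explicit than the paper's in the medium case, since you verify $|S|\ge xn_2$ (via a subset of size~$n_2$ and~\ref{lem:matching:ii}) before invoking~\ref{lem:matching:iii}.
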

\begin{proof}
  We will check Hall's condition in $B'$ for all sets $\ti{S}\subset\ti{U}$.
  We clearly have $|N_{B'}(\ti{S})|\ge|\ti{S}|$ for $|\ti{S}|\le xn_2$
  by~\ref{lem:matching:ii} (if $|\ti{S}|>n_2$, then consider a subset
  of~$\ti{S}$ of size $n_2$). 
  
  Next, consider the case $xn_2<|\ti{S}|<n_3$. Set
  $S:=N_{B'}(\ti{S})$ and assume, for a contradiction, that $|S|<|\ti{S}|$.
  Since $|S|<|\ti{S}|<n_3$ we have $|S|/n_3<1$. Therefore,
  applying~\ref{lem:matching:i}, we can conclude that
  \begin{equation*}
     e_{B'}(\ti{S},S)=\sum_{\ti{u}\in\ti{S}}\left|N_{B'}(\ti{u})\right|
       \ge n_1|\ti{S}|>\frac{n_1}{n_3}|\ti{S}||S|,
  \end{equation*}
  which is a contradiction to~\ref{lem:matching:ii}. Thus
  $|N_{B'}(\ti{S})|\ge|\ti{S}|$.
  
  Finally, for sets $\ti{S}$ of size at least $n_3$ set $S:=U\setminus
  N_{B'}(\ti{S})$ and assume, again for a contradiction, that
  $|N_{B'}(\ti{S})|<|\ti{S}|$. This implies $|S|>|U|-|\ti{S}|$. Accordingly we can
  apply~\ref{lem:matching:iv} to $\ti{S}$ and $S$ and infer that
  $|N_{B}(S)\cap\ti{S}|>s$. Since $\ndist(B,B')\le s$, at most $s$ vertices from
  $\ti{U}$ have different neighbourhoods in $B$ and $B'$ and so
  \begin{equation*}\begin{split}
    \left|N_{B'}(S)\cap\ti{S}\right| 
    &= \bigg|\left\{\ti{u}\in\ti{S}:N_{B'}(\ti{u})\cap S\neq\emptyset\right\}\bigg|
    \\ 
    &\ge \bigg|\left\{\ti{u}\in\ti{S}:N_{B}(\ti{u})\cap
         S\neq\emptyset\right\}\bigg|-s
    = \left|N_{B}(S)\cap\ti{S}\right|-s
    > 0,
  \end{split}\end{equation*}
  which is a contradiction as $S=U\setminus N_{B'}(\ti{S})$.
\end{proof}

\subsection{Proof of Lemma~\ref{lem:blowup}}

Now we are almost ready to present the proof of the constrained blow-up
lemma (Lemma~\ref{lem:blowup}). We just need one further 
technical lemma as preparation. This lemma considers a family of
pairwise disjoint $\Delta$-sets $\hyper{S}$ in a set $S$ and
states that a random injective function from $S$ to a set $T$
usually has the following property. The images $f(\hyper{S})$ 
of sets in $\hyper{S}$ ``almost'' avoid a small family of ``bad'' sets
$\hyper{T}$ in $T$.

\begin{lemma}\label{lem:bl:sets}
  For all positive integers $\Delta$ and positive reals $\beta$ and $\mu_\isubsc{S}$ there is
  $\mu_\isubsc{T}>0$ such that the following holds. Let $S$ and $T$ be disjoint sets,
  $\hyper{S}\subset\binom{S}{\Delta}$ be a family of pairwise disjoint
  $\Delta$-sets in $S$ with $|\hyper{S}|\le\frac1\Delta(1-\mu_\isubsc{S})|T|$, and
  $\hyper{T}\subset\binom{T}{\Delta}$ be a family of $\Delta$-sets in $T$ with
  $|\hyper{T}|\le\mu_\isubsc{T}|T|^\Delta$.
  
  Then a random injective function $f\colon S\to T$ satisfies
  $|f(\hyper{S})\setminus\hyper{T}|>(1-\beta)|\hyper{S}|$ with probability at
  least $1-\beta^{|\hyper{S}|}$.
\end{lemma}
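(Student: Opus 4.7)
The plan is a direct union bound over subsets $J\subset[K]$ of ``bad'' indices, where $K:=|\hyper{S}|$. Writing $\hyper{S}=\{S_1,\dots,S_K\}$, the fact that $f$ is injective and the $S_i$ are pairwise disjoint means the images $f(S_i)$ are pairwise disjoint $\Delta$-subsets of $T$, so $|f(\hyper{S})|=K$. The event whose complement I need to bound is therefore
\[
  \bigl\{\,|\{i\in[K]\colon f(S_i)\in\hyper{T}\}|\ge\beta K\,\bigr\}\,,
\]
and my goal is to show it has probability at most $\beta^K$. I will pick $\mu_\isubsc{T}$ at the very end, depending only on $\Delta$, $\beta$, $\mu_\isubsc{S}$.

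For a fixed $J\subset[K]$ of size $j:=\lceil\beta K\rceil$ and a fixed tuple $(T_i)_{i\in J}$ of pairwise disjoint sets in $\hyper{T}$, an elementary count of injections (choose the $(\Delta!)^j$ bijections between $\bigcup_{i\in J}S_i$ and $\bigcup_{i\in J}T_i$, then extend to the rest of $S$ arbitrarily) gives
\[
  \Prob\bigl(\forall i\in J\colon f(S_i)=T_i\bigr)
  =\frac{(\Delta!)^j}{|T|(|T|-1)\cdots(|T|-j\Delta+1)}\,.
\]
The hypothesis $K\Delta\le(1-\mu_\isubsc{S})|T|$ ensures each factor in the denominator is at least $\mu_\isubsc{S}|T|$. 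Summing over at most $|\hyper{T}|^{j}$ tuples $(T_i)_{i\in J}\in\hyper{T}^{j}$ (the sum is unchanged whether one enforces disjointness or not, since non-disjoint tuples contribute $0$) yields
\[
  \Prob\bigl(\forall i\in J\colon f(S_i)\in\hyper{T}\bigr)
  \le\biggl(\frac{\mu_\isubsc{T}\,\Delta!}{\mu_\isubsc{S}^\Delta}\biggr)^{\!j}\,.
\]

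A union bound over the $\binom{K}{j}\le(e/\beta)^j$ choices of $J$ then yields failure probability at most $\bigl(e\mu_\isubsc{T}\Delta!/(\beta\mu_\isubsc{S}^\Delta)\bigr)^{j}$. Since $j\ge\beta K$ and (assuming the nontrivial case) $0<\beta<1$, it suffices to choose $\mu_\isubsc{T}$ so that the bracketed quantity is at most $\beta^{1/\beta}$; taking
\[
  \mu_\isubsc{T}:=\frac{\beta^{\,1+1/\beta}\,\mu_\isubsc{S}^\Delta}{e\,\Delta!}
\]
works, and the bound becomes $\beta^{j/\beta}\le\beta^{K}$, as required. There is no serious obstacle; the only slightly subtle point is that the crude replacement of the sum over disjoint tuples by $|\hyper{T}|^{j}$ is still strong enough, because the denominator loses only the multiplicative factor $\mu_\isubsc{S}^{\Delta}$ per level, which is absorbed into $\mu_\isubsc{T}$.
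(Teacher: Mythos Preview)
Your proof is correct and follows essentially the same route as the paper's: both bound the probability that at least $\beta K$ of the images $f(S_i)$ land in $\hyper{T}$ by a $\binom{K}{j}p^j$-type tail, where $p\approx\mu_\isubsc{T}/\mu_\isubsc{S}^\Delta$ comes from the hypothesis $K\Delta\le(1-\mu_\isubsc{S})|T|$. The paper phrases this via sequential exposure and stochastic domination by a binomial, whereas you do the direct combinatorial union bound over $(J,(T_i)_{i\in J})$; the arithmetic and the choice of $\mu_\isubsc{T}$ match up to the harmless difference between $\Delta!$ and $\Delta^\Delta$.
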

\begin{proof}
  Given $\Delta$, $\beta$, and $\mu_\isubsc{S}$ choose 
  \begin{equation}\label{eq:bl:sets:muT}
    \mu_\isubsc{T}:=\sqrt[\beta]{\beta}
      \left(\frac{e}{\beta}\left(\frac{\Delta}{\mu_\isubsc{S}}\right)^\Delta\right)^{-1}.
  \end{equation} 
  Let $S$, $T$, $\hyper{S}$, and $\hyper{T}$ be as required and let $f$ be a
  random injective function from $S$ to $T$. We consider $f$ as a consecutive
  random selection (without replacement) of images for the elements of $S$ where
  the images of the elements of the (disjoint) sets in $\hyper{S}$ are chosen
  first. Let $S_i$ be the $i$-th such set in $\hyper{S}$. Then the probability
  that $f$ maps $S_i$ to a set in $\hyper{T}$, which we denote by $p_i$, is
  at most
  \begin{equation*}
    p_i \le \frac{|\hyper{T}|}{\binom{|T|-(i-1)\Delta}{\Delta}}
    \le \frac{\mu_\isubsc{T}|T|^\Delta}{\binom{\mu_\isubsc{S}|T|}{\Delta}}
    \le \mu_\isubsc{T} \frac{ |T|^{\Delta} }
                   { \left(\frac{\mu_\isubsc{S}|T|}{\Delta}\right)^{\Delta} }
    = \mu_\isubsc{T}\left(\frac{\Delta}{\mu_\isubsc{S}}\right)^\Delta =: p\,,
  \end{equation*}
  where the second inequality follows from
  $(i-1)\Delta\le|\bigcup\hyper{S}|\le(1-\mu_\isubsc{S})|T|$. 
  Let $Z$ be a random variable with distribution $\Bin(|\hyper{S}|,p)$.  
  It follows that $\Prob[|f(\hyper{S})\cap\hyper{T}|\ge z]\le\Prob[Z\ge z]$.
  Since
  \begin{equation*}
    \Prob[Z\ge z]\le\binom{|\hyper{S}|}{z}p^z
      < \left(\frac{e|\hyper{S}|p}{z}
    \right)^z\,,
  \end{equation*}
  we infer that
  \begin{equation*}
    \Prob\Big[|f(\hyper{S})\cap\hyper{T}|\ge\beta|\hyper{S}|\Big]
    < \left(\frac{e p}{\beta}\right)^{\beta|\hyper{S}|}
    =\left( \frac{e\mu_\isubsc{T}}{\beta}
       \left(\frac{\Delta}{\mu_\isubsc{S}}\right)^\Delta
     \right)^{\beta|\hyper{S}|}
    \eqByRef{eq:bl:sets:muT}\beta^{|\hyper{S}|},
  \end{equation*}
  which proves the lemma since $|f(\hyper{S})\cap\hyper{T}|\ge\beta|\hyper{S}|$
  holds iff $|f(\hyper{S})\setminus\hyper{T}|\le(1-\beta)|\hyper{S}|$.
\end{proof}

Now we can finally give the proof of Lemma~\ref{lem:blowup}.

\begin{proof}[Proof of Lemma~\ref{lem:blowup}]
  We first define a sequence of constants.
  Given $\Delta$, $d$, 
  and $\eta$ 
  fix $\Delta':=\Delta^2+1$. Choose $\beta$ and $\sigma$ such that
  \begin{equation}\label{eq:bl:betasigma}
    \beta^{\frac17(\frac d2)^\Delta}\le\frac15  \qquad\text{and}\qquad
    \frac{(1-\beta)d^\Delta}{100^\Delta}\ge 2\sigma
  \end{equation} 
  Apply the hypergraph packing lemma, Lemma~\ref{lem:pack}, with input $\Delta$,
  $\ell=2\Delta+1$,
  and $\sigma$ to obtain constants
  $\eta_{\subref{lem:pack}}$, and
  $n_{\subref{lem:pack}}$. Next, choose $\eta'_{\subref{lem:pack}}$,
  $\mu_\subsc{BL}$, and $\mu_\isubsc{S}$ such that
  \begin{equation}\label{eq:bl:alphaetamu}
    \frac{\eta'_{\subref{lem:pack}}}{1-\eta}\le
    \eta_{\subref{lem:pack}}\,,
    \qquad
      \frac{\Delta!\cdot 2\mu_{\subsc{BL}}}
        {(\eta'_{\subref{lem:pack}})^{\Delta-1}} \le \eta\,,
    \qquad
      \frac1{\Delta'}\le\frac1\Delta(1-\mu_\isubsc{S})\,.
  \end{equation}
  Lemma~\ref{lem:bl:sets} with input $\Delta$, $\beta$, $\mu_\isubsc{S}$
  provides us with a constant $\mu_\isubsc{T}$. We apply
  Lemma~\ref{lem:joint} two times, once with input $\Delta=\ell$, $d$,
  $\eps':=\frac12 d$, and
  $\mu=\mu_{\subsc{BL}}/\Delta'$ and once with input $\Delta=\ell$, $d$,
  $\eps':=\frac12 d$, and $\mu=\mu_\isubsc{T}$ and get constants 
  $\eps_{\subref{lem:joint}}$ and $\ti\eps_{\subref{lem:joint}}$, respectively.
  Now we can fix the promised constant $\eps$ such that
  \begin{equation}\label{eq:bl:eps}
    \eps\le\min\left\{ \frac{\eps_{\subref{lem:joint}}}{\Delta'}\,,\ 
      \frac{d}{2\Delta}\right\},
    \qquad\text{and}\qquad
    \frac{\eps\Delta'}{\eta(1-\eta)}
     < \min\{d,\ti\eps_{\subref{lem:joint}}\}.
  \end{equation}
  As last input let $r_1$ be given and set 
  \begin{equation}\label{eq:bl:xi}
    \xi_{\subref{lem:joint}}:=\eta(1-\eta)/(r_1\Delta').
  \end{equation}
  Next let $c_{\subref{lem:joint}}$ be the maximum of the two constants obtained
  from the two applications of Lemma~\ref{lem:joint}, that we started above,
  with the additional parameter $\xi_{\subref{lem:joint}}$.
  Further, let $\nu$ and $c_{\subref{lem:exp}}$ be the constants 
  from Lemma~\ref{lem:exp} for input $\Delta$, $d$, and $\eps$,
  and let $c_{\subref{lem:stars-big}}$ be the constant from
  Lemma~\ref{lem:stars-big} for input $\Delta$ and $\nu$. Finally, we
  choose $c=\max\{c_{\subref{lem:joint}},c_{\subref{lem:exp}},c_{\subref{lem:stars-big}}\}$.
  With this we defined all necessary constants.

  Now assume we are given any $1\le r\le r_1$,
  and a random graph $\Gamma=\Gnp$ with $p\ge c(\log n/n)^{1/\Delta}$, 
  where, without loss of generality, $n$ is such that
  \begin{equation}\label{eq:bl:n}
    (1-\eta')\tfrac{n}{r} \ge n_{\subref{lem:pack}}.
    % \qquad \frac{\eta n}{r_1\Delta'}\ge n^{2/3},
    % \qquad\text{and}\qquad
    % p\ge n^{-3/5\Delta}.
  \end{equation}
  Then, with high probability, the graph $\Gamma$ satisfies the assertion
  of the different lemmas concerning random graphs, that we started to apply
  in the definition of the constants. More precisely, by the choice of the
  constants above,
  \begin{enumerate}[label={\rm (P\arabic{*})},leftmargin=*]
    \item\label{bl:Pstars} 
      $\Gamma$ satisfies the assertion of Lemma~\ref{lem:stars-big}
      for parameters $\Delta$ and $\nu$, i.e., for any set $X$ and any
      family~$\cF$ with the conditions required in this lemma, the
      conclusion of the lemma holds.
    \item\label{bl:Pjoint}
      Similarly $\Gamma$ satisfies the assertion of Lemma~\ref{lem:joint}
      for parameters $\Delta=\ell$, $d$, $\eps'=\frac12 d$, $\mu=\mu_{\rm
        BL}/\Delta'$, $\eps_{\subref{lem:joint}}$, and
      $\xi_{\subref{lem:joint}}$. The same holds for parameters
      $\Delta=\ell$, $d$, $\eps'=\frac12 d$, $\mu=\mu_\isubsc{T}$, 
      $\ti\eps_{\subref{lem:joint}}$, and $\xi_{\subref{lem:joint}}$.
    \item\label{bl:Pexp}
      $\Gamma$ satisfies the assertion of Lemma~\ref{lem:exp}
      for parameters $\Delta$, $d$, $\eps$, and $\nu$.
  \end{enumerate}
  In the following we will assume that $\Gamma$ has these properties and
  show that it then also satisfies the conclusion of the
  constrained blow-up lemma, Lemma~\ref{lem:blowup}.

  Let $G\subset\Gamma$ and $H$ be two bipartite graphs on vertex sets
  $U\dcup V$ and $\ti{U}\dcup\ti{V}$, respectively, that fulfil the requirements of
  Lemma~\ref{lem:blowup}. Moreover, let 
  % $\ti{X}\subset\ti{V}$ be the set of special vertices, 
  $\hyper{H}\subset\binom{\ti{V}}{\Delta}$ be the family of special
  $\Delta$-sets, and $\hyper{B}\subset\binom{V}{\Delta}$ be the family of
  forbidden $\Delta$-sets. It is not difficult to see that, by possibly
  adding some edges to $H$, we can assume that the following holds.
  \begin{enumerate}[label={\rm ($\tilde{\text{\Alph{*}}}$)},start=21,leftmargin=*]
    \item\label{bl:U} All vertices in $\ti U$ have degree \emph{exactly} $\Delta$.
    \item\label{bl:V} All vertices in $\ti V$ have degree maximal $\Delta+1$.
  \end{enumerate}  

  Our next step will be to split the partition class $U$ of
  $G$ and the corresponding partition class $\ti U$ of $H$ into $\Delta'$ 
  parts of equal size.
  From the partition of~$H$ we require that no two vertices in
  one part have a common neighbour. This will guarantee that the
  neighbourhoods of two different vertices from one part form disjoint vertex
  sets (which we need because we would like to apply Lemma~\ref{lem:exp}
  later, in the proof of Claim~\ref{cl:bl:small}, 
  and Lemma~\ref{lem:exp} asserts certain properties for
  families of disjoint vertex sets).
   
  Let us now explain precisely how we split $U$ and $\ti U$.
  We assume for simplicity that $|\ti{U}|$ and
  $|U|$ are divisible by $\Delta'$ and
  partition the sets $U$ arbitrarily into $\Delta'$ parts
  $U=U_1\dcup\dots U_{\Delta'}$ of equal size, i.e., sets of size
  at least $n/(r\Delta')$. Similarly let $\ti{U}=\ti{U}_1\dcup\dots\dcup
  \ti{U}_{\Delta'}$ be a partition of $\ti{U}$ into sets of equal size such
  that each $\ti{U}_j$ is $2$-independent in $H$. Such a partition exists by
  the Theorem of Hajnal and Szemer\'edi (Theorem~\ref{thm:HajSze})
  applied to $H^2[\ti{U}]$ because  the maximum degree of $H^2$ is less than
  $\Delta'=\Delta^2+1$.

  In Claim~\ref{cl:bl:matching} below we will assert
  that there is an embedding $f'$
  of $\ti V$ into $V$ that can be extended to \emph{each} of the $\ti U_j$
  separately such that we obtain an embedding of $H$ into $G$. To this end we
  will consider the candidate graphs $B_{f'}(H_j,G_j)$ defined by $f'$ (see
  Definition~\ref{def:cand}) and show, that there is an~$f'$ such that each
  $B_{f'}(H_j,G_j)$ has a matching covering $\ti U_j$.
  This, as discussed earlier, will ensure the existence of the desired
  embedding. For preparing this argument, we first need to exclude some vertices of $V$
  which are not suitable for such an embedding. For identifying these vertices, we
  define the following family of $\Delta$-sets which contains
  $\hyper{B}$ and all sets in $V$ that have a small common neighbourhood
  in some $\ti U_j$.

  Define $\hyper{B}':=\hyper{B}\cup\bigcup_{j\in\Delta'}\hyper{B}_j$ where
  \begin{equation}
  \label{eq:bl:B}
    \hyper{B}_j:=
    \left\{B\in\binom{V}{\Delta}\colon\,
    \big|\coN_G(B)\cap U_j\big|<(\tfrac12 d)^{\Delta}p^{\Delta}|U_j| \right\}
    \eqByRef{eq:BAD} \BAD{\Delta}{d/2}{d}(V,U_j).
  \end{equation}
  We claim that we obtain a set $\hyper{B}'$ that is not much larger than
  $\hyper{B}$. Indeed, by Proposition~\ref{prop:subpairs} the pair 
  \begin{equation}\label{eq:bl:dense}
    \text{$(V,U_j)$ is
      $(\eps\Delta',d,p)$-dense for all $j\in[\Delta']$,}
  \end{equation}
  and $\eps\Delta'\le\eps_{\subref{lem:joint}}$ by~\eqref{eq:bl:eps}.
  Moreover we have
  $|U_j|\ge n/(r\Delta')\ge n/(r_1\Delta')\ge\xi_{\subref{lem:joint}} n$
  by~\eqref{eq:bl:xi}.
  We can thus use the fact that our random graph $\Gamma$ satisfies
  property~\ref{bl:Pjoint} (with $\mu=\mu_{\subsc{BL}}/\Delta'$) on the
  bipartite subgraph $G[V\dcup U_j]$ and
  conclude that $|\hyper{B}_j|\le\mu_{\subsc{BL}}|V|^\Delta/\Delta'$. 
  Since $|\hyper{B}|\le\mu_{\subsc{BL}} |V|^\Delta$ by assumption we infer
  \begin{equation*}
    |\hyper{B}'|\le 
      \mu_{\subsc{BL}} |V|^\Delta + \Delta'\cdot\mu_{\subsc{BL}}|V|^\Delta/\Delta'
      = 2\mu_{\subsc{BL}} |V|^\Delta.
  \end{equation*}
  Set
  \begin{equation}\label{eq:bl:corrupt}
    V':=V\setminus V'' \quad\text{with}\quad
    V'':=\Big\{v\in V\colon\, v 
      \text{ is $\eta'_{\subref{lem:pack}}|V|$-corrupted by }
      \hyper{B}'\Big\}
  \end{equation}
  and delete all sets from $\hyper{B}'$ that contain vertices from $V''$.
  This determines the set $V''$ of vertices that we exclude from $V$ for
  the embedding. We will next show that we did not exclude too many
  vertices in this process. For this we use the corruption lemma,
  Lemma~\ref{lem:corrupt}.
  Indeed, Lemma~\ref{lem:corrupt} applied with $n$ replaced by $|V|$,
  with $\Delta$,  $\mu=2\mu_{\subsc{BL}}$, and
  $\eta'_{\subref{lem:pack}}$ to $V$ and $\hyper{B}'$ implies that
  \begin{equation}\label{eq:bl:V'}
    |V''| 
    \le\frac{\Delta!}{(\eta'_{\subref{lem:pack}})^{\Delta-1}}2\mu_{\subsc{BL}}|V|
    \leByRef{eq:bl:alphaetamu} \eta|V|
    \quad\text{and thus}\quad
    n':=|V'|\ge(1-\eta)|V|.
  \end{equation}
  Let 
  \begin{equation*}
    H_j:=H\big[\ti{U}_j\dcup\ti{V}\big] \qquad\text{and}\qquad
    G_j:=G\big[U_j\dcup V'\big].
  \end{equation*}
  Now we are ready to state the claim announced above, which asserts that
  there is an embedding $f'$ of the vertices in $\ti V$ to the vertices in
  $V'$ such that the corresponding candidate graphs 
  $B_{f'}(H_j,G_j)$ have matchings covering $\ti U_j$. 
  As we will shall show, this claim implies the assertion of the constrained
  blow-up lemma. Its proof, which we will provide thereafter, 
  requires the matching lemma
  (Lemma~\ref{lem:matching}), and the hypergraph packing lemma
  (Lemma~\ref{lem:pack}).
  \begin{claim}\label{cl:bl:matching}
    There is an injection $f':\ti{V}\to V'$
    with 
    $f'(T)\not\in\hyper{B}$ for all $T\in\hyper{H}$
    such that for \emph{all} $j\in[\Delta']$ the candidate graph
    $B_{f'}(H_j,G_j)$ has a matching covering $\ti{U}_j$.  
  \end{claim}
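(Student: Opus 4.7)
The plan is to sample a uniformly random injection $f: \ti V \to V'$ and show that with positive probability $f$ can be modified via at most $\sigma|\ti V|$ switchings into an injection $f'$ that (a) maps no set of $\hyper H$ into $\hyper B$, and (b) has, for each $j \in [\Delta']$, a candidate graph $B_{f'}(H_j, G_j)$ satisfying the hypotheses of the matching lemma (Lemma~\ref{lem:matching}); Hall's theorem, packaged there, then supplies the matchings. To set up the packing, I extend $\hyper H$ to a $\Delta$-uniform hypergraph $\hyper N$ on $\ti V$ by adjoining the $\Delta$-sets $\{N_H(\ti u) : \ti u \in \ti U\}$, which are of the right size by~\ref{bl:U}; by~\ref{bl:V} and the hypothesis on $\hyper H$, each vertex of $\hyper N$ lies in at most $2\Delta+1 = \ell$ edges. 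The definition~\eqref{eq:bl:corrupt} of $V'$ together with $|V'| \ge (1-\eta)|V|$ from~\eqref{eq:bl:V'} and the choice of $\eta'_{\subref{lem:pack}}$ in~\eqref{eq:bl:alphaetamu} ensures that no vertex of $V'$ is $\eta_{\subref{lem:pack}}|V'|$-corrupted by $\hyper B'$, so Lemma~\ref{lem:pack} applied to $\hyper N$ and $\hyper B'$ yields that at least half of all injections $f$ admit such an $f'$ with $f'(\hyper N) \cap \hyper B' = \emptyset$. Any such $f'$ satisfies (a), and by the switching lemma (Lemma~\ref{lem:switch}) one has $\ndist(B_f(H_j, G_j), B_{f'}(H_j, G_j)) \le 2\sigma |\ti V| \Delta =: s$.

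Conditions (i)--(iii) of Lemma~\ref{lem:matching} are verified deterministically for $B_{f'}(H_j, G_j)$. For~(i), since $N_H(\ti u) \in \hyper N$ we have $f'(N_H(\ti u)) \notin \hyper B_j$, hence $\deg_{B_{f'}}(\ti u) = |\coN_{U_j}(f'(N_H(\ti u)))| \ge (d/2)^\Delta p^\Delta |U_j| =: n_1$. For~(ii), note that $(V,U_j)$ is $(\eps\Delta',d,p)$-dense by~\eqref{eq:bl:dense}, and for $\ti S \subset \ti U_j$ with $|\ti S| \le 1/p^\Delta =: n_2$ the sets $\{f'(N_H(\ti u))\}_{\ti u \in \ti S}$ are pairwise disjoint (by $2$-independence of $\ti U_j$ in $H$) and $p$-good in $(V, U_j)$ by the same lower bound on common neighbourhood size; the expansion lemma (Lemma~\ref{lem:exp}) then gives $|N_{B_{f'}}(\ti S)| \ge \nu n p^\Delta |\ti S|$, so set $x := \nu n p^\Delta$. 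For~(iii), bound the edge count $e_{B_{f'}}(\ti S, S)$ by $\stars(S, \{f'(N_H(\ti u))\}_{\ti u \in \ti S})$; since $x n_2 = \nu n \le |S| \le |\ti S|$, Lemma~\ref{lem:stars-big} gives $\stars \le 7 p^\Delta |S||\ti S|$, and choosing $n_3$ so that $7 p^\Delta n_3 \le n_1$ yields~(iii).

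The main obstacle is condition~(iv), which must be verified in $B_f$ rather than $B_{f'}$ and is where the randomness of $f$ is needed. For each $j \in [\Delta']$ and each $W \subset U_j$ with $|W| < |\ti U_j|$, we have $|U_j \setminus W| \ge \eta |U_j|$ (since $|\ti U_j| \le (1-\eta)|U_j|$), so Proposition~\ref{prop:subpairs} makes $(V, U_j \setminus W)$ into an $(\eps\Delta'/\eta, d, p)$-dense pair, and Lemma~\ref{lem:joint} bounds the family $\hyper T_W := \{T \in \binom{V}{\Delta} : \coN_{U_j}(T) \subset W\} \subset \bad{\Delta}{\ti\eps_{\subref{lem:joint}}}{d}(V, U_j \setminus W)$ by $\mu_\isubsc{T}|V|^\Delta$. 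Since $\hyper S_j := \{N_H(\ti u) : \ti u \in \ti U_j\}$ is a system of pairwise disjoint $\Delta$-sets in $\ti V$, Lemma~\ref{lem:bl:sets} applied to the random $f$ shows that with probability at least $1 - \beta^{|\hyper S_j|}$, at most $\beta|\ti U_j|$ vertices $\ti u \in \ti U_j$ satisfy $\coN_{U_j}(f(N_H(\ti u))) \subset W$; equivalently, with $S := U_j \setminus W$, at least $(1-\beta)|\ti U_j|$ of them have $N_{B_f}(\ti u) \cap S \ne \emptyset$. A union bound over $j$ and the relevant sets $W$, feasible by the smallness of $\beta$ in~\eqref{eq:bl:betasigma}, shows that a uniformly random $f$ satisfies condition~(iv) (with $n_3$ taken to dominate $s + \beta|\ti U_j|$) with probability strictly greater than $1/2$. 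Intersecting this event with the at-least-$1/2$ event from Lemma~\ref{lem:pack} produces an $f$ (and hence an $f'$) realising both (a) and (b), so the matching lemma applies and delivers the required matchings in $B_{f'}(H_j, G_j)$ for every $j \in [\Delta']$.
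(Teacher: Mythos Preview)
Your proposal is correct and follows essentially the same route as the paper: construct $\hyper N$ from $\hyper H$ together with all $H$-neighbourhoods in $\ti V$, invoke the hypergraph packing lemma against $\hyper B'$ to obtain $f'$ close to a random $f$, verify conditions~\ref{lem:matching:i}--\ref{lem:matching:iii} of the matching lemma for $B_{f'}$ deterministically via the definition of $\hyper B_j$, the expansion lemma, and the star lemma for big sets, and verify condition~\ref{lem:matching:iv} for $B_f$ probabilistically via Lemma~\ref{lem:bl:sets}. The paper organises this into three separate claims (Claims~\ref{cl:bl:small}--\ref{cl:bl:big}) but the content is the same.

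Two minor points worth noting. First, in your switching-lemma estimate you can sharpen $2\sigma|\ti V|\Delta$ to $2\sigma|\ti V|$: since $\ti U_j$ is $2$-independent in $H$, every $\ti v\in\ti V$ has $\deg_{H_j}(\ti v)\le 1$, so Lemma~\ref{lem:switch} applies with degree bound $1$ rather than $\Delta$; the paper uses this. Second, in your treatment of~\ref{lem:matching:iv} you apply Lemma~\ref{lem:bl:sets} once with the full family $\hyper S_j=\{N_H(\ti u):\ti u\in\ti U_j\}$ and deduce the bound for every $\ti S$ by subtraction, whereas the paper applies it separately for each $\ti S$ with $|\ti S|\ge n_3$. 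Your variant works, but you should remark that when $|\ti U_j|<n_3$ condition~\ref{lem:matching:iv} is vacuous, so the union bound $2^{|U_j|}\beta^{|\ti U_j|}$ need only be controlled when $|\ti U_j|\ge\tfrac17(\tfrac d2)^\Delta|U_j|$, where~\eqref{eq:bl:betasigma} indeed makes it $o(1)$.
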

  Let us show that proving this claim suffices to establish the constrained blow-up
  lemma. Indeed, let $f':\ti{V}\to V'$ be such an injection and denote by
  $M_j:\ti{U}_j\to U_j$  the corresponding matching in
  $B_{f'}(H_j,G_j)$ for $j\in[\Delta]$. We claim that the function
  $g\colon\ti{U}\dcup\ti{V}\to U\dcup V$, defined by
  \begin{equation*}
    g(\ti{w}) = \begin{cases}
       M_j(\ti{w}) & \ti{w}\in\ti{U}_j, \\
       f'(\ti{w}) & \ti{w}\in\ti{V},
    \end{cases}
  \end{equation*}
  is an embedding of $H$ into $G$. To see this, notice first that $g$ is
  injective since $f'$ is an injection and all $M_j$ are matchings.
  Furthermore, consider an edge $\ti{u}\ti{v}$ of $H$ with $\ti{u}\in\ti{U}_j$ for
  some $j\in[\Delta']$ and $\ti{v}\in\ti{V}$ and let
  \begin{equation*}
    u:=g(\ti{u})=M_j(\ti{u}) \qquad\text{and}\qquad
    v:=g(\ti{v})=f'(\ti{v}). 
  \end{equation*}
  It follows from the definition of $M_j$ that $\ti{u}u$ is an edge of the
  candidate graph $B_{f'}(H_j,G_j)$. Hence, by the definition of
  $B_{f'}(H_j,G_j)$, $u$ is an $f'$-candidate for $\ti{u}$, i.e.,
  \begin{equation*}
    f'\left(N_{H_j}(\ti{u})\right) \subset N_{G_j}(u).
  \end{equation*}
  Since $v=f'(\ti{v})\in f'\left(N_{H_j}(\ti{u})\right)$ this implies that $uv$ is
  an edge of $G$. Because $f'$ also satisfies 
  $f'(T)\not\in\hyper{B}$ for all
  $T\in\hyper{H}$ the embedding $g$ also meets the remaining requirement of the
  constrained blow-up lemma that no special $\Delta$-set is mapped to a
  forbidden $\Delta$-set.
\end{proof}

  For completing the proof of Lemma~\ref{lem:blowup}, we still need
  to prove Claim~\ref{cl:bl:matching} which we shall be occupied with for the
  remainder of this section. We will assume throughout that we have the same
  setup as in the preceding proof. In particular all constants, sets, and
  graphs are defined as there.

  For proving Claim~\ref{cl:bl:matching} we will use the matching lemma
  (Lemma~\ref{lem:matching}) on candidate graphs $B=B_{f}(H_j,G_j)$ and
  $B'=B_{f'}(H_j,G_j)$ for injections $f,f'\colon\ti{V}\to V'$. 
  As we will see, the following three claims imply that there are suitable
  $f$ and $f'$ such that the conditions of this lemma are satisfied.
  More precisely, Claim~\ref{cl:bl:small} will take care of
  conditions~\ref{lem:matching:i} and~\ref{lem:matching:ii} in this lemma,
  Claim~\ref{cl:bl:middle} of condition~\ref{lem:matching:iii}, and
  Claim~\ref{cl:bl:big} of condition~\ref{lem:matching:iv}.
  Before proving these claims we will show that they imply
  Claim~\ref{cl:bl:matching}.

  The first claim states that \emph{many} injective mappings $f\colon\ti V\to
  V'$ can be turned into injective mappings $f'$ (with the help of a few
  switchings) such that the candidate graphs ${B_{f'}}(H_j,G_j)$ for $f'$ satisfy certain
  degree and expansion properties.

  \begin{claim}\label{cl:bl:small}
    For \emph{at least half} of all injections $f:\ti{V}\to V'$ there is an
    injection $f':\ti{V}\to V'$ with $\swdist(f,f')\le\sigma n/r$ such that the
    following is satisfied for all $j\in[\Delta']$. For all $\ti{u}\in\ti{U}_j$
    and all $\ti{S}\subset\ti{U}_j$ with $|\ti{S}|\le p^{-\Delta}$ we have
    \begin{equation}\label{eq:bl:small}
      \deg_{{B_{f'}}(H_j,G_j)}(\ti{u})\ge (\tfrac{d}{2})^{\Delta}p^\Delta|U_j| 
      \qquad\text{and}\qquad
      |N_{{B_{f'}}(H_j,G_j)}(\ti{S})|\ge \nu np^\Delta|\ti{S}|.
    \end{equation}
     Further, 
     no special $\Delta$-set from $\hyper{H}$ is mapped to a forbidden
     $\Delta$-set from $\hyper{B}$ by $f'$.
  \end{claim}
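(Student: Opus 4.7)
The plan is to apply the hypergraph packing lemma (Lemma~\ref{lem:pack}) to show that the neighbourhoods of all the $\ti{u}\in\ti U$, together with the special sets in $\hyper H$, can be made to avoid all bad sets simultaneously. For each $j\in[\Delta']$ set $\hyper{N}_j:=\{N_H(\ti u):\ti u\in\ti U_j\}$ and let $\hyper{N}:=\hyper{H}\cup\bigcup_{j\in[\Delta']}\hyper{N}_j$. By~\ref{bl:U} and the hypothesis on $\hyper H$, this is a $\Delta$-uniform hypergraph on vertex set $\ti V$. Since each $\ti v\in\ti V$ has $\deg_H(\ti v)\le\Delta+1$ by~\ref{bl:V} and lies in at most $\Delta$ special sets by hypothesis, each vertex of $\hyper N$ is contained in at most $2\Delta+1=\ell$ of its edges.

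Next I check the hypotheses of Lemma~\ref{lem:pack}, with $\hyper N$ as above and with the role of $\hyper B$ played by $\hyper{B}'$ restricted to $V'$. By the construction in~\eqref{eq:bl:corrupt} together with the choice of $\eta'_{\subref{lem:pack}}$ in~\eqref{eq:bl:alphaetamu} and $|V'|\ge(1-\eta)|V|$ from~\eqref{eq:bl:V'}, no vertex of $V'$ is $\eta_{\subref{lem:pack}}|V'|$-corrupted by~$\hyper{B}'$. Moreover $|V(\hyper N)|=|\ti V|\le(1-\eta)n/r\le|V'|$ and $|V'|\ge n_{\subref{lem:pack}}$ by~\eqref{eq:bl:n}. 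Lemma~\ref{lem:pack} thus produces, for at least half of all injections $f\colon\ti V\to V'$, an injection $f'\colon\ti V\to V'$ with $\swdist(f,f')\le\sigma|\ti V|\le\sigma n/r$ that is a packing of $\hyper N$ into $\hyper{B}'$, meaning that $f'(T)\notin\hyper{B}'$ for every $T\in\hyper N$.

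The three conclusions of the claim now follow. Since $\hyper H\subset\hyper N$ and $\hyper B\subset\hyper{B}'$, the packing property immediately implies that $f'$ maps no special set to a forbidden set. For the degree bound, fix $\ti u\in\ti U_j$: as $f'(N_H(\ti u))\notin\hyper{B}_j\subset\hyper{B}'$, the definition of $\hyper{B}_j$ in~\eqref{eq:bl:B} yields
\[
  \deg_{B_{f'}(H_j,G_j)}(\ti u)=\bigl|\coN_{U_j}\bigl(f'(N_H(\ti u))\bigr)\bigr|\ge(d/2)^{\Delta}p^{\Delta}|U_j|.
\]
For the expansion bound, fix $\ti S\subset\ti U_j$ with $|\ti S|\le p^{-\Delta}$ and set $\cF:=\{f'(N_H(\ti u)):\ti u\in\ti S\}$. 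The neighbourhoods $N_H(\ti u)$ with $\ti u\in\ti U_j$ are pairwise disjoint (as $\ti U_j$ is $2$-independent in $H$) and $f'$ is injective, so $\cF$ is a pairwise disjoint family of $\Delta$-sets in $V$; each $F\in\cF$ is $p$-good in $(V,U_j)$ since it avoids $\hyper{B}_j$. Because $(V,U_j)$ is $(\eps\Delta',d,p)$-dense by~\eqref{eq:bl:dense}, the expansion lemma (Lemma~\ref{lem:exp}, available via property~\ref{bl:Pexp}) yields
\[
  |N_{B_{f'}(H_j,G_j)}(\ti S)|=|\coN_{U_j}(\cF)|\ge\nu np^{\Delta}|\cF|=\nu np^{\Delta}|\ti S|.
\]

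The main subtlety I expect is the alignment of the $\eps$-parameter used in the definition of the forbidden family $\hyper{B}_j$ with that of the expansion lemma, so that the condition ``$F\notin\hyper{B}_j$'' is a sufficiently strong $p$-good property for Lemma~\ref{lem:exp}. This is arranged by the choice of $\eps$ in~\eqref{eq:bl:eps} (in particular $\eps\le d/(2\Delta)$), which ensures that the $(d/2)^{\Delta}p^{\Delta}|U_j|$ lower bound inherited from avoiding $\hyper{B}_j$ is compatible with the expansion constants set up via~\ref{bl:Pexp}.
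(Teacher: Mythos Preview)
Your proof is correct and follows essentially the same approach as the paper: you define the same hypergraph $\hyper{N}=\hyper{H}\cup\{N_H(\ti u):\ti u\in\ti U\}$, apply Lemma~\ref{lem:pack} against $\hyper{B}'$ on $V'$ with $\ell=2\Delta+1$, and then read off the degree bound from~\eqref{eq:bl:B} and the expansion bound from Lemma~\ref{lem:exp} via~\ref{bl:Pexp}, exactly as the paper does. Your flagging of the $\eps$-alignment between the definition of $\hyper{B}_j$ (using $\eps'=d/2$) and the notion of $p$-good used in the expansion lemma is apt; the paper handles this at the same level of detail, invoking~\eqref{eq:bl:dense} and~\eqref{eq:bl:eps} to argue that $d_{G,p}(V,U_j)\ge d-\eps\Delta'\ge d/2$ so that avoiding $\hyper{B}_j$ indeed certifies $p$-goodness.
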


  The second claim asserts that \emph{all} injective mappings $f'$ are such
  that the candidate graphs ${B_{f'}}(H_j,G_j)$ do not contain sets of certain
  sizes with too many edges between them.

  \begin{claim}\label{cl:bl:middle}
	\emph{All} injections $f':\ti{V}\to V'$ satisfy the following for all
	$j\in[\Delta']$ and all $S\subset U_j$, $\ti{S}\subset\ti{U}_j$.
    If $\nu n\le|S|<|\ti{S}|< \frac17(\frac d 2)^\Delta|U_j|$,
	then
	\begin{equation*}
      e_{B_{f'}(H_j,G_j)}(\ti{S},S)\le 7p^\Delta|\ti{S}||S|.
    \end{equation*}
  \end{claim}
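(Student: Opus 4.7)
\smallskip
\noindent\textit{Proof plan.}
The plan is to reduce the edge count in the candidate graph to a star count in $\Gamma$ and then invoke the star lemma for big sets (Lemma~\ref{lem:stars-big}). Fix any $f'\colon\ti V\to V'$, $j\in[\Delta']$, and $S\subset U_j$, $\ti S\subset\ti U_j$ of the prescribed sizes. By Definition~\ref{def:cand}, an edge $\ti u u$ of $B_{f'}(H_j,G_j)$ with $\ti u\in\ti S$ and $u\in S$ exists precisely when $f'(N_H(\ti u))\subset N_G(u)$; since $G\subset\Gamma$, this forces $u\in\coN_\Gamma(f'(N_H(\ti u)))$. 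Setting
\[
  \cF \;:=\; \bigl\{\,f'(N_H(\ti u))\;:\;\ti u\in\ti S\,\bigr\},
\]
we therefore have $e_{B_{f'}(H_j,G_j)}(\ti S,S) \le \stars[\Gamma](S,\cF)$.

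The next step is to check that $\cF$ is a family of $|\ti S|$ pairwise disjoint $\Delta$-sets in $V(\Gamma)\setminus S$. Each $N_H(\ti u)$ is a $\Delta$-set by property~\ref{bl:U}. For disjointness we use the way the partition $\ti U=\ti U_1\dcup\dots\dcup\ti U_{\Delta'}$ was chosen in the proof of Lemma~\ref{lem:blowup}: each $\ti U_j$ is $2$-independent in $H$, i.e.\ any two distinct vertices of $\ti U_j$ are at $H$-distance at least~$3$ and hence share no common $H$-neighbour. Thus the neighbourhoods $N_H(\ti u)$ for $\ti u\in\ti S\subset\ti U_j$ are pairwise disjoint $\Delta$-sets in $\ti V$, and since $f'$ is injective the same holds for their images in $V$, which are disjoint from $S\subset U$ because $U\cap V=\emptyset$.

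It remains to apply Lemma~\ref{lem:stars-big} to $X:=S$ and $\cF$, exploiting property~\ref{bl:Pstars} of $\Gamma$. The size hypotheses of that lemma, $\nu n\le|X|\le|\cF|\le n$, are immediate from the assumptions: $|S|\ge\nu n$, $|S|<|\ti S|=|\cF|$, and $|\cF|=|\ti S|<\tfrac{1}{7}(\tfrac{d}{2})^\Delta|U_j|\le n$. This yields
\[
  \stars[\Gamma](S,\cF) \;\le\; 7 p^\Delta|S||\cF| \;=\; 7p^\Delta|\ti S||S|,
\]
which is exactly the desired bound. There is no real obstacle here: the entire argument is a deterministic book-keeping step, and the only thing one needs to notice is that the $2$-independence of $\ti U_j$ (built into the partition of $\ti U$) supplies the pairwise disjointness that turns a candidate-edge count into a star count amenable to Lemma~\ref{lem:stars-big}.
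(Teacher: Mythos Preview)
Your proof is correct and follows essentially the same approach as the paper: reduce the candidate-graph edge count to $\stars[\Gamma](S,\cF)$ via the definition of the candidate graph and $G\subset\Gamma$, use the $2$-independence of $\ti U_j$ and property~\ref{bl:U} to see that $\cF$ consists of $|\ti S|$ pairwise disjoint $\Delta$-sets, and apply Lemma~\ref{lem:stars-big} (property~\ref{bl:Pstars}).
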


  The last of the three claims states that for \emph{random} injective
  mappings~$f$ the graphs ${B_{f'}}(H_j,G_j)$ have edges between any pair of
  large enough sets $S\subset U_j$ and $\ti S\subset\ti{U}_j$.

  \begin{claim}\label{cl:bl:big}
    A \emph{random} injection $f:\ti{V}\to V'$ \aas\ satisfies the following.
    For all $j\in[\Delta']$ and all $S\subset U_j$, $\ti{S}\subset\ti{U}_j$ with
	$|\ti{S}|\ge\frac17(\frac d 2 )^\Delta|U_j|$ and $|S|>|U_j|-|\ti{S}|$ we have
    \begin{equation*}
      \left|N_{B_{f}(H_j,G_j)}(S)\cap\ti{S}\right|>2\sigma n/r.
      % \left|N_{B_{f'}(H_j,G_j)}(X)\right|\ge|X|+2\sigma n'.
    \end{equation*}
  \end{claim}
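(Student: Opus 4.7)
The plan is to reduce Claim~\ref{cl:bl:big} to Lemma~\ref{lem:bl:sets}, using Lemma~\ref{lem:joint} to bound the family of $\Delta$-sets in $V'$ whose common $G$-neighbourhood misses $S$.

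The crucial observation that unlocks the argument is that the size constraints force $|S|>\eta|U_j|$. Indeed, since $|\ti U|\le(1-\eta)n/r\le(1-\eta)|U|$ and both partitions are equitable, we have $|\ti U_j|\le(1-\eta)|U_j|$, whence $|S|>|U_j|-|\ti S|\ge|U_j|-|\ti U_j|\ge\eta|U_j|$. Applying Proposition~\ref{prop:subpairs} symmetrically to the $(\eps\Delta',d,p)$-dense pair $(V',U_j)$ from~\eqref{eq:bl:dense} shows that $(V',S)$ is itself $(\eps\Delta'/\eta,d,p)$-dense, with $\eps\Delta'/\eta$ below the threshold of Lemma~\ref{lem:joint} by~\eqref{eq:bl:eps}. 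Invoking property~\ref{bl:Pjoint} with $\ell=\Delta$, $\eps'=d/2$ and $\mu=\mu_\isubsc{T}$, and observing that $(d/2)^\Delta p^\Delta|S|\ge 1$ for $n$ large, we conclude that the family
\[
\cF_{\mathrm{bad}}(S):=\Bigl\{F\in\binom{V'}{\Delta}:\coN_S(F)=\emptyset\Bigr\}\subset\bad{\Delta}{d/2}{d}(V',S)
\]
has size at most $\mu_\isubsc{T}|V'|^\Delta$.

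Now fix a triple $(j,\ti S,S)$ satisfying the hypotheses and set $\hyper S:=\{N_H(\ti u):\ti u\in\ti S\}$. Because $\ti U_j$ is $2$-independent in $H$ and~\ref{bl:U} forces each $\ti u\in\ti U$ to have degree exactly $\Delta$, the family $\hyper S$ consists of $|\ti S|$ pairwise disjoint $\Delta$-sets in $\ti V$, and $|\hyper S|\le\frac{1}{\Delta}(1-\mu_\isubsc{S})|V'|$ by~\eqref{eq:bl:alphaetamu}. Applying Lemma~\ref{lem:bl:sets} with $\hyper T:=\cF_{\mathrm{bad}}(S)$ gives, with probability at least $1-\beta^{|\ti S|}$, that a random injection $f\colon\ti V\to V'$ satisfies $|f(\hyper S)\setminus\cF_{\mathrm{bad}}(S)|>(1-\beta)|\ti S|$; for every $\ti u$ contributing to this count $\coN_S(f(N_H(\ti u)))\ne\emptyset$, so $\ti u$ has an $f$-candidate in $S$. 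Combining this with the lower bound $|\ti S|\ge\frac{(d/2)^\Delta}{7\Delta'}\cdot\frac{n}{r}$ and the choice of $\sigma$ in~\eqref{eq:bl:betasigma} yields $|N_{B_f(H_j,G_j)}(S)\cap\ti S|>(1-\beta)|\ti S|>2\sigma n/r$ for this particular triple.

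It remains to take a union bound over $j\in[\Delta']$, $\ti S\subset\ti U_j$ and $S\subset U_j$: the total failure probability is bounded by $\Delta'\cdot 2^{|U_j|+|\ti U_j|}\cdot\beta^{(d/2)^\Delta n/(7r\Delta')}$, which is $o(1)$ as soon as $\beta$ is small enough. The main obstacle is precisely this constant book-keeping: one must verify that $\beta$ can be fixed small enough for the tail $\beta^{(d/2)^\Delta n/(7r\Delta')}$ to dominate the $2^{|U_j|+|\ti U_j|}$ union-bound factor while still respecting every other constraint on $\beta$ imposed during the constant-setup (so~\eqref{eq:bl:betasigma} may need to be tightened, e.g.\ by replacing $1/5$ with a much smaller constant depending on $d$, $\Delta'$ and $r_1$). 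Since every constraint in the setup is monotone in the appropriate direction, this bookkeeping is routine.
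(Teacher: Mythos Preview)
Your approach is essentially the paper's: bound $|S|$ from below, show $(V',S)$ is $p$-dense, invoke property~\ref{bl:Pjoint} (with $\mu=\mu_\isubsc{T}$) to bound the ``bad'' $\Delta$-sets, apply Lemma~\ref{lem:bl:sets}, and union bound. Two remarks.

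First, a minor slip: \eqref{eq:bl:dense} gives $(V,U_j)$ $(\eps\Delta',d,p)$-dense, not $(V',U_j)$; you need one more application of Proposition~\ref{prop:subpairs} with $|V'|\ge(1-\eta)|V|$, yielding parameter $\eps\Delta'/(\eta(1-\eta))$ rather than $\eps\Delta'/\eta$. This is still covered by~\eqref{eq:bl:eps}.

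Second, and more importantly, your closing worry about the union bound is self-inflicted and would \emph{not} be routine if it were real. You lower-bound the exponent in the tail by $(d/2)^\Delta n/(7r\Delta')$, i.e.\ you replace $|U_j|$ by its lower bound $n/(r\Delta')$; but you leave the union-bound factor as $2^{|U_j|+|\ti U_j|}$, where $|U_j|$ has no matching upper bound in terms of $n/r$. This mismatch is what makes you think $\beta$ must depend on $r_1$. That would be fatal, not cosmetic: in the constant hierarchy $\beta$ is fixed \emph{before} $r_1$ is revealed, so you cannot go back and tighten~\eqref{eq:bl:betasigma} with an $r_1$-dependence. The fix is simply not to weaken the tail: use directly $|\ti S|\ge\tfrac17(\tfrac d2)^\Delta|U_j|$, so the failure probability for a fixed triple is at most $\beta^{\frac17(\frac d2)^\Delta|U_j|}\le 5^{-|U_j|}$ by~\eqref{eq:bl:betasigma} as stated. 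Since $|\ti U_j|\le|U_j|$, the total failure probability is $O\bigl(\Delta'\cdot 4^{|U_j|}\cdot 5^{-|U_j|}\bigr)=o(1)$, with no dependence on $r$ or $r_1$ anywhere.
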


  \begin{proof}[Proof of Claim~\ref{cl:bl:matching}]
  Our aim is to apply the matching lemma (Lemma~\ref{lem:matching}) to the
  candidate graphs $B_f(H_j,G_j)$ and $B_{f'}(H_j,G_j)$ for all
  $j\in[\Delta']$ with carefully chosen injections $f$ and $f'$.
  
  Let $f:\ti{V}\to V'$ be an injection satisfying the assertions of
  Claim~\ref{cl:bl:small} and Claim~\ref{cl:bl:big} and let $f'$ be the injection
  promised by Claim~\ref{cl:bl:small} for this $f$. Such an $f$ exists as
  at least half of all injections from $\ti{V}$ to
  $V'$ satisfy the assertion of Claim~\ref{cl:bl:small} and almost all of those
  satisfy the assertion of Claim~\ref{cl:bl:big}. We
  will now show that for all $j\in[\Delta']$ the conditions of
  Lemma~\ref{lem:matching} are satisfied for input 
  \begin{gather*}
    B=B_f(H_j,G_j), \qquad B'=B_{f'}(H_j,G_j), \qquad
    s=2\sigma n/r\,,  \\
    x=\nu np^\Delta, \qquad
    n_1=(\tfrac{d}{2})^\Delta p^\Delta|U_j|, \qquad 
    n_2=p^{-\Delta}, \qquad
    n_3=\tfrac17(\tfrac d 2)^\Delta|U_j|, \qquad
  \end{gather*}
  Claim~\ref{cl:bl:small} asserts that $\swdist(f,f')\le\sigma
  n/r$. Since $\ti{U}_j$ is $2$-independent in $H$ we have
  $\deg_{H_j}(\ti{v})\le 1$ for all $\ti{v}\in\ti{V}$.  
  Thus the switching lemma, Lemma~\ref{lem:switch}, applied to $H_j$ and $G_j$
  and with $s$ replaced by $\sigma n/r$ implies
  \begin{equation*}
    \ndist[\ti{U}_j](B,B')=
    \ndist[\ti{U}_j]\Big(B_f(H_j,G_j),B_{f'}(H_j,G_j)\Big)\le 2\sigma n/r
    =s.
  \end{equation*}
  
  Moreover, by Claim~\ref{cl:bl:small}, for all
  $\ti{u}\in\ti{U}_j$ we have
  \begin{equation*}
    \deg_{B'}(\ti{u}) =  \deg_{B_{f'}(H_j,G_j)}(\ti{u}) 
    \ge (\tfrac{d}{2})^\Delta p^\Delta|U_j| = n_1
  \end{equation*}
  and thus condition~\ref{lem:matching:i} of Lemma~\ref{lem:matching} holds
  true. Further, we conclude from Claim~\ref{cl:bl:small} that
  $|N_{B'}(\ti{S})|\ge x|\ti{S}|$ for all $\ti{S}\subset\ti{U}_j$ with
  $|\ti{S}|<p^{-\Delta}=n_2$. This gives
  condition~\ref{lem:matching:ii} of Lemma~\ref{lem:matching}. In
  addition, Claim~\ref{cl:bl:middle} states that for all $S\subset U_j$, $\ti{S}\subset\ti{U}_j$ with
  $xn_2=\nu n\le|S|<|\ti{S}|<\tfrac17(\tfrac d 2)^\Delta|U_j|=n_3$ we have
  \begin{equation*}
       e_{B'}(\ti{S},S)=e_{B_{f'}(H_j,G_j)}(\ti{S},S)
       \le 7p^\Delta|\ti{S}||S|=\frac{n_1}{n_3}|\ti{S}||S|
  \end{equation*}
  and accordingly also condition~\ref{lem:matching:iii} of
  Lemma~\ref{lem:matching} is satisfied. To see~\ref{lem:matching:iv}, 
  observe that the choice of $f$ and Claim~\ref{cl:bl:big} assert
  \begin{equation*}
    \left|N_{B}(S)\cap\ti{S}\right| =
    \left|N_{B_{f}(H_j,G_j)}(S)\cap\ti{S}\right| >
    2\sigma n/r=s
  \end{equation*}
  for all $S\subset U_j$, $\ti{S}\subset\ti{U}_j$ with
  $|\ti{S}|\ge\tfrac17(\tfrac d 2)^\Delta|U_j|=n_3$ and $|S|>|U|-|\ti{S}|$.
  Therefore, all conditions of Lemma~\ref{lem:matching} are satisfied and we
  infer that for \emph{all} $j\in[\Delta']$ the candidate graph
  $B_{f'}(H_j,G_j)$ with $f'$ as chosen above has a matching covering $\ti{U}$.
  Moreover, by Claim~\ref{cl:bl:small}, 
  % $f'(\ti{x})\in C(\ti{x})$ for all special vertices
  % $\ti{x}\in\ti{X}$ and 
  $f'$ maps no special $\Delta$-set to a forbidden $\Delta$-set. This
  establishes Claim~\ref{cl:bl:matching}.
  % and thus, as argued earlier, proves the constrained blow-up lemma.
  \end{proof}

  It remains to show Claims~\ref{cl:bl:small}--\ref{cl:bl:big}.
  We start with Claim~\ref{cl:bl:small}. For the proof of this claim we
  apply the hypergraph packing lemma (Lemma~\ref{lem:pack}).

  \begin{proof}[Proof of Claim~\ref{cl:bl:small}]
  Notice that~\ref{bl:U} on page~\pageref{bl:U} implies that $N_H(\ti{u})$
  contains exactly $\Delta$ elements for each $\ti{u}\in\ti{U}$. Hence we
  may define the following family of $\Delta$-sets.
  Let
  \begin{equation*}
    \hyper{N}:=\left\{N_H(\ti{u})\colon\; \ti{u}\in\ti{U}\right\}\cup\hyper{H} 
    \subset \binom{\ti{V}}{\Delta}.
  \end{equation*}
  % add to $\ti{V}$ exactly $\min\{n,n'\}-|\ti{V}|$ new vertices and call the 
  % resulting set $\ti{W}$.
  We want to apply the hypergraph packing lemma (Lemma~\ref{lem:pack}) with $\Delta$,
  with $\ell$ replaced by $2\Delta+1$, 
  % $\rho$ replaced by $\rho/2$, 
  and with
  $\sigma$  to the hypergraphs with vertex sets $\ti{V}$ and $V'$ and edge sets
  $\hyper{N}$ and $\hyper{B}'$, respectively (see~\eqref{eq:bl:B} on
  page~\pageref{eq:bl:B}). We will first check that the necessary conditions are
  satisfied.

  Observe that
  \begin{equation*}
    |V'|\geByRef{eq:bl:V'} (1-\eta')|V|\ge(1-\eta')n/r
    \geByRef{eq:bl:n} n_{\subref{lem:pack}}, \qquad\text{and}\qquad
    |\ti{V}|\le|V'|\,.
  \end{equation*}
  Furthermore, a vertex $\ti{v}\in\ti{V}$ is neither contained in more than
  $\Delta$ sets from $\hyper{H}$ nor is $\ti{v}$ in $N_H(\ti{u})$ for more than
  $\Delta+1$ vertices $\ti{u}\in\ti{U}$ (by~\ref{bl:V} on
  page~\pageref{bl:V}). Therefore the condition Lemma~\ref{lem:pack} 
  imposes on $\hyper{N}$ is satisfied with $\ell$ replaced by $2\Delta+1$.
  Moreover, according to~\eqref{eq:bl:corrupt} no vertex in $V'$ is
  $\eta'_{\subref{lem:pack}}|V|$-corrupted by $\hyper{B}'$.
  % and by assumption we
  % have $|\ti{X}|\le\alpha_{\subsc{BL}}|\ti{V}|$ and
  % $|C(\ti{x})|\ge\rho|V|$ for all $\ti{x}\in\ti{X}$.
  Since 
  \begin{equation*}
    \eta'_{\subref{lem:pack}}|V| \leByRef{eq:bl:V'}
    \eta'_{\subref{lem:pack}}(1-\eta)^{-1}n' \leByRef{eq:bl:alphaetamu}
      \eta_{\subref{lem:pack}}n', 
    % \qquad
    % \alpha_{\subsc{BL}}|\ti{V}|
    %  \leByRef{eq:bl:alphaetamu}\alpha_{\subref{lem:pack}}|\ti{V}|,
    % \qquad\text{and} \\
    % \alpha_{\subsc{BL}}|V| &\leByRef{eq:bl:V'}
    % \alpha_{\subsc{BL}}(1-\eta')^{-1}n' \leByRef{eq:bl:alphaetamu}
    % \alpha_{\subref{lem:pack}}n', \quad\text{and} \\
    % |C(\ti{x})\cap V'|\ge\rho|V|-|V\setminus V'|
    %   \geByRef{eq:bl:V'} \rho n' - \eta|V|
    %  \geByRef{eq:bl:V'} \rho n' 
    %    -\eta(1-\eta)^{-1} n'
    % \geByRef{eq:bl:eta'} \frac{\rho}{2}n'
  \end{equation*}
  this (together with the observation in Definition~\ref{def:corrupt}) implies
  that no vertex in $V'$ is $\eta_{\subref{lem:pack}}n'$-corrupted by
  $\hyper{B}'$ 
  % and we have $|\ti{X}|\le\alpha_{\subref{lem:pack}}|\ti{V}|$ and
  % $|C(\ti{x})|\ge(\rho/2)n'$ for all $\ti{x}\in\ti{X}$ 
  and therefore all
  prerequisites of Lemma~\ref{lem:pack} are satisfied.

  It follows that the conclusion of Lemma~\ref{lem:pack}
  holds for at least half of all 
  injective functions $f\colon\ti{V}\to V'$,
  namely that 
  there are packings $f'$ of (the
  hypergraphs with edges) $\hyper{N}$ and $\hyper{B}$ with switching distance
  $\swdist(f,f')\le\sigma|\ti{V}|\le\sigma n/r$.
  Clearly, such a packing~$f'$ does not send
  any special $\Delta$-set from~$\hyper{H}$ to any forbidden $\Delta$-set
  from~$\hyper{B}$. Our next goal is to show that~$f'$ satisfies the first part
  of~\eqref{eq:bl:small} for all $j\in[\Delta']$ and $\ti{u}\in\ti{U}_j$. For
  this purpose, fix~$j$ and~$\ti{u}$. The definition of the candidate graph
  $B_{f'}(H_j,G_j)$, Definition~\ref{def:cand}, implies
  \begin{equation*}\begin{split}
    \deg_{{B_{f'}}(H_j,G_j)}(\ti{u}) &=
    \left|\Big\{ 
      u\in U_j\colon\, f'\left(N_{H_j}(\ti{u})\right)\subset N_{G_j}(u)
    \Big\}\right| \\
    &=
    \left|\bigg\{ 
      u\in U_j\colon\,
      u\in\coN_{G_j}\!\Big(f'\left(N_{H_j}(\ti{u})\right)\!\Big) \bigg\}\right| \\
    &=\left|\coN_{G_j}\!\Big(f'\left(N_{H_j}(\ti{u})\right)\!\Big)\right|
    \ge (\tfrac12 d)^\Delta p^\Delta|U_j|\,.
  \end{split}\end{equation*}
  where the first inequality follows from the fact that
  $N_{H_j}(\ti{u})\in\hyper{N}$ and thus, as $f'$ is a packing of $\hyper{N}$
  and $\hyper{B}'$, we have
  $f'(N_{H_j}(\ti{u}))\not\in\BAD{\Delta}{
    d/2}{d}(V,U_j)\subset\hyper{B'}$ (see the definition of $\hyper{B'}$
  in~\eqref{eq:bl:B}).
  This in turn means that all $\Delta$-sets $f'(N_{H_j}(\ti{u}))$ with
  $\ti{u}\in\ti{U}_j$ are $p$-good (see Definition~\ref{def:bad}) in
  $(V,U_j)$, because~$(V,U_j)$ has $p$-density at least
  $d-\eps\Delta'\ge\tfrac{d}2$ by~\eqref{eq:bl:dense} and~\eqref{eq:bl:eps}. With this information at hand we
  can proceed to prove the second part of~\eqref{eq:bl:small}. Let $\ti{S}\subset\ti{U}_j$ with $\ti{S}<1/p^\Delta$
  and consider the family $\hyper{F}\subset\binom{V}{\Delta}$ with
  $$\hyper{F}:=\{ f'(N_H(\ti{u}))\colon\, \ti{u}\in\ti{S} \}.$$
  Because $U_j$ is $2$-independent in $H$ the sets $N_H(\ti{u})$ with
  $\ti{u}\in\ti{S}$ form a family of disjoint $\Delta$-sets in $\ti{V}$. 
  It follows that also the sets $f'(N_H(\ti{u}))$ with
  $\ti{u}\in\ti{S}$ form a family of disjoint $\Delta$-sets in $V$.
  By~\ref{bl:Pexp} on page~\pageref{bl:Pexp} the conclusion of
  Lemma~\ref{lem:exp} holds for $\Gamma$. We conclude that the pair $(V,U_j)$ is
  $(1/p^\Delta,\nu np^\Delta)$-expanding. Since
  $|\hyper{F}|=|\ti{S}|<1/p^\Delta$ by assumption and all members of~$\hyper{F}$
  are $p$-good in $(V,U_j)$ this implies that
  $|\coN_{U_j}(\hyper{F})|\ge\nu np^\Delta|\hyper{F}|$. 
  On the other
  hand $\coN_{U_j}(\hyper{F})=N_{B_{f'}(H_j,G_j)}(\ti{S})$ by the definition of
  $B_{f'}(H_j,G_j)$ and $\hyper{F}$ and thus we get the second part
  of~\eqref{eq:bl:small}.
  \end{proof}
	
  Recall
  that~property~\ref{bl:Pstars} states that
  $\Gamma$ satisfies the conclusion of Lemma~\ref{lem:stars-big} for
  certain parameters. 
  We will use this fact to prove Claim~\ref{cl:bl:middle}.

  \begin{proof}[Proof of Claim~\ref{cl:bl:middle}]
  Fix $f':\ti{V}\to V'$,
  $j\in[\Delta']$, $S\subset U_j$, and $\ti{S}\subset\ti{U}_j$ with
  $\nu n\le|S|<|\ti{S}|<\frac17 (\frac d 2)^\Delta|U_j|$. For the candidate
  graphs $B_{f'}(H_j,G_j)$ of $f'$ we have
  \begin{equation*}\begin{split}
    e_{B_{f'}(H_j,G_j)}(\ti{S},S) 
    &= \left|\bigg\{ \ti{u}u\in\ti{S}\times S\colon\,
      f'\big(N_H(\ti{u})\big) \subset N_G(u) \bigg\}\right| \\
    &\eqByRef{eq:stars} \stars\!\bigg( 
      S, \Big\{ f'\big(N_H(\ti{u})\big)\colon\, \ti{u}\in\ti{S} \Big\} \bigg) \\
    &\le \stars[\Gamma]\! \bigg(
      S, \Big\{ f'\big(N_H(\ti{u})\big)\colon\, \ti{u}\in\ti{S} \Big\} \bigg)
    = \stars[\Gamma]\!(S,\hyper{F'}),
  \end{split}\end{equation*}
  where $\hyper{F'}:=\{ f'\big(N_H(\ti{u})\big)\colon\, \ti{u}\in\ti{S}\}$.
  As before the sets $f'(N_H(\ti{u}))$ with
  $\ti{u}\in\ti{S}$ form a family of $|\ti{S}|$ disjoint $\Delta$-sets in $V'$.
  Since 
  $\nu n\le|S|<|\ti{S}|=|\hyper{F}'|\le n$ we can appeal to
  property~\ref{bl:Pstars} (and hence Lemma~\ref{lem:stars-big}) with the set $X:=S$
  and the family $\hyper{F}'$ and infer that
  \begin{equation*}
    e_{B_{f'}(H_j,G_j)}(\ti{S},S) 
     \le \stars[\Gamma]\!(S,\hyper{F'})
     \le 7p^\Delta|\hyper{F'}||S|
     = 7p^\Delta|\ti{S}||S|
  \end{equation*}
  as required.
  \end{proof}

  Finally, we prove Claim~\ref{cl:bl:big}. For this proof we will use the fact
  that $\Delta$-sets in $p$-dense graphs have big common
  neighbourhoods (the conclusion of Lemma~\ref{lem:joint} holds by
  property~\ref{bl:Pjoint}) together with Lemma~\ref{lem:bl:sets}.

  \begin{proof}[Proof of Claim~\ref{cl:bl:big}]
  Let $f$ be an
  injective function from $\ti{V}$ to $V'$.
  First, consider a fixed
  $j\in[\Delta']$ and fixed sets $S\subset U_j$, $\ti{S}\subset\ti{U}_j$ with
  $|\ti{S}|\ge\frac17(\frac d 2)^\Delta|U_j|$ and $|S|>|U_j|-|\ti{S}|$. Define
  \begin{equation*}
    \hyper{S}:=\{N_{H_j}(\ti{u})\colon\, \ti{u}\in\ti{S} \}
    \qquad\text{and}\qquad \hyper{T}:=\BAD{\Delta}{d/2}{d}(V',S).
  \end{equation*}
  and observe that
  \begin{equation*}\begin{split}
    \left| N_{B_{f}(H_j,G_j)}(S)\cap\ti{S} \right|
    &= \left| \Big\{ \ti{u}\in\ti{S}\colon\, \exists u\in S\text{ with }
        f\left(N_{H_j}(\ti{u})\right)\subset N_{G_j}(u) \Big\} \right|\\ 
    &= \left| \bigg\{ \ti{u}\in\ti{S}\colon\,
         \coN_{G_j}\!\Big(f\left(N_{H_j}(\ti{u})\right)\!\Big)
         \cap S \neq \emptyset \bigg\} \right|\\
    &\ge \left| \Big\{ \ti{u}\in\ti{S}\colon\,
         f\left(N_{H_j}(\ti{u})\right) \not\in \BAD{\Delta}{d/2}{d}(V',S)
       \Big\} \right|
    = \left| f(\hyper{S})\setminus\hyper{T} \right|
  \end{split}\end{equation*}
  since all $\Delta$-sets $B\not\in \BAD{\Delta}{d/2}{d}(V',S)$ satisfy
  $|\coN_{G_j}(B)\cap S|\ge(\frac d 2)^\Delta p^\Delta|S|>0$.
  Thus, for proving the claim, it suffices to show that a random injection
  $f:\ti{V}\to V'$ violates $|f(\hyper{S})\setminus\hyper{T}|>2\sigma n/r$ with probability at most
  $5^{-|U_j|}$ because this implies that $f$ violates the conclusion of
  Claim~\ref{cl:bl:big} for \emph{some} $j\in[\Delta']$, and \emph{some}
  $S\subset U_j$, $\ti{S}\subset\ti{U}_j$ with probability at most
  $\bigO(2^{|U_j|} 2^{|\ti{U}_j|}\cdot 5^{-|U_j|})=\smallo(1)$.
  For this purpose, we will use the fact that the pair $(V',S)$ is $p$-dense.
  Indeed, observe that 
  \begin{equation*} 
    |S|>|U_j|-|\ti{S}|>|U_j|-|\ti{U}_j| = \frac{|U|-|\ti{U}|}{\Delta'}
       \ge \frac{\eta|U|}{\Delta'}
  \end{equation*}
  by the assumptions of the constrained blow-up lemma, Lemma~\ref{lem:blowup}.
  As $|V'|\ge(1-\eta)|V|$ by~\eqref{eq:bl:V'} we can apply
  Proposition~\ref{prop:subpairs} twice to infer from the
  $(\eps,d,p)$-density of $(V,U)$ that $(V',S)$ is $(\ti \eps,d,p)$-dense with
  $\ti \eps:=\eps\Delta'/(\eta(1-\eta))$. 
  Furthermore
  $\ti\eps\le\ti\eps_{\subref{lem:joint}}$
  by~\eqref{eq:bl:eps} and
  \begin{equation*}
    % p\geByRef{eq:bl:n} n^{-3/5\Delta}, \qquad
    |V'|\geByRef{eq:bl:V'} (1-\eta)\frac{n}{r} 
    \geByRef{eq:bl:xi}\xi_{\subref{lem:joint}} n, \qquad\text{and}\qquad
    |S|>\frac{\eta|U|}{\Delta'}\ge\frac{\eta n}{r\Delta'} \geByRef{eq:bl:xi}
    \xi_{\subref{lem:joint}} n.
  \end{equation*}
  Hence we conclude from~\ref{bl:Pjoint} on page~\pageref{bl:Pjoint} (with
  $\mu=\mu_\isubsc{T}$) that
  $|\hyper{T}|=|\BAD{\Delta}{d/2}{d}(V',S)|\le\mu_\isubsc{T}|V'|^\Delta$.
  In addition
  \begin{equation}\label{eq:bl:HS}
    \tfrac 17 \left(\tfrac{d}{2}\right)^\Delta|U_j|
    \le |\ti{S}|=|\hyper{S}|
    \le|\ti{U}_j| \le (1-\eta)\frac{n}{\Delta'}
    \leByRef{eq:bl:V'} \frac{|V'|}{\Delta'} 
    \leByRef{eq:bl:alphaetamu} \frac1\Delta(1-\mu_\isubsc{S})|V'|.
  \end{equation}
  Thus, we can apply
  Lemma~\ref{lem:bl:sets} with $\Delta$, $\beta$, and $\mu_\isubsc{S}$ to
  $S=\ti{V}$, $T=V'$, and to $\hyper{S}$ and $\hyper{T}$ and conclude that $f$
  violates
  \begin{equation*}
    |f(\hyper{S})\setminus\hyper{T}|>(1-\beta)|\hyper{S}|
      \geByRef{eq:bl:HS} (1-\beta)
        \tfrac 17 \left(\tfrac{d}{2}\right)^\Delta|U_j|
      \ge \frac{(1-\beta)d^\Delta n}{7\cdot 2^\Delta r\Delta'}
      \ge \frac{(1-\beta)d^\Delta n}{100^\Delta r}
      \geByRef{eq:bl:betasigma} 2\sigma \frac{n}{r}
  \end{equation*}
  with probability at most
  $$\beta^{|\hyper{S}|} \le \beta^{\frac17(\frac d
  2)^\Delta|U_j|}\le5^{-|U_j|}$$ 
  where the first inequality follows
  from~\eqref{eq:bl:HS} and the second from~\eqref{eq:bl:betasigma}.
\end{proof}

\smallskip

\paragraph{\bf Acknowledgement}

We thank two anonymous referees for their comments.

%%%% BIBLIOGRAPHY %%%%%%%%%%%%%%%%%%%%%%%%%%%%%%%%%%%%%%%%%%%%%%%%%%%%%

\bibliographystyle{amsplain_yk}
\bibliography{sparse} 

%%%% APPENDIX %%%%%%%%%%%%%%%%%%%%%%%%%%%%%%%%%%%%%%%%%%%%%%%%%%%%%%%%%

\ifthen{\equal{\printappendix}{on}}{

\appendix 

%%%% CONNECTION %%%%%%%%%%%%%%%%%%%%%%%%%%%%%%%%%%%%%%%%%%%%%%%%%%%%%%%%%%

\section{The connection lemma}\label{sec:CL}

The proof of Lemma~\ref{lem:CL} which we present in this appendix is inherent in
the proof of~\cite[Lemma~18]{KohRoeSchSze}. The only difference is that we have
a somewhat more special set-up here (given by the pre-defined partitions and candidate
sets). This set-up however is chosen exactly in such a way that this proof
continues to work if we adapt the parameters involved accordingly.

\begin{proof}[Proof of Lemma~\ref{lem:CL}]
  For the proof of Lemma~\ref{lem:CL} we use an inductive argument and 
  embed a partition class of $H$ into the corresponding partition
  class of $G$ one at a time. Before describing this strategy we will define
  two graph properties ${\rm D}_p(d_0,\eps',\mu,\eps,\xi)$ and ${\rm
  STAR}_p(k,\xi,\nu)$, which a random graph $\Gamma=\Gnp$ enjoys \aas\ for
  suitable sets of parameters. Then we will set up these parameters accordingly
  and define all other constants involved in the proof.

  For a fixed $n$-vertex graph $\Gamma$, fixed positive reals $d_0$, $\eps'$,
  $\mu$, $\eps$, $\xi$, and $\nu$, a fixed integer $k$, and a function $p=p(n)$
  we define the following properties of $\Gamma$.
  \begin{description}[leftmargin=0mm, listparindent=0mm, itemsep=2mm,
    topsep=2mm, style=sameline, font=\mdseries]
    \item[${\rm D}_p(d_0,\eps',\mu,\eps,\xi)$\,]\ 
      {\it We say that $\Gamma$ has property ${\rm
      D}_p(d_0,\eps',\mu,\eps,\xi)$ if it satisfies the property stated in
      Lemma~\ref{lem:reg-neighb} with these parameters and with $\Delta$,
      i.e., whenever $G=(X\dcup Y\dcup Z,E)$ is a tripartite subgraph of
      $\Gamma$ with the required properties, then it satisfies the conclusion of
      this lemma.}
    \item[${\rm STAR}_p(k,\xi,\nu)$\,]\ 
      {\it Similarly $\Gamma$ has
      property ${\rm STAR}_p(k,\xi,\nu)$ if $\Gamma$ has the property
      stated in Lemma~\ref{lem:stars-small} with $\Delta$ replaced by $k$,
      with parameters $\xi$, $\nu$, and for $p=p(n)$.}
  \end{description}

  Now we set up the constants.
  Let $\Delta,t$ and $d$ be given and assume without loss of generality that
  $d\le\frac14$.
  First we set
  \begin{equation}\label{eq:CL:mu}
    \mu=\frac{1}{4\Delta^2}
  \end{equation}
  and we fix $\eps_i$ for $i=t,t-1,\dots,0$ by setting
  \begin{equation}\label{eq:CL:eps}
  \begin{split}
    \eps_t &=\frac{d}{12\Delta t}\,,
    \qquad
    d_0:=d\,, % -t\eps_t,
    \qand \\
    \eps_{i-1}
      &=\min\big\{\eps(\Delta,d_0,\eps'=\eps_i,\mu),
          \,\eps_i\big\}\ \text{for}\ i=t,\dots,1\,,  
  \end{split}
  \end{equation}
  where $\eps(\Delta-1,d_0,\eps'=\eps_i,\mu)$ is given by
  Lemma~\ref{lem:reg-neighb}.
  We choose $\eps:=\eps_0$ and $\xi:=(d/100)^{\Delta}$ and receive $r_1$ as
  input. For each $k\in[\Delta]$ and each $r'\in[r_1]$
  Lemma~\ref{lem:stars-small} with $\Delta$ replaced by $k$ and with $\xi$
  replaced by $\xi/r'$ provides positive constants $\nu(k,r')$ and $c(k,r')$.
  Let $\nu$ be the minimum among the $\nu(k,r')$ and let
  $c_{\subref{lem:stars-small}}$ be the maximum among the $c(k,r')$ as we
  let both $k$ and $r'$ vary. Similarly
  Lemma~\ref{lem:reg-neighb} with input $\Delta-1$, $d_0$, $\eps'=\eps_i$,
  $\mu$, and $\xi$ replaced by $\xi/r'$ provides constants
  $c'(i,r')$ for $i\in[0,t]$ and $r'\in[r_1]$. We let
  $c_{\subref{lem:reg-neighb}}$ be the maximum among these $c'(i,r')$.
  Then we fix $c:=\max\{c_{\subref{lem:stars-small}},
  c_{\subref{lem:reg-neighb}}\}$, and receive $r\in[r_1]$ as input. Finally, we
  set
  \begin{equation}\label{eq:CL:xi}
    \xi_{\subref{lem:stars-small}}
    :=\xi_{\subref{lem:reg-neighb}}
    :=\xi/r
    =(d/100)^{\Delta}(1/r)\,.
  \end{equation}
  This finishes the definition of the constants.

  Let $p=p(n)\ge c(\log n/n)^{1/\Delta}$ and let 
  $\Gamma$ be a graph from $\Gnp$. By 
  Lemma~\ref{lem:stars-small}, Lemma~\ref{lem:reg-neighb}, and the choice of 
  constants the graph $\Gamma$ \aas\ satisfies 
  properties
  ${\rm D}_p(d,\eps_{i},\mu,\eps_{i-1},\xi_{\subref{lem:reg-neighb}})$ for
  all $i\in[t]$,
  and properties ${\rm STAR}_p(k,\xi_{\subref{lem:stars-small}},\nu)$ for
  all $k\in[\Delta]$. In the remainder of this proof we assume that $\Gamma$
  has these properties and show that then $\Gamma$ also satisfies the
  conclusion of Lemma~\ref{lem:CL}.
  
  Let $G\subset\Gamma$ and $H$ be arbitrary graphs satisfying the requirements
  stated in the lemma on vertex sets $W=W_1\dcup\dots\dcup
  W_t$ and $\ti{W}=\ti{W}_1\dcup\dots\dcup\ti{W}_t$, respectively.
  Let $h\colon\,\ti{W}\to[t]$ be the ``partition function'' for the vertex partition
  of $H$, i.e., 
  $$
    h(\ti{w})=j\quad\text{if and only if}\quad \ti{w}\in\ti{W}_j.
  $$
  For an integer $i\le h(\ti w)$ we denote by
  $$
    \ldeg^{i}(\ti w):=\big|N_H(\ti w)\cap \{\ti x\in \ti W\colon\, h(\ti x)\le
    i\}\big| 
  $$ 
  the left degree of $\ti w$ with respect to $\ti W_1\dcup\dots\dcup\ti W_i$.
  Clearly $\ldeg^{h(\ti w)}(\ti w)=\ldeg(\ti w)$.
  Before we continue, recall that each vertex $\ti w\in\ti W_i$ is equipped
  with a set $X_{\ti w}\subset V(\Gamma)\setminus W$
  and that we defined an external degree $\edeg(\ti{w})=|X_{\ti{w}}|$
  of $\ti{w}$ as well as a candidate set
  $C(\ti{w})=\coN_{W_i}(X_{\ti{w}})\subset W_i$. In the course of
  our embedding procedure, that we will describe below, we shall shrink this
  candidate set but keep certain invariants as we explain next.
 
  We proceed inductively and embed the vertex class $\ti{W}_i$ into $W_i$ one
  at a time, for $i=1,\dots,t$. To this end, we verify the following
  statement~\STi{} for $i=0,\dots,t$.
  
  \begin{enumerate}[label={\STi},leftmargin=*]
    \item There exists a partial embedding $\phi_{i}$ of
    $H[\bigcup_{j=1}^{i}\ti{W}_j]$ into $G[\bigcup_{j=1}^{i}W_j]$ such that
    for every $\ti{z}\in \bigcup_{j=i+1}^{t} \ti{W}_j$ there exists a candidate
    set $C_i(\ti{z})\subset C(\ti{z})$ given by
    \begin{enumerate}[label={\rm(\alph{*})},leftmargin=*]
      \item\label{ST:a}
      $C_i(\ti{z})=\bigcap\{N_{G}(\phi_{i}(\ti{x}))\colon\, \ti{x}\in
      N_H(\ti{z}) \text{ and } h(\ti x)\le i\}\cap C(\ti z)$,
    \end{enumerate}
    and satisfying
    \begin{enumerate}[label={\rm(\alph{*})},leftmargin=*,resume]
      \item\label{ST:b}
      $|C_i(\ti{z})|\geq (dp/2)^{\ldeg^{i}(\ti{z})}|C(\ti z)|$, and
      \item\label{ST:c}
      for every edge $\{\ti z,\ti z'\}\in E(H)$ with $h(\ti z),h(\ti z')>i$ the
      pair $\big(C_i(\ti z),C_i(\ti z')\big)$ is $(\eps_i,d,p)$-dense
      % $(\eps_i,d-i\eps_{t},p)$-dense
      in $G$.
    \end{enumerate}
  \end{enumerate}

  Statement~\STi{} ensures the existence of a partial embedding of the first
  $i$ classes $\ti W_1,\dots,\ti W_i$ of $H$ into $G$ such that for every
  unembedded vertex $\ti z$ there exists a candidate set $C_i(\ti z)\subset
  C(\ti z)$ that is not too small (see part~\ref{ST:b}). Moreover, if we embed
  $\ti z$ into its candidate set, then its image will be adjacent to all
  vertices $\phi_i(\ti x)$ with $\ti x\in (\ti W_1\cup\dots\cup \ti W_i)\cap
  N_H(\ti z)$ (see part~\ref{ST:a}). The last property, part~\ref{ST:c}, says
  that for edges of $H$ such that none of the endvertices are embedded already
  the respective candidate sets induce $(\eps,d',p)$-dense pairs for some
  positive $d'$. This property will be crucial for the inductive proof.

  \begin{remark}
    \it In what follows we shall use the following convention. Since
    the embedding of $H$ into $G$ will be divided into $t$ rounds, we shall find
    it convenient to distinguish among the vertices of $H$.  We shall use $\ti
    x$ for vertices that have already been embedded, $\ti y$ for vertices that
    will be embedded in the current round, while $\ti z$ will denote vertices
    that we shall embed at a later step.
  \end{remark}

  Before we verify~\STi{} for $i=0,\dots,t$ by induction on $i$ we note
  that~\STi[t] implies that $H$ can be embedded into~$G$ in such a way that
  every vertex $\ti w\in\ti W$ is mapped to a vertex in its candidate set
  $C(\ti w)$. Consequently, verifying~\STi[t] concludes the proof of
  Lemma~\ref{lem:CL}.

  \paragraph{Basis of the induction: $i=0$.} We first verify~\STi[0]. In this
  case $\phi_0$ is the empty mapping and for every $\ti z\in\ti W$ we have, according
  to~\ref{ST:a}, $C_0(\ti z)=C(\ti z)$, as there is no vertex $\ti x\in N_H(\ti
  z)$ with $h(\ti x)\le 0$. Property~\ref{ST:b} holds because $C_0(\ti
  z)=C(\ti z)$ and $\ldeg^{0}(\ti{z})=0$ for every $\ti z\in\ti W$.  Finally,
  property~\ref{ST:c} follows from the property that $(C(\ti z),C(\ti z'))$ is
  $(\eps_0,d,p)$-dense by~\ref{lem:CL:Cdense} of Lemma~\ref{lem:CL}.

  \paragraph{Induction step: $i\to i+1$.} For the inductive step, we suppose
  that~$i<t$ and assume that statement~\STi{} holds; we have to construct
  $\phi_{i+1}$ with the required properties. Our strategy is as follows. In the
  first step, we find for every $\ti y\in\ti W_{i+1}$ an appropriate subset
  $C'(\ti y)\subseteq C_{i}(\ti y)$ of its candidate set such that if
  $\phi_{i+1}(\ti y)$ is chosen from $C'(\ti y)$, then the new candidate set
  $C_{i+1}(\ti z):= C_{i}(\ti z)\cap N_{G}(\phi_{i+1}(\ti y))$ of every
  ``right-neighbour'' $\ti z$ of $\ti y$ will not shrink too much and
  property~\ref{ST:c} will continue to hold.

  Note, however, that in general $|C'(\ti y)|\leq |C_i(\ti y)|=\smallo(n)\ll
  |\ti W_{i+1}|$ (if $\ldeg^{i}(\ti y)\ge 1$) and, hence, we cannot ``blindly''
  select $\phi_{i+1}(\ti y)$ from $C'(\ti y)$. Instead, in the second step, we
  shall verify Hall's condition to find a system of distinct representatives
  for the indexed set system $\big( C'(\ti y)\colon\,\ti
  y\in\ti W_{i+1}\big)$ and we let $\phi_{i+1}(\ti y)$ be the representative of
  $C'(\ti y)$. (A similar idea was used
  in~\cite{alon92:_spann_subgr_of_random_graph,RoRu}.) We now give the details
  of those two steps.

  \medskip
  \noindent{\sl First step:}
  For the first step, fix $\ti y\in\ti W_{i+1}$ and set 
  $$ N_H^{i+1}(\ti y):=\{\ti z\in N_H(\ti y)\colon\, h(\ti z)>i+1\}\,. $$ 
  A vertex $v\in C_{i}(\ti y)$ will be ``bad''
  (i.e., we shall not select $v$ for $C'(\ti y)$) if there exists a vertex
  $\ti z\in N_H^{i+1}(\ti y)$ for which $N_{G}(v)\cap C_{i}(\ti z)$, in view
  of~\ref{ST:b} and~\ref{ST:c} of \STi[i+1], cannot play the r\^ole of
  $C_{i+1}(\ti z)$.

  We first prepare for~\ref{ST:b} of \STi[i+1]. Fix a vertex $\ti z\in
  N_H^{i+1}(\ti y)$. Since $(C_i(\ti y),C_i(\ti z))$ is an
  $(\eps_i,d,p)$-dense 
  % $(\eps_i,d-i\eps_{t},p)$-dense 
  pair by~\ref{ST:c} of \STi{},
  Proposition~\ref{prop:typical} implies that there exist at most
  $\eps_{i}|C_{i}(\ti y)|\le\eps_{t}|C_{i}(\ti y)|$ vertices $v$ in $C_{i}(\ti y)$ such that 
  $$ |N_{G}(v)\cap C_i(\ti z)|<
    \big(d-\eps_t\big) p |C_i(\ti z)|\,. 
  %  \big(d-i\eps_{t}-\eps_t\big) p |C_i(\ti z)|\,.     
  $$ 
  Repeating the above for all $\ti z\in N_H^{i+1}(\ti y)$, we infer
  from~\ref{ST:a} and~\ref{ST:b} of \STi{}, that there are at most
  $\Delta\eps_t|C_i(\ti y)|$ vertices $v\in C_i(\ti y)$ such that the following
  fails to be true for some $\ti z\in N_H^{i+1}(\ti y)$:
  \begin{multline}\label{eq:CL:a'}
    |N_G(v)\cap C_i(\ti z)|\ge
    \big(d-\eps_t\big)p |C_i(\ti z)|\\
    % \big(d-i\eps_{t}-\eps_t\big)p |C_i(\ti z)|\\
    \geBy{\text{\ref{ST:a},\ref{ST:b}}}
    \left(d-\eps_{t}\right)p
    % \left(d-(i+1)\eps_{t}\right)p    
    \left(\frac{dp}{2}\right)^{\ldeg^{i}(\ti z)} |C(\ti z)|
    \geByRef{eq:CL:eps}
    \left(\frac{dp}{2}\right)^{\ldeg^{i+1}(\ti z)}|C(\ti z)|\,.
  \end{multline}
  
  For property~\ref{ST:c} of \STi[i+1], we fix an edge $e=\{\ti z,\ti z'\}$ with
  $h(\ti z)$, $h(\ti z')>i+1$ and with at least one end vertex in
  $N_H^{i+1}(\ti y)$. There are at most $\Delta(\Delta-1)< \Delta^2$ such
  edges. Note that if both vertices $\ti z$ and $\ti z'$ are neighbours of
  $\ti y$, i.e., $\ti z$, $\ti z'\in N_H^{i+1}(\ti y)$, then 
  $$
    \max\big\{
      \ldeg^i(\ti y)+\edeg(\ti y),
      \ldeg^i(\ti z)+\edeg(\ti z),
      \ldeg^i(\ti z')+\edeg(\ti z')
    \big\}\le\Delta-2\,,
  $$
  by~\ref{lem:CL:deg} of Lemma~\ref{lem:CL} and
  because all three vertices $\ti y$, $\ti z$, and $\ti z'$ have at least two
  neighbours in $\ti W_{i+1}\cup\dots\cup \ti W_{t}$. 
  From property~\ref{ST:b} of \STi{}, and~\ref{lem:CL:Wi}
  and~\ref{lem:CL:Cbig} of Lemma~\ref{lem:CL} we infer for all $\ti
  w\in\{\ti y,\ti z,\ti z'\}$ that
  \begin{equation*}
    |C_i(\ti w)|
    \geBy{\ref{ST:b}}\left(\frac{dp}{2}\right)^{\ldeg^i(\ti w)}|C(\ti w)|
    \geBy{\ref{lem:CL:Wi},\ref{lem:CL:Cbig}}
      \left(\frac{dp}{2}\right)^{\ldeg^i(\ti w)+\edeg(\ti w)}
      \frac{n}{r}
    \geByRef{eq:CL:xi} \xi_{\subref{lem:reg-neighb}} p^{\Delta-2}n.  
  \end{equation*}
  Furthermore, $\Gamma$ has property
  ${\rm D}_p(d,\eps_{i+1},\mu,\eps_{i},\xi_{\subref{lem:reg-neighb}})$ by
  assumption.
  % and $d-i\eps_{t}\ge d-(i+1)\eps_t \ge d_0$
  % (see~\eqref{eq:CL:eps}). 
  This implies that there are at most $\mu |C_i(\ti y)|$ vertices $v\in
  C_i(\ti y)$ such that the pair $(N_G(v)\cap C_i(\ti z),
  N_G(v)\cap C_i(\ti z'))$ fails to be 
  % $(\eps_{i+1},d-(i+1)\eps_t,p)$-dense.
  $(\eps_{i+1},d,p)$-dense.

  If, on the other hand, say, only $\ti z\in N_H^{i+1}(\ti y)$ and
  $\ti z'\not\in N_H^{i+1}(y)$, then
  \begin{align*}
    \max\big\{
      \ldeg^i(\ti y)+\edeg(\ti y),
      \ldeg^i(\ti z')+\edeg(\ti z')
    \big\}
    & \le \Delta-1 \\
    \tand\quad
    \ldeg^{i}(\ti z)+\edeg(\ti z) &\le \Delta-2.
  \end{align*}
  Consequently, similarly as above,
  $$
    \min\Big\{|C_i(\ti y)|\,, |C_i(\ti z')|\Big\}
    \ge \xi_{\subref{lem:reg-neighb}} p^{\Delta-1}n
    \qand
    |C_i(\ti z)|\geq \xi_{\subref{lem:reg-neighb}} p^{\Delta-2}n
  $$
  and we can appeal to the fact that $\Gamma$ has property
  ${\rm D}_p(d,\eps_{i+1},\mu,\eps_{i},\xi_{\subref{lem:reg-neighb}})$ to
  infer that there are at most $\mu |C_i(\ti y)|$ vertices $v\in C_i(\ti y)$ such that
  $(N_G(v)\cap C_i(\ti z), C_i(\ti z'))$ fails to be
  % $(\eps_{i+1},d-(i+1)\eps_t,p)$-dense.  
  $(\eps_{i+1},d,p)$-dense.  
  For a given~$v\in C_i(\ti y)$, let $\hat
  C_i(\ti z)=C_i(\ti z)\cap N_G(v)$ if $\ti z\in N_H^{i+1}(\ti y)$ and $\hat
  C_i(\ti z)=C_i(\ti z)$ if $\ti z\not\in N_H^{i+1}(\ti y)$, and define~$\hat
  C_i(\ti z')$ analogously.

  Summarizing the above we infer that there are at least
  \begin{equation}\label{eq:CL:finalC}
    (1-\Delta\eps_t-\Delta^2\mu)|C_i(\ti y)|
  \end{equation}
  vertices $v\in  C_i(\ti y)$ such that
  \begin{enumerate}[label={\rm(\alph{*}')},leftmargin=*,start=2]
    \item\label{ST:b'}
      $|N_G(v)\cap C_i(\ti z)|\ge (dp/2)^{\ldeg^{i+1}(\ti z)}|C(\ti z)|$ for
      every $\ti z\in N_H^{i+1}(\ti y)$ (see~\eqref{eq:CL:a'}) and
    \item\label{ST:c'}
      $(\hat C_i(\ti z), \hat C_i(\ti z'))$ is
      $(\eps_{i+1},d,p)$-dense 
      % $(\eps_{i+1},d-(i+1)\eps_t,p)$-dense 
      for all edges
      $\{\ti z,\ti z'\}$ of~$H$ with $h(\ti z)$, $h(\ti z')>i+1$ and
      $\{\ti z,\ti z'\}\cap N_H^{i+1}(\ti y)\neq\emptyset$.
  \end{enumerate}
  Let $C'(\ti y)$ be the set of those vertices~$v$ from $C_i(\ti y)$
  satisfying properties~\ref{ST:b'} and~\ref{ST:c'} above.  Recall
  that $\ldeg^i(\ti y)+\edeg(\ti y)=\ldeg^i(\ti y')+\edeg(\ti y')$ for all
  $\ti y$, $\ti y'\in\ti W_{i+1}$ and set
  \begin{equation}\label{eq:CL:k}
    k:=\ldeg^i(\ti y)+\edeg(\ti y)\ \text{for some $\ti y\in\ti W_{i+1}$.}
  \end{equation}
  Since $\ti y\in\ti W_{i+1}$ was arbitrary, we infer from
  property~\ref{ST:b} of \STi{},
  properties~\ref{lem:CL:Wi} and~\ref{lem:CL:Cbig} of Lemma~\ref{lem:CL},
  and the choices of $\mu$ and $\eps_t$ that
  \begin{multline}\label{eq:CL:finalC2}
    |C'(\ti y)|
    \geByRef{eq:CL:finalC} (1-\Delta\eps_t-\Delta^2\mu)|C_i(\ti y)|
    \geBy{\ref{ST:b}} (1-\Delta\eps_t-\Delta^2\mu)
      \left(\frac{dp}{2}\right)^{\ldeg^i(\ti y)}|C(\ti z)| \\
    \geBy{\ref{lem:CL:Wi},\ref{lem:CL:Cbig}} (1-\Delta\eps_t-\Delta^2\mu)
      \left(\frac{dp}{2}\right)^{k}\frac{n}{r}
    \geBy{\eqref{eq:CL:mu},\eqref{eq:CL:eps}}
      \left(\frac{dp}{10}\right)^k \frac{n}{r}\,.
  \end{multline}

  \medskip
  \noindent{\sl Second step:}
  We now turn to the aforementioned second part of the inductive step. Here we
  ensure the existence of a system of distinct representatives for the
  indexed set system
  $$\cC_{i+1}:=\Big( C'(\ti y) \colon\,\ti y\in\ti W_{i+1}\Big)\,.$$
  We shall appeal to \emph{Hall's condition} and show that for every subfamily
  $\cC'\subset \cC_{i+1}$ we have
  \begin{equation}\label{eq:CL:hall}
    |\cC'|\leq \bigg|\bigcup_{C'\in\cC'}C'\,\bigg|.
  \end{equation}
  Because of~\eqref{eq:CL:finalC2}, assertion~\eqref{eq:CL:hall}
  holds for all families $\cC'$ with $1\leq |\cC'|\leq (dp/10)^k n/r$.
  
  Thus, consider a family $\cC'\subset\cC_i$ with $|\cC'|>(dp/10)^k n/r$.
  For every $\ti y\in\ti W_{i+1}$ 
  we have a set $\ti K(\ti y)$ of $\ldeg^i(\ti y)$ already embedded
  vertices of $H$ such that $\ti K(\ti y)=N_H(\ti y)\setminus N_H^{i+1}(\ti y)$. 
  Let $K'(\ti y):=\phi_i(\ti K(\ti y))$ be the image of
  $\ti K(\ti y)$ in $G$ under $\phi_i$. Recall that $\ti y$ is equipped with a
  set $X_{\ti y}\subset V(\Gamma)\setminus W$ of size $\edeg(\ti y)$ in
  Lemma~\ref{lem:CL}.  We have $\ldeg^i(\ti y)+\edeg(\ti y)=k$ by~\eqref{eq:CL:k}.
  Hence, when we add the vertices of $X_{\ti y}$ to $K'(\ti y)$ we obtain a
  set $K(\ti y)=\{u_1(\ti y),\dots,u_k(\ti y)\}$ of $k$ vertices in $\Gamma$.
  Note that for two distinct vertices $\ti y$, $\ti y'\in\ti W_{i+1}$ the sets
  $\ti K(\ti y)$ and $\ti K(\ti y')$ are disjoint. This follows from the fact
  that the distance in~$H$ between $\ti y$ and $\ti y'$ is at least four
  by the $3$-independence of $\ti W_{i+1}$ (cf.~\ref{lem:CL:indep} of
  Lemma~\ref{lem:CL}) and if $\ti K(\ti y)\cap\ti K(\ti y')\neq\emptyset$, then
  this distance would be at most two. In addition $\big(X_{\ti y}:\ti y\in\ti
  W_{i+1}\big)$ consists of pairwise disjoint sets by
  hypothesis. Consequently, the sets $K(\ti
  y)$ and $K(\ti y')$ are disjoint as well and, therefore, 
  $$
    \cF:=\{K(\ti y)\colon\,C'(\ti y)\in\cC'\}
    \subset\{K(\ti y)\colon\,\ti y\in\ti W_{i+1}\}
    \subset \binom{V(\Gamma)}{k}
  $$ 
  is a family of $|\cC'|$
  pairwise disjoint $k$-sets in $V(\Gamma)$. Moreover, $C(\ti
  y)=\coN_{W_i}(X_{\ti y})$ by definition and so~\ref{ST:a} of \STi{}
  implies 
  $$ 
    C'(\ti y)
    \subset C(\ti y) \cap \bigcap_{v\in K'(\ti y)} N_\Gamma(v)
    =\bigcap_{v\in K(\ti y)} N_\Gamma(v)\,. 
  $$
  Let 
  $$ 
    U=\bigcup_{C'(\ti y)\in\cC'}C'(\ti y)\subset W_{i+1}\,, 
  $$
  and suppose for a contradiction that
  \begin{equation}
  \label{eq:CL:Hall_contra}
    |U|<|\cC'|=|\cF|.
  \end{equation}
  We now use the fact that $\Gamma$ has property ${\rm
  STAR}_p(k,\xi_{\subref{lem:stars-small}},\nu)$ and apply it to~$U$ and~$\cF$
  (see Lemma~\ref{lem:stars-small}). By assumption $|U|<|\cF|\le\nu
  np^k|\cF|$. We deduce that
  $$
    \stars[\Gamma](U,\cF)\leq 
      p^k|U||\cF|+6\xi_{\subref{lem:stars-small}} np^k|\cF|\,.
  $$
  On the other hand, because of~\eqref{eq:CL:finalC2}, we have
  $$
    \stars[\Gamma](U,\cF)\ge
      \left(\frac{dp}{10}\right)^k \frac{n}{r}|\cF|\,.    
  $$
  Combining the last two inequalities we infer from property~\ref{lem:CL:Wi} of
  Lemma~\ref{lem:CL} that
  $$
    |U|
    \geq \left(\left(\frac{d}{10}\right)^k\frac{1}{r}
      -6\xi_{\subref{lem:stars-small}} \right)n
    \geByRef{eq:CL:xi} \xi_{\subref{lem:stars-small}} n
    \eqByRef{eq:CL:xi} \xi\frac{n}{r}
    \geBy{\ref{lem:CL:Wi}} |\ti W_{i+1}|
    \geq|\cC'|,
  $$
  which contradicts~\eqref{eq:CL:Hall_contra}.  This contradiction shows
  that~\eqref{eq:CL:Hall_contra} does not hold, that is, Hall's
  condition~\eqref{eq:CL:hall} does hold.  Hence, there exists a system of
  representatives for $\cC_{i+1}$, i.e., an injective mapping $\psi\colon\ti
  W_{i+1}\to \bigcup_{\ti y\in \ti W_{i+1}}C'(\ti y)$ such that $\psi(\ti y)\in
  C'(\ti y)$ for every $\ti y\in\ti W_{i+1}$.
  
  Finally, we extend $\phi_i$. For that we set 
  $$
    \phi_{i+1}(\ti w)=\begin{cases}
      \phi_i(\ti w)\,,&\text{if $\ti w\in\bigcup_{j=1}^{i} \ti W_j$},\\
      \psi(\ti w)\,,&\text{if $\ti w\in \ti W_{i+1}$}\,.
    \end{cases}
  $$
  Note that every $\ti z\in\bigcup_{j=i+2}^t \ti W_j$ has at most one neighbour
  in $\ti W_{i+1}$, as otherwise there would be two vertices $\ti y$ and
  $\ti y'\in \ti W_{i+1}$ with distance at most $2$ in $H$, which contradicts
  property~\ref{lem:CL:indep} of Lemma~\ref{lem:CL}. Consequently,
  for every $\ti z\in\bigcup_{j=i+2}^t \ti W_j$ we have
  $$
    C_{i+1}(\ti z)=\begin{cases}
      C_i(\ti z)\,,&\text{if $N_H(\ti z)\cap \ti W_{i+1}=\emptyset$},\\
      C_i(\ti z)\cap N_G(\phi_{i+1}(\ti y)) \,,&
        \text{if $N_H(\ti z)\cap\ti W_{i+1}=\{\ti y\}$}.
    \end{cases}
  $$
  by~\ref{ST:a} of \STi[i+1].
  In what follows we show that $\phi_{i+1}$ and $C_{i+1}(\ti z)$ for every
  $\ti z\in\bigcup_{j=i+2}^t \ti W_j$ have the desired properties and validate
  \STi[i+1].

  First of all, from~\ref{ST:a} of \STi{}, combined with $\phi_{i+1}(\ti y)\in
  C'(\ti y)\subseteq C_i(\ti y)$ for every~$\ti y\in\ti W_{i+1}$ and the
  property that $\big(\phi_{i+1}(\ti y)\colon\,\ti y\in\ti W_{i+1}\big)$ is a
  system of distinct representatives, we infer that $\phi_{i+1}$ is indeed a partial
  embedding of $H[\bigcup_{j=1}^{i+1}W_j]$.

  Next we shall verify property~\ref{ST:b} of \STi[i+1]. So
  let $\ti z\in \bigcup_{j=i+2}^t \ti W_j$ be fixed. If $N_H(\ti z)\cap
  \ti W_{i+1}=\emptyset$, then $C_{i+1}(\ti z)=C_{i}(\ti z)$,
  $\ldeg^{i+1}(\ti z)=\ldeg^{i}(\ti z)$, which yields~\ref{ST:b}
  of \STi[i+1] for that case. If, on the other hand, $N_H(\ti z)\cap
  \ti W_{i+1}\neq\emptyset$, then there exists a unique neighbour $\ti y\in
  \ti W_{i+1}$ of~$H$ (owing to the $3$-independence
  of $W_{i+1}$ by property~\ref{lem:CL:indep} of Lemma~\ref{lem:CL}). As
  discussed above we have
  $C_{i+1}(\ti z)=C_{i}(\ti z)\cap N_G(\phi_{i+1}(\ti y))$ 
  in this case. Since $\phi_{i+1}(\ti y)\in
  C'(\ti y)$, we infer directly from~\ref{ST:b'} that~\ref{ST:b} of
  \STi[i+1] is satisfied.

  Finally, we verify property~\ref{ST:c} of \STi[i+1]. Let $\{\ti z,\ti z'\}$ be
  an edge of $H$ with $\ti z,\ti z'\in\bigcup_{j=i+2}^t \ti W_j$.  We consider
  three cases, depending on the size of $N_H(\ti z)\cap\ti W_{i+1}$ and of
  $N_H(\ti z')\cap\ti W_{i+1}$. If $N_H(\ti z)\cap\ti W_{i+1}=\emptyset$ and
  $N_H(\ti z')\cap\ti W_{i+1}=\emptyset$, then part~\ref{ST:c} of \STi[i+1]
  follows directly from part~\ref{ST:c} of \STi{} and $\eps_{i+1}\geq\eps_i$,
  combined with $C_{i+1}(\ti z)=C_{i}(\ti z)$, $C_{i+1}(\ti z')=C_{i}(\ti z')$. 
  If $N_H(\ti z)\cap\ti W_{i+1}=\{\ti y\}$ and $N_H(\ti z')\cap
  \ti W_{i+1}=\emptyset$, then~\ref{ST:c} of \STi[i+1] follows from~\ref{ST:c'}
  and the definition of $C_{i+1}(\ti z)$ and $C_{i+1}(\ti z')$.  If
  $N_H(\ti z)\cap\ti W_{i+1}=\{\ti y\}$ and $N_H(\ti z')\cap\ti
  W_{i+1}=\{\ti y'\}$, then $\ti y=\ti y'$, as otherwise there would be a
  $\ti y$-$\ti y'$-path in $H$ with three edges, contradicting
  the $3$-independence of $\ti W_{i+1}$. Consequently, \ref{ST:c} of \STi[i+1]
  follows from~\ref{ST:c'} and the definition of $C_{i+1}(\ti z)$ and
  $C_{i+1}(\ti z')$.

  We have therefore verified \ref{ST:a}--\ref{ST:c} of \STi{}, thus concluding
  the induction step. The proof of Lemma~\ref{lem:CL} follows by induction.
\end{proof}

%%%% AUX PROOFS %%%%%%%%%%%%%%%%%%%%%%%%%%%%%%%%%%%%%%%%%%%%%%%%%%%%%%%

\section{Proofs of auxiliary lemmas}\label{sec:aux}

In this section we provide all proofs that were postponed earlier, namely
those of Lemma~\ref{lem:reduced}, Lemma~\ref{lem:stars-small}, Lemma~\ref{lem:joint}, and
Lemma~\ref{lem:Bad}.

\subsection{Proof of
Lemma~\ref{lem:reduced}}\label{sec:aux:reduced}

For the proof of Lemma~\ref{lem:reduced} we need the following lemma which collects
some well known facts about the edge distribution in random graphs $\Gnp$ and
follows directly from the Chernoff bound for binomially distributed random
variables.

\begin{lemma}
  \label{lem:Gnp}
  If $\log^4n/(pn)=\smallo(1)$ then \aas\ the random graph $\Gamma=\Gnp$
  has the following properties. For all vertex sets $X$, $Y$, $Z\subset
  V(\Gamma)$ with $X\cap Y=\emptyset$ and $|X|,|Y|\ge\frac{n}{\log n}$
  we have
  \begin{enumerate}[label=\irom]
      \item\label{lem:Gnp:eX}
        $e_\Gamma(X)=(1\pm\frac{1}{\log n})p\binom{|X|}{2}$,
      \item\label{lem:Gnp:eXY}
        $e_\Gamma(X,Y)=(1\pm\frac{1}{\log n})p|X||Y|$,
      \item\label{lem:Gnp:deg}
        $\sum_{z\in Z}\deg_\Gamma(z)=(1\pm\frac{1}{\log n})p|Z|n$.
  \qed
  \end{enumerate}
\end{lemma}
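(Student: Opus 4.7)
The plan is to apply the multiplicative Chernoff bound for binomial random variables pointwise and then to take a union bound over all admissible sets; the hypothesis $\log^4 n/(pn) = \smallo(1)$ is tailored exactly so that the Chernoff savings outpace the entropy cost of the union bound.

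For \ref{lem:Gnp:eX} and \ref{lem:Gnp:eXY}, observe that for fixed disjoint $X,Y\subset V(\Gamma)$ the quantities $e_\Gamma(X)$ and $e_\Gamma(X,Y)$ are binomially distributed with parameters $(\binom{|X|}{2},p)$ and $(|X||Y|,p)$ respectively. Applying the standard multiplicative Chernoff bound
\[
  \Prob\bigl[\,|W-\Exp W|\ge\eps\Exp W\,\bigr]\le 2\exp(-\eps^2\Exp W/3)
\]
with $\eps=1/\log n$ yields, for each fixed choice of $X$ (and $Y$), a failure probability at most $2\exp(-\Exp W/(3\log^2 n))$. Since $|X|,|Y|\ge n/\log n$, both expectations are bounded below by $pn^2/(3\log^2 n)$ for $n$ large, and the failure probability per choice is at most $\exp(-pn^2/(9\log^4 n))$.

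A union bound over all pairs $(X,Y)$ costs a factor of at most $2^n\cdot 2^n=4^n$, so the overall failure probability for both \ref{lem:Gnp:eX} and \ref{lem:Gnp:eXY} is bounded by
\[
  4^n\exp\Bigl(-\frac{pn^2}{9\log^4 n}\Bigr)
  =\exp\Bigl(n\log 4-\frac{pn^2}{9\log^4 n}\Bigr),
\]
which tends to $0$ because the hypothesis $\log^4 n/(pn)=\smallo(1)$ forces $pn/\log^4 n\to\infty$, hence $pn^2/\log^4 n\gg n$.

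For \ref{lem:Gnp:deg} it is enough to establish the (stronger) pointwise statement that \aas\ every vertex satisfies $\deg_\Gamma(v)=(1\pm 1/\log n)pn$, as summing over $z\in Z$ then gives the claim for every $Z\subset V(\Gamma)$ without any further lower bound on $|Z|$. Since $\deg_\Gamma(v)\sim\Bin(n-1,p)$, Chernoff with $\eps=1/\log n$ gives a per-vertex failure probability at most $2\exp(-np/(4\log^2 n))$, and a union bound over the $n$ vertices costs only an extra factor of $n$; this still tends to $0$ under the hypothesis. The only obstacle in the whole argument is checking that the Chernoff exponent $pn^2/\mathrm{polylog}(n)$ dominates the entropy $n\log 4$ of the union bound, which is immediate from $pn\gg\log^4 n$.
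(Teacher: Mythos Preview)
Your proof is correct and is precisely the approach the paper has in mind: the paper states the lemma without proof, noting only that it ``follows directly from the Chernoff bound for binomially distributed random variables,'' and your Chernoff-plus-union-bound argument is exactly this. Your treatment of~\ref{lem:Gnp:deg} via the stronger per-vertex degree concentration is the natural way to handle the absence of a lower bound on~$|Z|$.
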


\begin{proof}[Proof of Lemma~\ref{lem:reduced}]
  For the proof we will use the sparse regularity lemma
  (Lemma~\ref{lem:sparse-RL}) and the facts about the edge
  distribution in random graphs given by Lemma~\ref{lem:Gnp}.
  
  Given $\alpha$, $\eps$, and $r_0$ let $r_1$, $\nu$, and $n_0$ be as provided
  by Lemma~\ref{lem:sparse-RL} for input 
  $$
    \eps':=\eps^2/1000\,, \quad
    K:=1+\eps'\,, \quad\text{and}\quad
    r'_0:=\max\{2r_0,\lceil 1/\eps'\rceil\}\,.
  $$
  Let further~$d$ be given and assume that~$n$ is such that $n\ge
  n_0$, $\log n\ge 1/\eps'$, and $\log n\ge 1/\nu$.  Let~$\Gamma$ be a
  typical graph from~$\Gnp$ with $\log^4n/(pn)=\smallo(1)$, by which
  we mean here that~$\Gamma$ should satisfy properties
  \ref{lem:Gnp:eX}--\ref{lem:Gnp:deg} of Lemma~\ref{lem:Gnp}. We will
  show that, then, $\Gamma$ also satisfies the conclusion of
  Lemma~\ref{lem:reduced}.
  
  To this end we consider an arbitrary
  subgraph~$G=(V,E)$ of~$\Gamma$ that satisfies the assumptions of
  Lemma~\ref{lem:reduced}.
  By property~\ref{lem:Gnp:eXY} of Lemma~\ref{lem:Gnp} the graph
  $G\subset\Gamma$ is $(1/\log n, 1+1/\log n)$-bounded with respect to $p$.
  Since we have $1+1/\log n\le 1+\eps'=K$,
  the sparse regularity lemma (Lemma~\ref{lem:sparse-RL}) with input $\eps'$,
  $K$, and $r'_0$ asserts that $G$ has an $(\eps',p)$-regular
  $\epsilon$-equipartition $V=V'_0\dcup V'_1\dcup\dots\dcup V'_{r'}$ for some
  $r'_0\le r'\le r_1$. Observe that there are at most $r'\sqrt{\eps'}$
  clusters in this partition which are contained in more than $r'\sqrt{\eps'}$
  pairs that are not $(\eps',p)$-regular. We add all these clusters to $V'_0$,
  denote the resulting set by $V_0$ and let the remaining clusters be
  $V_1,\dots,V_r$. Then $r_0\le r'/2 \le r\le r_1$.  We claim that the
  partition $V=V_0\dcup V_1\dcup\dots\dcup V_{r}$ has the desired properties. 

  Indeed, $|V_0|\le\eps'n+r'\sqrt{\eps'}(n/r')\le\eps n$ and the
  number of pairs in $V_1\dcup\dots\dcup V_{r}$ which are not
  $(\eps,p)$-regular is at most $r\cdot r'\sqrt{\eps'}\le
  2r^2\sqrt{\eps'}\le\eps\binom{r}{2}$.  It follows that
  $V_1\dcup\dots\dcup V_{r}$ is an $(\eps,p)$-regular partition.  Let
  $R$ be the (edge maximal) reduced graph for the given paramter~$d$,
  so that, $R$ has vertex set $[r]$ and edges $ij$ for exactly all the
  $(\eps,d,p)$-dense pairs $(V_i,V_j)$ with $i$, $j\in[r]$.  It
  remains to show that we have $\delta(R)\ge(\alpha-d-\eps)|R|$.
  
  To see this, define $L:=|V_i|$ ($i\in[r]$) and consider arbitrary
  disjoint sets $X,Y\subset V(G)$.
  Then $\sum_{x\in X}
  \deg_G(x)=2e_G(X)+e_G(X,Y)+e_G(X,V\setminus(X\cup Y))$ and therefore
  \begin{equation*}
	e_G(X,Y) \ge \Big(\alpha\sum_{x\in X}\deg_\Gamma(x) \Big) 
	  - 2e_\Gamma(X) - e_\Gamma\big(X,V\setminus(X\cup Y)\big).
  \end{equation*}
  By properties~\ref{lem:Gnp:eX}--\ref{lem:Gnp:deg} of Lemma~\ref{lem:Gnp},
  if $|X|\ge n/\log n$ and $|X\cup Y|\le n-n/\log n$, then this
  implies
  \begin{equation}\label{eq:reduced}
  \begin{split}
    e_G(X,Y) 
    &\ge 
        \alpha\Big(1-\frac1{\log n}\Big)p|X|n
      - 2\Big(1+\frac1{\log n}\Big)p\binom{|X|}{2} \\
      &\hspace{2.5cm}
      - \Big(1+\frac1{\log n}\Big)p|X|\Big(n-|X|-|Y|\Big) \\
    &\ge  \big(
        \alpha(1-\eps')n
      - (1+\eps')(n-|Y|)
    \big) p|X| \,.
  \end{split}
  \end{equation}  
  Now fix $i\in[r]$ and let $\bar{V}_i:=V\setminus(V_0\cup V_i)$. Then
  \begin{equation*}
    e_G(V_i,\bar{V}_i)\le
      \big(\deg_R(i)+2r\sqrt{\eps'}\big) \left(1+\eps'\right)pL^2
      +\big(r-\deg_R(i)\big)dpL^2, 
  \end{equation*}
  since each cluster is contained in at most
  $r'\sqrt{\eps'}\le2r\sqrt{\eps'}$ $(\epsilon',p)$-irregular pairs and
  because~$R$ is a maximal $(\eps',d,p)$-reduced graph and $G\subset\Gamma$
  is $(1/\log n, 1+\eps')$-bounded with respect to $p$. On the other
  hand,~\eqref{eq:reduced} implies that
  \begin{equation*}\begin{split}
    e_G(V_i,\bar{V}_i)
    & \ge \Big(\alpha(1-\eps')n-(1+\eps')\big(|V_0|+|V_i|\big)\Big)p|V_i| \\
    & \ge \Big(\alpha(1-\eps')-(1+\eps')3\sqrt{\eps'}\Big)pLn,
  \end{split}\end{equation*}
  where we used $|V_0|\le(\eps'+\sqrt{\eps'})n$ and $|V_i|\le n/r'_0\le\eps'n$.
  We conclude that 
  \begin{equation*}
    \Big(\!\deg_R(i)(1+\eps'-d)+2r\sqrt{\eps'}(1+\eps')+rd\Big)pL^2
    \ge \Big(\alpha(1-\eps')-(1+\eps')3\sqrt{\eps'}\Big)prL^2,
  \end{equation*}
  since $n/L\ge r$.
  This gives
  \begin{multline*}
   \deg_R(i)(1+\eps'-d)
    \ge \Big(\alpha(1-\eps')-(1+\eps')3\sqrt{\eps'}-2\sqrt{\eps'}(1+\eps') 
    -d\Big)r \\ 
    \ge \Big(\alpha-\alpha\eps'-9\sqrt{\eps'}-d\Big)r
    \ge(\alpha-d-\eps/2)|R|\,.
  \end{multline*}
  Thus, $\deg_R(i)\geq(\alpha-d-\eps/2)(1+\eps'-d)^{-1}|R|
  \geq(\alpha-d-\eps/2)(1+\eps')^{-1}|R|
  \geq(\alpha-d-\eps/2)(1-\eps')|R|\geq(\alpha-d-\eps)|R|$.
  Hence the $(\eps,d,p)$-dense partition $V=V_0\dcup
  V_1\dcup\dots\dcup V_{r}$ has a reduced graph~$R$ with
  $\delta(R)\ge(\alpha-d-\eps)|R|$.
\end{proof}

%%%%%%%%%%%%%

\subsection{Proof of
Lemma~\ref{lem:stars-small}}\label{sec:aux:stars-small}

This proof makes use of a Chernoff bound for the
binomially distributed random variable $\stars[\Gamma](X,\hyper{F})$
appearing in this lemma (cf.\ Definition~\ref{def:stars} and the discussion
below this definition).

\begin{proof}[Proof of Lemma~\ref{lem:stars-small}]
  Given $\Delta$ and $\xi$ let $\nu$ and $c$ be constants satisfying
  \begin{equation}\label{eq:stars-small:cnu}
  \begin{alignedat}{2}
    -6\xi\log(2\xi) &\le -(6\xi-2\sqrt\nu)\log\xi, &\qquad
    2\nu &\le(\sqrt{\nu}-2\nu), \\
    \Delta+1-6\xi c^\Delta&\le -1, \qquad\text{and} &
    \Delta &\le\nu c^\Delta.    
  \end{alignedat}
  \end{equation}
  % Observe that for fixed $X$ and $\hyper{F}$ the random variable
  % $\stars[\Gamma](X,\hyper{F})$ has distribution
  % $\Bin(p^\Delta,|X||\hyper{F}|)$.
  First we estimate the probability that there are $X$ and $\hyper{F}$ with
  $|\hyper{F}|\ge n/\log n$ fulfilling the requirements of the lemma but
  violating~\eqref{eq:stars-small}.  
  Chernoff's inequality $\Prob[Y\ge\Exp
  Y+t]\le\exp(-t)$ for a binomially distributed random variable $Y$ and $t\ge
  6\Exp Y$ 
  (see~\cite[Chapter~2]{purpleBook}):
  implies
  \begin{equation*}
    \Prob\Big[\stars[\Gamma](X,\hyper{F})
      \ge p^\Delta|X||\hyper{F}|+6\xi np^\Delta|\hyper{F}|\Big]
    \le\exp(-6\xi np^\Delta|\hyper{F}|)
    \le\exp(-6\xi c^\Delta|\hyper{F}|\log n)
  \end{equation*}
  for fixed $X$ and $\hyper{F}$ since $6\xi np^\Delta|\hyper{F}|\ge 6
  p^\Delta|X||\hyper{F}|$. As the number of choices for $\hyper{F}$ and $X$ can
  be bounded by $\sum_{f =n/\log n}^{\xi n}n^{\Delta f}$ and $2^n\le\exp(n)$,
  respectively, the probability we want to estimate is at most
  \begin{equation*}
    \sum_{f=\frac{n}{\log n}}^{\xi n} \!\!
      \exp\Big(\Delta f\log n + n - 6\xi c^\Delta f\log n\Big) 
    \le \sum_{f=\frac{n}{\log n}}^{\xi n} \!\!
      \exp\big(f\log n(\Delta+1-6\xi c^\Delta)\big),
  \end{equation*}
  which does not exceed $\xi n\exp(-n)$ by~\eqref{eq:stars-small:cnu} and thus
  tends to $0$ as $n$ tends to infinity.
  
  It remains to establish a similar bound on the probability that there are
  $X$ and $\hyper{F}$ with $|\hyper{F}|<n/\log n$ fulfilling the requirements
  of the lemma but violating~\eqref{eq:stars-small}. For this purpose we use that
  \begin{equation*}
    \Prob[Y\ge t]\le q^t\binom{m}{t}\le\exp\Big(-t\log\frac{t}{3qm}\Big)
  \end{equation*}
  for a random variable $Y$ with distribution $\Bin(m,q)$ and infer for fixed
  $X$ and $\hyper{F}$
  \begin{multline*}
    \Prob\Big[\stars[\Gamma](X,\hyper{F})
      \ge p^\Delta|X||\hyper{F}|+6\xi np^\Delta|\hyper{F}|\Big]
    \le \Prob\Big[\stars[\Gamma](X,\hyper{F})
      \ge 6\xi np^\Delta|\hyper{F}|\Big] \\
    \le\exp\left(-6\xi np^\Delta|\hyper{F}|\log\frac{2\xi n}{|X|}\right)
    \le\exp\left(-2\sqrt{\nu} np^\Delta|\hyper{F}|\log\frac{n}{|X|}\right).
  \end{multline*}
  because $-6\xi\log(2\xi)\le-(6\xi-2\sqrt{\nu})\log\xi\le
  (6\xi-2\sqrt{\nu})\log(n/|X|)$ by~\eqref{eq:stars-small:cnu}. The number of
  choices for $\hyper{F}$ and $X$ in total can be bounded by
  \begin{multline*}
    \sum_{f=1}^{\frac{n}{\log n}} \,\, \sum_{x=1}^{\nu np^\Delta f}
      n^{\Delta f}\binom{n}{x}
    \le \sum_{f=1}^{\frac{n}{\log n}} \,\, \sum_{x=1}^{\nu np^\Delta f}
      \exp\left(\Delta f\log n + \nu np^\Delta f\log\frac{en}{x}\right) \\ 
    \le \sum_{f=1}^{\frac{n}{\log n}} \,\, \sum_{x=1}^{\nu np^\Delta f}
      \exp\left(2\nu np^\Delta f\log\frac{en}{x}\right)
    \le \sum_{f=1}^{\frac{n}{\log n}} \,\, \sum_{x=1}^{\nu np^\Delta f}
      \exp\left(\sqrt{\nu}np^\Delta f\log\frac{n}{x}\right)
  \end{multline*}
  where the second inequality follows from $\Delta\log n\le\nu c^\Delta\log
  n\le \nu np^\Delta$ and the last from $2\nu\log
  e\le(\sqrt{\nu}-2\nu)\log(n/x)$ by~\eqref{eq:stars-small:cnu}. Therefore the
  probability under consideration is at most
  \begin{equation*}
    \sum_{f=1}^{\frac{n}{\log n}} \,\, \sum_{x=1}^{\nu p^\Delta nf}
      \exp\left(\sqrt{\nu}np^\Delta f\log\frac{n}{x} 
      - 2\sqrt{\nu}np^\Delta f\log\frac{n}{x}
      \right)\le n^2 \exp\left(-\sqrt{\nu}\log n \frac{n}{\log n}\right).
  \end{equation*}
\end{proof}

%CL \end{NEW}

\subsection{Proof of
Lemma~\ref{lem:joint}}\label{sec:aux:joint}

We will use the following simple proposition about cuts in hypergraphs. This
proposition generalises the well known fact that any graph~$G$ admits a vertex
partition into sets of roughly equal size such that the resulting cut contains at least half
the edges of~$G$.

\begin{proposition}\label{prop:crosscut}
  Let $\hyper{G}=(V,\hyper{E})$ be an $\ell$-uniform hypergraph with $m$ edges
  and $n$ vertices such that $n\ge 3\ell$. Then there is a partition $V=V_1\dcup
  V_2$ with $|V_1|=\lfloor 2n/3 \rfloor$ and $|V_2|=\lceil n/3 \rceil$ such that at
  least $m\cdot\ell/2^{\ell+2}$ edges in $\hyper{E}$ are $1$-crossing, i.e.,
  they have exactly one vertex in $V_2$.
\end{proposition}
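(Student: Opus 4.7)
The plan is to prove Proposition~\ref{prop:crosscut} by a standard probabilistic argument: pick a uniformly random balanced partition of the prescribed sizes and show that the expected number of $1$-crossing edges is at least $m\ell/2^{\ell+2}$. This is cleanly set up by linearity of expectation, and then some standard tail-type estimate (in fact just a direct inequality) yields the required pointwise lower bound on the probability that a fixed edge is $1$-crossing.

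Concretely, I would first let $V_2$ be a uniformly random subset of $V$ of size $k := \lceil n/3\rceil$ and $V_1 := V \setminus V_2$, and fix an arbitrary edge $e = \{v_1,\dots,v_\ell\} \in \hyper{E}$. By symmetry and a counting of ordered $\ell$-tuples, one has
\begin{equation*}
  \Prob\bigl[|e \cap V_2| = 1\bigr]
  \;=\; \ell\,\frac{k}{n}\prod_{i=0}^{\ell-2}\frac{n-k-i}{n-i-1}.
\end{equation*}
Since the factors $(n-k-i)/(n-i-1)$ are decreasing in $i$ (as $k\geq 1$), the product is minimized at $i=\ell-2$, and it suffices to verify that $(n-k-\ell+2)/(n-\ell+1) \geq 1/2$ under the hypothesis $n \geq 3\ell$. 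This reduces to the elementary inequality $n \geq 3\ell-3$, which holds with room to spare.

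Combining these bounds with $k/n \geq 1/3$ yields
\begin{equation*}
  \Prob\bigl[|e \cap V_2| = 1\bigr]
  \;\geq\; \ell \cdot \tfrac13 \cdot \bigl(\tfrac12\bigr)^{\ell-1}
  \;=\; \frac{2\ell}{3\cdot 2^{\ell}}
  \;\geq\; \frac{\ell}{2^{\ell+2}}.
\end{equation*}
Summing over $e \in \hyper{E}$ by linearity of expectation gives an expected number of $1$-crossing edges of at least $m\ell/2^{\ell+2}$, and therefore at least one partition $V = V_1 \dcup V_2$ achieves this bound. Since $|V_1| + |V_2| = \lfloor 2n/3\rfloor + \lceil n/3\rceil = n$ holds automatically, the partition has the required sizes. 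No serious obstacle arises; the only care needed is the elementary verification that the product of ratios can be uniformly bounded below by $(1/2)^{\ell-1}$ when $n \geq 3\ell$.
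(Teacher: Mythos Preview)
Your argument is correct and matches the paper's approach: both bound the fraction of $\lfloor 2n/3\rfloor$--$\lceil n/3\rceil$ partitions in which a fixed edge is $1$-crossing from below by $\ell/2^{\ell+2}$, and then average (equivalently, double count) over all such partitions. Your direct estimate of the hypergeometric product is slightly cleaner than the paper's comparison of binomial coefficients $\binom{n-\ell}{\lceil n/3\rceil - r}$ for varying $r$, but the underlying idea is identical.
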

\begin{proof}
  Let $X$ be the number of $\frac13$-cuts of $V$, i.e., cuts $V=V_1\dcup V_2$
  with $|V_1|=\lfloor 2n/3 \rfloor$ and $|V_2|=\lceil n/3 \rceil$.  
  For a fixed edge $B$ there are precisely $2^\ell$ ways to distribute its
  vertices over $V_1\dcup V_2$ out of which exactly $\ell$ are such that $B$ is
  $1$-crossing. Further, for $r$ fixed vertices of $B$ exactly
  $\binom{n-\ell}{\lceil n/3 \rceil-r}$ of all $\frac13$-cuts have exactly these
  vertices in $V_2$. It is easy to check that 
  $$
    \binom{n-\ell}{\lceil n/3 \rceil-r}\le
    4\binom{n-\ell}{\lceil n/3 \rceil-1}
    \qquad\text{for all $0\le r\le\ell$\,.}
  $$
  It follows
  that $B$ is $1$-crossing for at least an $\frac14\ell/(2^\ell)$ fraction of
  all $\frac13$-cuts. Now assume that all $\frac13$-cuts have less than
  $m\cdot\ell/2^{\ell+2}$ edges that are $1$-crossing. Then double counting
  gives
  \begin{equation*}
   m\cdot\frac{\ell}{2^{\ell+2}}\cdot X>
   \sum_{B\in\hyper{E}} \#\big\{\,\text{$\tfrac13$-cuts s.t.\ $B$ is
   $1$-crossing}\,\big\} \ge m\cdot \frac14\frac{\ell}{2^\ell} \cdot X
  \end{equation*}
  which is a contradiction.
\end{proof}

In the proof of Lemma~\ref{lem:joint} we need to estimate the number of 
``bad'' $\ell$-sets in a vertex set $X$. For this purpose we will
use Proposition~\ref{prop:crosscut} to obtain a partition of~$X$ into sets
$X=X_1\dcup X_2$ such that a substantial proportion of all these bad
$\ell$-sets will be $1$-crossing and $X_1$ is not too small. In this way we
obtain many $(\ell-1)$-sets in $X_1$ most of which will, as we show, be
similarly bad as the $\ell$-sets we started with. This will allow us to
prove Lemma~\ref{lem:joint} by induction.

\begin{proof}[Proof of Lemma~\ref{lem:joint}]
  Let $\Delta$ and $d$ be given. Let $\Gamma$
  be an $n$-vertex graph, let $\ell$ be an integer, let $\eps'$, $\mu$,
  $\eps$, $\xi$ be positive real numbers, and let
  $p=p(n)$ be a function. We say that $\Gamma$ has property
  $P_\ell(\eps',\mu,\eps,\xi,p(n))$ if $\Gamma$ has the property stated in
  Lemma~\ref{lem:joint} with parameters 
  $\eps'$, $\mu$, $\eps$, $\xi$, $p(n)$ and with parameters
  and $\Delta$ and $d$.  
  % This means, whenever $G=(X\dcup Y,E)$ is a bipartite
  % subgraph of $\Gamma$ with the required properties, it satisfies the
  % conclusion of the lemma.
  Similarly, $\Gamma$ has property $D(\eps',\mu,\eps,\xi,p(n))$ if it satisfies
  the conclusion of Lemma~\ref{lem:reg-neighb} with these parameters and
  with $\Delta$ and $d_0:=d$. 
  For any fixed~$\ell>0$, we
  denote by ($\cP_\ell$) the following statement.
  \begin{itemize}[label={\rm($\cP_\ell$)}, leftmargin=*]
    \item {\it For all $\eps',\mu>0$ there is $\eps$ such that for all $\xi>0$
    there is $c>1$ such that a random graph $\Gamma=\Gnp$ with $p>c(\frac{\log
    n}{n})^{1/\Delta}$ has property $P_\ell(\eps',\mu,\eps,\xi,p(n))$ with
    probability $1-\smallo(1)$.}
  \end{itemize}
  We prove that ($\cP_\ell$) holds for every fixed $\ell>0$ by induction on
  $\ell$. The case $\ell=1$ is an easy consequence of
  Proposition~\ref{prop:typical} which states that in \emph{all}
  $(\eps,d,p)$-dense pairs most vertices have a large neighbourhood.
  
  For the inductive step assume that ($\cP_{\ell-1}$) holds. We will show that
  this implies ($\cP_\ell$). We start by specifying the constants appearing in
  statement ($\cP_\ell$).  
  Let $\eps'$ and $\mu$ be arbitrary positive
  constants. Set $\eps'_{\ell-1}:=\eps'$ and
  $\mu_{\ell-1}:=\frac1{100}\mu\tfrac{\ell}{2^{\ell+2}}$. Let $\eps_{\ell-1}$ be
  given by ($\cP_{\ell-1}$) for input parameters $\eps'_{\ell-1}$ and
  $\mu_{\ell-1}$. Set $\eps'_{\subref{lem:reg-neighb}}:=\eps_{\ell-1}$ and let
  $\eps_{\subref{lem:reg-neighb}}$ be as promised by Lemma~\ref{lem:reg-neighb}
  with parameters $\eps'_{\subref{lem:reg-neighb}}$ and
  $\mu_{\subref{lem:reg-neighb}}:=\frac{1}{2}$. Define
  $\eps:=\mu_{\ell-1}\eps_{\subref{lem:reg-neighb}}\eps'_{\ell-1}$.
  Next, let $\xi$ be an arbitrary parameter provided by
  the adversary in Lemma~\ref{lem:joint} and choose $\xi_{\ell-1}:=\xi(d-\eps)$
  and $\xi_{\subref{lem:reg-neighb}}:=\mu_{\ell-1}\xi$. Finally, let
  $c_{\ell-1}$ and $c_{\subref{lem:reg-neighb}}$ be given
  by ($\cP_{\ell-1}$) and by Lemma~\ref{lem:reg-neighb}, respectively,
  for the previously specified parameters together with $\xi_{\ell-1}$ and $\xi_{\subref{lem:reg-neighb}}$.
  Set $c:=\max\{c_{\ell-1},c_{\subref{lem:reg-neighb}}\}$.
  We will prove that with this choice of $\eps$ and $c$ the statement in
  ($\cP_\ell$) holds for the input parameters $\eps'$, $\mu$, and $\xi$.

  Let $\Gamma=\Gnp$ be a random graph.   
  By ($\cP_{\ell-1}$) and Lemma~\ref{lem:reg-neighb}, and by the choice of
  the parameters the graph $\Gamma$ has properties
  $$
    P_{\ell-1}(\eps'_{\ell-1},\mu_{\ell-1},\eps_{\ell-1},\xi_{\ell-1},p(n))
    \qqand
    D(\eps'_{\subref{lem:reg-neighb}},
    \mu_{\subref{lem:reg-neighb}}, \eps_{\subref{lem:reg-neighb}},
    \xi_{\subref{lem:reg-neighb}}, p(n)) 
  $$
  with probability $1-\smallo(1)$ if $n$ is large enough. We will show that a
  graph $\Gamma$ with these properties also satisfies $P_\ell(\eps',\mu,\eps,\xi,p(n))$. Let
  $G=(X\dcup Y,E)$ be an arbitrary subgraph of such a $\Gamma$ where
  $|X|=n_1$ and $|Y|=n_2$ with
  $n_1\ge\xi p^{\Delta-1}n$, $n_2\ge\xi p^{\Delta-\ell}n$, and $(X,Y)$
  is an $(\eps,d,p)$-dense pair.
  
  We would like to show that for
  $\hyper{B}_\ell:=\bad{\ell}{\eps'}{d}(X,Y)$ we have
  $|\hyper{B}_\ell|\le\mu n_1^\ell$. Assume for a contradiction that this is
  not the case. By Proposition~\ref{prop:crosscut} there is a cut $X=X_1\dcup X_2$ with
  $|X_1|=\lfloor 2n_1/3 \rfloor$ and $|X_2|=\lceil n_2/3 \rceil$ such that at
  least $|\hyper{B}_\ell|\cdot\ell/2^{\ell+2}$ of the $\ell$-sets in
  $\hyper{B}_\ell$ are $1$-crossing, i.e., have exactly one vertex in $X_2$.
  By Proposition~\ref{prop:typical} there are less than $\eps|X|$ vertices
  $x\in X_2$ such that $|N_Y(x)|<(d-\eps)p n_2$. We delete all $\ell$-sets from
  $\hyper{B}_\ell$ that contain such a vertex or are not $1$-crossing for
  $X=X_1\dcup X_2$ and call the resulting set $\hyper{B}'_\ell$. It follows
  that
  \begin{equation}\label{eq:joint:B'}
   |\hyper{B}'_\ell|
     \ge|\hyper{B}_\ell|\tfrac{\ell}{2^{\ell+2}}-\eps|X|n_1^{\ell-1}
     >\mu n_1^\ell\tfrac{\ell}{2^{\ell+2}}-\eps n_1^{\ell}
     \ge 20\mu_{\ell-1} n_1^{\ell}.
  \end{equation}
  Now, for each $v\in X_2$ we count the number of $\ell$-sets
  $B\in\hyper{B}'_\ell$ containing $v$. We delete all vertices $v$ from $X_2$
  for which this number is less than $|\hyper{B}'_\ell|/(10n_1)$ and call the
  resulting set $X'$. Observe that the definition of $\hyper{B}'_\ell$ implies
  that all vertices $x$ in $X'$ satisfy $|N_Y(x)|\ge(d-\eps)p n_2$.
  Because $\hyper{B}'_\ell$ contains only $1$-crossing $\ell$-sets we get
  \begin{equation*}
    |\hyper{B}'_\ell|
      \le |X_2\setminus X'|\frac{|\hyper{B}'_\ell|}{10n_1} + |X'|n_1^{\ell-1}
      \le \frac{|\hyper{B}'_\ell|}{10} + |X'|n_1^{\ell-1}
  \end{equation*}
  and thus
  \begin{equation*}
    |X'|\ge\frac{9}{10n_1^{\ell-1}}|\hyper{B}'_\ell|\geByRef{eq:joint:B'}
      10\mu_{\ell-1} n_1.
  \end{equation*}
  This together with Proposition~\ref{prop:subpairs} implies that the pairs
  $(X',Y)$ and $(Y,X_1)$ are $(\eps_{\subref{lem:reg-neighb}},d,p)$-dense.
  In addition we have $|X'|,|X_1|\ge\mu_{\ell-1} n_1\ge
  \mu_{\ell-1} \xi p^{\Delta-1}n= 
  \xi_{\subref{lem:reg-neighb}}p^{\Delta-1}n$
  and $|Y|\ge\xi p^{\Delta-\ell}n\ge\xi_{\subref{lem:reg-neighb}}
  p^{\Delta-2}n$. Because $\Gamma$ has property
  $D(\eps'_{\subref{lem:reg-neighb}}, \mu_{\subref{lem:reg-neighb}},
  \eps_{\subref{lem:reg-neighb}}, \xi_{\subref{lem:reg-neighb}}, p(n))$ we
  conclude for the tripartite graph $G[X'\dcup Y\dcup X_1]$
  that there are at least $|X'|-\mu_{\subref{lem:reg-neighb}}|X'|\ge
  1$ vertices $x$ in $X'$ such that $(N_Y(x),X_1)$ is
  $(\eps'_{\subref{lem:reg-neighb}},d,p)$-dense.
  Let $x^*\in X'$ be one of these vertices and set $Y':=N_Y(x^*)$.
  Thus $(Y',X_1)$ is $(\eps'_{\subref{lem:reg-neighb}},d,p)$-dense
  and since $X'$ only contains vertices with a large neighbourhood in $Y$ we
  have $|Y'|\ge(d-\eps)p n_2$. Furthermore, let $\hyper{B}'_\ell(x^*)$ be the family
  of $\ell$-sets in $\hyper{B}'_\ell$ that contain~$x^*$. Then
  $\hyper{B}'_\ell(x^*)$ contains $\ell$-sets with $\ell-1$ vertices in $X_1$
  and with one vertex, the vertex $x^*$, in $X_2$ because $\hyper{B}'_\ell$
  contains only $1$-crossing $\ell$-sets. By definition of $X'$ and because $x^*\in X'$ we
  have
  \begin{equation}\label{lem:joint:Bv}
    |\hyper{B}'_\ell(x^*)|\ge |\hyper{B}'_\ell|/(10n_1)
    \geByRef{eq:joint:B'} 2\mu_{\ell-1} n_1^{\ell-1}.
  \end{equation}

  For $B\in\hyper{B}'_\ell(x^*)$ let $\Pi_{\ell-1}(B)$ be the projection of $B$
  to $X_1$. This implies that $\Pi_{\ell-1}(B)$ is an $(\ell-1)$-set in
  $X_1$. In addition $N_{Y'}(\Pi_{\ell-1}(B))=N_Y(B)$ by definition of $Y'$ and
  hence $\Pi_{\ell-1}(B)$ has less than $(d-\eps')^\ell p^\ell n_2$
  common neighbours in $Y'$ because
  $B\in\hyper{B}'_\ell(x^*)\subset\hyper{B}_\ell$. Accordingly the family
  $\hyper{B}_{\ell-1}$ of all projections $\Pi_{\ell-1}(B)$ with
  $B\in\hyper{B}'_\ell(x^*)$ is a family of size $|\hyper{B}'_\ell(x^*)|$ and
  contains only $(\ell-1)$-sets $B'$ with
  \begin{equation*}
    |N_{Y'}(B')|
    \le (d-\eps')^\ell p^\ell n_2
    \le (d-\eps')^{\ell-1}p^{\ell-1}|Y'|
    = (d-\eps'_{\ell-1})^{\ell-1}p^{\ell-1}|Y'|.
  \end{equation*}
  This means
  $\hyper{B}_{\ell-1}\subset\bad{\ell-1}{\eps'_{\ell-1}}{d}(X_1,Y')$. 
  Recall that $(X_1,Y')$ is
  $(\eps'_{\subref{lem:reg-neighb}},d,p)$-dense by the choice of $x^*$.
  Because $|X|=n_1\ge\xi p^{\Delta-1}n$ and 
  $$
    |Y'|\ge(d-\eps)p n_2
    \ge(d-\eps)p\cdot \xi p^{\Delta-\ell}n =\xi_{\ell-1}p^{\Delta-(\ell-1)}n
  $$
  we can appeal to
  $P_{\ell-1}(\eps'_{\ell-1},\mu_{\ell-1},\eps_{\ell-1},\xi_{\ell-1},p(n))$  
  and conclude that
  \[ |\hyper{B}'_\ell(x^*)|=|\hyper{B}_{\ell-1}|
    \le|\bad{\ell-1}{\eps'_{\ell-1}}{d}(X,Y')|
    \le\mu_{\ell-1} n_1^{\ell-1}\,,
  \]
  contradicting~\eqref{lem:joint:Bv}.

  Because $G$ was arbitrary this shows that $\Gamma$ has property
  $P_\ell(\eps',\mu,\eps,\xi,p(n))$. Thus ($\cP_\ell$) holds, which finishes the
  proof of the inductive step.
\end{proof}

\subsection{Proof of Lemma~\ref{lem:Bad}}\label{sec:aux:Bad}

In this section we provide the proof of Lemma~\ref{lem:Bad} which 
examines the inheritance of $p$-density to
neighbourhoods of $\Delta$-sets.
For this purpose we will first establish a version of this lemma,
Lemma~\ref{lem:bad} below, which only considers $\Delta$-sets that are
crossing in a given vertex partition.

We need some definitions. Let $G=(V,E)$ be a graph, $X$ be a subset of its
vertices, and $X=X_1\dcup\dots\dcup X_T$
be a partition of $X$. Then, for integers $\ell,T>0$, we say that
an $\ell$-set $B\subset X$ is \emph{crossing} in
$X_1\dcup\dots\dcup X_T$ if there are indices $0<i_1<\dots<i_\ell<T$ such that
$B$ contains exactly one element in $X_{i_j}$ for each $j\in[\ell]$.
In this case we also write $B \in X_{i_1}\times\dots\times X_{i_\ell}$
(hence identifying crossing $\ell$-sets with $\ell$-tuples).

Now let  $p$, $\eps$, $d$ be positive
reals, and $Y$, $Z\subset V$ be vertex sets such that $X$, $Y$, and $Z$ are
mutually disjoint. Define 
\begin{equation*}
  \bad{\ell}{\eps}{d}(X_1,\dots,X_T;Y,Z)
\end{equation*}
to be the family of all those crossing $\ell$-sets $B$ in $X_1\dcup\dots\dcup
X_T$
% $B \in X_{i_1}\times\dots\times
% X_{i_\ell}$ for some $0<i_1<\dots<i_\ell<T$ 
that either satisfy $|\coN_Y(B)|<(d-\eps)^{\ell}p^{\ell}|Y|$
or have the property that $(\coN_Y(B),Z)$ is not $(\eps,d,p)$-dense in $G$.
Further, let
\begin{equation*}
  \Bad{\ell}{\eps}{d}(X_1,\dots,X_T;Y,Z)
\end{equation*}
be the family of crossing $\ell$-sets $B$ in $X_1\dcup\dots\dcup X_T$
% \in X_{i_1}\times\dots\times
% X_{i_\ell}$ for some $0<i_1<\dots<i_\ell<T$ 
that contain an $\ell'$-set $B'\subset B$ with
$\ell'>0$ such that $B'\in\bad{\ell'}{\eps}{d}(X_1,\dots,X_T;Y,Z)$. 

\begin{lemma}
\label{lem:bad}
  For all integers $\ell,\Delta>0$ and positive reals $d_0$, $\eps'$, and $\mu$
  there is $\eps$ such that for all $\xi>0$ there is $c>1$ such that if
  $p>c(\frac{\log n}{n})^{1/\Delta}$, then the following holds \aas\ for
  $\Gamma=\Gnp$. For $n_1,n_3\ge\xi p^{\Delta-1}n$ and $n_2\ge\xi
  p^{\Delta-\ell-1}n$ let $G=(X\dcup Y\dcup Z,E)$ be any tripartite subgraph
  of $\Gamma$ with $|X|=n_1$, $|Y|=n_2$, and $|Z|=n_3$. Assume further that
  $X=X_1\dcup\dots\dcup X_\ell$ with $|X_i|\ge\lfloor\frac{n_1}{\ell}\rfloor$
  and that $(X,Y)$ and $(Y,Z)$ are $(\eps,d,p)$-dense pairs with $d\ge d_0$.
  Then
  \begin{equation*}
    \big|\bad{\ell}{\eps'}{d}(X_1,\dots,X_\ell;Y,Z)\big|
    \le\mu n_1^\ell.
  \end{equation*}  
\end{lemma}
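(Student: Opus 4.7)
I would prove Lemma~\ref{lem:bad} by induction on $\ell$, choosing the constants at each level by descending from those of the next-higher level and absorbing a small loss in each step. For the base case $\ell=1$ the set $\bad{1}{\eps'}{d}(X;Y,Z)$ splits into two types: those $x\in X$ for which $|N_Y(x)|<(d-\eps')p|Y|$, of which there are at most $\eps|X|$ by Proposition~\ref{prop:typical}, and those for which $(N_Y(x),Z)$ fails to be $(\eps',d,p)$-dense, whose count is bounded by applying Lemma~\ref{lem:reg-neighb} to the triple $(X,Y,Z)$. Choosing the constants so that the sum of both contributions is at most $\mu n_1$ handles the base case.

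For the inductive step from $\ell-1$ to $\ell$ I would designate $X_1$ as the pivot part and declare $x_1\in X_1$ to be \emph{good} when (i)~$|N_Y(x_1)|\geq(d-\eps'')p|Y|$, (ii)~the pair $(N_Y(x_1),Z)$ is $(\eps'',d,p)$-dense, and (iii)~each of the pairs $(N_Y(x_1),X_j)$ with $j\in\{2,\ldots,\ell\}$ is $(\eps'',d,p)$-dense. Property~(i) holds for all but an $\eps$-fraction of $X_1$ by Proposition~\ref{prop:typical}, while $\ell$ separate applications of Lemma~\ref{lem:reg-neighb} to the triples $(X_1,Y,Z)$ and $(X_1,Y,X_j)$ (using that all these pairs are sub-pairs of the $p$-dense pairs $(X,Y)$ and $(Y,Z)$, hence dense by Proposition~\ref{prop:subpairs}) rule out properties~(ii) and~(iii) for only a small constant fraction of $X_1$. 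For each good $x_1$ I would apply the inductive hypothesis to the triple $(X_2\dcup\cdots\dcup X_\ell,N_Y(x_1),Z)$ with the $(\ell-1)$-partition $X_2,\ldots,X_\ell$; for each $(\ell-1)$-set $B'$ that is not bad at this lower level the set $\{x_1\}\cup B'$ satisfies both the size and density conditions at the $\ell$-level provided the parameters are chosen so that $(d-\eps''')^{\ell-1}(d-\eps'')\geq(d-\eps')^\ell$ and $\eps'''\leq\eps'$. The total count of bad $\ell$-sets is then at most $(\mu/2)n_1\cdot n_1^{\ell-1}+n_1\cdot(\mu/2)n_1^{\ell-1}=\mu n_1^{\ell}$, where the first term counts bad $\ell$-sets coming from bad pivots and the second counts those coming from good pivots with a bad residual $(\ell-1)$-set.

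The main technical obstacle is that the inductive hypothesis as stated requires $(X_2\dcup\cdots\dcup X_\ell,N_Y(x_1))$ to be a single $(\eps,d,p)$-dense pair, whereas property~(iii) only provides $(\eps'',d,p)$-density of each $(X_j,N_Y(x_1))$ individually, and $N_Y(x_1)$ is far too small a subset of $Y$ for Proposition~\ref{prop:subpairs} to transfer density from $(X_2\dcup\cdots\dcup X_\ell,Y)$ to $(X_2\dcup\cdots\dcup X_\ell,N_Y(x_1))$. To circumvent this I would prove a slightly strengthened form of the lemma in which the hypothesis ``$(X,Y)$ is $(\eps,d,p)$-dense'' is weakened to ``each $(X_j,Y)$ is $(\eps,d,p)$-dense''; the original statement follows by applying Proposition~\ref{prop:subpairs} to each $X_j$, which has size at least $n_1/\ell$, and the inductive argument above goes through for the strengthened version without modification. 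Tracking the $\eps$, $\mu$ and $\xi$ parameters through the $\ell$-fold descent, and verifying that the hypothesis $|Y|\geq\xi p^{\Delta-\ell-1}n$ translates at each step to the size requirement $|Y|\geq\xi^{(i)}p^{\Delta-i-1}n$ needed by each invocation of Lemma~\ref{lem:reg-neighb}, is tedious but routine.
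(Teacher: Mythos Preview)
Your proposal is correct and follows the same inductive scheme as the paper, but with the two reductions in the opposite order. You pivot on a single vertex $x_1\in X_1$ first (using Lemma~\ref{lem:reg-neighb} repeatedly) and then invoke the $(\ell-1)$-case on $(X_2,\dots,X_\ell;\,N_Y(x_1),Z)$; the paper instead pivots on the $(\ell-1)$-set $B'\in X_1\times\dots\times X_{\ell-1}$ first, applying $(\cP_{\ell-1})$ twice---once with third set~$Z$ and once with third set~$X_\ell$---and only then invokes the $\ell=1$ case on $(X_\ell;\,\coN_Y(B'),Z)$.

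The practical difference is exactly the technical obstacle you identified. In the paper's order the set~$Y$ is still the original~$Y$ when $(\cP_{\ell-1})$ is applied, so $(X_1\dcup\dots\dcup X_{\ell-1},Y)$ is a large sub-pair of the given dense pair $(X,Y)$ and Proposition~\ref{prop:subpairs} applies directly; no strengthening of the inductive hypothesis is needed. The price the paper pays is the second application of $(\cP_{\ell-1})$ with $X_\ell$ in the role of~$Z$, which is what guarantees that $(\coN_Y(B'),X_\ell)$ is dense for the subsequent $\ell=1$ step. Your workaround---proving the lemma under the weaker hypothesis that each $(X_j,Y)$ is individually $(\eps,d,p)$-dense---is perfectly valid and no harder, so the two routes are equivalent in strength and length; the paper's order merely avoids having to reformulate the statement.
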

\begin{proof}
  Let $\Delta$ and $d_0$ be given. For a fixed $n$-vertex graph $\Gamma$,
  a fixed integer $\ell$ and fixed positive reals $\eps'$, $\mu$, $\eps$, $\xi$,
  and a function $p=p(n)$ we say that we say that a graph $\Gamma$ on $n$
  vertices has property $P_\ell(\eps',\mu,\eps,\xi,p(n))$ if $\Gamma$ has the
  property stated in the lemma for these parameters and for $\Delta$ and $d_0$,
  that is, whenever $G=(X\dcup Y\dcup Z,E)$ is a tripartite subgraph of $\Gamma$
  with the required properties, then~$G$ satisfies the conclusion of the lemma.
  For any fixed $\ell>0$,
  we denote by ($\cP_\ell$) the following
  statement. \begin{itemize}[label={\rm($\cP_\ell$)}, leftmargin=*]
    \item {\it For all $\eps',\mu>0$ there is $\eps$ such that for all $\xi>0$
    there is $c>1$ such that a random graph $\Gamma=\Gnp$ with $p>c(\frac{\log
    n}{n})^{1/\Delta}$ has property $P_\ell(\eps',\mu,\eps,\xi,p(n))$ with
    probability $1-\smallo(1)$.}
  \end{itemize}
  We prove that ($\cP_\ell$) holds for every fixed $\ell>0$ by induction on
  $\ell$.
  
  The case $\ell=1$ is an easy consequence of Lemma~\ref{lem:reg-neighb} and
  Proposition~\ref{prop:typical}. Indeed, let $\eps'$ and $\mu$ be
  arbitrary, let $\eps_{\subref{lem:reg-neighb}}$ be as given by
  Lemma~\ref{lem:reg-neighb} for $\Delta$, $d_0$, $\eps'$, and $\mu/2$ and
  fix $\eps:=\min\{\eps_{\subref{lem:reg-neighb}},\eps',\mu/2\}$. Let $\xi$ be
  arbitrary and pass it on to Lemma~\ref{lem:reg-neighb} for obtaining $c$.
  Now, let $\Gamma=\Gnp$ be a random graph. Then, by the choice of
  parameters, Lemma~\ref{lem:reg-neighb} asserts that the graph $\Gamma$ has
  the following property  with probability $1-o(1)$. Let $G=(X\dcup Y\dcup
  Z,E)$ be any subgraph with $X=X_1$ and $|X|=n_1$, $|Y|=n_2$, and
  $|Z|=n_3$, where $n_1,n_3\ge\xi p^{\Delta-1}n$ and $n_2\ge\xi p^{\Delta-2}n$,
  and $(X,Y)$ and $(Y,Z)$ are $(\eps,d,p)$-dense pairs. Then there are at most
  $\frac{\mu}{2}n_1$ vertices $x\in X$ such that $(N(x)\cap Y,Z)$ is not an
  $(\eps',d,p)$-dense pair in $G$. Because $\eps\le\mu/2$,
  Proposition~\ref{prop:typical} asserts that in every such $G$ there are at
  most $\frac{\mu}{2}n_1$ vertices $x\in X$ with $|N_Y(x)|<(d-\eps')p|Y|$. This
  implies that
  \begin{equation*}
    \big|\bad{1}{\eps'}{d}(X_1;Y,Z)\big|
    \le\mu n_1
  \end{equation*}
  holds with probability $1-\smallo(1)$ for all such subgraphs $G$ of the random
  graph $\Gamma$. Accordingly we get ($\cP_1$).

  For the inductive step assume that ($\cP_{\ell-1}$) and ($\cP_1$) hold. We
  will show that this implies ($\cP_\ell$). 
  Again, let $\eps'$ and $\mu$ be arbitrary positive constants. Let $\eps_1$ be
  as promised in the statement ($\cP_1$) for parameters
  $\eps'_1:=\eps'$ and $\mu_1:=\mu/2$. Set $\eps'_{\ell-1}:=
  \min\{\eps_1,\eps',\frac{\mu}{4}\}$, and let $\eps_{\ell-1}$  be given by
  ($\cP_{\ell-1}$) for parameters
  $\eps'_{\ell-1}$ and $\mu_{\ell-1}:=\frac{\mu}{4}$.
  We define $\eps:=\eps_{\ell-1}/(\ell+1)$. Next, let $\xi$ be an arbitrary
  parameter and choose
  \begin{equation}
  \label{eq:bad:xi}
    \xi_1:=\min\{\xi/(\ell+1), (d_0-\eps'_{\ell-1})^{\ell-1}\xi\}
    \quad\text{and}\quad
    \xi_{\ell-1}:=\xi/(\ell+1).
  \end{equation}
  Finally, let $c_1$ and $c_{\ell-1}$ be given by ($\cP_1$) and
  ($\cP_{\ell-1}$), respectively, for the previously specified parameters together with $\xi_1$
  and $\xi_{\ell-1}$. Set $c:=\max\{c_1,c_{\ell-1}\}$.
  We will prove that with this choice of $\eps$ and $c$ the statement in
  ($\cP_\ell$) holds for the input parameters $\eps'$, $\mu$, and $\xi$.
  For this purpose let $\Gamma=\Gnp$ be a random graph. By ($\cP_1$) and
  ($\cP_{\ell-1}$) and the choice of the parameters the graph $\Gamma$ has
  properties $P_1(\eps'_1,\mu_1,\eps_1,\xi_1,p(n))$ and
  $P_{\ell-1}(\eps'_{\ell-1},\mu_{\ell-1},\eps_{\ell-1},\xi_{\ell-1},p(n))$ with
  probability $1-\smallo(1)$. We will show that a graph $\Gamma$ with these
  properties also satisfies $P_\ell(\eps',\mu,\eps,\xi,p(n))$. Let $G=(X\dcup
  Y\dcup Z,E)$ be an arbitrary subgraph of such a $\Gamma$ where
  $X=X_1\dcup\dots\dcup X_\ell$, $|X|=n_1$, $|Y|=n_2$, $|Z|=n_3$, 
  with $n_1,n_3\ge\xi p^{\Delta-1}n$, $n_2\ge\xi p^{\Delta-\ell-1}n$,
  and $|X_i|\ge\lfloor\frac{n_1}{\ell}\rfloor$, and assume that $(X,Y)$ and
  $(Y,Z)$ are $(\eps,d,p)$-dense pairs for $d\ge d_0$.
  
  We would like to bound
  $\hyper{B}_\ell:=\bad{\ell}{\eps'}{d}(X_1,\dots,X_{\ell};Y,Z)$. For this
  purpose let $B'$ be a fixed $(\ell-1)$-set and define
  \begin{subequations}
  \begin{gather}
    \hyper{B}_{\ell-1}:=
      \bad{\ell-1}{\eps'_{\ell-1}}{d}(X_1,\dots,X_{\ell-1};Y,Z) \cup
      \bad{\ell-1}{\eps'_{\ell-1}}{d}(X_1,\dots,X_{\ell-1};Y,X_\ell) 
    \label{eq:bad:def:Bell-1}
    \\ 
    % \BAR{\hyper{B}}_{\ell-1}:=X_1\times\dots\times X_{\ell-1} \setminus
    %   \hyper{B}_{\ell-1}, \qquad\text{and} \nonumber\\
    \hyper{B}_{1}(B'):=
      % \bigcup_{B\in\BAR{\hyper{B}}_{\ell-1}}
      \bad{1}{\eps'}{d}(X_\ell;\coN_Y(B'),Z).
  \label{eq:bad:def:B1}
  \end{gather}
  \end{subequations}
  For an $\ell$-set $B\in X_1\times\dots\times X_{\ell}$ let further 
  $\Pi_{\ell-1}(B)$ denote the $(\ell-1)$-set that is the projection of $B$
  to $X_1\times\dots\times X_{\ell-1}$ and let $\Pi_{\ell}(B)$ be the 
  vertex that is the projection of $B$ to $X_\ell$.
  Now, consider an $\ell$-set $B$ that is contained in
  $B\in\hyper{B}_\ell$ but is
  such that $B':=\Pi_{\ell-1}(B)\not\in\hyper{B}_{\ell-1}$. Let
  $v=\Pi_\ell(B)\in X_\ell$ and $Y':=\coN_Y(B')$. We will show that then
  $v\in\hyper{B}_{1}(B')$. Indeed, since $B'\not\in\hyper{B}_{\ell-1}$ it
  follows from~\eqref{eq:bad:def:Bell-1} that
  \begin{equation*}
    B'\not\in \bad{\ell-1}{\eps'_{\ell-1}}{d}(X_1,\dots,X_{\ell-1};Y,Z)
  \end{equation*}
  and thus $|Y'|\ge(d-\eps'_{\ell-1})^{\ell-1}p^{\ell-1}n_2$. 
  As $\coN_Y(B)=\coN_{Y'}(v)$ we conclude that
  $$v\in\smash{\bad{1}{\eps'}{d}(X_\ell;Y',Z)}=\hyper{B}_{1}(B')$$
  by~\eqref{eq:bad:def:B1} because otherwise $(\coN_Y(B),Z)$ was
  $(\eps',d,p)$-dense and we had
  \begin{equation*}
    |\coN_Y(B)|
    \ge (d-\eps')p|Y'|
    \ge (d-\eps')p\cdot(d-\eps'_{\ell-1})^{\ell-1}p^{\ell-1}n_2
    \ge (d-\eps')^\ell p^\ell n_2,
  \end{equation*}
  which contradicts $B\in\hyper{B}_\ell$.
  Summarizing, we have
  \begin{equation}
  \label{eq:bad:Bell}
  \begin{split}
    \hyper{B}_\ell &=
      \{B\in\hyper{B}_\ell\colon\Pi_{\ell-1}(B)\in\hyper{B}_{\ell-1}\}
      \cup
      \{B\in\hyper{B}_\ell\colon\Pi_{\ell-1}(B)\not\in\hyper{B}_{\ell-1}\}
    \\
    &\subset
     \left(\hyper{B}_{\ell-1}\times X_\ell\right) \cup 
     \bigcup_{B'\not\in\hyper{B}_{\ell-1}}
     \{B'\}\times\hyper{B}_1(B').  
  \end{split}
  \end{equation}  
  For bounding $\hyper{B}_\ell$ we will thus
  estimate the sizes of $\hyper{B}_{\ell-1}$ and $\hyper{B}_1(B')$ for
  $B'\not\in\hyper{B}_{\ell-1}$. Let $X':=X_1\dcup\dots\dcup X_{\ell-1}$.
  Since $(X,Y)$ is $(\eps,d,p)$-dense we conclude from
  Proposition~\ref{prop:subpairs} that $(X',Y)$ and $(X_\ell,Y)$ are
  $(\eps_{\ell-1},d,p)$-dense pairs since $\eps(\ell+1)\le\eps_{\ell-1}$.
  Further, by the choice of $\xi_{\ell-1}$ we get $|X'|,|X_\ell|\ge
  n_1/(\ell+1)\ge\xi_{\ell-1} p^{\Delta-1}n$ since $n_1\ge\xi p^{\Delta-1}n$
  by assumption. Thus we can use the fact that $\Gamma$ has property
  $P_{\ell-1}(\eps'_{\ell-1},\mu_{\ell-1},\eps_{\ell-1},\xi_{\ell-1},p(n))$
  once on the tripartite subgraph induced on $X'\dcup Y\dcup Z$
  in $G$ and once on the tripartite subgraph induced on $X'\dcup Y\dcup X_\ell$
  in $G$ and infer that
  \begin{equation}
  \label{eq:bad:Bell-1}
    \left|\hyper{B}_{\ell-1}\right|\le 2\cdot\mu_{\ell-1}n_1^{\ell-2}
    =\frac{\mu}{2}n_1^{\ell-2}.
  \end{equation}
  For estimating $|\hyper{B}_1(B')|$ for $B'\not\in\hyper{B}_{\ell-1}$ let
  $Y':=\coN_Y(B')$. Observe that this implies that $(Y',Z)$ and $(X_\ell,Y')$
  are $(\eps_1,d,p)$-dense pairs because
  $\eps'_{\ell-1}\le\eps_1$, and that
  \begin{equation*}
  	|Y'| \ge (d-\eps'_{\ell-1})^{\ell-1}p^{\ell-1}n_2
  	  \ge (d-\eps'_{\ell-1})^{\ell-1} p^{\ell-1}\cdot \xi p^{\Delta-\ell-1}n
  	  \geByRef{eq:bad:xi} \xi_1 p^{\Delta-1}n.
  \end{equation*}
  By~\eqref{eq:bad:xi} $|X_\ell|,|Z|\ge\xi p^{\Delta-1} n/(\ell+1)\ge
  \xi_1 p^{\Delta-1} n$.
  As $\Gamma$ satisfies $P_1(\eps'_1,\mu_1,\eps_1,\xi_1,p(n))$ we conclude that
  \begin{equation}
  \label{eq:bad:B1}
    \left|\hyper{B}_1(B')\right|
    \eqByRef{eq:bad:def:B1}
    % |\bad{1}{\eps'}{d}(X_\ell;\coN_Y(B'),Z)|=
    |\bad{1}{\eps'}{d}(X_\ell;Y',Z)|\le\mu_1 n_1\le \frac{\mu}{2}n_1.
  \end{equation}
  In view of~\eqref{eq:bad:Bell}, combining~\eqref{eq:bad:Bell-1}
  and~\eqref{eq:bad:B1} gives
  \begin{equation*}
    \left|\bad{\ell}{\eps'}{d}(X_1,\dots,X_{\ell};Y,Z)\right|=
    \left|\hyper{B}_{\ell}\right|\le
    \frac{\mu}{2}n_1^{\ell-1} \cdot n_1 + n_1^{\ell-1}\cdot\frac{\mu}{2}
      n_1=\mu n_1^\ell.
  \end{equation*}
  Because $G$ was arbitrary this shows that $\Gamma$ has property
  $P_\ell(\eps',\mu,\eps,\xi,p(n))$. Thus ($\cP_\ell$) holds which finishes the
  proof of the inductive step.
\end{proof}

In the proof of Lemma~\ref{lem:Bad} we now first partition the vertex set~$X$,
in which we count bad $\ell$-sets, arbitrarily into~$T$ vertex sets of equal
size. Lemma~\ref{lem:bad} then implies that for all $\ell'\in[\ell]$ there are not
many bad $\ell'$-sets that are crossing in this partition. It follows that only
few $\ell$-sets in~$X$ contain a bad $\ell'$-set for some $\ell'\in[\ell]$ 
(recall that in Definition~\ref{def:bad} for
$\Bad{\ell}{\eps}{d}(X,Y,Z)$  such
$\ell'$-sets are considered). Moreover, if~$T$ is sufficiently large then the
number of non-crossing $\ell$-sets is negligible. Hence we obtain that there
are few bad sets in total.

\begin{proof}[Proof of Lemma~\ref{lem:Bad}]
  Given $\Delta,\ell,d_0,\eps'$ and $\mu$ let
  $T$ be such that $\mu T\ge 2$, fix
  $\mu_{\subref{lem:bad}}:=\frac12\mu/(\ell T^\ell)$. For $j\in[\ell]$ let
  $\eps_{j}$ be given by Lemma~\ref{lem:bad} with $\ell$ replaced by $j$ and
  for $\Delta$, $d_0$, $\eps'$, and $\mu_{\subref{lem:bad}}$ and set
  $\eps_{\subref{lem:bad}}:=\min_{j\in[\ell]}\eps_{j}$. Define
  $\eps:=\eps_{\subref{lem:bad}}/(T+1)$. Now, in Lemma~\ref{lem:Bad}
  let $\xi$ be given by the adversary for this $\eps$.
  Set $\xi_{\subref{lem:bad}}:=\xi/(T+1)$, and let $c$ be given by
  Lemma~\ref{lem:bad} for this $\xi_{\subref{lem:bad}}$.

%   OLD VERSION:
%   Given $\ell,T,\ell',d,\eps'$ and $\mu$ let
%   $\mu_{\subref{lem:bad}}:=\mu/(\ell T^\ell)$. For $j\in[\ell]$ let
%   $\eps_{j}$ be given by Lemma~\ref{lem:bad} with $\ell$ replaced by $j$ and
%   for $\ell'$, $d$, $\eps'$, and $\mu_{\subref{lem:bad}}$ and set
%   $\eps_{\subref{lem:bad}}:=\min_{j\in[\ell]}\eps_{j}$. Define
%   $\eps:=\eps_{\subref{lem:bad}}/(T+1)$, let $\xi$ be given for this $\eps$,
%   set $\xi_{\subref{lem:bad}}:=\xi/(T+1)$, and let $c$ be given by
%   Lemma~\ref{lem:bad} for this $\xi_{\subref{lem:bad}}$.
  
  Let $\Gamma=\Gnp$ with $p\ge c(\frac{\log n}{n})^{1/\Delta}$. Then \aas\
  the graph $\Gamma$ satisfies the statement in Lemma~\ref{lem:bad}
  for parameters $j\in[\ell]$, $\Delta$, $d_0$, $\eps'$,
  $\mu_{\subref{lem:bad}}$, and $\xi_{\subref{lem:bad}}$.
  Assume that $\Gamma$ has this property for all $j\in[\ell]$.
  We will show that it then also satisfies the statement in
  Lemma~\ref{lem:Bad}.
  
  Indeed, let $G$ and $X$, $Y$, $Z$ be arbitrary with the
  properties as required in Lemma~\ref{lem:Bad}. Let $X=X_1\dcup\dots\dcup
  X_T$ be an arbitrary partition of $X$ with $|X_i|\ge\lfloor\frac{n_1}{T}\rfloor$.
  We will first show that there are not many bad crossing $\ell$-sets with
  respect to this partition, i.e., we will bound the size of
  $\Bad{\ell}{\eps'}{d}(X_1,\dots,X_T;Y,Z)$. 
  By definition
  \begin{equation*}
    \big|\Bad{\ell}{\eps'}{d}(X_1,\dots,X_T;Y,Z)\big|\le
    \sum_{j\in[\ell]} \big|\bad{j}{\eps'}{d}(X_1,\dots,X_T;Y,Z)\big|
       \cdot n_1^{\ell-j}.
  \end{equation*}

  Now, fix $j\in[\ell]$ and an index set
  $\{i_1,\dots,i_{j}\}\in\binom{[T]}{j}$ and consider the induced
  tripartite subgraph $G'=(X' \dcup Y \dcup Z,E')$ of $G$ with
  $X'=X_{i_1}\dcup\dots\dcup X_{i_j}$. Observe that
  $|Y|\ge\xi_{\subref{lem:bad}}p^{\Delta-j-1}n$,
  $|Z|\ge\xi_{\subref{lem:bad}}p^{\Delta-1}n$, and $n'_1:=|X'|\ge j\lfloor
  n_1/T\rfloor\ge\xi_{\subref{lem:bad}}p^{\Delta-1}n$. 
  By definition $\eps (T+1)/j\le\eps_{\subref{lem:bad}} \le\eps_{j}$ 
  and so by Proposition~\ref{prop:subpairs} the pair $(X',Y)$ is
  $(\eps_{j},d,p)$-dense. 
  Thus, because $\Gamma$ satisfies the statement in Lemma~\ref{lem:bad}
  for parameters $j$, $\Delta$, $d$, $\eps'$,
  $\mu_{\subref{lem:bad}}$, and $\xi_{\subref{lem:bad}}$
  we have that $G'$ satisfies
  \begin{equation*}
    \big|\bad[G']{j}{\eps'}{d}(X_{i_1},\dots,X_{i_{j}};Y,Z)\big|  
    \le \mu_{\subref{lem:bad}} (n'_1)^{j}.
  \end{equation*}
  As there are $\binom{T}{j}$ choices for the index set
  $\{i_1,\dots,i_{j}\}$ this implies
  \begin{equation*}
    \big|\bad[G]{j}{\eps'}{d}(X_1,\dots,X_T;Y,Z)\big|  
    \le \binom{T}{j} \mu_{\subref{lem:bad}} (n'_1)^{j}
    \le T^{j} \mu_{\subref{lem:bad}} n_1^{j},
  \end{equation*}
  and thus
  \begin{equation*}
    \big|\Bad{\ell}{\eps'}{d}(X_1,\dots,X_T;Y,Z)\big|
    \le \sum_{j\in[\ell]}T^{j} \mu_{\subref{lem:bad}} n_1^{j}
      \cdot n_1^{\ell-j}
    \le\frac12\mu n_1^\ell.
  \end{equation*}
  The number of $\ell$-sets in $X$ that are not crossing with respect to the
  partition $X=X_1\dcup\dots\dcup X_T$ is at most
  $T\binom{n_1/T}{2}\binom{n_1}{\ell-2}\le \frac1T
  n_1^\ell\le\frac12\mu n_1^\ell$ and so we get
  $|\Bad{\ell}{\eps'}{d}(X,Y,Z)|\le\mu n_1^\ell$.
\end{proof}

}

\end{document}